\newtheorem{theorem}{Theorem}[section]
\newtheorem{proposition}[theorem]{Proposition}
\newtheorem{lemma}[theorem]{Lemma}
\newtheorem{corollary}[theorem]{Corollary}
\newtheorem{question}[theorem]{Question}
\newtheorem*{claim*}{Claim}
\newtheorem*{subclaim*}{Subclaim}
\theoremstyle{definition}
\newtheorem{remark}[theorem]{Remark}
\newtheorem{definition}[theorem]{Definition}
\newcommand{\Ce}{{\mathcal{C}}}
\newcommand{\VV}{{\rm{V}}}
\newcommand{\PPP}{{\mathbb{P}}}
\newcommand{\QQQ}{{\mathbb{Q}}}
\newcommand{\RRR}{{\mathbb{R}}}
\newcommand{\aaaa}{{\mathfrak{a}}}
\newcommand{\FFFF}{{\mathfrak{F}}}
\newcommand{\CCCC}{{\mathfrak{C}}}
\newcommand{\RRRR}{{\mathfrak{R}}}
\newcommand{\SSSS}{{\mathfrak{S}}}
\newcommand{\HH}[1]{{\rm{H}}(#1)}
\newcommand{\dom}[1]{{{\rm{dom}}(#1)}}
\newcommand{\POT}[1]{{\mathcal{P}}({#1})}
\newcommand{\seq}[2]{\langle{#1}~\vert~{#2}\rangle}
\newcommand{\map}[3]{{#1}:{#2}\longrightarrow{#3}}
\newcommand{\ran}[1]{{{\rm{ran}}(#1)}}
\newcommand{\id}{{\rm{id}}}
\newcommand{\crit}[1]{{{\rm{crit}}\left({#1}\right)}}
\newcommand{\calL}{\mathcal{L}}
\newcommand{\cof}[1]{{{\rm{cof}}(#1)}}
\newcommand{\Set}[2]{\{{#1}~ \vert~{#2}\}}
\newcommand{\anf}[1]{{\text{``}\hspace{0.3ex}{#1}\hspace{0.3ex}\text{''}}}
\newcommand{\LL}{{\rm{L}}}
\newcommand{\HOD}{{\rm{HOD}}}
\newcommand{\ZFC}{{\rm{ZFC}}}
\newcommand{\ZF}{{\rm{ZF}}}
\newcommand{\Ord}{{\rm{Ord}}}
\newcommand{\Lim}{{\rm{Lim}}}
\newcommand{\SCH}{{\rm{SCH}}}
\newcommand{\betrag}[1]{\vert{#1}\vert}
\newcommand{\calA}{\mathcal{A}}
\newcommand{\goedel}[2]{{\prec}{#1},{#2}{\succ}}
\newcommand{\Add}[2]{{\rm{Add}}({#1},{#2})}
\newcommand{\Str}{Str}
\newcommand{\ooo}{o}
\author{Philipp L\"{u}cke}
\address{Philipp L\"{u}cke. Fachbereich Mathematik, Universit\"at Hamburg, Bundesstra{\ss}e 55, Hamburg, 20146, Germany}
\email{philipp.luecke@uni-hamburg.de}
\title[Weak compactness cardinals and subtlety properties]{Weak compactness cardinals for strong logics and subtlety properties of the class of ordinals}
\subjclass[2020]{(Primary) 03B16; (Secondary) 03C55, 03E45, 03E55}
\keywords{Abstract logics, large cardinals, weak compactness cardinals,  strict L\"owenheim--Skolem--Tarski numbers, ordinal definability.} 
\thanks{The author would like to thank Will Boney and Victoria Gitman  for discussions that motivated much of the content of this paper. 
In addition, he would like to thank Toshimichi Usuba for helpful comments on earlier versions of the presented results. 
Finally, the author would like to thank the anonymous referee for a very thorough reading of the manuscript and numerous helpful comments.
 The  author gratefully acknowledges supported by the Deutsche Forschungsgemeinschaft (Project number 522490605).}
\begin{document}

\begin{abstract}
 Motivated by recent work of Boney, Dimopoulos, Gitman and Magidor, we characterize the existence of weak compactness cardinals for all abstract logics  through combinatorial properties of the class of ordinals. 
 This analysis is then used to show that, in contrast to the existence of strong compactness cardinals, 
  the existence of weak compactness cardinals for  abstract logics does not  imply the existence of a strongly inaccessible cardinal. 
  More precisely, it is proven that the existence of a proper class of subtle cardinals is consistent with the axioms of $\ZFC$ if and only if it is not possible to derive the existence of strongly inaccessible cardinals from the existence of weak compactness cardinals for all abstract logics.  
  Complementing this result, it is shown that the existence of weak compactness cardinals for all abstract logics implies that unboundedly many ordinals are strongly inaccessible in the inner model $\HOD$ of all hereditarily ordinal definable sets. 
\end{abstract}

\maketitle


\section{Introduction}

The theory of strong axioms of infinity and the study of extensions of first-order logic are deeply connected through results showing that  
 the ability to generalize fundamental structural features of first-order logic to stronger logics 
 is equivalent to the existence of   large cardinals.\footnote{Throughout this paper,  we refer to properties of cardinal numbers that imply weak inaccessibility as \emph{large cardinal properties}.} 
%
%
%
%
 A classical example of such a result is  a theorem of Magidor in \cite{MR295904} that shows that the existence of 
 a \emph{strong compactness cardinal} for second-order logic $\calL^2$ ({i.e.,} the existence of a cardinal $\kappa$ with the property that every unsatisfiable $\calL^2$-theory contains an unsatisfiable subtheory of cardinality less than $\kappa$) 
 is equivalent to the existence of an extendible cardinal.  
 In \cite{MR780522}, Makowsky proved an analog of this result that provides a large cardinal characterization of the existence of strong compactness cardinals for all abstract logics.\footnote{The definition of an abstract logic and all related model-theoretic notions can be found in Section \ref{section:AbstractLogics}.}
  Recall that \emph{Vop\v{e}nka's Principle} is the scheme of axioms stating that for every proper class\footnote{Throughout this paper, we work in $\ZFC$. Therefore,  all classes are definable by first-order formulas in the language of set theory, possibly using sets as parameters.} of graphs, there are two distinct members of the class with a homomorphism between them. 
 The validity of this combinatorial principle is known to be characterizable through the existence of certain large cardinals (see {\cite[Section 4]{zbMATH06029795}} and {\cite[Section 6]{MR482431}}). 
 Makowsky's result  now states the following:

 \begin{theorem}[{\cite[Theorem 2]{MR780522}}]\label{theorem:Makowsky}
  The following schemes are equivalent over $\ZFC$: 
   \begin{enumerate}
   \item Vop\v{e}nka's Principle. 
   
   \item Every abstract logic has a strong compactness cardinal. 
   \end{enumerate}
 \end{theorem}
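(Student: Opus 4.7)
The plan is to prove both implications of the equivalence separately, and to reformulate Vopěnka's Principle via elementary embeddings between relational structures in a common signature, which is equivalent to its graph/homomorphism formulation.

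For the forward direction, I would assume Vopěnka's Principle and, toward a contradiction, suppose $\calL^*$ is an abstract logic with no strong compactness cardinal. Then for every cardinal $\kappa$ there exists an $\calL^*$-theory $T_\kappa$ which is $<\kappa$-satisfiable but unsatisfiable. For each such $\kappa$ I would construct a relational structure $\mathfrak{A}_\kappa$ in a fixed language that codes $T_\kappa$ together with a family of witnessing models $\mathfrak{M}_S \models S$ for each subtheory $S \subseteq T_\kappa$ of cardinality less than $\kappa$, and a distinguished element marking $\kappa$ itself. The class of all such $\mathfrak{A}_\kappa$ is a proper class, and after a routine translation to graphs Vopěnka's Principle supplies distinct $\kappa < \lambda$ and an elementary embedding $j \colon \mathfrak{A}_\kappa \to \mathfrak{A}_\lambda$. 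The crucial step is then to exploit $j$ together with the models encoded inside $\mathfrak{A}_\lambda$, which include witnesses for subtheories of cardinality up to $\lambda$ and in particular a witness for a subtheory of size $|T_\kappa|$, to extract an actual model of $T_\kappa$, contradicting its unsatisfiability.

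For the converse, I would assume the failure of Vopěnka's Principle, fix a proper class $\mathcal{C}$ of relational structures in a common signature $\tau$ such that no two distinct members admit a non-trivial homomorphism, and build an abstract logic $\calL^*_\mathcal{C}$ without any strong compactness cardinal. The idea is to enrich first-order logic with a class-sized family of generalized quantifiers or atomic predicates $\{Q_\mathfrak{A} : \mathfrak{A} \in \mathcal{C}\}$, interpreted so that $Q_\mathfrak{A}\, \bar{x}\, \varphi(\bar{x})$ holds in a structure $\mathfrak{B}$ precisely when the substructure of $\mathfrak{B}$ defined by $\varphi$ is isomorphic to $\mathfrak{A}$. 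For each cardinal $\kappa$ I would then write down a theory $T_\kappa$ forcing any model to contain a definable isomorphic copy of some member of $\mathcal{C}$ of cardinality at least $\kappa$. Any subtheory of $T_\kappa$ of cardinality less than $\kappa$ is satisfiable, witnessed by a sufficiently large member of $\mathcal{C}$, but a global model of $T_\kappa$ would, via a Löwenheim--Skolem-style cutting-down argument, give rise to a homomorphism between two distinct members of $\mathcal{C}$, contradicting our choice.

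The main obstacle is the converse direction, where one must define $\calL^*_\mathcal{C}$ in a way that genuinely conforms to the axioms of an abstract logic (isomorphism invariance, the occurrence property, closure under reducts and Boolean connectives, and the renaming property) while its semantics still refer to the proper class $\mathcal{C}$. A delicate issue is that an abstract logic is itself a class-sized object, so the construction must be carried out uniformly in the parameter $\mathcal{C}$; in particular one has to verify that the resulting satisfaction relation is class-definable in a controlled way and that it faithfully translates the combinatorial obstruction provided by the failure of Vopěnka's Principle into the non-existence of a strong compactness cardinal.
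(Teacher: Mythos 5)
This theorem is not proved in the paper: it is quoted from Makowsky \cite{MR780522}, with the remark that {\cite[Section 4]{bdgm}} verifies it for the definitions of Section \ref{section:AbstractLogics}. So there is no in-paper proof to compare against; the closest analogues are the proof of Stavi's Theorem \ref{theorem:Stavi} and Lemma \ref{lemma:CnWeaklyShrewd}. Measured against those, both halves of your sketch have genuine gaps. In the converse direction, the logic $\calL^*_{\mathcal{C}}$ you describe is not an abstract logic: introducing a quantifier $Q_{\mathfrak{A}}$ for \emph{each} $\mathfrak{A}$ in the proper class $\mathcal{C}$ makes $\calL^*_{\mathcal{C}}(\tau)$ a proper class, whereas Definition \ref{definition:AbstractLogic} requires $\calL(\tau)$ to be a set (and the occurrence-number axiom to hold). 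This cannot be repaired by care with definability; the construction itself must change. The standard fix, and the one the paper uses for $\calL_\Gamma$ in the proof of Theorem \ref{theorem:Stavi}, is to add a \emph{single} sentence $\phi_r$ per relation symbol asserting \anf{isomorphic to \emph{some} member of the class}, so that the proper class enters only through the satisfaction relation, not the syntax. Relatedly, your theory $T_\kappa$ and the \anf{L\"owenheim--Skolem-style cutting-down} step belong to the LST-number argument, not the compactness one: for strong compactness one wants a theory whose $<\kappa$-satisfiability is witnessed by a single fixed member of $\mathcal{C}$ and whose global model directly produces a homomorphism from that member into a \emph{different} member, and your $T_\kappa$ as described does not obviously have satisfiable small subtheories.

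In the forward direction, the step you call crucial is exactly where the content lies, and it is not merely a matter of reading off a witness inside $\mathfrak{A}_\lambda$. Granting an elementary $\map{j}{\mathfrak{A}_\kappa}{\mathfrak{A}_\lambda}$ with $|T_\kappa|<\lambda$ (which already requires thinning the class of indices and orienting the embedding), the set $j[T_\kappa]$ is a small, hence satisfiable, subtheory of $T_\lambda$ --- but it consists of \emph{different} sentences in a \emph{different} language. To pull a model of $j[T_\kappa]$ back to a model of $T_\kappa$ you must invoke the renaming axiom, and for that you must prove that $\phi\mapsto j(\phi)$ agrees with $u_*$ for the renaming $u=j\restriction\SSSS_{\tau_\kappa}$. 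This is false for arbitrary elementary embeddings between arbitrary structures coding the data; it has to be forced by seeding the structures with enough information to pin down the action of $j$ on sentences. That is precisely what the paper does in the weak-compactness analogue, Lemma \ref{lemma:CnWeaklyShrewd}\,\eqref{item:WeakCompactFromWShrewd1}--\eqref{item:WeakCompactFromWShrewd2}, via the witnesses $M_{\sigma,\pi}$, $\phi_{\sigma,\pi}$ for nontrivial permutations $\pi$ of $\calL(\sigma)$. Without an analogous device your argument stalls at the point where you claim to \anf{extract an actual model of $T_\kappa$}. (Alternatively, the forward direction can be routed through Bagaria's theorem that Vop\v{e}nka's Principle yields $C^{(n)}$-extendible cardinals for all $n$, as the paper does for L\"owenheim--Skolem--Tarski numbers in Theorem \ref{theorem:Stavi}; that version also needs the renaming axiom at the corresponding step.)
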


 The work presented in this paper is motivated by recent results of Boney, Dimopoulos, Gitman and Magidor in \cite{bdgm} that deal with the existence of \emph{weak compactness cardinals} for abstract logics, {i.e.,} the existence of cardinals $\kappa$ with the property that every unsatisfiable theory of cardinality $\kappa$ contains an unsatisfiable subtheory of cardinality less than $\kappa$. 
 These results show that the existence of weak compactness cardinals for strong logics is connected to the combinatorial concept of \emph{subtleness}, introduced by Jensen and Kunen in \cite{jensennotes}. Remember that an infinite cardinal $\kappa$ is \emph{subtle} if for every sequence $\seq{A_\alpha}{\alpha<\kappa}$ with $A_\alpha\subseteq\alpha$ for all $\alpha<\kappa$ and every closed unbounded subset $C$ of $\kappa$, there are $\alpha,\beta\in C$ with $\alpha<\beta$ and $A_\beta\cap\alpha=A_\alpha$. A short argument shows that subtle cardinals are inaccessible limits of totally indescribable cardinals. 
 This large cardinal notion  has a canonical class-version, {i.e.,} one defines \anf{\emph{$\Ord$ is subtle}} to be the scheme of sentences  stating that for every class sequence $\seq{A_\alpha}{\alpha\in\Ord}$ with $A_\alpha\subseteq\alpha$ for all $\alpha\in\Ord$ and every closed unbounded class $C$ of ordinals, there exist $\alpha,\beta\in C$ with $\alpha<\beta$ and $A_\beta\cap\alpha=A_\alpha$. 
 The results of \cite{bdgm} now show that, in the presence of a definable well-ordering of $\VV$,\footnote{Note that the existence of such a well-ordering is equivalent to the statement that $\VV=\HOD_x$ for some set $x$ and can therefore be formulated as a single sentence in the language of set theory.} the validity of this principle is connected to the existence of weak compactness cardinals for abstract logics:

 \begin{theorem}[{\cite[Theorem 5.3]{bdgm}}]\label{theorem:BDGM}
  The following schemes are equivalent over $\ZFC$ together with the existence of a definable well-ordering of $\VV$: 
  \begin{enumerate}
   \item $Ord$ is subtle. 
   
   \item Every abstract logic has a stationary class of weak compactness cardinals. 
  \end{enumerate}
 \end{theorem}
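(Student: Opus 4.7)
The plan is to prove both implications using the definable well-ordering $<_\VV$ of $\VV$ as a coding tool, treating ``Ord is subtle'' as providing a reflection mechanism between the class and set levels.

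For the direction (1) $\Rightarrow$ (2), I would fix an abstract logic $\calL$ and aim to show that the class $\mathrm{WC}_{\calL}$ of weak compactness cardinals for $\calL$ is stationary in $\Ord$. Suppose toward a contradiction that some closed unbounded class $C$ of cardinals disjoint from $\mathrm{WC}_{\calL}$ exists. For each $\alpha \in C$, use $<_\VV$ to select the least $\calL$-theory $T_\alpha$ of cardinality $\alpha$ which is unsatisfiable but all of whose subtheories of cardinality less than $\alpha$ are satisfiable. By shrinking $C$ (intersecting with a definable club) I may assume every $\alpha \in C$ is large enough that a suitable canonical bijection between $\alpha$ and the set of $\calL$-formulas occurring in $T_\alpha$ exists, and that this coding is coherent: the bijection chosen at $\alpha$ is the restriction of the one chosen at $\beta$ whenever $\alpha < \beta$ in $C$. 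Define $A_\alpha \subseteq \alpha$ to be the pullback of $T_\alpha$ under this canonical coding. Applying ``$\Ord$ is subtle'' to the class sequence $\seq{A_\alpha}{\alpha \in \Ord}$ (extended arbitrarily off $C$) together with the club $C$ yields $\alpha < \beta$ in $C$ with $A_\beta \cap \alpha = A_\alpha$. Decoding, this means that $T_\alpha$ is a subtheory of $T_\beta$ of cardinality $\alpha < \beta$. But $T_\alpha$ is unsatisfiable, contradicting that every subtheory of $T_\beta$ of cardinality less than $\beta$ is satisfiable.

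For the direction (2) $\Rightarrow$ (1), assume Vop\v{e}nka-style stationarity of weak compactness cardinals for every abstract logic, and suppose toward a contradiction that $\Ord$ is not subtle, as witnessed by a class sequence $\seq{A_\alpha}{\alpha \in \Ord}$ and a club $C \subseteq \Ord$. The plan is to design an abstract logic $\calL^*$, with $\vec A$ and $C$ built in as class parameters (legitimate because both are definable from $<_\VV$ and the presumed witnesses), whose set of weak compactness cardinals is disjoint from a club, contradicting stationarity. Concretely, I would extend first-order logic by a sentence $\sigma_\kappa$, for each cardinal $\kappa$, whose models are precisely the structures of cardinality $\kappa$ that, when interpreted through the canonical coding from the previous paragraph, faithfully realize $A_\kappa$. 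For each $\kappa \in C$, form the theory $T_\kappa$ consisting of $\sigma_\kappa$ together with a sentence expressing ``and no ordinal $\beta > \kappa$ with $\beta \in C$ satisfies $A_\beta \cap \kappa = A_\kappa$''. The non-subtleness hypothesis makes the full theory $T_\kappa$ satisfiable (by the witnessing failure), while on a club of $\kappa \in C$ suitable subtheories of size $< \kappa$ force incompatible data and are unsatisfiable --- or, arranged in the opposite direction, $T_\kappa$ itself is unsatisfiable while every subtheory of size $< \kappa$ is satisfiable. Either arrangement places all $\kappa \in C$ outside $\mathrm{WC}_{\calL^*}$, giving the desired contradiction.

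I expect the coherent coding step in the forward direction, and especially the precise design of $\calL^*$ and its defining sentences in the backward direction, to be the main technical obstacles. The forward coherence issue can be handled by fixing once and for all (via $<_\VV$) a uniform enumeration of the underlying sets of $\calL$-formulas by the ordinals, and restricting attention to the club of closure points of this enumeration, so that $A_\beta \cap \alpha = A_\alpha$ truly reflects subtheory inclusion. The backward construction requires verifying closure under the axioms of an abstract logic (isomorphism, occurrence, reduct, renaming, substitution) for the logic obtained by adjoining the sentences $\sigma_\kappa$; since these sentences are parametrized by sets and the added expressive power is local to the coded structure on $\kappa$, this verification should reduce to checking that the enriched signature still satisfies the axioms in Section \ref{section:AbstractLogics}.
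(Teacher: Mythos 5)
First, note that the paper does not prove Theorem \ref{theorem:BDGM} at all: it is quoted from \cite{bdgm}, and the paper's own machinery instead establishes the stronger $\ZFC$-result Theorem \ref{MAIN:EssentiallySubtle} by a completely different route, namely the large-cardinal characterizations of Theorems \ref{theorem:OrdSubtle1} and \ref{theorem:CharOrdFaint} combined with Lemmas \ref{lemma:CnWeaklyShrewd} and \ref{lemma:CardinalFromLogic}. Your direction (1)~$\Rightarrow$~(2) is essentially sound and close to the direct argument one would give: using the definable well-ordering to select, for each $\alpha$ in a putative club avoiding the weak compactness cardinals, a least unsatisfiable ${<}\alpha$-satisfiable theory of cardinality $\alpha$, coding these coherently as subsets $A_\alpha\subseteq\alpha$ via a single global enumeration and passing to its closure points, and then reading off a contradiction from $A_\beta\cap\alpha=A_\alpha$. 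The remaining work there is routine (normalizing the languages via renaming and the occurrence number so that a single $\Ord$-enumeration of sentences makes sense, and using the expansion axiom so that $T_\beta\cap e[\alpha]=T_\alpha$ really yields a satisfiable copy of $T_\alpha$), and I have no objection to it.

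The direction (2)~$\Rightarrow$~(1) has a genuine gap. Your theory $T_\kappa$ consists of $\sigma_\kappa$ together with a sentence asserting \anf{no $\beta>\kappa$ in $C$ satisfies $A_\beta\cap\kappa=A_\kappa$}. That second assertion is a statement about $\VV$ whose truth does not depend on the structure being evaluated; under the non-subtlety hypothesis it is simply \emph{true}, so $T_\kappa$ is satisfiable, and in any case a two-element theory can never witness the failure of weak compactness at $\kappa$, which requires an unsatisfiable ${<}\kappa$-satisfiable theory \emph{of cardinality $\kappa$}. The hedge \anf{either arrangement places all $\kappa\in C$ outside $\mathrm{WC}_{\calL^*}$} signals that the actual mechanism producing such a theory from a failure of subtlety was never identified, and I do not see one along these lines. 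The arguments that do work run in the opposite direction: one designs a logic (as in Lemma \ref{lemma:CardinalFromLogic}, or the second-order logic used in \cite{bdgm}) whose weak compactness cardinals $\kappa$ necessarily admit nontrivial elementary embeddings between set-sized structures with critical point below or at $\kappa$ --- an extension/reflection property --- and then uses the definable well-ordering to fix the \emph{least} counterexample pair $\langle\vec{A},C\rangle$ and contradicts its minimality by pulling the instance $A_{j(\bar\kappa)}\cap\bar\kappa=A_{\bar\kappa}$ back through the embedding, exactly as in the proof of Lemma \ref{lemma:SubtleInHOD}. In other words, non-subtlety is refuted by a reflection argument at a weak compactness cardinal of a suitably rich logic; it is not itself encoded as an unsatisfiable theory. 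Without that embedding step your backward direction does not go through.
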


 This result immediately raises the following questions: 
 \begin{itemize}
  \item Is it also possible to characterize the mere existence of weak compactness cardinals for every abstract logic (instead of the existence of stationary classes of such cardinals) through combinatorial properties of the class of ordinals? 
  
  \item Is it  possible to establish such characterizations in the absence of definable well-orderings of $\VV$? 
 \end{itemize}

 The results of this paper will provide affirmative answers to both of these questions. 
 Moreover, this analysis will be based on arguments  showing that the validity of the involved combinatorial principles for the class of ordinals is equivalent to the existence of  certain large cardinals.

 First, in order to prove results that do not rely on the existence of definable well-orderings, we will make use of the following variation of the above class-principle that was introduced by Bagaria and the author in their work on \emph{principles of structural reflection}:

\begin{definition}[\cite{Patterns}]
 We let \anf{\emph{$\Ord$ is essentially subtle}} denote the scheme of sentences stating that for every class sequence $\seq{E_\alpha}{\alpha\in\Ord}$ with $\emptyset\neq E_\alpha\subseteq\POT{\alpha}$  for all $\alpha\in\Ord$ and  every closed unbounded class $C$ of ordinals, there exist $\alpha,\beta\in C$ and  $A\in E_\beta$ with $\alpha<\beta$  and $A\cap\alpha\in E_\alpha$.  
\end{definition}

Obviously, if $\Ord$ is essentially subtle, then $\Ord$ is subtle. Moreover, in the presence of a definable well-ordering of $\VV$, these principles are equivalent. In particular, since the assumption that $\Ord$ is subtle is easily seen to be downwards absolute from $\VV$ to the constructible universe $\LL$, it follows that these two principles are equiconsistent over $\ZFC$. 
 In contrast, by combining a result in \cite{bdgm}  with Theorem \ref{MAIN:EssentiallySubtle} below, it is possible to show that, over $\ZFC$, the assumption that  $\Ord$ is subtle does not imply that $\Ord$ is essentially subtle (see Corollary \ref{corollary:SublteNotEssentially} below). 
 The following direct strengthening of Theorem \ref{theorem:BDGM}, proven in Section \ref{section:ProofsCompactness} below,  now provides the desired  $\ZFC$-characterization of the existence of stationary classes of weak compactness cardinals for all abstract logics:

\begin{theorem}\label{MAIN:EssentiallySubtle}
 The following schemes of sentences are equivalent over $\ZFC$: 
 \begin{enumerate}
  \item $\Ord$ is essentially subtle. 
  
  \item Every abstract logic has a stationary class of weak compactness cardinals. 
  
 \end{enumerate}
\end{theorem}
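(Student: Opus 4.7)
The plan is to adapt the proof strategy of Theorem \ref{theorem:BDGM} from \cite{bdgm}, replacing the definable well-ordering of $\VV$ with the flexibility offered by essential subtlety: the multi-valued nature of each $E_\alpha$ lets us collect together every possible code of a mathematical object of rank below $\alpha$, so applications of the principle implicitly quantify over codings. Wherever \cite{bdgm} selects a canonical subset of $\alpha$ using the well-ordering, the new argument bundles all compatible codings into $E_\alpha$.

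For the implication (1) $\Rightarrow$ (2), I would fix an abstract logic $\calL$ and a closed unbounded class $C$ of ordinals, and assume for a contradiction that no $\kappa \in C$ is a weak compactness cardinal for $\calL$. Shrinking $C$ to a club of strong-limit cardinals closed under the operations of $\calL$, for each $\kappa \in C$ one may choose an $\calL$-theory $T_\kappa$ of cardinality $\kappa$ that is unsatisfiable but all of whose subtheories of cardinality less than $\kappa$ are satisfiable. Setting
\[
 E_\kappa \;=\; \Set{\Set{\gamma < \kappa}{f(\gamma) \in T_\kappa}}{f \colon \kappa \to \HH{\kappa} \text{ is a bijection with } T_\kappa \subseteq \ran{f}}
\]
for $\kappa \in C$ and $E_\alpha = \{\emptyset\}$ otherwise, essential subtlety produces $\alpha < \beta$ in $C$ and $A \in E_\beta$ with $A \cap \alpha \in E_\alpha$. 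Decoding via the associated bijections, $A \cap \alpha$ simultaneously corresponds to a subtheory of $T_\beta$ of cardinality $\alpha$ and to the unsatisfiable theory $T_\alpha$, and using the syntactic invariance of $\calL$-satisfiability under renaming of signature symbols one transports unsatisfiability to obtain an unsatisfiable subtheory of $T_\beta$ of cardinality $\alpha < \beta$, contradicting the minimality of $T_\beta$.

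For the converse (2) $\Rightarrow$ (1), suppose essential subtlety fails, as witnessed by some class sequence $\seq{E_\alpha}{\alpha \in \Ord}$ and club $C$. I would design an abstract logic $\calL^*$ whose syntax refers to the parameters $\seq{E_\alpha}{\alpha \in \Ord}$ and $C$, and, for each $\kappa \in C$, construct a $\kappa$-sized $\calL^*$-theory $T_\kappa$ whose axioms describe the restrictions $\seq{E_\alpha}{\alpha < \kappa}$ and $C \cap \kappa$ and additionally assert the existence of some $A \subseteq \kappa$ with $A \in E_\kappa$ together with some $\alpha < \kappa$ in $C$ for which $A \cap \alpha \in E_\alpha$. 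By the assumed failure of essential subtlety below $\kappa$, $T_\kappa$ is unsatisfiable, whereas any subtheory of cardinality less than $\kappa$ omits the constraints at cofinally many levels in $C$, and one satisfies it by modifying the data at one of these free levels so as to contain the restriction of a chosen element of $E_\kappa$. This rules out every $\kappa \in C$ as a weak compactness cardinal for $\calL^*$, contradicting (2).

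The main obstacle, shared by both directions, is carrying out the codings and the logic construction rigorously without a well-ordering of $\VV$: for (1) $\Rightarrow$ (2), the subtle point is ensuring that the bijections witnessing $A \in E_\beta$ and $A \cap \alpha \in E_\alpha$ can be aligned so that unsatisfiability genuinely transfers through a symmetry of $\calL$; for (2) $\Rightarrow$ (1), the delicate issue is designing $\calL^*$ so that it meets the axioms of an abstract logic recalled in Section \ref{section:AbstractLogics} while expressing enough of $\seq{E_\alpha}{\alpha \in \Ord}$ and $C$ to control the satisfiability of each $T_\kappa$.
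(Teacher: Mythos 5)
Your plan—to redo the proof of Theorem \ref{theorem:BDGM} directly, letting the multi-valued sets $E_\alpha$ absorb the missing well-ordering by collecting all codings at once—is a sensible idea and is genuinely different from the paper's route (the paper factors both directions through the large-cardinal characterizations of Theorems \ref{theorem:OrdSubtle1} and \ref{theorem:CharOrdFaint}, proving the forward direction via the embedding argument of Lemma \ref{lemma:CnWeaklyShrewd} and the converse via the logic built from the classes $\Ce^{m,n}_\rho$ in Lemma \ref{lemma:CardinalFromLogic}). However, both directions of your sketch have a gap at their decisive step. In (1) $\Rightarrow$ (2), the phrase ``one transports unsatisfiability'' does not follow from your coding: if $A\in E_\beta$ is witnessed by a bijection $f$ and $A\cap\alpha\in E_\alpha$ by a bijection $g$, then $f\circ g^{-1}\restriction T_\alpha$ is merely a bijection between two sets of sentences. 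It is not induced by a renaming—renamings act on the symbols $\SSSS_\sigma$ and induce the maps $u_*$ of Definition \ref{definition:AbstractLogic}.\eqref{item:AbstractLogic-renaming}; they are not arbitrary bijections of sentence sets—so the renaming axiom gives you nothing. Moreover, even a renaming-induced correspondence would not transfer unsatisfiability unless the coherent codes also force agreement about $\models_\calL$, which may only be $\Sigma_n$-definable for large $n$; one must therefore code elementary submodels of suitable $\HH{\theta}$ with $\theta\in C^{(n)}$ together with the formalized satisfaction relation, so that coherence of codes produces an elementary embedding fixing the small theory pointwise. This is precisely what the paper does in Lemma \ref{lemma:ShrewdFromSubtle} and in part \eqref{item:WeakCompactFromWShrewd2} of Lemma \ref{lemma:CnWeaklyShrewd}, and it is where the correctness classes $C^{(n)}$ become unavoidable. (A smaller, fixable point: choosing a single $T_\kappa$ for each $\kappa\in C$ is itself an instance of class choice; you should instead put the codes of all unsatisfiable, ${<}\kappa$-satisfiable theories into $E_\kappa$.)

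In (2) $\Rightarrow$ (1), the ``delicate issue'' you defer is the heart of the matter, not a technicality. For every language $\tau$ the collection $\calL^*(\tau)$ must be a set, and the renaming axiom forces $\calL^*(\sigma)$ and $\calL^*(\tau)$ to be in bijection for renamable languages, so you cannot place a distinguished sentence ``describing level $\alpha$'' for each of the $\kappa$-many ordinals $\alpha<\kappa$ into a fixed finite language. The level-by-level description of $\seq{E_\alpha}{\alpha<\kappa}$ and $C\cap\kappa$ must instead be routed through expanded languages with constant symbols (an elementary-diagram device), after which isomorphism-invariance and the occurrence-number axiom have to be verified for $\calL^*$; compare the Claim inside the proof of Lemma \ref{lemma:CardinalFromLogic}, where the $\kappa$-sized theory is the first-order elementary diagram of a structure in $\Ce^{m,n}_\rho$ plus finitely many distinguished sentences. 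Until $\calL^*$ is actually exhibited and these axioms checked, the converse is not established. Finally, note that the paper's detour through $C^{(n)}$-strongly unfoldable and $C^{(n)}$-weakly shrewd cardinals is not mere overhead: a correct direct argument would have to reproduce the same correctness and reflection analysis inline, and the factored form is what lets the identical lemmas also yield Theorems \ref{MAIN:EssentiallyFaint} and \ref{MAIN:StrictLST} and the $\HOD$ results.
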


 Next, we present a  result  showing that the statement that every abstract logic has at least one weak compactness cardinal 
 is also equivalent to a combinatorial property of the class $\Ord$. 
 %
 %
 This property generalizes the notion of \emph{faintness}, introduced by Matet in \cite{MR930262}, from cardinals to the class $\Ord$. Matet defined a cardinal $\kappa$ to be \emph{faint} if for every sequence $\seq{A_\alpha}{\alpha<\kappa}$ with $A_\alpha\subseteq\alpha$ for all $\alpha<\kappa$ and every $\xi<\kappa$, there are $\xi<\alpha<\beta<\kappa$ with $A_\beta\cap\alpha=A_\alpha$. 
 The results of \cite{MR930262} then show that a cardinal is faint if and only if it is either subtle or a limit of subtle cardinals. 
 Following earlier patterns, we introduce the following class-version of faintness:

\begin{definition}
 We let \anf{\emph{$\Ord$ is essentially faint}} denote the scheme of sentences stating that for every class sequence $\seq{E_\alpha}{\alpha\in\Ord}$ with $\emptyset\neq E_\alpha\subseteq\POT{\alpha}$  for all $\alpha\in\Ord$ and every ordinal $\xi$, there are ordinals $\xi<\alpha<\beta$ and $A\in E_\beta$ with $A\cap\alpha\in E_\alpha$.  
\end{definition}

By definition, if $\Ord$ is essentially subtle, then $\Ord$ is essentially faint. Moreover, it is easy to see that the existence of a proper class of subtle cardinals implies that $\Ord$ is essentially faint:  
 Given a class sequence $\seq{E_\alpha}{\alpha\in\Ord}$ with $\emptyset\neq E_\alpha\subseteq\POT{\alpha}$  for all $\alpha\in\Ord$ and an ordinal $\xi$, the given assumption allows us to  pick a subtle cardinal $\kappa>\xi$ and we can then use the Axiom of Choice to find a sequence $\seq{A_\alpha}{\alpha<\kappa}$ with $A_\alpha\in E_\alpha$ for all $\alpha<\kappa$. 
 The subtleness of $\kappa$ then ensures that the closed unbounded subset $(\xi,\kappa)$ of $\kappa$ contains ordinals $\alpha<\beta$ with $A_\alpha=A_\beta\cap\alpha$. In particular, it follows that there are ordinals $\xi<\alpha<\beta$ and $A\in E_\beta$ with $A\cap\alpha\in E_\alpha$.

 The following result, again proven in Section \ref{section:ProofsCompactness}, shows that this principle exactly determines the  combinatorial property of  $\Ord$ needed for the desired characterization:

\begin{theorem}\label{MAIN:EssentiallyFaint}
 The following schemes of sentences are equivalent over $\ZFC$: 
 \begin{enumerate}
  \item $\Ord$ is essentially faint. 
  
  \item Every abstract logic has a weak compactness cardinal. 
  
 \end{enumerate}
\end{theorem}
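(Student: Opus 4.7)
The plan is to adapt the proof of Theorem \ref{MAIN:EssentiallySubtle} by systematically replacing the role of closed unbounded classes $C$ with initial segments of the form $(\xi,\Ord)$ on the combinatorial side, and the role of stationary classes of weak compactness cardinals with mere existence of one such cardinal on the logic side. The passage from \emph{existence of a weak compactness cardinal} to \emph{existence above any prescribed ordinal $\xi$} is provided by adjoining a constant symbol for $\xi$ to the given abstract logic, which preserves the property of being an abstract logic but forces any weak compactness cardinal of the extended logic to exceed $\xi$.

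For the implication (2) $\Rightarrow$ (1), given $\vec{E} = \seq{E_\alpha}{\alpha \in \Ord}$ with $\emptyset \neq E_\alpha \subseteq \POT{\alpha}$ and an ordinal $\xi$, I would construct an abstract logic $\calL^*_{\vec{E},\xi}$ whose semantics encodes this data, for instance by means of a generalized quantifier that, evaluated on a transitive structure containing an ordinal $\alpha > \xi$, queries membership in $E_\alpha$. One then sets up an $\calL^*_{\vec{E},\xi}$-theory whose unsatisfiability over size-$\kappa$ fragments exactly translates the failure of essentially-faint coherence below $\kappa$ and above $\xi$. Invoking the weak compactness cardinal $\kappa > \xi$ guaranteed by (2) then yields the desired witnesses $\xi < \alpha < \beta < \kappa$ and $A \in E_\beta$ with $A \cap \alpha \in E_\alpha$.

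For the implication (1) $\Rightarrow$ (2), let $\calL^*$ be an abstract logic and suppose toward a contradiction that it has no weak compactness cardinal. Then for every sufficiently large cardinal $\kappa$ there exists an unsatisfiable $\calL^*$-theory $T_\kappa$ of cardinality $\kappa$ all of whose subtheories of cardinality less than $\kappa$ are satisfiable. Using the axiom of choice, I would fix enumerations $T_\kappa = \Set{\varphi^\kappa_i}{i<\kappa}$ together with $\calL^*$-structures $M^\kappa_\alpha$ satisfying $\Set{\varphi^\kappa_i}{i<\alpha}$ for each $\alpha<\kappa$, encode each $M^\kappa_\alpha$ uniformly as a bounded subset of $\Ord$, and define a class sequence $\vec{E} = \seq{E_\alpha}{\alpha \in \Ord}$ so that $E_\alpha$ records the codes of the $M^\kappa_\alpha$ for $\kappa$ appropriately determined by $\alpha$. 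Essential faintness applied to $\vec{E}$ above a suitable threshold $\xi$ then yields $\alpha < \beta$ and $A \in E_\beta$ with $A \cap \alpha \in E_\alpha$; the intention is that the coherence witnessed by $A$, combined with the closure axioms of an abstract logic, allows one to extend the model coded by $A$ to a model of the entire theory $T_\kappa$ associated with $\beta$, contradicting its unsatisfiability.

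The main technical difficulty I expect lies in arranging the coding in (1) $\Rightarrow$ (2) so that the set-theoretic restriction $A \mapsto A \cap \alpha$ accurately mirrors a restriction operation on $\calL^*$-structures and their satisfaction relations, and with enough rigidity that the essentially-faint coherence $A \cap \alpha \in E_\alpha$ can be leveraged, via the abstract logic's closure properties, to produce a model of the full theory rather than merely of an initial segment. The corresponding machinery is presumably developed uniformly in Section \ref{section:ProofsCompactness} for the essentially-subtle version, so the modifications required here reduce to tracking thresholds $\xi$ in place of clubs $C$ and single cardinals in place of stationary classes, rather than introducing any genuinely new ingredient.
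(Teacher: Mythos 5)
Your overall architecture does not match the paper's, and the direct route you sketch has a genuine gap in the direction $(1)\Rightarrow(2)$. The paper does not translate between class sequences $\seq{E_\alpha}{\alpha\in\Ord}$ and $\calL$-theories directly; it factors both implications through a large cardinal notion: Theorem \ref{theorem:CharOrdFaint} shows that $\Ord$ is essentially faint if and only if for every $n$ there is a proper class of $C^{(n)}$-weakly shrewd cardinals, Lemma \ref{lemma:CnWeaklyShrewd}.\eqref{item:WeakCompactFromWShrewd3} shows such cardinals are weak compactness cardinals for any $\Sigma_n$-definable abstract logic, and Lemma \ref{lemma:CardinalFromLogic} manufactures, from a purported failure of weak shrewdness, an abstract logic whose weak compactness cardinals force a $C^{(n)}$-weakly shrewd cardinal to exist. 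Your expectation that the machinery ``reduces to tracking thresholds $\xi$ in place of clubs $C$'' is right about the uniformity (the paper proves Theorems \ref{MAIN:EssentiallySubtle} and \ref{MAIN:EssentiallyFaint} by the same two lemmas, switching $m=n+1$ to $m=0$ in the class $\Ce^{m,n}_\rho$), but wrong about what that machinery is.

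The concrete gap: in your $(1)\Rightarrow(2)$ argument you propose to ``encode each $M^\kappa_\alpha$ uniformly as a bounded subset of $\Ord$'' and place these codes in $E_\alpha\subseteq\POT{\alpha}$. Models of ${<}\kappa$-sized fragments of an abstract-logic theory have no cardinality bound whatsoever --- there is no downward L\"owenheim--Skolem theorem for abstract logics (by Stavi's Theorem \ref{theorem:Stavi}, the existence of L\"owenheim--Skolem--Tarski numbers for all abstract logics is already equivalent to Vop\v{e}nka's Principle) --- so this coding is impossible in general. Worse, even granting some coding, the step from a single coherent pair $\alpha<\beta$ with $A\in E_\beta$ and $A\cap\alpha\in E_\alpha$ to satisfiability of the \emph{entire} theory is precisely the missing idea: coherence of codes of models of fragments gives you, at best, another model of a fragment. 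What actually closes this loop in the paper is an elementary embedding $\map{j}{X}{\HH{\theta}}$ with $j(\bar{T})=T$ and $j\restriction(\calL(\bar{\tau})\cap X)=\id_{\calL(\bar{\tau})\cap X}$, so that $\bar{T}=j[\bar{T}]\subseteq T$ is a small (hence satisfiable) subtheory whose satisfiability transfers upward to $T$ by elementarity; establishing that $j$ fixes the sentences of the small language is itself delicate and consumes the renaming and occurrence-number axioms of abstract logics (Lemma \ref{lemma:CnWeaklyShrewd}.\eqref{item:WeakCompactFromWShrewd2}). The coherence in the faintness scheme enters only in producing such embeddings --- via the characterization of $C^{(n)}$-weakly shrewd cardinals in Lemma \ref{lemma:WcnShrewdChar}, where the sets being cohered are coded satisfaction predicates of elementary submodels of $\HH{\theta}$, not models of theory-fragments. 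A similar correctness issue affects your $(2)\Rightarrow(1)$ sketch: to read off $A\cap\alpha\in E_\alpha$ from an embedding between two of your structures, those structures must be elementary in a sufficiently $\Sigma_n$-correct $\HH{\vartheta}$ so that the definable class $\vec{E}$ reflects properly; ``transitive structures querying $E_\alpha$'' will not do.
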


 As  mentioned earlier, the proofs of the above theorems rely heavily on characterizations of the involved combinatorial class-principles through large cardinal assumptions. 
 This analysis, carried out in Sections \ref{section:CnStronglyUnfoldable} and \ref{section:CnWeaklyShrewd} below, can also be used to show that the   ability to generalize other structural properties of first-order logic to all abstract logics can also be characterized through the validity of the studied combinatorial principles for $\Ord$. 
 More specifically, motivated by an unpublished result of Stavi that provides an analog to Theorem \ref{theorem:Makowsky} for the existence of \emph{L\"owenheim--Skolem--Tarski numbers} for abstract logics (see Theorem \ref{theorem:Stavi} below), 
 we will prove analogs of Theorems \ref{MAIN:EssentiallySubtle} and \ref{MAIN:EssentiallyFaint}  
  that characterize the existence of \emph{strict L\"owenheim--Skolem--Tarski numbers}, studied by Bagaria and V\"a\"an\"anen in \cite{MR3519447}, through subtlety properties of $\Ord$ (see Theorem \ref{MAIN:StrictLST}).

 The results presented above naturally raise questions about the  relationship between the principle \anf{\emph{$\Ord$ is essentially subtle}} and the principle \anf{\emph{$\Ord$ is essentially faint}}. Trivially, the former principle implies the latter. However, it is not obvious whether these principles are equivalent.
 Somewhat surprisingly, the next result shows that the inequivalence of the two principles has consistency strength strictly greater than the consistency strength of the individual principles. 
 Moreover, this result  uses the inequivalence of the two principles to derive the existence of certain large cardinals not only in the constructible universe $\LL$, but also in the class $\HOD$ of all hereditarily ordinal definable sets.

\begin{theorem}\label{theorem:SubtleInHOD}
 If  $\Ord$ is essentially faint and only boundedly many cardinals are subtle in $\HOD$, then $\Ord$ is essentially subtle. 
\end{theorem}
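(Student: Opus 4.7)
The plan is to prove the theorem directly. Assume $\Ord$ is essentially faint and fix an ordinal $\mu$ such that no cardinal $>\mu$ is subtle in $\HOD$. To establish that $\Ord$ is essentially subtle, take arbitrary data: a class sequence $\langle E_\alpha:\alpha\in\Ord\rangle$ with $\emptyset\neq E_\alpha\subseteq\POT{\alpha}$ and a closed unbounded class $C\subseteq\Ord$. The task is to produce $\alpha<\beta$ in $C$ and some $A\in E_\beta$ with $A\cap\alpha\in E_\alpha$. The key strategic idea is to use the $\HOD$-witnesses to the failure of subtleness above $\mu$ as \emph{decoys}: I will feed essential faintness a combined sequence in which any reflection that does \emph{not} lie in $C$ would automatically refute the $\HOD$-non-subtleness of some cardinal, which is impossible.

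Using the definable well-ordering of $\HOD$, I assign to each cardinal $\kappa>\mu$ of $\HOD$ the $\HOD$-least pair $(\vec{A}^\kappa,D^\kappa)$ witnessing that $\kappa$ is not subtle in $\HOD$, where $\vec{A}^\kappa=\langle A^\kappa_\alpha:\alpha<\kappa\rangle\in\HOD$ with $A^\kappa_\alpha\subseteq\alpha$ and $D^\kappa\in\HOD$ is closed unbounded in $\kappa$; crucially, no $\alpha<\beta$ in $D^\kappa$ satisfy $A^\kappa_\beta\cap\alpha=A^\kappa_\alpha$. Using these data together with $\langle E_\alpha\rangle$ and $C$, I then construct a class sequence $\langle F_\gamma:\gamma\in\Ord\rangle$ in $V$ of nonempty $F_\gamma\subseteq\POT{\gamma}$ that amalgamates, via an ordinal-coded tagging scheme with the tag stored in a reserved low segment of $\gamma$ (so that the tag is preserved under $B\mapsto B\cap\gamma$), both the $V$-datum $E_\gamma$, made active only when $\gamma\in C$, and, for each cardinal $\kappa$ of $\HOD$ with $\kappa>\gamma$ and $\gamma\in D^\kappa$, the singleton $\{A^\kappa_\gamma\}$. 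A default element is added to ensure $F_\gamma\neq\emptyset$ at every level.

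Applying essential faintness of $\Ord$ to $\langle F_\gamma\rangle$ beyond $\mu$ produces $\mu<\gamma<\delta$ and $B\in F_\delta$ with $B\cap\gamma\in F_\gamma$, so that $B$ and $B\cap\gamma$ carry a common tag and hence arise from a common source. If that source is a $\kappa$-component, then $\gamma,\delta\in D^\kappa$ and $A^\kappa_\delta\cap\gamma=A^\kappa_\gamma$, contradicting the defining property of $(\vec{A}^\kappa,D^\kappa)$ inside $\HOD$; hence the source must be the $E$-component, which forces $\gamma,\delta\in C$ and yields some $A\in E_\delta$ with $A\cap\gamma\in E_\gamma$, exactly the required reflection.

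The main obstacle is the design of the tagging scheme for $\langle F_\gamma\rangle$. It must be $V$-definable, it must separate the $E$-source from each distinct $\kappa$-source in a way preserved by intersection with $\gamma$, and it must allow reconstruction of the specific $\HOD$-cardinal $\kappa$ from the reflection data even though $\kappa$ can strictly exceed $\gamma$ and so cannot literally be stored inside $\POT{\gamma}$. I expect to overcome this by exploiting that the assignment $\kappa\mapsto(\vec{A}^\kappa,D^\kappa)$ is uniformly $\HOD$-definable, so that the cardinal $\kappa$ is recoverable from $\delta$ together with a small type marker identifying the $\kappa$-index as a local rank within the bounded enumeration of relevant $\HOD$-cardinals at $\gamma$; alternatively, one can reduce to a single canonical $\kappa=\kappa(\gamma)$ per level, for instance the least $\HOD$-cardinal $\kappa>\gamma$ with $\gamma\in D^\kappa$, and argue that reflection on the resulting thinner sequence still forces $\kappa(\gamma)=\kappa(\delta)$ at the reflection pair.
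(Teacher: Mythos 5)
Your overall skeleton (force the reflection pair produced by essential faintness to land on the $E$-component by making every other component non-reflecting) is reasonable, but the implementation has a gap exactly at the point you flag as ``the main obstacle,'' and neither of your proposed fixes closes it. The decoy $\{A^\kappa_\gamma\}$ only yields a contradiction if the reflected pair $\gamma<\delta$ is tied to the \emph{same} $\kappa$: you need $\gamma,\delta\in D^\kappa$ and $A^\kappa_\delta\cap\gamma=A^\kappa_\gamma$ for one fixed $\kappa$. Since $\kappa>\delta>\gamma$, no tag stored in $\POT{\gamma}$ can name $\kappa$; the ``local rank'' of $\kappa$ among the relevant $\HOD$-cardinals is computed relative to the level and so need not agree at $\gamma$ and at $\delta$, and with the ``least $\kappa(\gamma)$'' reduction the coherence $B\cap\gamma=B'$ gives $A^{\kappa(\delta)}_\delta\cap\gamma=A^{\kappa(\gamma)}_\gamma$ with possibly $\kappa(\gamma)\neq\kappa(\delta)$, which contradicts nothing (even adding codes for $D^{\kappa}\cap\gamma$ and $\vec{A}^{\kappa}\restriction\gamma$ to the decoy only yields agreement of the two witnesses below $\gamma$, not identity of the two cardinals). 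A second, related gap is the unexamined ``default element'': at regular cardinals $\gamma\notin C$ that lie in no $D^\kappa$ you must place something, and it cannot in general be chosen both definably and non-reflectingly --- if it could, the $E$-versus-default dichotomy alone would prove \anf{$\Ord$ essentially faint} $\Rightarrow$ \anf{$\Ord$ essentially subtle} outright, contradicting Theorem~\ref{MAIN:SeparatePrinciples}. (Indeed, at a $C^{(n)}$-weakly shrewd cardinal $\gamma\notin C$ the argument for Theorem~\ref{theorem:CharOrdFaint} shows that any assignment definable from small parameters reflects below $\gamma$.) So the two problems cannot be traded off against each other: the defaults must be replaced by decoys, and the decoys lack the cross-level identification they need.

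For comparison, the paper does not argue combinatorially at all: it combines Theorems~\ref{theorem:OrdSubtle1} and \ref{theorem:CharOrdFaint} to extract, from the failure of essential subtlety, a proper class of $C^{(n)}$-weakly shrewd cardinals that are not $C^{(n)}$-strongly unfoldable, and then invokes Lemma~\ref{lemma:SubtleInHOD}. That lemma is where your identification problem gets solved, and it is solved by elementarity rather than by coding: one fixes a \emph{single} cardinal $\delta$ that is $\Sigma_{n+1}$-definable from a parameter $y\in\HH{\kappa}$, takes the $\HOD$-least witness $\langle\vec{A},C\rangle$ to the non-subtlety of $\delta$ (which is then also definable from $y$), and uses an embedding $\map{j}{X}{\HH{\theta}}$ with $j(\bar{\kappa})=\kappa$, $j(y)=y$, hence $j(\vec{A})=\vec{A}$ and $j(C)=C$. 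The equation $j(\vec{A})=\vec{A}$ is precisely the cross-level identification your tags cannot provide, and the minimality of $\delta$ is what forces $\kappa$ (and then $\bar\kappa$) into $C$. If you want a proof in the spirit of your proposal, you should route it through the large-cardinal characterizations in Sections~\ref{section:CnStronglyUnfoldable} and~\ref{section:CnWeaklyShrewd} rather than a direct amalgamation of class sequences.
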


In particular, in the unlikely case that the axioms of $\ZFC$ prove that there are only set-many subtle cardinals, the above theorem implies that these axioms also prove that the principle \anf{\emph{$\Ord$ is essentially subtle}} is equivalent to the principle \anf{\emph{$\Ord$ is essentially faint}}. 
Our later results will show that the converse of this implication also holds, {i.e.,} if the axioms of $\ZFC$ are consistent with the existence of a proper class of subtle cardinals, then the given principles are not provably equivalent (see Theorem \ref{MAIN:SeparatePrinciples} below).

Since the existence of a subtle cardinal clearly implies the existence of a set-sized model of $\ZFC$ in which $\Ord$ is essentially subtle, Theorem \ref{theorem:SubtleInHOD} directly yields the following equiconsistency:

\begin{corollary}\label{corollary:EquiconsEsubtleEfaint}
  The following theories are equiconsistent: 
     \begin{enumerate}
      \item $\ZFC + \anf{\textit{$\Ord$ is essentially subtle}}$. 

      \item $\ZFC + \anf{\textit{$\Ord$ is essentially faint}}$.  \qed 
     \end{enumerate} 
\end{corollary}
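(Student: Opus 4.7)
The plan is to prove the two directions of the equiconsistency separately.

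The direction $\mathrm{Con}((1))\Rightarrow\mathrm{Con}((2))$ is essentially immediate: the essentially subtle principle implies the essentially faint principle instance by instance. Indeed, given a class sequence $\seq{E_\alpha}{\alpha\in\Ord}$ of nonempty subsets of $\POT{\alpha}$ and an ordinal $\xi$, applying essential subtleness to this sequence together with the club $C=\Ord\setminus(\xi+1)$ yields the required $\xi<\alpha<\beta$ and $A\in E_\beta$ with $A\cap\alpha\in E_\alpha$. Hence every model of (1) is a model of (2), and the easy direction follows.

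For the converse $\mathrm{Con}((2))\Rightarrow\mathrm{Con}((1))$, the core ingredient is Theorem \ref{theorem:SubtleInHOD} together with the following auxiliary observation, which I would record as a short sub-lemma: if $N\models\ZFC$ and $\kappa$ is a subtle cardinal of $N$, then $\VV_\kappa^N\models\ZFC+\anf{\textit{$\Ord$ is essentially subtle}}$. The first conjunct is standard since subtle cardinals are strongly inaccessible. For an arbitrary instance of the scheme, any definable-in-$\VV_\kappa^N$ class sequence $\seq{E_\alpha}{\alpha<\kappa}$ of nonempty subsets of $\POT{\alpha}$ and any definable club $C\subseteq\kappa$ are genuine elements of $N$ by inaccessibility of $\kappa$; the Axiom of Choice in $N$ selects a sequence $\seq{A_\alpha}{\alpha<\kappa}$ with $A_\alpha\in E_\alpha$, and the subtleness of $\kappa$ in $N$ delivers ordinals $\alpha<\beta$ in $C$ with $A_\beta\cap\alpha=A_\alpha$, so $A:=A_\beta$ witnesses the desired instance.

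Given an arbitrary model $M\models\ZFC+\anf{\textit{$\Ord$ is essentially faint}}$, I then argue as follows. If $M$ already satisfies $\anf{\textit{$\Ord$ is essentially subtle}}$, there is nothing to do. Otherwise, the contrapositive of Theorem \ref{theorem:SubtleInHOD}, interpreted inside $M$, forces unboundedly many cardinals to be subtle in $\HOD^M$; in particular, $\HOD^M$ contains a subtle cardinal $\kappa$. Applying the sub-lemma inside $\HOD^M$ yields a set model $\VV_\kappa^{\HOD^M}\models\ZFC+\anf{\textit{$\Ord$ is essentially subtle}}$, so $M$ itself verifies $\mathrm{Con}((1))$, as required. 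The only mildly delicate point will be the scheme-nature of (1): to conclude $\mathrm{Con}((1))$ one needs each finite fragment to be consistent, but since the sub-lemma produces a single model realizing every instance of the scheme uniformly, every finite fragment is simultaneously satisfied.
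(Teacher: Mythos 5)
Your proposal is correct and follows exactly the paper's intended argument: the corollary is stated with a \qed because its proof is the remark immediately preceding it, namely that a subtle cardinal yields a set-sized model of $\ZFC+\anf{\textit{$\Ord$ is essentially subtle}}$ (your sub-lemma, which is the same rank-initial-segment argument the paper sketches in the introduction for deriving essential faintness from a proper class of subtle cardinals), combined with Theorem \ref{theorem:SubtleInHOD} applied in contrapositive form inside $\HOD$. Your attention to the scheme-theoretic bookkeeping at the end is appropriate and resolves the only delicate point.
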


Motivated by Theorem \ref{theorem:SubtleInHOD}, we now consider the possibility of separating the principle \anf{\emph{$\Ord$ is essentially faint}} from the principle \anf{\emph{$\Ord$ is essentially subtle}}. 
Our results will show that this question is closely related to questions about the existence of sentences in the language of set theory that imply all sentences appearing in the  given schemes of sentences. To motivate these results, we will first observe that, over $\ZFC$, the  principle \anf{\emph{$\Ord$ is essentially subtle}}  cannot be derived from a single consistent sentence. 
In Section \ref{section:CnStronglyUnfoldable}, we will show that this observation directly follows from the results of {\cite[Section 9]{Patterns}} that show that for every natural number $n$, the assumption that $\Ord$ is essentially subtle implies the existence of class-many strongly inaccessible cardinals $\kappa$ with the property that $\VV_\kappa$ is a $\Sigma_n$-elementary submodel of $\VV$.

 \begin{proposition}\label{proposition:OrdSubtleNotFinite}
  If $\phi$ is a sentence in the language of set theory with the property that $\ZFC+\phi$ is consistent, then $${\ZFC\mathbin{+}\phi}\mathbin{\not\vdash}\anf{\textit{$\Ord$ is essentially subtle}}.$$ 
 \end{proposition}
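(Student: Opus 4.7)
The plan is a standard G\"odelian diagonal argument, using the large cardinal consequences of essential subtlety that are advertised in the paragraph preceding the proposition. Suppose toward a contradiction that $\phi$ is a sentence in the language of set theory such that $\ZFC+\phi$ is consistent and $\ZFC+\phi\vdash\lanf\textit{$\Ord$ is essentially subtle}\ranf$.

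First, fix a natural number $n$ large enough that $\phi$ is equivalent to a $\Sigma_n$-sentence. The result from \cite[Section 9]{Patterns} quoted just before the proposition shows that, provably in $\ZFC$, essential subtlety of $\Ord$ yields class-many strongly inaccessible cardinals $\kappa$ satisfying $\VV_\kappa\prec_{\Sigma_n}\VV$. Combined with the hypothesis, this gives $\ZFC+\phi\vdash\lanf\text{there exists a strongly inaccessible }\kappa\text{ with }\VV_\kappa\prec_{\Sigma_n}\VV\ranf.$

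Next, reason inside an arbitrary model of $\ZFC+\phi$ and pick such a $\kappa$. Since $\kappa$ is strongly inaccessible, $\VV_\kappa$ is a transitive set model of $\ZFC$. Since $\phi$ is $\Sigma_n$ and $\VV\models\phi$, the $\Sigma_n$-elementarity of $\VV_\kappa$ in $\VV$ yields $\VV_\kappa\models\phi$. Hence $\VV_\kappa$ is a transitive set model of $\ZFC+\phi$, which witnesses $\mathrm{Con}(\ZFC+\phi)$ internally. It follows that $\ZFC+\phi\vdash\mathrm{Con}(\ZFC+\phi)$, contradicting G\"odel's second incompleteness theorem applied to the recursively axiomatized and (by assumption) consistent theory $\ZFC+\phi$.

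The only substantive ingredient is the reflection step in the first paragraph, namely the fact recorded in \cite[Section 9]{Patterns} that essential subtlety of $\Ord$ produces strongly inaccessible fixed points of the $\Sigma_n$-correctness hierarchy for every fixed $n$. Once this is in hand, the argument reduces to the familiar observation that no consistent recursive extension of $\ZFC$ can prove the existence of a transitive set model of itself. The main obstacle is therefore not in the present proposition at all, but in establishing the cited reflection consequence in Section \ref{section:CnStronglyUnfoldable}; that step will have to be stated and verified there before the above two-line deduction becomes available.
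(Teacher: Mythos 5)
Your proof is correct. It isolates the same key ingredient as the paper's proof, namely that combining Theorem \ref{theorem:OrdSubtle1} with Proposition \ref{proposition:BasicCnSunf}.\eqref{item:BasicCnSunf3} yields, in any model of ${\ZFC+\phi}$ proving the scheme \anf{\textit{$\Ord$ is essentially subtle}}, a strongly inaccessible $\kappa$ with $\VV_\kappa\models\phi$ (you obtain this via $\Sigma_n$-correctness for $n$ large enough that $\phi$ is $\Sigma_n$, which is exactly how the paper gets it). The two arguments then diverge in how the contradiction is extracted. You note that $\VV_\kappa$ is a transitive set model of $\ZFC+\phi$, conclude ${\ZFC+\phi}\vdash\mathrm{Con}(\ZFC+\phi)$, and invoke G\"odel's second incompleteness theorem. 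The paper instead takes $\kappa$ \emph{minimal} with $\VV_\kappa\models\phi$ and reruns the reflection argument inside $\VV_\kappa$ (legitimate because $\VV_\kappa$ satisfies every axiom of $\ZFC+\phi$ and hence every instance of the scheme, each of which is derivable from finitely many such axioms), producing an inaccessible $\mu<\kappa$ with $\VV_\mu\models\phi$ and contradicting minimality. Your route is the familiar \anf{no consistent recursively axiomatized extension of $\ZFC$ proves the existence of a transitive set model of itself} argument; it makes the metamathematical content explicit but requires the arithmetized consistency predicate and the formalized fact that $\ZFC$ proves \anf{\textit{if $\kappa$ is inaccessible then $\VV_\kappa\models\ZFC$}}. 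The paper's minimality argument sidesteps both G\"odel's theorem and the formalized satisfaction relation, working instance by instance externally. Both are complete proofs resting on the same reflection lemma from Section \ref{section:CnStronglyUnfoldable}.
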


 In contrast, assuming the consistency of sufficiently strong large cardinal assumptions, it is easy to find consistent sentences that provably imply that $\Ord$ is essentially faint. 
 As observed above, the statement that there is a proper class of subtle cardinals is an example of  a sentence with this property. 
 Our further analysis will yield additional examples of such sentences. In particular, we will produce two incompatible sentences with the given property (see Theorem \ref{MAIN:Equicons} below). 
 The following theorem now shows that the existence of such a consistent sentence is actually equivalent to the possibility of separating the two principles. Moreover, it shows that Theorem \ref{theorem:SubtleInHOD} already provides the correct consistency strength for the inequality of these principles.

\begin{theorem}\label{MAIN:SeparatePrinciples}
 The following statements are equivalent:
    \begin{enumerate}
          \item\label{item:Separate1} ${\ZFC\mathbin{+}\anf{\textit{$\Ord$ is essentially faint}}}\mathbin{\not\vdash}\anf{\textit{$\Ord$ is essentially subtle}}$. 
          
     \item\label{item:Separate2} There exists a sentence $\phi$ in the language of set theory such that the theory $\ZFC+\phi$ is consistent and $${\ZFC\mathbin{+}\phi}\mathbin{\vdash}\anf{\textit{$\Ord$ is essentially faint}}.$$

          \item\label{item:Separate3} The theory $$\ZFC+\anf{\textit{There is a proper class of subtle cardinals}}$$ is consistent. 
    \end{enumerate}
\end{theorem}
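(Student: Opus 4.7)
The plan is to prove the cycle $(\ref{item:Separate3}) \Rightarrow (\ref{item:Separate2}) \Rightarrow (\ref{item:Separate1}) \Rightarrow (\ref{item:Separate3})$, with the last implication being the main obstacle and the only one drawing on Theorem \ref{theorem:SubtleInHOD}. For $(\ref{item:Separate3}) \Rightarrow (\ref{item:Separate2})$, I simply take $\phi$ to be the single first-order sentence asserting that there is a proper class of subtle cardinals; its consistency is the content of (\ref{item:Separate3}), and the short argument given just after the definition of essential faintness in the introduction shows that $\phi$ proves \emph{$\Ord$ is essentially faint} over $\ZFC$.

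For $(\ref{item:Separate2}) \Rightarrow (\ref{item:Separate1})$, I apply Proposition \ref{proposition:OrdSubtleNotFinite} to the sentence $\phi$ given by (\ref{item:Separate2}) to produce some instance $\sigma$ of the scheme \emph{$\Ord$ is essentially subtle} that is not a theorem of $\ZFC + \phi$. By completeness, fix a model $M \models \ZFC + \phi + \lnot\sigma$. Since $\phi$ proves every instance of \emph{$\Ord$ is essentially faint} over $\ZFC$, the scheme \emph{$\Ord$ is essentially faint} holds in $M$ while the specific instance $\sigma$ of \emph{$\Ord$ is essentially subtle} fails in $M$. Consequently, $\ZFC$ together with the scheme \emph{$\Ord$ is essentially faint} does not prove $\sigma$, and therefore does not prove the scheme \emph{$\Ord$ is essentially subtle}.

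The main obstacle is the implication $(\ref{item:Separate1}) \Rightarrow (\ref{item:Separate3})$, which I argue by contraposition. Assume that $\ZFC$ together with the single sentence \emph{there is a proper class of subtle cardinals} is inconsistent. Then its negation, the single first-order sentence \emph{only boundedly many cardinals are subtle}, is a theorem of $\ZFC$; and since $\ZFC$ proves that $\HOD$ is a transitive class model of $\ZFC$, the relativization \emph{only boundedly many cardinals are subtle in $\HOD$} is also a theorem of $\ZFC$. Now Theorem \ref{theorem:SubtleInHOD} is itself a theorem-scheme: each instance of \emph{$\Ord$ is essentially subtle} is provable from $\ZFC$, the scheme \emph{$\Ord$ is essentially faint}, and the single sentence \emph{only boundedly many cardinals are subtle in $\HOD$}. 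Combining the two, $\ZFC$ together with \emph{$\Ord$ is essentially faint} proves every instance of \emph{$\Ord$ is essentially subtle}, contradicting (\ref{item:Separate1}).

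The delicate bookkeeping throughout is the scheme-versus-sentence distinction: Proposition \ref{proposition:OrdSubtleNotFinite} is essential because it rules out any single $\phi$ proving the full subtlety scheme, and the last implication succeeds precisely because the negation of \emph{there is a proper class of subtle cardinals} is a single first-order sentence, so that its $\HOD$-relativization can legitimately be added to the hypothesis side of the theorem-scheme supplied by Theorem \ref{theorem:SubtleInHOD}.
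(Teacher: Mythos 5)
Your proposal is correct and follows essentially the same route as the paper: the same cycle of implications, with Theorem \ref{theorem:SubtleInHOD} carrying the weight of \eqref{item:Separate1}$\Rightarrow$\eqref{item:Separate3}, Proposition \ref{proposition:OrdSubtleNotFinite} giving \eqref{item:Separate2}$\Rightarrow$\eqref{item:Separate1}, and the subtle-cardinals-imply-essential-faintness argument giving \eqref{item:Separate3}$\Rightarrow$\eqref{item:Separate2}. The only cosmetic difference is that you establish \eqref{item:Separate1}$\Rightarrow$\eqref{item:Separate3} by contraposition at the level of provability (relativizing the $\ZFC$-theorem \anf{\textit{only boundedly many cardinals are subtle}} to $\HOD$ and feeding it into the theorem-scheme reading of Theorem \ref{theorem:SubtleInHOD}), whereas the paper argues directly inside a model of $\ZFC$ plus \anf{\textit{$\Ord$ is essentially faint}} plus \anf{\textit{there are no $C^{(n)}$-strongly unfoldable cardinals}} and reads off a proper class of subtle cardinals in its $\HOD$.
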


Next, we consider the possibility of axiomatizing the validity of the given principles by single sentences. Proposition \ref{proposition:OrdSubtleNotFinite} already shows that this is not possible for the principle stating that $\Ord$ is essentially subtle. A short argument in Section \ref{section:Separate} will prove the following result that provides the same conclusion for the principle \anf{\emph{$\Ord$ is essentially faint}}.

\begin{corollary}\label{corollary:AxiomatizeEssFaint}
   If the theory $$\ZFC+\anf{\textit{$\Ord$ is essentially faint}}$$ is consistent, then no sentence $\phi$ in the language of set theory  satisfies the following statements: 
 \begin{enumerate}
 
  \item\label{item:AxiomatizeEssFaint1} ${\ZFC+{\anf{\textit{$\Ord$ is essentially faint}}}}\vdash\phi$. 
  
    \item\label{item:AxiomatizeEssFaint2} ${\ZFC+\phi}\vdash{\anf{\textit{$\Ord$ is essentially faint}}}$. 
 \end{enumerate}
\end{corollary}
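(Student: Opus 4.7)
The plan is to argue by contradiction, combining Theorem~\ref{theorem:SubtleInHOD} with Proposition~\ref{proposition:OrdSubtleNotFinite} and a direct construction of a model of $\ZFC+\anf{\textit{$\Ord$ is essentially faint}}$ satisfying the single sentence $\rho:=\anf{\textit{only boundedly many cardinals are subtle in $\HOD$}}$.

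Suppose $\phi$ satisfies both (1) and (2). Together these express that $\phi$ and the scheme $\anf{\textit{$\Ord$ is essentially faint}}$ are mutually deducible over $\ZFC$, so the theories $\ZFC+\phi$ and $\ZFC+\anf{\textit{$\Ord$ is essentially faint}}$ are deductively equivalent; the consistency hypothesis of the corollary thus makes $\ZFC+\phi$ consistent. Theorem~\ref{theorem:SubtleInHOD} gives $\ZFC+\anf{\textit{$\Ord$ is essentially faint}}+\rho\vdash\anf{\textit{$\Ord$ is essentially subtle}}$, so this equivalence also yields $\ZFC+(\phi\wedge\rho)\vdash\anf{\textit{$\Ord$ is essentially subtle}}$. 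As $\phi\wedge\rho$ is a single sentence in the language of set theory, Proposition~\ref{proposition:OrdSubtleNotFinite} forces $\ZFC+(\phi\wedge\rho)$ to be inconsistent, and hence $\ZFC+\anf{\textit{$\Ord$ is essentially faint}}\vdash\neg\rho$.

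To derive the contradiction, fix any model $M\models\ZFC+\anf{\textit{$\Ord$ is essentially faint}}$. If $M\models\rho$, we are already done. Otherwise $\HOD^{M}$ contains unboundedly many cardinals that are subtle in $\HOD^{M}$, so let $\kappa$ be the least such. Since every subset of any $\kappa'<\kappa$ belonging to $\HOD^{M}$ has rank below $\kappa$ and so lies in $\VV_\kappa^{\HOD^{M}}$, the subtleness of a cardinal $\kappa'<\kappa$ is the same notion in $\HOD^{M}$ as in $\VV_\kappa^{\HOD^{M}}$; by the minimality of $\kappa$, the latter model has no subtle cardinals and thus satisfies $\rho$, and the inaccessibility of $\kappa$ in $\HOD^{M}$ makes $\VV_\kappa^{\HOD^{M}}\models\ZFC$. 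Given any definable class sequence $\seq{E_\alpha}{\alpha<\kappa}$ of nonempty subsets of ordinals in $\VV_\kappa^{\HOD^{M}}$ and any ordinal $\xi<\kappa$, the Axiom of Choice in $\HOD^{M}$ produces a sequence $\seq{A_\alpha}{\alpha<\kappa}$ with $A_\alpha\in E_\alpha$ for each $\alpha$; applying subtleness of $\kappa$ in $\HOD^{M}$ to this sequence and the club $(\xi,\kappa)$ supplies the pair $\alpha<\beta$ required by the corresponding instance of $\anf{\textit{$\Ord$ is essentially faint}}$ in $\VV_\kappa^{\HOD^{M}}$. The resulting model $\VV_\kappa^{\HOD^{M}}\models\ZFC+\anf{\textit{$\Ord$ is essentially faint}}+\rho$ contradicts $\ZFC+\anf{\textit{$\Ord$ is essentially faint}}\vdash\neg\rho$, so no such $\phi$ can exist. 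The main obstacle in this plan is the last verification: recognising that definable class sequences of $\VV_\kappa^{\HOD^{M}}$ correspond, after choice in $\HOD^{M}$, to the set-sized sequences in $\HOD^{M}$ to which $\kappa$'s subtleness applies; the remaining steps are short formal manipulations.
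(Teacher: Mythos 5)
Your overall strategy is the same as the paper's: both arguments reduce the corollary to Theorem \ref{theorem:SubtleInHOD} together with Proposition \ref{proposition:OrdSubtleNotFinite}, and both ultimately hinge on exhibiting a model of $\ZFC$ in which the faintness (equivalently, by the hypotheses on $\phi$, the subtlety) scheme holds while only boundedly many cardinals are subtle in $\HOD$. Your formal manipulations up to the conclusion $\ZFC+\anf{\textit{$\Ord$ is essentially faint}}\vdash\neg\rho$ are correct.

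The gap is in the final model construction, at the step where you assert that $\VV_\kappa^{\HOD^M}$ \anf{has no subtle cardinals and thus satisfies $\rho$}. The minimality of $\kappa$ does give that no $\mu<\kappa$ is subtle \emph{in the model $\VV_\kappa^{\HOD^M}$ itself} (your absoluteness argument for that is fine), but $\rho$ asserts that only boundedly many cardinals are subtle \emph{in the $\HOD$ of that model}, i.e.\ in $\HOD^{\VV_\kappa^{\HOD^M}}$, and subtlety in an inner model is a strictly weaker requirement, since there are fewer sequences and clubs to accommodate. So \anf{no subtle cardinals} does not yield \anf{no subtle cardinals in $\HOD$}; this non-implication is precisely the phenomenon the whole section is built around --- Lemma \ref{lemma:SubtleInHOD} and Theorem \ref{MAIN:Equicons} produce models with no strongly inaccessible (hence no subtle) cardinals whose $\HOD$ nevertheless contains a proper class of subtle cardinals. (Incidentally, you flag the translation between definable class sequences and set-sized sequences as the main obstacle; that part is routine --- the verification of $\rho$ is the real one.) The repair is to descend to a set-sized model satisfying $\VV=\HOD$, where the two notions coincide: since $\kappa$ is subtle in $\HOD^M$, it is subtle in $\LL^M$ by downward absoluteness to inner models, and if $\mu$ is the least subtle cardinal of $\LL^M$, then $\LL_\mu$ models $\ZFC+{\VV=\LL}+\anf{\textit{$\Ord$ is essentially subtle}}+\anf{\textit{no ordinal is subtle in $\HOD$}}$, hence $\ZFC+\anf{\textit{$\Ord$ is essentially faint}}+\rho$. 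This is in effect what the paper's proof does when it invokes the fact that subtle cardinals yield set-sized models of $\ZFC+\anf{\textit{$\Ord$ is essentially subtle}}$; with this substitution the rest of your argument goes through.
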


 We now consider the question whether the validity of the scheme {\anf{\emph{$\Ord$ is essentially subtle}}} can be finitely axiomatized over the theory $\ZFC+\anf{\textit{$\Ord$ is essentially faint}}$. 
 The following observation, proven in  Section \ref{section:Separate}, motivates the   formulation of the subsequent theorem:

 \begin{proposition}\label{proposition:Difference}
  There exists a theory $\mathrm{T}$ in the language of set theory such that the following statements hold: 
    \begin{enumerate}
    \item ${\ZFC\mathbin{+}{\anf{\textit{$\Ord$ is essentially subtle}}}}\mathbin{\vdash}\mathrm{T}$. 
        
    \item ${\ZFC\mathbin{+}{\anf{\textit{$\Ord$ is essentially faint}}}\mathbin{+}\mathrm{T}}\mathbin{\vdash}\anf{\textit{$\Ord$ is essentially subtle}}$. 
    
        \item For every sentence $\phi$ in $\mathrm{T}$, we have $${\ZFC\mathbin{+}\neg\phi}\mathbin{\vdash}\anf{\textit{$\Ord$ is essentially faint}}.$$
   \end{enumerate}
 \end{proposition}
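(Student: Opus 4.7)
The plan is to exhibit $\mathrm{T}$ by exploiting the dichotomy provided by Theorem~\ref{theorem:SubtleInHOD}. Let $\tau$ denote the single first-order sentence \anf{\emph{only boundedly many cardinals are subtle in $\HOD$}}, and, for each instance $\psi$ of the scheme \anf{\emph{$\Ord$ is essentially subtle}}, let $\phi_\psi$ be the sentence $\tau\vee\psi$. I propose to take
\[
 \mathrm{T}\ :=\ \Set{\phi_\psi}{\psi\text{ is an instance of \anf{\emph{$\Ord$ is essentially subtle}}}}.
\]

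Conditions (1) and (2) should then be verified quickly. For (1), $\ZFC$ together with \anf{\emph{$\Ord$ is essentially subtle}} proves each instance $\psi$, hence each $\phi_\psi = \tau \vee \psi$. For (2), I would work in a model of $\ZFC + \anf{\emph{$\Ord$ is essentially faint}} + \mathrm{T}$ and split on $\tau$: if $\tau$ holds, then Theorem~\ref{theorem:SubtleInHOD} yields \anf{\emph{$\Ord$ is essentially subtle}} directly; otherwise $\neg\tau$ holds, so every $\phi_\psi\in\mathrm{T}$ collapses to $\psi$ and, ranging over all instances, the whole scheme \anf{\emph{$\Ord$ is essentially subtle}} follows.

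The main obstacle is condition (3). For each instance $\psi$, we must prove that $\ZFC + \neg\phi_\psi$ proves \anf{\emph{$\Ord$ is essentially faint}}, where the single sentence $\neg\phi_\psi$ is the conjunction of $\neg\tau$ (a proper class of $\HOD$-subtle cardinals) and $\neg\psi$ (an explicit counterexample consisting of a class sequence $\seq{E_\alpha}{\alpha\in\Ord}$ and a closed unbounded class $C$ witnessing the failure of essential subtleness at $\psi$). The plan for this step is to convert the counterexample from $\neg\psi$ into a canonical parameter which allows a given essential-faintness instance $(\seq{F_\alpha}{\alpha\in\Ord},\xi)$ in $V$ to be absorbed into a $\HOD$-definable configuration: fix a $\HOD$-subtle cardinal $\kappa>\xi$, use the witnessing counterexample to define inside $\HOD$ an auxiliary sequence that encodes enough information about $\seq{F_\alpha}{\alpha\in\Ord}$ to extract the desired ordinals $\xi<\alpha<\beta$ and witness $A$ by applying subtleness of $\kappa$ inside $\HOD$.

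I expect the coding step above to be the most delicate point, since arbitrary $V$-class sequences need not be $\HOD$-definable. If the naive implementation of $\mathrm{T}$ turns out to be insufficient for (3), the plan is to refine $\mathrm{T}$ by enlarging the disjunction defining $\phi_\psi$: replacing $\tau$ by a stronger auxiliary sentence, itself a consequence of essential subtleness, chosen so that its negation together with $\neg\psi$ provably implies each instance of essential faintness. This controlled refinement should preserve the easy verifications of (1) and (2) while giving condition (3) the extra strength it needs.
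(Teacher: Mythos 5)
Your choice of $\mathrm{T}$ handles conditions (1) and (2) correctly: (1) is immediate, and for (2) the case split on $\tau$ together with Theorem \ref{theorem:SubtleInHOD} does yield every instance of \anf{\emph{$\Ord$ is essentially subtle}}. The proposal breaks down at condition (3), and the breakdown is not a technicality that a coding step can repair. For a fixed instance $\psi$, the sentence $\neg\phi_\psi$ gives you $\neg\tau$ (a proper class of cardinals that are subtle in $\HOD$) together with $\neg\psi$ (one failed instance of essential subtleness). Neither piece, nor their conjunction, supplies large cardinals \emph{in $\VV$}: by Theorem \ref{theorem:CharOrdFaint}, essential faintness is equivalent to the existence, for every $n$, of a proper class of $C^{(n)}$-weakly shrewd cardinals in $\VV$, and such cardinals are weakly Mahlo in $\VV$. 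A cardinal that is subtle in $\HOD$ need not even be regular in $\VV$, and an arbitrary class sequence $\seq{E_\alpha}{\alpha\in\Ord}$ arising in an instance of essential faintness in $\VV$ has no reason to admit a $\HOD$-definable selector to which $\HOD$-subtlety could be applied; the paper's transfer (Theorem \ref{theorem:SubtleInHOD}, Corollary \ref{corollary:LargeInHOD}) runs only from $\VV$ to $\HOD$, not back. The extra hypothesis $\neg\psi$ is purely negative information (the existence of one bad sequence and club) and does not produce the needed $\VV$-large cardinals either. Your closing paragraph acknowledges the difficulty and proposes to replace $\tau$ by a stronger auxiliary sentence, but that replacement is exactly the missing mathematical content of the proposition.

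The paper takes a genuinely different $\mathrm{T}$: the sentences $\phi_n$ asserting that the existence of a proper class of $C^{(n)}$-weakly shrewd cardinals implies the existence of a proper class of $C^{(n)}$-strongly unfoldable cardinals. There, (1) and (2) follow from Theorems \ref{theorem:OrdSubtle1} and \ref{theorem:CharOrdFaint}, and (3) works because $\neg\phi_m$ itself asserts a \emph{positive} existence statement about $\VV$ --- a proper class of $C^{(m)}$-weakly shrewd cardinals that are not $C^{(m)}$-strongly unfoldable --- which Lemma \ref{lemma:separatePropertiesCn} together with the overspill Lemma \ref{lemma:Overspil} amplifies to a proper class of $C^{(n)}$-weakly shrewd cardinals for every $n$, whence essential faintness follows. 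Nothing in your proposal plays the role of that amplification step.
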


 It is now natural to ask whether the theory $\mathrm{T}$ appearing in the above proposition can be replaced by a single sentence. The next result, also proven in Section \ref{section:Separate}, shows that the answer to this question depends on the consistency of the class-version of the large cardinal property of being a subtle limit of subtle cardinals:

 \begin{theorem}\label{theorem:DiffSentence}
   The following statements are equivalent:
   \begin{enumerate}
    \item\label{item:Diff1} There is no sentence $\phi$ in the language of set theory satisfying the following statements: 
     \begin{enumerate}
     \item\label{item:Diff1.1} ${\ZFC\mathbin{+}{\anf{\textit{$\Ord$ is essentially subtle}}}}\mathbin{\vdash}\phi$. 
        
    \item\label{item:Diff1.2} ${\ZFC\mathbin{+}{\anf{\textit{$\Ord$ is essentially faint}}}\mathbin{+}\phi}\mathbin{\vdash}\anf{\textit{$\Ord$ is essentially subtle}}$. 
    
        \item\label{item:Diff1.3} ${\ZFC\mathbin{+}\neg\phi}\mathbin{\vdash}\anf{\textit{$\Ord$ is essentially faint}}$. 
     \end{enumerate}
    
    \item\label{item:Diff2} There is no sentence $\phi$ in the language of set theory satisfying the above statements \eqref{item:Diff1.1} and \eqref{item:Diff1.2}.
    
    \item\label{item:Diff3} The theory $$\ZFC+\anf{\textit{There is a proper class of subtle cardinals}}+\anf{\textit{$\Ord$ is essentially subtle}}$$ is consistent. 
   \end{enumerate}
 \end{theorem}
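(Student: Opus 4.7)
The plan is to prove the three-way equivalence via the cycle $\eqref{item:Diff2} \Rightarrow \eqref{item:Diff1} \Rightarrow \eqref{item:Diff3} \Rightarrow \eqref{item:Diff2}$. The implication $\eqref{item:Diff2} \Rightarrow \eqref{item:Diff1}$ is immediate, since ruling out every sentence that satisfies just the two conditions \eqref{item:Diff1.1} and \eqref{item:Diff1.2} is a strictly stronger prohibition than ruling out every sentence that satisfies all three of \eqref{item:Diff1.1}, \eqref{item:Diff1.2}, and \eqref{item:Diff1.3}. For the first nontrivial implication $\eqref{item:Diff3} \Rightarrow \eqref{item:Diff2}$, I would assume \eqref{item:Diff3} and argue by contradiction that some single sentence $\phi$ satisfies \eqref{item:Diff1.1} and \eqref{item:Diff1.2}. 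Fix a model of $\ZFC$ together with $\anf{\textit{there is a proper class of subtle cardinals}}$ and $\anf{\textit{$\Ord$ is essentially subtle}}$ provided by \eqref{item:Diff3}; this model satisfies $\phi$ by \eqref{item:Diff1.1}, so the single sentence $\chi := \phi \wedge \anf{\textit{there is a proper class of subtle cardinals}}$ is consistent with $\ZFC$, and Proposition~\ref{proposition:OrdSubtleNotFinite} yields that $\ZFC + \chi$ does not prove $\anf{\textit{$\Ord$ is essentially subtle}}$. On the other hand, the argument reproduced in the excerpt shows that the existence of a proper class of subtle cardinals already implies that $\Ord$ is essentially faint, and hence $\ZFC + \chi$ proves both $\phi$ and $\anf{\textit{$\Ord$ is essentially faint}}$; condition \eqref{item:Diff1.2} then forces $\ZFC + \chi \vdash \anf{\textit{$\Ord$ is essentially subtle}}$, contradicting the previous conclusion.

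For the remaining implication $\eqref{item:Diff1} \Rightarrow \eqref{item:Diff3}$ I would argue contrapositively: assume that $\ZFC + \anf{\textit{$\Ord$ is essentially subtle}} + \anf{\textit{there is a proper class of subtle cardinals}}$ is inconsistent, and produce a single sentence $\phi$ satisfying all three of \eqref{item:Diff1.1}, \eqref{item:Diff1.2}, and \eqref{item:Diff1.3}. Motivated by Theorem~\ref{theorem:SubtleInHOD}, my candidate will be $\phi := \anf{\textit{only boundedly many cardinals are subtle in } \HOD}$. With this choice, condition \eqref{item:Diff1.2} is exactly the conclusion of Theorem~\ref{theorem:SubtleInHOD}. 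For condition \eqref{item:Diff1.1}, the plan is a reflection argument: since $\anf{\textit{$\Ord$ is essentially subtle}}$ is downwards absolute to $\HOD$, any model of this scheme has $\HOD \models \anf{\textit{$\Ord$ is essentially subtle}}$, and the assumed inconsistency statement, being provability-absolute, is internally witnessed inside $\HOD$ by the same G\"{o}del number; internal soundness of $\ZFC$ would then make $\HOD \models \anf{\textit{$\Ord$ is essentially subtle}} + \anf{\textit{proper class of subtle cardinals}}$ collapse to inconsistency, and so $\HOD$ must instead have only boundedly many subtle cardinals, which externally is precisely~$\phi$.

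The hardest step will be \eqref{item:Diff1.3}: showing that the existence of unboundedly many cardinals subtle in $\HOD$ already implies that $\Ord$ is essentially faint in $V$. My plan is to adapt the excerpt's argument that a proper class of subtle cardinals implies $\Ord$ is essentially faint. Given a class sequence $\seq{E_\alpha}{\alpha \in \Ord}$ with $\emptyset \neq E_\alpha \subseteq \POT{\alpha}$ and an ordinal $\xi$, I would pick a cardinal $\kappa > \xi$ that is subtle inside $\HOD$, use the Axiom of Choice in $V$ to select $A_\alpha \in E_\alpha$ for $\alpha < \kappa$, and then apply the internal subtleness of $\kappa$ to a suitable $\HOD$-coding of this sequence in order to extract ordinals $\xi < \alpha < \beta < \kappa$ and a set $A \in E_\beta$ with $A \cap \alpha \in E_\alpha$. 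The delicate point, which will require inspecting the construction in the proof of Theorem~\ref{theorem:SubtleInHOD}, is producing a $\HOD$-sequence that faithfully records the $V$-chosen data well enough for the $\HOD$-subtleness of $\kappa$ to deliver a coherent pair that still witnesses the essentially faint property in $V$; this transfer from $\HOD$-internal witnesses to external $V$-witnesses is where the main combinatorial work will lie.
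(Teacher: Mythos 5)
Your implication $\eqref{item:Diff2}\Rightarrow\eqref{item:Diff1}$ and your argument for $\eqref{item:Diff3}\Rightarrow\eqref{item:Diff2}$ are both correct and essentially match the paper. The genuine gap is in $\eqref{item:Diff1}\Rightarrow\eqref{item:Diff3}$, specifically in condition \eqref{item:Diff1.3} for your candidate $\phi=\anf{\textit{only boundedly many cardinals are subtle in $\HOD$}}$. You would need $\ZFC+\anf{\textit{unboundedly many cardinals are subtle in $\HOD$}}\vdash\anf{\textit{$\Ord$ is essentially faint}}$, and this is not just hard but consistently false (granting $\mathrm{Con}(\ZFC+\anf{\textit{proper class of subtle cardinals}})$, which is exactly the non-vacuous case). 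Subtlety of $\kappa$ in $\HOD$ only applies to sequences and clubs lying in $\HOD$; a class sequence $\seq{E_\alpha}{\alpha\in\Ord}$ in $\VV$ may be defined from a non-ordinal-definable parameter, and there is no $\HOD$-coding of a choice sequence $\seq{A_\alpha}{\alpha<\kappa}$ available to the $\HOD$-subtleness of $\kappa$. Concretely: start from a model of $\VV=\LL$ with a proper class of subtle cardinals and force with a definable, weakly homogeneous Easton-support product of collapses that turns every $\LL$-inaccessible above some fixed $\gamma_0$ into a non-limit cardinal. Then $\HOD$ of the extension equals $\LL$ and still has a proper class of subtle cardinals, so $\neg\phi$ holds; but the extension has no weakly Mahlo cardinals above $\gamma_0$, hence (since weakly shrewd cardinals are weakly Mahlo) no $C^{(n)}$-weakly shrewd cardinals above $\gamma_0$, and Theorem \ref{theorem:CharOrdFaint} shows it is not essentially faint. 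So no amount of combinatorial work at the step you flagged as delicate will close this; the implication itself fails.

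The paper avoids this by proving $\eqref{item:Diff1}\Rightarrow\eqref{item:Diff2}$ instead of $\eqref{item:Diff1}\Rightarrow\eqref{item:Diff3}$: given \emph{any} sentence $\phi$ satisfying \eqref{item:Diff1.1} and \eqref{item:Diff1.2}, it invokes Proposition \ref{proposition:Difference} to get the theory $\mathrm{T}$, extracts a finite fragment $\mathrm{F}\subseteq\mathrm{T}$ with $\ZFC+\anf{\textit{$\Ord$ is essentially faint}}+\mathrm{F}\vdash\phi$, and takes $\psi$ to be the disjunction of $\phi$ with the conjunction of $\mathrm{F}$. Condition \eqref{item:Diff1.3} for $\psi$ then comes from the third clause of Proposition \ref{proposition:Difference} (ultimately Lemma \ref{lemma:Overspil}): the failure of any member of $\mathrm{T}$ already implies that $\Ord$ is essentially faint. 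This disjunction mechanism is the missing ingredient in your proposal; if you insist on routing through $\eqref{item:Diff1}\Rightarrow\eqref{item:Diff3}$, you would still have to first obtain a sentence with \eqref{item:Diff1.1} and \eqref{item:Diff1.2} from $\neg\eqref{item:Diff3}$ (your $\HOD$ sentence does this, as in the paper) and then upgrade it by exactly this construction. A minor further point: your justification of \eqref{item:Diff1.1} via \anf{internal soundness of $\ZFC$} is misstated --- no such soundness is available --- but the argument is repairable, since any proof of \anf{\textit{only boundedly many subtle cardinals}} from $\ZFC$ plus finitely many instances of \anf{\textit{$\Ord$ is essentially subtle}} relativizes to $\HOD$, and both $\ZFC$ and the scheme are downwards absolute to $\HOD$.
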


 The final topic that will be treated in this paper is the relationship between the existence of weak compactness cardinals for abstract logics and the existence of strongly inaccessible cardinals. 
 By combining Theorem \ref{theorem:Makowsky} with the results of {\cite[Section 4]{zbMATH06029795}}, it is easy to see that the existence of strong compactness cardinals for all abstract logics implies the existence of a proper class of strongly inaccessible cardinals. Moreover, a combination of Theorem \ref{MAIN:EssentiallySubtle} with results in \cite[Section 9]{Patterns} (see Theorem \ref{theorem:OrdSubtle1} below) shows that the same conclusion follows from the assumption that every abstract logic has a stationary class of weak compactness cardinals. 
 In contrast, it turns out that the existence of weak compactness cardinals for abstract logics is compatible with the non-existence of strongly inaccessible cardinals. The corresponding arguments will also allow us to show that there can be incompatible sentences witnessing \eqref{item:Separate2} in Theorem \ref{MAIN:SeparatePrinciples}.

 \begin{theorem}\label{MAIN:Equicons}
  The following statements are equivalent:
     \begin{enumerate}
      \item\label{item:EquiconsNonInacc1} The theory $$\ZFC ~ + ~  \anf{\textit{There is a proper class of subtle cardinals}}$$ is consistent. 

      \item\label{item:EquiconsNonInacc2} The theory $$\ZFC + \anf{\textit{$\Ord$ is essentially faint}} + \anf{\textit{There are no strongly inaccessible cardinals}}$$ is consistent. 
      
      \item\label{item:EquiconsNonInacc3} There is a sentence $\phi$ in the language of set theory such that  $\ZFC+\phi$ is consistent and $${\ZFC\mathbin{+}\phi}\mathbin{\vdash}{\anf{\textit{$\Ord$ is essentially faint}}\mathbin{+}\anf{\textit{There are no strongly inaccessible cardinals}}}.$$ 
     \end{enumerate} 
   \end{theorem}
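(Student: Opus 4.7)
The plan is to establish the cycle $(3)\Rightarrow(2)\Rightarrow(1)\Rightarrow(3)$. The implication $(3)\Rightarrow(2)$ is immediate from the definitions. For $(2)\Rightarrow(1)$, fix a model $M$ of the theory in $(2)$. Since the introduction records (via the results of \cite[Section 9]{Patterns}) that the principle \anf{\emph{$\Ord$ is essentially subtle}} implies the existence of a proper class of strongly inaccessible cardinals, the hypothesis of $(2)$ forces $M\not\models\anf{\emph{$\Ord$ is essentially subtle}}$. Applying the contrapositive of Theorem~\ref{theorem:SubtleInHOD} inside $M$ yields that unboundedly many cardinals are subtle in $\HOD^M$, so $\HOD^M\models\ZFC+\anf{\textit{There is a proper class of subtle cardinals}}$, establishing $(1)$.

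For the substantive implication $(1)\Rightarrow(3)$, the plan is to begin with a model $\VV$ of $\ZFC$ containing a proper class of subtle cardinals and produce a class-generic extension $\VV[G]$ in which every strongly inaccessible cardinal of $\VV$ loses its inaccessibility, while a proper class of subtle cardinals survives in $\HOD^{\VV[G]}$. A natural candidate is a tame, Easton-supported definable class forcing $\mathbb{P}$ that, at each strongly inaccessible $\kappa$, adjoins sufficiently many Cohen subsets of a fixed $\lambda<\kappa$ so that $2^\lambda\ge\kappa$ in the extension, thereby destroying strong inaccessibility of $\kappa$ without creating new strong inaccessibles. The sentence $\phi$ witnessing $(3)$ is then a single first-order formula capturing the structural setup of the extension: it asserts the definable relationship of $\VV$ to $\HOD$, the first-order statement inside $\HOD$ asserting the existence of a proper class of subtle cardinals, and the absence of strongly inaccessible cardinals in $\VV$. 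From $\phi$, each instance of the scheme \anf{\emph{$\Ord$ is essentially faint}} is then deduced by a uniform argument: given a definable class sequence $\seq{E_\alpha}{\alpha\in\Ord}$ with parameter $p$ and an ordinal $\xi$, one pulls $p$ back to a $\HOD$-name, uses the $\HOD$-wellorder to select canonical representatives $A_\alpha$ inside $\HOD$, chooses a subtle cardinal $\kappa>\xi$ of $\HOD$ above the rank of the involved names, and applies subtleness in $\HOD$ to obtain ordinals $\alpha<\beta$ with $A_\beta\cap\alpha=A_\alpha$.

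The main obstacle is the careful orchestration required for the construction to satisfy simultaneously: (i) the class forcing $\mathbb{P}$ is tame enough that $\ZFC$ is preserved in the class-generic extension; (ii) every strongly inaccessible cardinal is destroyed and no new ones created; (iii) a proper class of subtle cardinals survives in $\HOD$ of the extension so that the reflection argument goes through; and (iv) the entire structural configuration is captured by a single first-order sentence $\phi$ from which every instance of the scheme \anf{\emph{$\Ord$ is essentially faint}} is derivable. Requirement~(iv) is particularly delicate in view of Corollary~\ref{corollary:AxiomatizeEssFaint}, which forbids $\phi$ from being equivalent to the scheme; the sentence $\phi$ must therefore be strictly stronger, and the natural route of encoding the $\HOD$-wellorder together with the first-order statement asserting subtleness of class-many $\HOD$-cardinals requires careful verification that a uniform derivation yields each instance of the scheme. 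As a byproduct, the construction is expected to yield two incompatible sentences witnessing $(3)$ (with the sentence \anf{\textit{There is a proper class of subtle cardinals}} providing the opposite extreme, implying \anf{\emph{$\Ord$ is essentially faint}} in the presence of many strong inaccessibles), confirming the remark in the introduction preceding the statement of Theorem~\ref{MAIN:Equicons}.
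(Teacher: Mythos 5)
Your treatment of \eqref{item:EquiconsNonInacc3}$\Rightarrow$\eqref{item:EquiconsNonInacc2} and of \eqref{item:EquiconsNonInacc2}$\Rightarrow$\eqref{item:EquiconsNonInacc1} matches the paper: the latter is exactly the combination of Theorem \ref{theorem:OrdSubtle1} (to see that \anf{\emph{$\Ord$ is essentially subtle}} fails when there are no inaccessibles) with Theorem \ref{theorem:SubtleInHOD}. The problem is the core implication \eqref{item:EquiconsNonInacc1}$\Rightarrow$\eqref{item:EquiconsNonInacc3}, where your plan has a genuine gap at the decisive step. You propose that the witnessing sentence $\phi$ should assert, roughly, that $\HOD$ has a proper class of subtle cardinals together with some structural facts about $\VV$ over $\HOD$, and that each instance of \anf{\emph{$\Ord$ is essentially faint}} then follows by choosing \anf{canonical representatives $A_\alpha$ inside $\HOD$} and applying subtlety in $\HOD$. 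This does not work as described: a class sequence $\seq{E_\alpha}{\alpha\in\Ord}$ witnessing a potential failure of essential faintness lives in $\VV$, and the families $E_\alpha\subseteq\POT{\alpha}$ may consist entirely of sets that are not in $\HOD$, so there are no representatives in $\HOD$ to feed into its subtlety. Passing to names does not repair this: coherence of a sequence of names at a subtle cardinal of $\HOD$ does not yield $A\cap\alpha\in E_\alpha$ for the evaluated sets unless the generic can be aligned with the reflection, and arranging that alignment is precisely the hard content one cannot wave away. Indeed, the whole point of the paper's distinction between \anf{$\Ord$ is subtle} and \anf{$\Ord$ is essentially subtle} (and of Corollary \ref{corollary:SublteNotEssentially}) is that subtlety-type coherence for \emph{chosen} representatives does not transfer to the \anf{essential} versions without a definable selection, which is exactly what is unavailable here. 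No result in the paper transfers a proper class of subtle cardinals \emph{in $\HOD$} upward to essential faintness \emph{in $\VV$}, and you do not supply one.

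The paper's actual route is different and avoids this transfer problem entirely. Starting from a model of $\ZFC+{\VV=\LL}$ with a proper class of subtle cardinals in which no inaccessible is a limit of subtle cardinals, it forces with the Easton support product of the posets $\Add{R(\delta)}{\delta}$ for $\delta$ subtle, where $R(\delta)=\sup(S\cap\delta)^+$; this precise calibration (not a generic \anf{enough Cohen subsets of some fixed $\lambda<\kappa$}) is what guarantees that \emph{every} inaccessible is killed, including potential inaccessible limits of active coordinates. It then proves, by a delicate lifting argument using an automorphism of $\Add{R(\delta)}{\delta}$ induced by a permutation extending $j\restriction(X\cap\delta)$, that a proper class of \emph{weakly shrewd} cardinals survives in $\VV[G]$. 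The witnessing sentence is $\phi=\anf{\textit{there are no strongly inaccessible cardinals and there is a proper class of weakly shrewd cardinals}}$; from $\phi$ one gets a proper class of weakly shrewd non--strongly-unfoldable cardinals, Lemma \ref{lemma:Overspil} boosts this to a proper class of $C^{(n)}$-weakly shrewd cardinals for every $n$, and Theorem \ref{theorem:CharOrdFaint} then yields the scheme. To complete your proof you would either need to carry out an analogous preservation-and-overspill argument, or prove a new transfer theorem from subtlety of $\HOD$ to essential faintness of $\VV$, which the sketch does not do.
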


 While Theorem \ref{MAIN:Equicons} shows that the existence of weak compactness cardinals for all abstract logics does not imply the existence of strongly inaccessible cardinals, our analysis allows us to conclude that the given assumption at least implies the existence of strongly inaccessible cardinals in the inner model $\HOD$:

\begin{theorem}\label{theorem:LargeInHODintro}
 If $\Ord$ is essentially faint, then unboundedly many ordinals are strongly inaccessible cardinals in $\HOD$. 
\end{theorem}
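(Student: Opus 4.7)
The plan is to argue by contradiction. Suppose $\Ord$ is essentially faint but only boundedly many ordinals are strongly inaccessible in $\HOD$, and fix an ordinal $\xi$ bounding them. Since every subtle cardinal is strongly inaccessible, only boundedly many cardinals are then subtle in $\HOD$, so Theorem \ref{theorem:SubtleInHOD} implies that $\Ord$ is essentially subtle in $V$. The remaining task is to transfer this conclusion down to $\HOD$ and then to invoke Theorem \ref{theorem:OrdSubtle1} (the cited result from \cite[Section 9]{Patterns}) internally in $\HOD$ to produce a proper class of strongly inaccessible cardinals there, contradicting the assumption.

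To transfer the scheme that $\Ord$ is essentially subtle from $V$ to $\HOD$, I would first establish that $\Ord$ is subtle in $\HOD$. Given any class sequence $\seq{A_\alpha}{\alpha\in\Ord}$ in $\HOD$ with $A_\alpha\subseteq\alpha$ and any closed unbounded class $C\subseteq\Ord$ in $\HOD$, set $E_\alpha=\{A_\alpha\}$ for each $\alpha\in\Ord$. Because $\HOD$ is definable as a class of $V$, both $\seq{E_\alpha}{\alpha\in\Ord}$ and $C$ are definable in $V$; moreover, closedness and unboundedness of a class of ordinals are absolute between $\HOD$ and $V$, so $C$ is a club class in $V$ as well. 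Applying the scheme that $\Ord$ is essentially subtle in $V$ to this sequence and this club yields ordinals $\alpha<\beta$ in $C$ and $A\in E_\beta$ with $A\cap\alpha\in E_\alpha$, and the choice of the $E_\alpha$ forces $A_\beta\cap\alpha=A_\alpha$. Since $\HOD$ carries a definable well-ordering, the observation recorded after the definition of \emph{$\Ord$ is essentially subtle}, namely that the principles of being subtle and being essentially subtle coincide in the presence of a definable well-ordering, then upgrades this to the scheme \emph{$\Ord$ is essentially subtle} holding in $\HOD$.

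An application of Theorem \ref{theorem:OrdSubtle1} internally in $\HOD$ now produces a proper class of strongly inaccessible cardinals in $\HOD$, contradicting the initial assumption. The main technical point is the transfer step in the previous paragraph: one must verify that every class sequence and every club class definable in $\HOD$ remains such in $V$, which rests on $\HOD$ being a definable class of $V$ together with the absoluteness of suprema (and hence of the notions of closedness and unboundedness) of classes of ordinals between $\HOD$ and $V$.
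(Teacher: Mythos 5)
Your proposal is correct and follows essentially the same route as the paper's proof (given as Corollary \ref{corollary:LargeInHOD}): both rest on Theorem \ref{theorem:SubtleInHOD} together with the downward transfer of essential subtlety to $\HOD$ and an application of Theorem \ref{theorem:OrdSubtle1} inside $\HOD$. The only difference is organizational — the paper splits into the cases where $\Ord$ is or is not essentially subtle (using in the latter case that subtle cardinals are already inaccessible in $\HOD$), whereas you fold both cases into a single argument by contradiction — and your detailed verification of the transfer step is sound.
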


 Note that the proof of this result will actually show that strongly inaccessible cardinals with certain strong combinatorial properties exist in $\HOD$ (see Corollary \ref{corollary:LargeInHOD} below).


\section{$C^{(n)}$-strongly unfoldable cardinals}\label{section:CnStronglyUnfoldable}

In this section, we review the large cardinal characterization of the principle \anf{\emph{$\Ord$ is essentially subtle}} derived in {\cite[Section 9]{Patterns}}. 
 The starting point of these results is the notion of \emph{strongly unfoldable cardinal} introduced by  Villaveces in \cite{MR1649079}. Villaveces defined an inaccessible cardinal $\kappa$ to be \emph{strongly unfoldable} if for every ordinal $\lambda$ and every transitive $\ZF^-$-model $M$ of cardinality $\kappa$ with $\kappa\in M$ and ${}^{{<}\kappa}M\subseteq M$, there is a transitive set $N$ with $\VV_\lambda\subseteq N$ and an elementary embedding $\map{j}{M}{N}$ with $\crit{j}=\kappa$ and $j(\kappa)\geq\lambda$. 
In  \cite{Patterns}, Bagaria and the author introduced natural strengthenings of this definition that demand the existence of embeddings into models with stronger correctness properties. 
 Following \cite{zbMATH06029795}, for every natural number $n$, we let $C^{(n)}$ denote the class of all \emph{$\Sigma_n$-correct} ordinals, {i.e.,} the class of all ordinals $\alpha$ with the property that $\VV_\alpha$ is a $\Sigma_n$-elementary submodel of the set-theoretic universe $\VV$, denoted by $\VV_\alpha\prec_{\Sigma_n}\VV$. 
 It is then easy to show that for each natural number $n$, the class  $C^{(n)}$ is a closed  unbounded class of ordinals that is definable by a $\Pi_n$-formula without parameters. Moreover, short arguments show that $C^{(0)}$ is the class of all ordinals and $C^{(1)}$ is the class of all cardinals $\kappa$ satisfying $\HH{\kappa}=\VV_\kappa$.  In particular, there is a sentence $\varphi$ in the language of set theory with the property that $C^{(1)}$ is equal to the class of all ordinals $\alpha$ with the property that $\varphi$ holds in $\VV_\alpha$.

\begin{definition}[\cite{Patterns}]
 Given a natural number $n$, an inaccessible cardinal $\kappa$ is \emph{$C^{(n)}$-strongly unfoldable} if for every ordinal $\lambda\in C^{(n)}$ greater than $\kappa$ and every transitive $\mathrm{ZF}^-$-model $M$ of cardinality $\kappa$ with $\kappa\in M$ and ${}^{{<}\kappa}M\subseteq M$, there is a transitive set $N$ with $\VV_\lambda\subseteq N$ and an elementary embedding $\map{j}{M}{N}$ with $\crit{j}=\kappa$,  $j(\kappa)>\lambda$ and $\VV_\lambda\prec_{\Sigma_n}\VV_{j(\kappa)}^N$. 
\end{definition}

The following basic properties of these large cardinal notions were established in {\cite[Section 9]{Patterns}}:

\begin{proposition}[\cite{Patterns}]\label{proposition:BasicCnSunf}
 \begin{enumerate}
     \item A cardinal is strongly unfoldable if and only if it is $C^{(n)}$-strongly unfoldable for some natural number $n<2$. 
     
     \item Given natural numbers $m<n$, every $C^{(n)}$-strongly unfoldable cardinal is $C^{(m)}$-strongly unfoldable. 

     \item\label{item:BasicCnSunf3} For every natural number $n$, all $C^{(n)}$-strongly unfoldable cardinals are elements of $C^{(n+1)}$. 
 \end{enumerate}
\end{proposition}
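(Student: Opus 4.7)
The plan is to treat the three clauses in order, with essentially all of the work going into part (3). For part (1), I would unpack the definition of $C^{(0)}$-strong unfoldability and note that the clause $V_\lambda\prec_{\Sigma_0}V_{j(\kappa)}^N$ is vacuous, since $\Delta_0$-formulas are absolute between transitive structures; the residual mismatch between the strict inequality $j(\kappa)>\lambda$ in the $C^{(0)}$-version and the non-strict $j(\kappa)\geq\lambda$ appearing in Villaveces's original definition is absorbed by replacing $\lambda$ with $\lambda+1$. For part (2), given $\lambda\in C^{(m)}$ above $\kappa$ and a suitable $M$, I would pick $\lambda'\in C^{(n)}$ with $\lambda'\geq\lambda$ using the unboundedness of $C^{(n)}$, apply $C^{(n)}$-strong unfoldability at $\lambda'$ to obtain $j\colon M\to N$ with $V_{\lambda'}\subseteq N$, $j(\kappa)>\lambda'$ and $V_{\lambda'}\prec_{\Sigma_n}V_{j(\kappa)}^N$, and then combine $V_\lambda\prec_{\Sigma_m}V_{\lambda'}$ (which holds because both ordinals belong to $C^{(m)}\supseteq C^{(n)}$) with the $\Sigma_m$-elementarity inherited from $V_{\lambda'}\prec_{\Sigma_n}V_{j(\kappa)}^N$ to derive the required $V_\lambda\prec_{\Sigma_m}V_{j(\kappa)}^N$.

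Part (3) would be proven by induction on $n$. The base case $n=0$ amounts to the standard observation that every strongly inaccessible cardinal satisfies $V_\kappa=\HH{\kappa}$ and consequently belongs to $C^{(1)}$. For the inductive step, fix a $C^{(n+1)}$-strongly unfoldable cardinal $\kappa$; part (2) together with the inductive hypothesis immediately delivers $\kappa\in C^{(n+1)}$, so it remains to promote this to $\kappa\in C^{(n+2)}$. I would verify the Tarski--Vaught criterion for a $\Sigma_{n+2}$-formula $\exists y\,\psi(x,y)$ with $\psi$ of complexity $\Pi_{n+1}$: given a parameter $a\in V_\kappa$ with $V\models\exists y\,\psi(a,y)$, I choose $\lambda\in C^{(n+1)}$ above $\kappa$ large enough that $V_\lambda$ contains a witness, and a transitive $\ZF^-$-model $M$ of cardinality $\kappa$ with $\kappa,V_\kappa\in M$ and ${}^{{<}\kappa}M\subseteq M$. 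Applying $C^{(n+1)}$-strong unfoldability yields an elementary $j\colon M\to N$ with $V_\lambda\subseteq N$, $j(\kappa)>\lambda$ and $V_\lambda\prec_{\Sigma_{n+1}}V_{j(\kappa)}^N$. The $\Pi_{n+1}$-absoluteness supplied by $\lambda\in C^{(n+1)}$ lifts a witness into $V_\lambda$; the elementarity $V_\lambda\prec_{\Sigma_{n+1}}V_{j(\kappa)}^N$ transports the existential statement into $V_{j(\kappa)}^N=j(V_\kappa)$; pulling back along $j$ together with the identity $V_\kappa^M=V_\kappa$ produces a witness $b'\in V_\kappa$; and $\kappa\in C^{(n+1)}$ pushes the resulting $\Pi_{n+1}$-statement $\psi(a,b')$ back up to $V$, completing the reflection.

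The main technical obstacle will be the careful bookkeeping in the inductive step of part (3). One has to verify the auxiliary absoluteness identities $V_\kappa^M=V_\kappa$ and $j(V_\kappa)=V_{j(\kappa)}^N$ (which ultimately rest on the closure properties of $M$ under ${<}\kappa$-sequences and on elementarity of $j$) and to track precisely which of the several $\Sigma_{n+1}$- or $\Pi_{n+1}$-absoluteness assertions is being invoked at each transfer between the structures $V$, $V_\lambda$, $V_{j(\kappa)}^N$, $V_\kappa^M$ and $V_\kappa$. Once this bookkeeping is organised correctly, the induction proceeds cleanly and yields the conclusion $\kappa\in C^{(n+1)}$ for every $C^{(n)}$-strongly unfoldable cardinal $\kappa$.
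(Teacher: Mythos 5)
The paper does not actually prove this proposition: it is imported verbatim from \cite{Patterns} (Section 9 there) and stated without argument, so there is no in-paper proof to compare yours against. Judged on its own terms, your reconstruction is correct and is essentially the standard argument one would expect. Part (1): the $\Sigma_0$-clause is indeed vacuous for transitive sets and the shift from $j(\kappa)\geq\lambda$ to $j(\kappa)>\lambda$ is absorbed by passing to $\lambda+1$; note only that the literal statement also covers $n=1$, whose backward direction you get for free from part (2), and that the (unstated but intended) stronger fact that strong unfoldability already yields $C^{(1)}$-strong unfoldability follows just as cheaply, since $\Sigma_1$-statements are upwards absolute from the transitive set $\VV_{j(\kappa)}^N$ to $\VV$ and hence reflect to $\VV_\lambda$ whenever $\lambda\in C^{(1)}$. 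Part (2) is a routine composition of $\Sigma_m$-elementary inclusions and is fine. Part (3) is where the content lies, and your induction with the Tarski--Vaught verification for $\Sigma_{n+2}$-formulas is sound: the chain $\VV\to\VV_\lambda\to\VV_{j(\kappa)}^N=j(\VV_\kappa)\to\VV_\kappa^M=\VV_\kappa$ transfers the witness correctly, using $\Pi_{n+1}$-absoluteness at $\lambda\in C^{(n+1)}$, the hypothesis $\VV_\lambda\prec_{\Sigma_{n+1}}\VV_{j(\kappa)}^N$, and elementarity of $j$ together with $j\restriction\VV_\kappa=\id_{\VV_\kappa}$. One small remark: once you have $b'\in\VV_\kappa$ with $\VV_\kappa\models\psi(a,b')$, the downward half of the Tarski--Vaught test is already complete; the final step of pushing $\psi(a,b')$ back up to $\VV$ via $\kappa\in C^{(n+1)}$ is what handles the (easy) upward half, so it is worth separating those two roles when you write this out.
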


In  \cite{MR1369172}, Rathjen defined a cardinal  $\kappa$ to be  \emph{shrewd} if for every formula $\Phi(v_0,v_1)$ in the language  of set theory, every ordinal $\gamma>\kappa$ and every  $A\subseteq\VV_\kappa$ with the property that $\Phi(A,\kappa)$ holds in $\VV_\gamma$, there exist ordinals $\alpha<\beta<\kappa$ such that $\Phi(A\cap\VV_\alpha,\alpha)$ holds in $\VV_\beta$. The results of \cite{luecke2021strong} then show  that  the notions of shrewdness and strong unfoldability coincide. The proof of this equivalence relies on an embedding  characterization of shrewdness in \cite{SRminus} that closely resembles Magidor's classical characterization of supercompactness in \cite{MR295904}. The following result establishes analogous equivalences for the notion of $C^{(n)}$-strong unfoldability (see {\cite[Theorem 9.4]{Patterns}}):

\begin{lemma}[\cite{Patterns}]\label{lemma:SigmaNsunf}
 Given a natural number $n>0$, the following statements are equivalent for every cardinal $\kappa$: 
 \begin{enumerate}
     \item\label{item:sunf} The cardinal $\kappa$ is $C^{(n)}$-strongly unfoldable. 
     
     \item\label{item:shrewd} For every $\calL_\in$-formula $\varphi(v_0,v_1)$, every ordinal $\gamma\in C^{(n)}$ greater than $\kappa$, and every subset $A$ of $V_\kappa$ with the property that $\varphi(\kappa,A)$ holds in $V_\gamma$, there exist ordinals $\alpha<\beta<\kappa$ with $\beta\in C^{(n)}$ and the property that $\varphi(\alpha,A\cap V_\alpha)$ holds in $V_\beta$. 
     
    \item\label{item:Magidor} For every ordinal $\gamma\in C^{(n)}$ greater than $\kappa$ and every $z\in V_\gamma$ there exists an ordinal $\bar{\gamma}\in C^{(n)}\cap\kappa$, a cardinal $\bar{\kappa}<\bar{\gamma}$, an elementary submodel $X$ of $V_{\bar{\gamma}}$ with $V_{\bar{\kappa}}\cup\{\bar{\kappa}\}\subseteq X$, and an elementary embedding $\map{j}{X}{V_\gamma}$ with $j\restriction\bar{\kappa}=\id_{\bar{\kappa}}$, $j(\bar{\kappa})=\kappa$ and $z\in\ran{j}$.  
 \end{enumerate}
\end{lemma}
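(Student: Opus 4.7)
The plan is to establish the cycle \eqref{item:sunf} $\Rightarrow$ \eqref{item:shrewd} $\Rightarrow$ \eqref{item:Magidor} $\Rightarrow$ \eqref{item:sunf}, mirroring the corresponding proofs for shrewdness and strong unfoldability in \cite{luecke2021strong,SRminus} and for supercompactness in \cite{MR295904}. The central technical fact driving all three implications is that $C^{(n)}$ is definable by a $\Pi_n$-formula without parameters, so that membership in $C^{(n)}$ is preserved and reflected along any $\Sigma_n$-elementary inclusion of transitive models.

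For \eqref{item:sunf} $\Rightarrow$ \eqref{item:shrewd}, given $\varphi$, $\gamma \in C^{(n)}$ above $\kappa$, and $A \subseteq V_\kappa$ with $V_\gamma \models \varphi(\kappa, A)$, I would pick $\lambda \in C^{(n)}$ above $\gamma$, build a transitive $\ZF^-$-model $M$ of size $\kappa$ with $V_\kappa \cup \{A\} \subseteq M$ and ${}^{{<}\kappa}M \subseteq M$, and apply \eqref{item:sunf} to obtain $\map{j}{M}{N}$ with $V_\lambda \subseteq N$ and $V_\lambda \prec_{\Sigma_n} V_{j(\kappa)}^N$. Since $j \restriction V_\kappa = \id_{V_\kappa}$, we have $j(A) \cap V_\kappa = A$; the chain $V_\gamma \prec_{\Sigma_n} V_\lambda \prec_{\Sigma_n} V_{j(\kappa)}^N$ together with the $\Pi_n$-definability of $C^{(n)}$ then shows that, inside $V_{j(\kappa)}^N$, the pair $(\kappa, \gamma)$ witnesses the existence of $\alpha < \beta$ with $\beta \in C^{(n)}$ and $V_\beta \models \varphi(\alpha, j(A) \cap V_\alpha)$. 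Pulling this back through the elementarity of $j$ yields the required $\alpha < \beta < \kappa$ in $V$.

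For \eqref{item:shrewd} $\Rightarrow$ \eqref{item:Magidor}, given $\gamma \in C^{(n)}$ and $z \in V_\gamma$, I would form an elementary substructure $X_0 \prec V_\gamma$ of size $\kappa$ with $V_\kappa \cup \{\kappa, z\} \subseteq X_0$, Mostowski-collapse it, and encode the inverse collapse (together with the preimage of $z$) as a subset $A \subseteq V_\kappa$. Then $V_\gamma$ satisfies $\varphi(\kappa, A)$ for a suitable formula $\varphi$ asserting the existence of such a coded elementary embedding into $V_\gamma$ whose image contains both $\kappa$ and $z$. Applying \eqref{item:shrewd} produces $\alpha < \beta < \kappa$ with $\beta \in C^{(n)}$ and $V_\beta \models \varphi(\alpha, A \cap V_\alpha)$; decoding inside $V_\beta$ yields $\bar\kappa = \alpha$, $\bar\gamma = \beta$, an elementary submodel $X \prec V_{\bar\gamma}$, and an elementary embedding $\map{j}{X}{V_\gamma}$ with the required properties.

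For \eqref{item:Magidor} $\Rightarrow$ \eqref{item:sunf}, given $M$ and $\lambda \in C^{(n)}$ above $\kappa$, I would pick $\gamma \in C^{(n)}$ with $M, V_\lambda \in V_\gamma$ large enough that a strong unfoldability witness for $(M,\lambda)$ can be reconstructed inside $V_\gamma$, and apply \eqref{item:Magidor} with $z = \langle M, V_\lambda\rangle$ to obtain $\map{j}{X}{V_\gamma}$ with $j(\bar\kappa) = \kappa$ and $M, V_\lambda \in \ran{j}$. The preimages $\bar M = j^{-1}(M)$ and $\bar\lambda = j^{-1}(\lambda)$ then satisfy, by elementarity, all the closure properties required by \eqref{item:sunf} at level $\bar\kappa$, and $j \restriction \bar M$ is an elementary embedding of $\bar M$ into $M$ with critical point $\bar\kappa$; a standard factor/lift argument inside $V_\gamma$ then produces the desired $\map{j'}{M}{N}$ with $V_\lambda \subseteq N$ and $j'(\kappa) > \lambda$. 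The main obstacle, which I expect to require the most care, is arranging the $\Sigma_n$-correctness clause $V_\lambda \prec_{\Sigma_n} V_{j'(\kappa)}^N$ in the lifted embedding; this uses critically that $\gamma, \lambda \in C^{(n)}$ and that the $\Pi_n$-definability of $C^{(n)}$ propagates the correctness property from $V_\gamma$ through $j$ and then through the lift to the final target $N$.
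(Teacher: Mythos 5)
The paper does not actually prove this lemma; it imports it from {\cite[Theorem 9.4]{Patterns}}, and the surrounding text points out that already for $n\leq 1$ the equivalence of \eqref{item:sunf} with \eqref{item:shrewd} is the main theorem of \cite{luecke2021strong}. Judged on its own merits, your cycle has two gaps. The smaller one is in \eqref{item:shrewd}$\Rightarrow$\eqref{item:Magidor}: you evaluate $\varphi(\kappa,A)$ in $V_\gamma$ while asking $\varphi$ to assert the existence of an elementary embedding \emph{into $V_\gamma$ itself}; by Tarski's undefinability of truth this is not a first-order property over $V_\gamma$, and the same obstruction recurs on the reflected side when you set $\bar\gamma=\beta$ and need \anf{$X\prec V_{\bar\gamma}$} to be something $V_\beta$ asserts about itself. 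The standard repair (used in the paper's own proof of the $\HH{\theta}$-analogue, Lemma \ref{lemma:WcnShrewdChar}) is to reflect from a level of $C^{(n)}$ strictly above $\gamma$ a statement asserting the existence of some $\gamma'$ with $V_{\gamma'}\prec_{\Sigma_n}$ the ambient universe together with a normalized enumeration $b$ of an elementary submodel $Y$ of $V_{\gamma'}$ whose coded elementary diagram is exactly $A$; the reflected instance then yields $\bar\gamma$ \emph{strictly below} $\beta<\kappa$ (which is exactly what clause \eqref{item:Magidor} requires) and $j=b\circ a^{-1}$ is elementary into $V_\gamma$ because the two diagrams agree. This is fixable, but as written the step does not go through.

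The serious gap is \eqref{item:Magidor}$\Rightarrow$\eqref{item:sunf}. From $\map{j}{X}{V_\gamma}$ with $j(\bar{M})=M$ and $j(\bar{\lambda})=\lambda$ you get $\map{j\restriction\bar{M}}{\bar{M}}{M}$ with critical point $\bar{\kappa}$, and the pair $(M,\,j\restriction\bar{M})$ does witness the unfoldability statement for $(\bar{\kappa},\bar{M},\bar{\lambda})$ --- but this witness lives in $V_\gamma$, not in $V_{\bar{\gamma}}$ (indeed $M$ has rank above $\kappa>\bar{\gamma}$). Elementarity of $j$ transfers statements from $X\prec V_{\bar{\gamma}}$ \emph{up} to $V_\gamma$, so to conclude that $V_\gamma$ satisfies the unfoldability statement for $(\kappa,M,\lambda)$ you would need $V_{\bar{\gamma}}$ to satisfy it for $(\bar{\kappa},\bar{M},\bar{\lambda})$, which is precisely what you do not have; no factoring or lifting of $j\restriction\bar{M}$ produces an embedding whose \emph{domain} is the $\kappa$-sized model $M$ and whose target is a transitive $N\supseteq V_\lambda$ with $j'(\kappa)>\lambda$ and $V_\lambda\prec_{\Sigma_n}V_{j'(\kappa)}^N$. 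Constructing that target is the actual content of the lemma and is why shrewd $=$ strongly unfoldable is a theorem of \cite{luecke2021strong} rather than an observation. What \emph{does} follow immediately from \eqref{item:Magidor} is \eqref{item:shrewd}, by pulling $\varphi(\kappa,A)$ \emph{down} through $j$ exactly as in the second half of the proof of Lemma \ref{lemma:WcnShrewdChar}; so your cycle is oriented so that its one easy edge is replaced by its hardest one, and the hard implication is left unproved.
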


Analogous to the characterization of Vop\v{e}nka's Principle through the existence of $C^{(n)}$-extendible cardinals provided by the results of {\cite[Section 4]{zbMATH06029795}}, it  turns out that the existence $C^{(n)}$-strongly unfoldable cardinals for all natural numbers $n$ characterizes the validity of the principle \anf{\emph{$\Ord$ is essentially subtle}} (see {\cite[Theorem 1.17]{Patterns}}).

\begin{theorem}[\cite{Patterns}]\label{theorem:OrdSubtle1}
 The following schemes of axioms are equivalent over $\ZFC$: 
 \begin{enumerate}
  \item $\Ord$ is essentially subtle. 

  \item For every natural number $n$, there exists a $C^{(n)}$-strongly unfoldable cardinal. 
  
  \item For every natural number $n$, there exists a proper class of $C^{(n)}$-strongly unfoldable cardinals. 
    
 \end{enumerate}
\end{theorem}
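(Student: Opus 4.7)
The plan is to close the cycle $(3)\Rightarrow(2)\Rightarrow(1)\Rightarrow(3)$; the first implication is immediate, so the substance lies in the other two, each of which is a reflection argument built on Lemma \ref{lemma:SigmaNsunf}.

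For $(2)\Rightarrow(1)$, I fix $\Sigma_n$-formulas $\psi$ and $\chi$ defining an instance of essential subtlety: the sequence $\seq{E_\alpha}{\alpha\in\Ord}$ with $\emptyset\neq E_\alpha\subseteq\POT{\alpha}$ and a closed unbounded class $C$. Using (2), pick a $C^{(n)}$-strongly unfoldable cardinal $\kappa$. Proposition \ref{proposition:BasicCnSunf}(\ref{item:BasicCnSunf3}) places $\kappa$ in $C^{(n+1)}$, so the $\Pi_{n+1}$-statement expressing unboundedness of $C$ reflects to $V_\kappa$ and $C\cap\kappa$ is cofinal in $\kappa$; closure of $C$ then forces $\kappa\in C$. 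Pick any $A\in E_\kappa$ and apply Lemma \ref{lemma:SigmaNsunf}(\ref{item:shrewd}) to the formula $\varphi(v_0,v_1)\equiv \chi(v_0)\wedge\psi(v_0,v_1)$: since $V_\gamma\models\varphi(\kappa,A)$ for any $\gamma\in C^{(n)}$ above $\kappa$, the lemma supplies $\alpha<\beta<\kappa$ with $\beta\in C^{(n)}$ and $V_\beta\models\varphi(\alpha,A\cap V_\alpha)$. Because $A\subseteq\kappa$ gives $A\cap V_\alpha=A\cap\alpha$, and because $\beta\in C^{(n)}$ transfers $\Sigma_n$-truths to $V$, the pair $(\alpha,\kappa)$ in $C$ realizes the required instance of essential subtlety.

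For $(1)\Rightarrow(3)$, fix $n$ and an ordinal $\xi$ and argue by contradiction, supposing no cardinal above $\xi$ is $C^{(n)}$-strongly unfoldable. By Lemma \ref{lemma:SigmaNsunf}(\ref{item:shrewd}), each inaccessible $\alpha>\xi$ admits a \emph{witness}: a formula $\varphi$ and a set $B\subseteq V_\alpha$ such that some $\gamma\in C^{(n)}$ above $\alpha$ satisfies $V_\gamma\models\varphi(\alpha,B)$, yet no $\alpha'<\beta<\alpha$ with $\beta\in C^{(n)}$ has $V_\beta\models\varphi(\alpha',B\cap V_{\alpha'})$. I would build a definable class sequence $\seq{E_\alpha}{\alpha\in\Ord}$ in which $E_\alpha$ collects subsets of $\alpha$ that encode such witnesses through a \emph{rank-coherent} scheme, meaning the code of $(\alpha,B)$ restricted below $\alpha'$ is exactly the code of $(\alpha',B\cap V_{\alpha'})$; small or non-inaccessible $\alpha$ receive a distinguished non-empty initial fragment, so that $E_\alpha$ stays non-empty and so that the pair produced by essential subtlety necessarily lies in the inaccessible case. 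With $C:=C^{(n+1)}\setminus(\xi+1)$, a closed unbounded class, essential subtlety hands me $\alpha<\beta$ in $C$ together with $A\in E_\beta$ and $A\cap\alpha\in E_\alpha$; rank-coherence forces both codes to refer to the same $\varphi$ and to satisfy the restriction relation on $B$. The $\Sigma_{n+1}$-assertion ``$\exists \gamma\in C^{(n)}$ with $\gamma>\alpha$ and $V_\gamma\models\varphi(\alpha,B\cap V_\alpha)$'', which holds in $V$ by the witness at $\alpha$, reflects via $\beta\in C^{(n+1)}$ into $V_\beta$ and supplies such a $\gamma$ with $\alpha<\gamma<\beta$. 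This is exactly what the universal clause of the witness at $\beta$ forbids, a contradiction.

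The principal obstacle is making the rank-coherent encoding from $\POT{V_\alpha}$ into $\POT{\alpha}$ \emph{definable uniformly in} $\alpha$, since $\ZFC$ alone does not furnish a global well-ordering. I plan to sidestep this by first establishing the equivalent variant of Lemma \ref{lemma:SigmaNsunf}(\ref{item:shrewd}) in which the witnessing parameter $A$ is required to lie in $\POT{\alpha}$ rather than $\POT{V_\alpha}$, by using the same embedding arguments from \cite{SRminus} underlying the shrewdness characterization. With this variant in hand, $E_\alpha$ is defined directly as a definable family of subsets of $\alpha$ with $\alpha$ as its only parameter, no serialization of $V_\alpha$ is needed, and the reflection contradiction above carries through cleanly.
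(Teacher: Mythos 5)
The paper does not actually prove this theorem; it is imported verbatim from {\cite[Theorem 1.17]{Patterns}}, so there is no in-paper argument to compare against and your proposal has to be judged on its own terms. Your cycle has one genuine gap, in the step $(2)\Rightarrow(1)$: you ignore the parameters of the class instance. By the paper's conventions, the class sequence $\seq{E_\alpha}{\alpha\in\Ord}$ and the club class $C$ are definable from formulas \emph{together with a set parameter} $y$, and Lemma \ref{lemma:SigmaNsunf}.\eqref{item:shrewd} only accepts a parameter-free formula $\varphi(v_0,v_1)$ whose single set argument is $A\subseteq \VV_\kappa$; so $y$ must be absorbed into $A$, which requires $y$ to lie in $\VV_{\kappa+1}$. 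Scheme (2) hands you just one $C^{(n)}$-strongly unfoldable cardinal per $n$, with no control over its position relative to $y$, so your argument only covers instances whose parameters happen to sit below the chosen cardinal. The missing idea is to first boost $(2)$ to $(3)$: if the class of $C^{(n)}$-strongly unfoldable cardinals were bounded, its supremum would be the unique witness to a parameter-free definition of bounded complexity, and any $C^{(m)}$-strongly unfoldable $\kappa$ for sufficiently large $m$ lies in $C^{(m+1)}$ by Proposition \ref{proposition:BasicCnSunf}.\eqref{item:BasicCnSunf3}, hence computes that supremum correctly inside $\VV_\kappa$ and so lies above it --- contradicting that $\kappa$ is itself $C^{(n)}$-strongly unfoldable. (This is the same leastness-plus-correctness device the paper uses to prove Proposition \ref{proposition:OrdSubtleNotFinite}.) With $(3)$ in hand you may pick the reflecting cardinal above the rank of $y$, and the rest of your $(2)\Rightarrow(1)$ argument --- deducing $\kappa\in C$ from $\kappa\in C^{(n+1)}$, and transferring $A\cap \VV_\alpha=A\cap\alpha$ back to $\VV$ via $\beta\in C^{(n)}$ --- goes through.

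The remainder is sound in outline: in $(1)\Rightarrow(3)$ the reflection of the $\Sigma_{n+1}$-assertion below $\beta\in C^{(n+1)}$ really does produce a correct $\gamma$ with $\alpha<\gamma<\beta$ contradicting the universal clause of the witness at $\beta$. Two remarks on the details you flag. First, since essential subtlety lets you \emph{choose} the club, taking $C=C^{(n+1)}\setminus(\xi+1)$ already forces both reflection points to be strong limit cardinals carrying genuine witnesses, so the tagging of small or non-inaccessible ordinals is unnecessary here (unlike in the essentially-faint analogue, Theorem \ref{theorem:CharOrdFaint}, where no club is available to steer the reflection pair); setting $E_\alpha=\{\emptyset\}$ off $C$ suffices. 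Second, the $\POT{\alpha}$-variant of Lemma \ref{lemma:SigmaNsunf}.\eqref{item:shrewd} that you want is obtainable without any global well-ordering by coding a bijection $\map{b}{\alpha}{\VV_\alpha}$ into the subset of $\alpha$ itself and building the clause \anf{$b$ is a bijection onto $\VV_{v_0}$} into the reflected formula, exactly as in the paper's proof of Lemma \ref{lemma:separatePropertiesCn}; coherence of the decoded witnesses is then forced by membership in $E_\alpha$ rather than assumed of the coding.
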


The above results now allow us to show that no consistent sentence can imply that $\Ord$ is essentially subtle. Note that an almost identical argument shows that  Vop\v{e}nka's Principle is not the consequence of a single consistent sentence.

\begin{proof}[Proof of Proposition \ref{proposition:OrdSubtleNotFinite}]
 Assume, towards a contradiction, that there  is a sentence $\phi$ in the language of set theory such that the theory $\ZFC+\phi$ is consistent and  proves that $\Ord$ is essentially subtle. 
 Work in a model of  $\ZFC+\phi$. By combining Theorem \ref{theorem:OrdSubtle1} with Proposition \ref{proposition:BasicCnSunf}.\eqref{item:BasicCnSunf3}, we can now find an inaccessible cardinal $\kappa$ with the property that $\phi$ holds in $\VV_\kappa$. Let $\kappa$ be minimal with this property. 
 Since $\VV_\kappa$ is a model of $\ZFC+\phi$, we can   repeat this argument in $\VV_\kappa$ and find an inaccessible cardinal $\mu<\kappa$ such that $\phi$ holds in $\VV_\mu$, contradicting the minimality of $\kappa$. 
\end{proof}


\section{$C^{(n)}$-Weakly shrewd cardinals}\label{section:CnWeaklyShrewd}

In this section, we will formulate and prove an analog of Theorem \ref{theorem:OrdSubtle1} for the principle \anf{\emph{$\Ord$ is essentially faint}}. 
 The starting point of this analysis is the notion of \emph{weak shrewdness} introduced in \cite{SRminus} as a natural weakening of shrewdness. 
 This large cardinal property  still entails many of the interesting combinatorial properties of strongly unfoldable cardinals, but it no longer implies that the given cardinal is strongly inaccessible or an element of  $C^{(2)}$. 
In \cite{SRminus}, a cardinal $\kappa$ is defined to be \emph{weakly shrewd} if for every formula $\varphi(v_0,v_1)$ in the language of set theory, every cardinal $\theta>\kappa$ and every subset $z$ of $\kappa$ with the property that $\varphi(z,\kappa)$ holds in $\HH{\theta}$, there exist cardinals $\bar{\kappa}<\bar{\theta}$ such that $\bar{\kappa}<\kappa$ and $\varphi(z\cap\bar{\kappa},\bar{\kappa})$ holds in $\HH{\bar{\theta}}$. 
The results of \cite{SRminus} show that when we modify this definition to also demand that we can find cardinals $\bar{\theta}$ with the given property  below $\kappa$ (as it is the case in the definition of shrewdness),  we obtain a strictly stronger large cardinal property. In particular, the canonical argument showing that shrewd cardinal are strongly inaccessible cannot be adapted to weakly shrewd cardinals. 

 The results of {\cite[Section 3]{SRminus}}  show that every shrewd cardinal is weakly shrewd and every weakly shrewd cardinal is weakly Mahlo. Moreover, these results show that, starting from a subtle cardinal, it is possible to construct a model of set theory that contains a weakly shrewd cardinal that is not strongly inaccessible and therefore also not shrewd. Moreover, it is shown that weakly shrewd cardinals can even exist below the cardinality of the continuum. The results of this paper will show that these consistency results make use of the correct large cardinal hypothesis (see Lemma \ref{lemma:SubtleInHOD} below). 
 Motivated by Lemma \ref{lemma:SigmaNsunf}, we strengthen the notion of weak shrewdness in the same way as the transition to $C^{(n)}$-strong unfoldability strengthens the notions of shrewdness.

\begin{definition}
  Given a natural number $n$, 
  a cardinal $\kappa$ is \emph{$C^{(n)}$-weakly shrewd} if 
   for every formula $\varphi(v_0,v_1)$ in the language of set theory, 
    every cardinal $\kappa<\theta\in C^{(n)}$ and 
     every subset $z$ of $\kappa$ with the property that $\varphi(z,\kappa)$ holds in $\HH{\theta}$, 
      there exist a cardinal $\bar{\theta}\in C^{(n)}$ and  
       a cardinal $\bar{\kappa}<\min(\kappa,\bar{\theta})$ 
        such that $\varphi(z\cap\bar{\kappa},\bar{\kappa})$ holds in $\HH{\bar{\theta}}$.  
    \end{definition}

 We now derive some basic properties of this newly defined notion:

\begin{proposition}\label{proposition:BasicWeaklyShrewd}
 \begin{enumerate}
  \item\label{item:Basic1} Given natural numbers $m<n$, every $C^{(n)}$-weakly shrewd cardinal is $C^{(m)}$-weakly shrewd. 

   \item\label{item:Basic2} A cardinal is weakly shrewd if and only if it is $C^{(n)}$-weakly shrewd for some $n<2$. 
 \end{enumerate}
\end{proposition}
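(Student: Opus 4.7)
The plan for both parts is a standard reflection trick: absorb the $C^{(\cdot)}$-correctness requirement on the cardinal $\theta$ into the formula itself and then apply the weaker shrewdness hypothesis at a suitably large parameter, afterwards unpacking the encoded existential inside the reflected substructure.

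For part \eqref{item:Basic1}, assume $\kappa$ is $C^{(n)}$-weakly shrewd and fix $\varphi(v_0,v_1)$, a cardinal $\theta \in C^{(m)}$ above $\kappa$, and $z \subseteq \kappa$ with $\varphi(z,\kappa)^{\HH{\theta}}$. The plan is to form a new formula $\psi(v_0,v_1)$ expressing \emph{``there exists a cardinal $\mu > v_1$ with $\mu \in C^{(m)}$ such that $\varphi(v_0,v_1)^{\HH{\mu}}$''}, and to select a cardinal $\theta^* \in C^{(n)}$ large enough that $\HH{\theta^*} \models \psi(z,\kappa)$ as witnessed by $\mu = \theta$. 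Applying the $C^{(n)}$-weak shrewdness of $\kappa$ to $\psi$ at $\theta^*$ then yields a cardinal $\bar\theta^* \in C^{(n)}$ and $\bar\kappa < \min(\kappa,\bar\theta^*)$ with $\psi(z \cap \bar\kappa, \bar\kappa)^{\HH{\bar\theta^*}}$, and unpacking the internal existential inside the transitive set $\HH{\bar\theta^*}$ produces the required $\bar\theta \in C^{(m)}$ below $\bar\theta^*$ with $\varphi(z \cap \bar\kappa, \bar\kappa)^{\HH{\bar\theta}}$.

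For part \eqref{item:Basic2}, the equivalence of weak shrewdness with $C^{(0)}$-weak shrewdness is immediate since $C^{(0)} = \Ord$, and the implication from $C^{(1)}$-weak shrewdness to weak shrewdness is just the instance $m = 0$, $n = 1$ of part \eqref{item:Basic1}. The remaining implication, that weak shrewdness implies $C^{(1)}$-weak shrewdness, uses the same encoding trick, this time leveraging the sentence $\sigma$ mentioned in the excerpt whose truth in $\VV_\alpha$ characterizes $\alpha \in C^{(1)}$. Given $\varphi$, $\theta \in C^{(1)}$ and $z$ as above, I would form $\psi(v_0,v_1)$ expressing \emph{``there is a cardinal $\mu > v_1$ with $\VV_\mu \models \sigma$ and $\varphi(v_0,v_1)^{\VV_\mu}$''}, observe that $\psi(z,\kappa)$ holds in $\HH{\theta^+}$ (since $\theta \in C^{(1)}$ gives $\betrag{\VV_\theta} = \theta < \theta^+$, so $\VV_\theta \in \HH{\theta^+}$), and then apply weak shrewdness of $\kappa$ to $\psi$ at $\theta^+$ to extract a witness.

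The only non-trivial point is absoluteness between $\HH{\bar\theta^*}$ and $\VV$: once the witness $\VV_{\bar\mu}$ (respectively $\HH{\bar\mu}$) lies as an element of the transitive set $\HH{\bar\theta^*}$, the internal assertions \anf{$\bar\mu \in C^{(m)}$}, \anf{$\VV_{\bar\mu} \models \sigma$} and \anf{$\varphi(z \cap \bar\kappa, \bar\kappa)$ holds in the corresponding $H$-model} transfer to $\VV$ automatically. I do not expect a genuine obstacle here; the main thing to keep track of is simply the correctness level of $\theta^*$ (ensured by choosing it in the club $C^{(n)}$) and the fact that the outer existential in $\psi$ provides enough room for $\bar\theta$ to be strictly below $\bar\theta^*$.
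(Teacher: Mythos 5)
Your proposal is correct and follows essentially the same route as the paper: for \eqref{item:Basic1} you reflect the statement \anf{there is a cardinal $\mu>v_1$ in $C^{(m)}$ such that $\varphi(v_0,v_1)$ holds in $\HH{\mu}$} from a larger $C^{(n)}$-correct level and then unpack the witness using the correctness of $\HH{\bar{\theta}^*}$, which is exactly the paper's argument, and for \eqref{item:Basic2} you likewise absorb $C^{(1)}$-correctness into the reflected formula. The only cosmetic difference is in \eqref{item:Basic2}, where the paper applies weak shrewdness at $\theta$ itself to the conjunction of $\varphi(z,\kappa)$ with \anf{every $\VV_\alpha$ is a set} (so that the reflecting level $\bar{\theta}$ is itself the witness to membership in $C^{(1)}$), whereas you pass to $\theta^+$ and existentially quantify over an intermediate level; both versions work.
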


\begin{proof}
 \eqref{item:Basic1} Let $\kappa$ be a  $C^{(n)}$-weakly shrewd cardinal. Fix a formula $\varphi(v_0,v_1)$ in the language of set theory, a  cardinal $\kappa<\theta\in C^{(m)}$ and  $z\subseteq\kappa$ such that $\varphi(z,\kappa)$ holds in $\HH{\theta}$. Pick a cardinal $\theta<\vartheta\in C^{(n)}$. Then, in $\HH{\vartheta}$, there exists a cardinal $\kappa<\nu\in C^{(m)}$ with the property that $\varphi(z,\kappa)$ holds in $\HH{\nu}$. By our assumptions on $\kappa$, we can now find a cardinal $\bar{\vartheta}\in C^{(n)}$ and a cardinal $\bar{\kappa}<\min(\kappa,\bar{\vartheta})$ such that, in $\HH{\bar{\vartheta}}$, there exists a cardinal $\bar{\kappa}<\bar{\nu}\in C^{(m)}$ with the property that $\varphi(z\cap\bar{\kappa},\bar{\kappa})$ holds in $\HH{\bar{\nu}}$. Since $\bar{\vartheta}\in C^{(n)}$ and $n>0$, we then know that, in $\VV$, the ordinal $\bar{\nu}$ is  a cardinal in $C^{(m)}$ with the property that $\varphi(z\cap\bar{\kappa},\bar{\kappa})$ holds in $\HH{\bar{\nu}}$. 
 
  \eqref{item:Basic2} By definition and  \eqref{item:Basic1}, it suffices to show that every weakly shrewd cardinal is $C^{(1)}$-weakly shrewd. Let $\kappa$ be a weakly shrewd cardinal, let $\varphi(v_0,v_1)$ be a formula in the language of set theory, let $\kappa<\theta\in C^{(1)}$ be a cardinal and let $z$ be a subset of $\kappa$ with the property that $\varphi(z,\kappa)$ holds in $\HH{\theta}$. Since $\theta$ is an element of $C^{(1)}$, we then know that, in $\HH{\theta}$, for every ordinal $\alpha$, the class $\VV_\alpha$ is a set. Our assumption then allows us to find a cardinal $\bar{\theta}$ and a cardinal $\bar{\kappa}<\min(\kappa,\bar{\theta})$ with the property that, in $\HH{\bar{\theta}}$, the statement $\varphi(z\cap\bar{\kappa},\bar{\kappa})$ holds and for every ordinal $\alpha$, the class $\VV_\alpha$ is a set. This completes the proof, because this conclusion ensures that   $\bar{\theta}$ is an element of $C^{(1)}$.  
\end{proof}

 In combination with {\cite[Proposition 3.3]{SRminus}}, the above proposition shows that all $C^{(n)}$-weakly shrewd cardinals are weakly Mahlo. In particular, if a $C^{(n)}$-weakly shrewd cardinal is an element of $C^{(1)}$, then it is strongly  inaccessible. 
Motivated by {\cite[Lemma 3.1]{SRminus}}, we now prove an analog of Lemma \ref{lemma:SigmaNsunf} for $C^{(n)}$-weak shrewdness.

\begin{lemma}\label{lemma:WcnShrewdChar}
  Given a natural number $n>0$, the following statements are equivalent for every cardinal $\kappa$: 
  \begin{enumerate}
   \item\label{item:WcnShrewdChar1} The cardinal $\kappa$ is $C^{(n)}$-weakly  shrewd. 
   
   \item\label{item:WcnShrewdChar2} For every cardinal $\kappa<\theta\in C^{(n)}$ and every $y\in \HH{\theta}$, there exists a cardinal $\bar{\theta}\in C^{(n)}$, a cardinal $\bar{\kappa}<\min(\kappa,\bar{\theta})$, an elementary submodel $X$ of $\HH{\bar{\theta}}$ with $\bar{\kappa}+1\subseteq X$ and an elementary embedding $\map{j}{X}{\HH{\theta}}$ with $j\restriction\bar{\kappa}=\id_{\bar{\kappa}}$, $j(\bar{\kappa})=\kappa$ and $y\in\ran{j}$. 
  \end{enumerate}
\end{lemma}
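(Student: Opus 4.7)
The implication \eqref{item:WcnShrewdChar2}$\Rightarrow$\eqref{item:WcnShrewdChar1} I plan to handle in a few lines. Given a formula $\varphi(v_0,v_1)$, a cardinal $\kappa<\theta\in C^{(n)}$, and $z\subseteq\kappa$ with $\HH{\theta}\models\varphi(z,\kappa)$, apply \eqref{item:WcnShrewdChar2} with $y=z$ to obtain $\bar{\theta},\bar{\kappa},X,j$. Let $\bar{z}\in X$ be the unique preimage of $z$ under $j$. Since elementarity forces $X\models\bar{z}\subseteq\bar{\kappa}$ and $j\restriction\bar{\kappa}=\id$, one checks that $\bar{z}=z\cap\bar{\kappa}$. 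A further appeal to elementarity of $j$ and of $X\prec\HH{\bar{\theta}}$ transfers $\varphi(z,\kappa)$ back to $\HH{\bar{\theta}}\models\varphi(z\cap\bar{\kappa},\bar{\kappa})$.

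For the substantive direction \eqref{item:WcnShrewdChar1}$\Rightarrow$\eqref{item:WcnShrewdChar2}, my plan is a type-matching reflection argument. Fix $\theta\in C^{(n)}$ with $\kappa<\theta$ and $y\in\HH{\theta}$, and choose a G\"odel pairing function that is closed on every infinite cardinal. Encode the complete first-order type of the distinguished pair $(y,\kappa)$ over $\kappa$ in $\HH{\theta}$ as
\[T \;=\; \Set{\goedel{\ulcorner\psi\urcorner}{\vec{\alpha}}}{\vec{\alpha}\in\kappa^{{<}\omega},\; \HH{\theta}\models\psi(\vec{\alpha},y,\kappa)} \;\subseteq\; \kappa.\]
Let $\Phi(z,\mu)$ be the $\calL_\in$-formula asserting that there exist a cardinal $\vartheta>\mu$ with $\VV_\vartheta\prec_{\Sigma_n}\VV$ and an element $y^*\in\HH{\vartheta}$ such that for every formula $\psi$ and every $\vec{\alpha}\in\mu^{{<}\omega}$, the pair $\goedel{\ulcorner\psi\urcorner}{\vec{\alpha}}$ lies in $z$ iff $\HH{\vartheta}\models\psi(\vec{\alpha},y^*,\mu)$. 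Definability of the Tarski satisfaction predicate for set-sized structures ensures that $\Phi$ is first-order expressible.

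Pick any $\theta^*\in C^{(n)}$ above $\theta$. Since both $\VV_\theta$ and $\VV_{\theta^*}$ are $\Sigma_n$-elementary submodels of $\VV$, one has $\VV_\theta\prec_{\Sigma_n}\VV_{\theta^*}$, so the pair $(\theta,y)$ witnesses $\Phi(T,\kappa)$ in $\HH{\theta^*}=\VV_{\theta^*}$. Applying \eqref{item:WcnShrewdChar1} to $\Phi$ and $T$ produces a cardinal $\bar{\theta}^*\in C^{(n)}$ and a cardinal $\bar{\kappa}<\min(\kappa,\bar{\theta}^*)$ with $\Phi(T\cap\bar{\kappa},\bar{\kappa})$ holding in $\HH{\bar{\theta}^*}$. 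Unwinding the internal witness and transferring through the $\Sigma_n$-correctness of $\bar{\theta}^*$ yields a genuine $\bar{\vartheta}\in C^{(n)}$ with $\bar{\kappa}<\bar{\vartheta}$ and $\bar{y}^*\in\HH{\bar{\vartheta}}$ such that for all $\psi$ and $\vec{\alpha}\in\bar{\kappa}^{{<}\omega}$, $\HH{\bar{\vartheta}}\models\psi(\vec{\alpha},\bar{y}^*,\bar{\kappa})$ iff $\HH{\theta}\models\psi(\vec{\alpha},y,\kappa)$; here closure of $\bar{\kappa}$ under the pairing function identifies $T\cap\bar{\kappa}$ with the appropriate truncation of $T$.

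To finish, I will set $\bar{\theta}=\bar{\vartheta}$, take $X$ to be the definable Skolem hull of $\bar{\kappa}+1\cup\{\bar{y}^*\}$ inside $\HH{\bar{\theta}}$, and define $j\colon X\to\HH{\theta}$ on each element of the form $t^{\HH{\bar{\theta}}}(\vec{\alpha},\bar{\kappa},\bar{y}^*)$, where $t$ is a definable Skolem term and $\vec{\alpha}\in\bar{\kappa}^{{<}\omega}$, by sending it to $t^{\HH{\theta}}(\vec{\alpha},\kappa,y)$. The type-matching condition from the previous paragraph makes $j$ simultaneously well-defined and elementary, and it immediately delivers $j\restriction\bar{\kappa}=\id$, $j(\bar{\kappa})=\kappa$, and $y=j(\bar{y}^*)\in\ran{j}$. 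The main obstacle I anticipate is cleanly managing the layered absoluteness: verifying that the internal interpretations of $\VV_\vartheta\prec_{\Sigma_n}\VV$ and of the set-theoretic satisfaction predicate inside $\HH{\bar{\theta}^*}$, combined with the $\Sigma_n$-correctness of $\bar{\theta}^*$, deliver a bona fide external type-matching between $\HH{\bar{\vartheta}}$ and $\HH{\theta}$ rather than a merely internal one.
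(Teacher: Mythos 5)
Your direction \eqref{item:WcnShrewdChar2}$\Rightarrow$\eqref{item:WcnShrewdChar1} is correct and is essentially the paper's argument. The gap is in the final step of \eqref{item:WcnShrewdChar1}$\Rightarrow$\eqref{item:WcnShrewdChar2}. You reflect only the $\in$-type of $(y,\kappa)$ over $\kappa$ and then propose to take $X$ to be the ``definable Skolem hull'' of $\bar{\kappa}+1\cup\{\bar{y}^*\}$ inside $\HH{\bar{\theta}}$. But $\HH{\bar{\theta}}$ is a $\ZF^-$-model with, in general, no definable well-ordering, hence no definable Skolem functions and no definable Skolem terms; the only canonical substitute, the set of elements of $\HH{\bar{\theta}}$ definable from parameters in $\bar{\kappa}\cup\{\bar{\kappa},\bar{y}^*\}$, need not pass the Tarski--Vaught test (an existential statement can hold with no witness definable from those parameters), so the requirement $X\prec\HH{\bar{\theta}}$ in \eqref{item:WcnShrewdChar2} --- which later applications of this lemma in the paper genuinely use --- is not secured. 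Choosing Skolem functions by the Axiom of Choice after the fact does not repair this: your $j$ is defined term by term and is well-defined and elementary only if the Skolem functions on the $\HH{\bar{\theta}}$-side and the $\HH{\theta}$-side are chosen coherently, and coherence is exactly the information that your type $T$, formulated in the pure language of set theory, does not carry.

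The paper closes this gap by reflecting more than the type: it first takes (by L\"owenheim--Skolem and choice) an elementary submodel $Y\prec\HH{\theta}$ of cardinality $\kappa$ with $\kappa\cup\{\kappa,y\}\subseteq Y$ together with a bijection $\map{b}{\kappa}{Y}$, codes the full satisfaction relation of $Y$ relative to $b$ into a subset $z$ of $\kappa$, and applies $C^{(n)}$-weak shrewdness to the statement, true in a larger $\HH{\vartheta}$, that an elementary submodel with an enumeration having satisfaction code $z$ exists. The existence of an \emph{elementary} $X\prec\HH{\bar{\theta}}$ with an enumeration $a$ matching $z\cap\bar{\kappa}$ is then part of the reflected assertion, and $j=b\circ a^{-1}$ is automatically elementary. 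Your argument can be rescued along the same lines by making the Skolemization part of the reflected data (for instance, taking the type in a structure $(\HH{\theta},W)$ expanded by a well-ordering and letting $\Phi$ assert the existence of such an expansion), but as written the proposal does not produce the elementary submodel $X$ that the lemma demands.
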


\begin{proof}
 First, assume that \eqref{item:WcnShrewdChar1} holds, $\kappa<\theta\in C^{(n)}$ is a cardinal and $y$ is an element of $\HH{\theta}$. Pick a  cardinal $\theta<\vartheta\in C^{(n)}$ and an elementary submodel $Y$ of $\HH{\theta}$ of cardinality $\kappa$ with the property that $\kappa\cup\{\kappa,y\}\subseteq Y$. In addition, fix a bijection $\map{b}{\kappa}{Y}$ satisfying  $b(0)=\kappa$, $b(1)=\langle \kappa,y\rangle$ and $b(\omega\cdot(1+\alpha))=\alpha$ for all $\alpha<\kappa$. Finally, define $z$ to be the set of all elements of $\kappa$ of the form $\goedel{\ell}{\alpha_0,\ldots,\alpha_{n-1}}$\footnote{We use  $\goedel{\cdot}{\ldots,\cdot}$ to denote (iterated applications of)  the \emph{G\"odel pairing function}.}
 with the property that $\ell<\omega$ is the G\"odel number of a set-theoretic formula with $n$ free variables, $\alpha_0,\ldots,\alpha_{n-1}<\kappa$ and $\mathsf{Sat}(Y,\ell,\langle b(\alpha_0),\ldots,b(\alpha_{n-1})\rangle)$ holds, where $\mathsf{Sat}$ denotes the formalized satisfaction relation (see, for example, {\cite[Section I.9]{MR750828}}). 
 
 By our assumption and the fact that $\mathsf{Sat}$ is uniformly $\Delta_1$-definable over models of $\ZF^-$, we can now find a cardinal $\bar{\vartheta}\in C^{(n)}$ and a cardinal $\bar{\kappa}<\min(\kappa,\bar{\vartheta})$ with the property that, in $\HH{\bar{\vartheta}}$, there exists a cardinal $\bar{\kappa}<\bar{\theta}\in C^{(n)}$, an elementary submodel $X$ of $\HH{\bar{\theta}}$ of cardinality $\bar{\kappa}$ with $\bar{\kappa}+1\subseteq X$ and a bijection $\map{a}{\bar{\kappa}}{X}$ with $a(0)=\bar{\kappa}$,  $a(\omega\cdot(1+\alpha))=\alpha$ for all $\alpha<\bar{\kappa}$ and the property that the set $z\cap\bar{\kappa}$ consists of all elements of $\kappa$ of the form $\goedel{\ell}{\alpha_0,\ldots,\alpha_{n-1}}$ such that $\ell<\omega$ is the G\"odel number of a set-theoretic formula with $n$ free variables, $\alpha_0,\ldots,\alpha_{n-1}<\bar{\kappa}$ and $\mathsf{Sat}(X,\ell,\langle a(\alpha_0),\ldots,a(\alpha_{n-1})\rangle)$ holds. 
  Since $\bar{\vartheta}\in C^{(n)}$, we  now know that, in $\VV$, the ordinal $\bar{\theta}$ is a cardinal in $C^{(n)}$ and the set $X$ is an elementary submodel of $\HH{\bar{\theta}}$. If we now define $$\map{j ~ = ~ b\circ a^{{-}1}}{X}{\HH{\theta}},$$ then it is easy to check that $j$ is an elementary embedding with $j\restriction\bar{\kappa}=\id_{\bar{\kappa}}$, $j(\bar{\kappa})=\kappa$ and $y\in\ran{j}$. 
 
 Now, assume that \eqref{item:WcnShrewdChar2} holds, $\varphi(v_0,v_1)$ is a formula in the language of set theory, $\kappa<\theta\in C^{(n)}$ is a cardinal and $z$ is a subset of $\kappa$ with the property that $\varphi(z,\kappa)$ holds in $\HH{\theta}$. Our assumptions then allow us to find a cardinal $\bar{\theta}\in C^{(n)}$, a cardinal $\bar{\kappa}<\min(\kappa,\bar{\theta})$, an elementary submodel $X$ of $\HH{\bar{\theta}}$ with $\bar{\kappa}+1\subseteq X$ and an elementary embedding $\map{j}{X}{\HH{\theta}}$ with $j\restriction\bar{\kappa}=\id_{\bar{\kappa}}$, $j(\bar{\kappa})=\kappa$ and $z\in\ran{j}$. We then know that $z\cap\bar{\kappa}\in X$ and $j(z\cap\bar{\kappa})=z$. But, this shows that $\varphi(z\cap\bar{\kappa},\bar{\kappa})$ holds in both $X$ and $\HH{\bar{\theta}}$.  
\end{proof}

Next, we observe that the existence of subtle cardinals entails the existence of many $C^{(n)}$-weakly shrewd cardinals:

\begin{lemma}\label{lemma:ShrewdFromSubtle}
 Given a natural number $n>0$, every subtle cardinal is a stationary limit of strongly inaccessible $C^{(n)}$-weakly shrewd cardinals. 
\end{lemma}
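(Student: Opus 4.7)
The plan is to fix a subtle cardinal $\kappa$ and show that, for every closed unbounded subset $C$ of $\kappa$, there exists a strongly inaccessible $C^{(n)}$-weakly shrewd cardinal in $C$. Since $\kappa$ is strongly inaccessible, the strong limit cardinals below $\kappa$ form a club in $\kappa$, and by intersecting $C$ with this club I may assume that every element of $C$ is a strong limit cardinal. The key observation is that Proposition \ref{proposition:BasicWeaklyShrewd} together with the remark that all $C^{(n)}$-weakly shrewd cardinals are weakly Mahlo, hence regular, guarantees that any $C^{(n)}$-weakly shrewd element of $C$ is automatically strongly inaccessible. It therefore suffices to locate some $\alpha\in C$ that is $C^{(n)}$-weakly shrewd.

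I would assume toward a contradiction that no $\alpha\in C$ is $C^{(n)}$-weakly shrewd. Using the Axiom of Choice, for each $\alpha\in C$ I fix a triple $(\varphi_\alpha,\theta_\alpha,z_\alpha)$ witnessing this failure: $\varphi_\alpha(v_0,v_1)$ is a formula in the language of set theory, $\theta_\alpha\in C^{(n)}$ is a cardinal with $\theta_\alpha>\alpha$, $z_\alpha\subseteq\alpha$, the statement $\varphi_\alpha(z_\alpha,\alpha)$ holds in $\HH{\theta_\alpha}$, and no cardinal $\bar{\theta}\in C^{(n)}$ together with a cardinal $\bar{\kappa}<\min(\alpha,\bar{\theta})$ satisfies that $\varphi_\alpha(z_\alpha\cap\bar{\kappa},\bar{\kappa})$ holds in $\HH{\bar{\theta}}$. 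Letting $\ell_\alpha<\omega$ denote the G\"odel number of $\varphi_\alpha$, I code the pair $(\varphi_\alpha,z_\alpha)$ via
\[ A_\alpha ~ = ~ \{\goedel{0}{\ell_\alpha}\}\cup\Set{\goedel{1}{\xi}}{\xi\in z_\alpha} ~ \subseteq ~ \alpha, \]
using the fact that every $\alpha\in C$, being a (strong limit) cardinal, is closed under the G\"odel pairing function. The sequence is extended arbitrarily (e.g., by declaring $A_\alpha=\emptyset$ for $\alpha<\kappa$ with $\alpha\notin C$).

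Applying the subtlety of $\kappa$ to this sequence and the club $C$, I obtain ordinals $\alpha<\beta$ in $C$ with $A_\beta\cap\alpha=A_\alpha$. The coordinate layout of the coding forces both $\ell_\alpha=\ell_\beta$, hence $\varphi_\alpha=\varphi_\beta$, and $z_\beta\cap\alpha=z_\alpha$. Setting $\bar{\theta}=\theta_\alpha$ and $\bar{\kappa}=\alpha$, I obtain a cardinal $\bar{\theta}\in C^{(n)}$ with $\bar{\kappa}<\min(\beta,\bar{\theta})$ such that $\varphi_\beta(z_\beta\cap\bar{\kappa},\bar{\kappa})=\varphi_\alpha(z_\alpha,\alpha)$ holds in $\HH{\bar{\theta}}$, directly contradicting the chosen failure witness $(\varphi_\beta,\theta_\beta,z_\beta)$ at $\beta$.

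The only delicate step is arranging the coding so that the single equation $A_\beta\cap\alpha=A_\alpha$ simultaneously transfers both the formula (whose G\"odel number sits at the low coordinate $\goedel{0}{\ell_\alpha}$ well below $\omega<\alpha$) and the appropriate initial segment of the set $z_\beta$. The remaining ingredients — the initial restriction to the club of strong limit cardinals, the Choice-based selection of witnesses, the deduction of strong inaccessibility from $C^{(n)}$-weak shrewdness together with strong-limit-ness, and the application of subtlety — are entirely routine.
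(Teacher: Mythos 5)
Your proposal is correct and follows essentially the same route as the paper's proof: negate $C^{(n)}$-weak shrewdness on a club of cardinals, code the witnessing formula and subset into a single subset of each $\alpha$ via Gödel pairing, and apply subtlety to transfer a witness downward for a contradiction, with the strong inaccessibility obtained by thinning the club (your restriction to strong limit cardinals plays the role of the paper's restriction to $C^{(1)}$, combined in both cases with the weak Mahloness of $C^{(n)}$-weakly shrewd cardinals). The only differences are cosmetic, such as coding the Gödel number as a singleton rather than an initial segment.
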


\begin{proof}
 Assume, towards a contradiction, that there is a subtle cardinal $\delta$ and a closed unbounded subset $C$ of $\delta$ that consists of uncountable  cardinals that are not $C^{(n)}$-weakly shrewd. Given $\gamma\in C$, we can use Lemma \ref{lemma:WcnShrewdChar} to find $\ell_\gamma<\omega$ and $z_\gamma\subseteq\gamma$ such that $\ell_\gamma$ is the G\"odel number of a  set-theoretic formula $\varphi(v_0,v_1)$ with the property that there exists a cardinal $\gamma<\theta\in C^{(n)}$ such  that $\varphi(z,\gamma)$ holds in $\HH{\theta}$  and  for every cardinal $\bar{\theta}\in C^{(n)}$, there is no cardinal $\bar{\gamma}<\min(\gamma,\bar{\theta})$ with the property that $\varphi(z\cap\bar{\gamma},\bar{\gamma})$ holds in $\HH{\bar{\theta}}$. Now, let $\seq{E_\gamma}{\gamma\in C}$ denote the unique sequence with $$E_\gamma ~ = ~ \goedel{0}{\ell_\gamma} ~ \cup ~ \Set{\goedel{1}{\beta}}{\beta\in z_\gamma}$$ for all  $\gamma\in C$. Using the subtlety of $\delta$, we can now find $\beta<\gamma$ in $C$ with $E_\gamma\cap\beta=E_\beta$. We then know that $\ell_\beta=\ell_\gamma$ and $z_\gamma\cap\beta=z_\beta$. But, this contradicts the choice of $\ell_\gamma$ and $z_\gamma$, because, if $\varphi(v_0,v_1)$ is the formula coded by $\ell_\gamma$, then $\beta$ is a cardinal below $\gamma$ with the property that there exists $\beta<\theta\in C^{(n)}$ such that $\varphi(z_\gamma\cap\beta,\beta)$ holds in $\HH{\theta}$.  
 
 Since subtle cardinals are regular limit points of the class $C^{(1)}$, the above computations show that subtle cardinals are stationary limits of  $C^{(n)}$-weakly shrewd cardinals that are elements of $C^{(1)}$. This yields the statement of the lemma, because, as observed earlier, $C^{(n)}$-weakly shrewd cardinals are inaccessible if and only if they are elements of $C^{(1)}$.  
\end{proof}

We close this section by proving the desired characterization of the   principle \anf{\emph{$\Ord$ is essentially faint}} through the existence of $C^{(n)}$-weakly shrewd cardinals:

\begin{theorem}\label{theorem:CharOrdFaint}
 The following schemes of axioms are equivalent over $\ZFC$: 
 \begin{enumerate}
  \item\label{item:CharOrdFaint1} $\Ord$ is essentially faint. 
  
  \item\label{item:CharOrdFaint2} For every natural number $n$, there exists a proper class of $C^{(n)}$-weakly shrewd cardinals. 
 \end{enumerate}
\end{theorem}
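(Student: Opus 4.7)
Both directions of Theorem \ref{theorem:CharOrdFaint} will be proven by mirroring the strategy of Theorem \ref{theorem:OrdSubtle1}, now using Lemma \ref{lemma:WcnShrewdChar} in place of Lemma \ref{lemma:SigmaNsunf} and Lemma \ref{lemma:ShrewdFromSubtle} as the combinatorial template. For the direction \eqref{item:CharOrdFaint2} $\Rightarrow$ \eqref{item:CharOrdFaint1}, let a class sequence $\seq{E_\alpha}{\alpha \in \Ord}$ with $\emptyset \neq E_\alpha \subseteq \POT{\alpha}$ be given by a formula $\psi(\alpha,A,p)$ of complexity at most $\Sigma_n$ with set parameter $p$, and let an ordinal $\xi$ be fixed. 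I would pick a $C^{(n)}$-weakly shrewd cardinal $\beta$ with $\beta > \xi$ and $p \in \HH{\beta}$, choose any $A \in E_\beta$, and code the triple $(A,p,\xi)$ as a subset $z$ of $\beta$ via G\"odel pairing in a robust way, meaning that for every sufficiently large cardinal $\bar\kappa < \beta$ (one exceeding $\xi$ and the hereditary rank of $p$), the restriction $z \cap \bar\kappa$ decodes to the triple $(A \cap \bar\kappa, p, \xi)$. The desired reflection is obtained by applying $C^{(n)}$-weak shrewdness of $\beta$ to a formula $\varphi(z,\kappa)$ asserting both that $z$ decodes as a complete triple $(A_z, p_z, \xi_z)$ fitting inside $\kappa$ and that $\psi(\kappa, A_z, p_z)$ holds; the completeness clause forces the reflected $\bar\kappa$ to exceed $\xi$ and to recover $p$, and $\Sigma_n$-absoluteness from $\HH{\bar\theta}$ to $\VV$ delivers $A \cap \bar\kappa \in E_{\bar\kappa}$, so that $\alpha := \bar\kappa$ is as required.

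For the direction \eqref{item:CharOrdFaint1} $\Rightarrow$ \eqref{item:CharOrdFaint2}, fix $n > 0$ and assume towards a contradiction that some infinite cardinal $\xi_0$ bounds the class of $C^{(n)}$-weakly shrewd cardinals. For each cardinal $\gamma > \xi_0$, the failure of $C^{(n)}$-weak shrewdness at $\gamma$ supplies a G\"odel number $\ell_\gamma < \omega$, a subset $z_\gamma \subseteq \gamma$, and a cardinal $\gamma < \theta_\gamma \in C^{(n)}$ such that the formula coded by $\ell_\gamma$ holds at $(z_\gamma, \gamma)$ in $\HH{\theta_\gamma}$ but admits no reflection to any cardinal $\bar\gamma < \gamma$ with any cardinal $\bar\theta \in C^{(n)}$. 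Following Lemma \ref{lemma:ShrewdFromSubtle}, I would set $s_\gamma = \goedel{0}{\ell_\gamma} \cup \Set{\goedel{1}{\delta}}{\delta \in z_\gamma}$ and then apply the principle ``$\Ord$ is essentially faint'' to a class sequence $\seq{E_\alpha}{\alpha \in \Ord}$ engineered so that any matching pair it produces yields a cardinal-level identity $s_\mu \cap \nu = s_\nu$ for some cardinals $\xi_0 < \nu < \mu$. Decoding then forces $\ell_\mu = \ell_\nu$ and $z_\mu \cap \nu = z_\nu$, so that reflecting $\mu$'s witness via $\bar\gamma = \nu$ and $\bar\theta = \theta_\nu \in C^{(n)}$ contradicts the choice of $(\ell_\mu, z_\mu)$.

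The main obstacle lies in this second direction. Unlike essential subtleness, whose definition allows one to restrict the matching pair to an arbitrary closed-unbounded class (in particular, to the cub of cardinals), essential faintness provides no such control, so an essentially faint pair $(\alpha,\beta)$ may be non-cardinal and indeed lie in a single cardinal gap $[\mu, \mu^+)$, producing only a trivial match with $|\alpha| = |\beta|$. The technical crux is therefore the design of the class sequence $\seq{E_\alpha}{\alpha \in \Ord}$: the plan is to build $E_\alpha$ as a carefully shifted extension of the cardinal-indexed singleton family $\Set{\{s_\kappa\}}{\kappa \text{ cardinal above } \xi_0}$, so that each $E_\alpha$ carries a canonical encoding of $s_{|\alpha|}$ together with a distinguishing component for non-cardinal $\alpha$ preventing spurious matches within a fixed cardinal gap. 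A case analysis on whether $\alpha \geq |\beta|$ or $\alpha < |\beta|$ should then distinguish the trivial same-cardinality scenario from the informative $|\alpha| < |\beta|$ scenario, and only the informative one delivers the desired cardinal-level identity between the witnesses. I expect the verification that the extension can be arranged so that the trivial alternative cannot satisfy essential faintness at all sufficiently large $\xi$ to be the subtle step of the argument.
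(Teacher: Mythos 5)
Your direction \eqref{item:CharOrdFaint2} $\Rightarrow$ \eqref{item:CharOrdFaint1} is essentially the paper's argument: the paper fixes $n$ so that $E$ is $\Sigma_n$-definable from a parameter $y$, picks a $C^{(n)}$-weakly shrewd $\kappa>\xi$ with $y\in\HH{\kappa}$, and reflects the statement \anf{there is a set $\calA$ with $A\in\calA$ and $\varphi(\kappa,\calA,y)$} from $\HH{\theta}$ to some $\HH{\bar\theta}$ with $\bar\theta\in C^{(n)}$, using the embedding form (Lemma \ref{lemma:WcnShrewdChar}) where you use the syntactic definition together with a robust coding of $(A,p,\xi)$; that difference is immaterial and your completeness-clause device for forcing $\bar\kappa>\xi$ and recovering $p$ is a standard and workable substitute for the paper's use of $j(y)=y$ and $\xi\in\ran{j}$.

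The gap is in direction \eqref{item:CharOrdFaint1} $\Rightarrow$ \eqref{item:CharOrdFaint2}, and it is exactly the step you flag as unverified: the definition of $E_\alpha$ for ordinals $\alpha$ that are not regular cardinals, arranged so that \emph{no} match $\xi<\alpha<\beta$, $A\in E_\beta$, $A\cap\alpha\in E_\alpha$ can occur unless both $\alpha$ and $\beta$ are regular cardinals carrying failure-witnesses. Your plan --- make every $E_\alpha$ encode $s_{|\alpha|}$ plus a \anf{distinguishing component} --- puts the entire burden on that component, since within a cardinal gap $[\mu,\mu^+)$ the $s_\mu$-part of $A\cap\alpha$ matches automatically; you would then still have to rule out matches between a non-cardinal $\alpha$ and a cardinal $\beta$ with $|\alpha|<\beta$, between two non-cardinals in different gaps, etc., and none of this is carried out. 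The paper avoids the problem rather than solving it head-on: it does \emph{not} place $s_{|\alpha|}$ into non-cardinal entries at all. Instead, $E(\alpha+1)$ is the singleton $\{\{\goedel{0}{\alpha}\}\}$ (so a match forces $\alpha=\beta$), and for every limit $\alpha>\xi$ with $\cof{\alpha}<\alpha$ (which covers both singular cardinals and all non-cardinal limit ordinals) $E(\alpha)$ consists of codes $\{\goedel{1}{\cof{\alpha}}\}\cup\Set{\goedel{2}{\eta,c(\eta)}}{\eta<\cof{\alpha}}$ of strictly increasing cofinal functions $\map{c}{\cof{\alpha}}{\alpha}$; since a cofinal function into $\beta$ restricted below $\alpha<\beta$ loses its tail, $A\cap\alpha$ can never again be a total cofinal function of the required length, so no match is possible in this case either. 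The disjoint tags $0,1,2,3,4$ then force any surviving match to pair two regular cardinals via the tag-$3$/tag-$4$ coding of $(\ell_\gamma,z_\gamma)$, which yields the contradiction with the choice of the witnesses exactly as in your sketch and in Lemma \ref{lemma:ShrewdFromSubtle}. Until a construction with this property is actually exhibited and verified, the second direction of your proof is incomplete.
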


\begin{proof}
 First, assume, towards a contradiction, that \eqref{item:CharOrdFaint1} holds, $n>0$ is a  natural number  and there exists an ordinal $\xi$ with the property that there are no $C^{(n)}$-weakly shrewd cardinals above $\xi$. Then there is a unique class function $E$ with domain $\Ord$ such that the following statements hold: 
  \begin{itemize}
   \item $E(\alpha)=\POT{\alpha}$ for all ordinal $\alpha\leq\xi$. 
   
   \item $E(\alpha+1)=\{\{\goedel{0}{\alpha}\}\}$ for all ordinals $\alpha\geq\xi$.  
   
   \item If $\alpha>\xi$ is a singular limit ordinal, then $E(\alpha)$ consists of all subsets of $\alpha$ of the form $$\{\goedel{1}{\cof{\alpha}}\} ~ \cup ~ \Set{\goedel{2}{\eta,c(\eta)}}{\eta<\cof{\alpha}},$$ where $\map{c}{\cof{\alpha}}{\alpha}$ is a strictly increasing cofinal function. 
   
   \item If $\alpha>\xi$ is a regular cardinal, then $E(\alpha)$ consists of all subsets of $\alpha$ of the form $$\{\goedel{3}{\ell}\} ~ \cup ~ \Set{\goedel{4}{\eta}}{\eta\in z},$$ where $\ell$ is the G\"odel number of a set-theoretic formula $\varphi(v_0,v_1)$ and $z$ is a subset of $\alpha$ with the property that there exists a cardinal $\alpha<\theta\in C^{(n)}$ such that $\varphi(z,\alpha)$ holds in $\HH{\theta}$ and for every cardinal $\bar{\theta}\in C^{(n)}$, there is no cardinal $\bar{\alpha}<\min(\alpha,\bar{\theta})$ with the property that $\varphi(z\cap\bar{\alpha},\bar{\alpha})$ holds in $\HH{\bar{\theta}}$. 
  \end{itemize}
 
 Our assumption then ensures that $E(\alpha)\neq\emptyset$ holds for all $\alpha\in\Ord$. Therefore, we can find ordinals $\xi<\alpha<\beta$ and $A\in E(\beta)$ with $A\cap\alpha\in E(\alpha)$. The definition of $E$ then ensures that $\alpha$ and $\beta$ are both regular cardinals. Fix a set-theoretic formula $\varphi(v_0,v_1)$  and a subset $z$ of $\beta$ such that $A=\{\goedel{3}{\ell}\}\cup\Set{\goedel{4}{\eta}}{\eta\in z}$, where $\ell$ is the G\"odel number of $\varphi(v_0,v_1)$. Since $$A\cap\alpha ~ = ~ \{\goedel{3}{\ell}\}\cup\Set{\goedel{4}{\eta}}{\eta\in z\cap\alpha} ~ \in ~ E(\alpha),$$ we now know that there exists a cardinal $\alpha<\theta\in C^{(n)}$ with the property that $\varphi(z\cap\alpha,\alpha)$ holds in $\HH{\theta}$. Since $\alpha<\min(\beta,\theta)$, this contradicts the fact that $A$ is an element of $E(\beta)$. 
  
 Now, assume that \eqref{item:CharOrdFaint2} holds and $E$ is a class function with domain $\Ord$ and the property that $\emptyset\neq E(\alpha)\subseteq\POT{\alpha}$ holds for all $\alpha\in\Ord$. Fix an ordinal $\xi$. Pick a natural number $n>0$ such that there exists a $\Sigma_n$-formula $\varphi(v_0,v_1,v_2)$ and a parameter $y$  defining $E$. By our assumption, there is a $C^{(n)}$-weakly shrewd cardinal $\kappa>\xi$ with $y\in\HH{\kappa}$. Pick $A\in E(\kappa)$ and a cardinal $\kappa<\theta\in C^{(n)}$. Then Lemma \ref{lemma:WcnShrewdChar} yields a cardinal $\bar{\theta}\in C^{(n)}$, a cardinal $\bar{\kappa}<\min(\kappa,\bar{\theta})$, an elementary submodel $X$ of $\HH{\bar{\theta}}$ with $\bar{\kappa}+1\subseteq X$ and an elementary embedding $\map{j}{X}{\HH{\theta}}$ with $j\restriction\bar{\kappa}=\id_{\bar{\kappa}}$, $j(\bar{\kappa})=\kappa$ and $A,y,\xi\in\ran{j}$. We then know that $\xi<\bar{\kappa}$, $y\in\HH{\bar{\kappa}}\cap X$ with $j(y)=y$ and $A\cap\bar{\kappa}\in X$ with $j(A\cap\bar{\kappa})=A$. Moreover, the fact that $\theta\in C^{(n)}$ ensures that, in $\HH{\theta}$, there exists a set $\calA$ such that $A\in\calA$ and $\varphi(\kappa,\calA,y)$ holds. This implies that, in $\HH{\bar{\theta}}$,  there exists a set $\bar{\calA}$ such that $A\cap\bar{\kappa}\in\bar{\calA}$ and $\varphi(\bar{\kappa},\bar{\calA},y)$ holds. Since $\bar{\theta}\in C^{(n)}$, we then know that $\varphi(\bar{\kappa},\bar{\calA},y)$ also holds in $\VV$ and we can conclude that  $A\cap\bar{\kappa}\in\bar{\calA}=E(\bar{\kappa})$. 
\end{proof}


\section{Separating the schemes}\label{section:Separate}

The goal of this section is to identify scenarios in which the principle \anf{\emph{$\Ord$ is essentially faint}} holds while the principle \anf{\emph{$\Ord$ is essentially subtle}} fails. In the light of Theorems \ref{theorem:OrdSubtle1} and \ref{theorem:CharOrdFaint}, this task is closely connected to the analysis of $C^{(n)}$-weakly shrewd cardinals that are not $C^{(n)}$-strongly unfoldable. For the case $n=1$, this analysis is provided by   {\cite[Lemma 3.5]{SRminus}}, which shows that a weakly shrewd cardinal is strongly unfoldable if and only if it is an element of $C^{(2)}$. The following lemma provides a direct analog of this result  for $C^{(n)}$-weakly shrewd cardinals:

\begin{lemma}\label{lemma:separatePropertiesCn}
Given a natural number $n>0$, the following statements are equivalent for every  $C^{(n)}$-weakly shrewd cardinal: 
 \begin{enumerate}
  \item\label{item:WCnShredNonSunf1} The cardinal $\kappa$ is $C^{(n)}$-strongly unfoldable. 
  
  \item\label{item:WCnShredNonSunf2} The cardinal $\kappa$ is an element of $C^{(n+1)}$. 
  
  \item\label{item:WCnShredNonSunf3} There is no ordinal $\varepsilon\geq\kappa$ with the property that the set $\{\varepsilon\}$ is definable by a $\Sigma_{n+1}$-formula with parameters in $\HH{\kappa}$. 
 \end{enumerate}
\end{lemma}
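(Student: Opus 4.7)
The plan is to establish the cycle $(1) \Rightarrow (2) \Rightarrow (3) \Rightarrow (1)$. The first implication is immediate from Proposition~\ref{proposition:BasicCnSunf}.\eqref{item:BasicCnSunf3}.

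For $(2) \Rightarrow (3)$, suppose $\kappa \in C^{(n+1)}$ and assume, towards a contradiction, that some ordinal $\varepsilon \geq \kappa$ is defined by a $\Sigma_{n+1}$-formula $\sigma(v, \vec{z})$ with $\vec{z} \in \HH{\kappa}$. Since $\kappa \in C^{(n+1)} \subseteq C^{(1)}$, we have $\HH{\kappa} = \VV_\kappa$, so $\vec{z} \in \VV_\kappa$. The sentence $\exists v\, \sigma(v, \vec{z})$ is $\Sigma_{n+1}$ and holds in $\VV$, so it also holds in $\VV_\kappa$ by $\VV_\kappa \prec_{\Sigma_{n+1}} \VV$, producing some $v \in \VV_\kappa$ with $\VV_\kappa \models \sigma(v, \vec{z})$. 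Upward $\Sigma_{n+1}$-absoluteness gives $\VV \models \sigma(v, \vec{z})$, and uniqueness of $\varepsilon$ forces $v = \varepsilon$. But $v \in \VV_\kappa$ makes $v$ an ordinal below $\kappa$, contradicting $\varepsilon \geq \kappa$.

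For $(3) \Rightarrow (1)$, I would first deduce that $\kappa$ is strongly inaccessible. If $\kappa$ were not a strong limit, some cardinal $\mu < \kappa$ would satisfy $2^\mu \geq \kappa$, and the cardinal $2^\mu$ would be a $\Delta_2$-definable (hence $\Sigma_{n+1}$-definable, since $n \geq 1$) ordinal $\geq \kappa$ from the parameter $\mu \in \HH{\kappa}$, violating (3). Combined with the weak Mahloness of every weakly shrewd cardinal (Proposition~\ref{proposition:BasicWeaklyShrewd}.\eqref{item:Basic1} together with the corresponding result of \cite{SRminus}), this forces $\kappa \in C^{(1)}$ and $\VV_\kappa = \HH{\kappa}$. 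I would then verify the strong unfoldability of $\kappa$ using characterization~\eqref{item:Magidor} of Lemma~\ref{lemma:SigmaNsunf}. Given $\gamma \in C^{(n)}$ with $\gamma > \kappa$ and $z \in \VV_\gamma$, fix a canonical rank-respecting bijection $b\colon \VV_\kappa \to \kappa$, pick a cardinal $\theta \in C^{(n)}$ with $\theta > \gamma$, and apply the embedding form of $C^{(n)}$-weak shrewdness (Lemma~\ref{lemma:WcnShrewdChar}.\eqref{item:WcnShrewdChar2}) with a parameter $y$ encoding $(\VV_\gamma, z)$. This produces $\bar\theta \in C^{(n)}$, a cardinal $\bar\kappa < \min(\kappa, \bar\theta)$, an elementary submodel $X \prec \HH{\bar\theta}$ with $\bar\kappa + 1 \subseteq X$, and an embedding $j \colon X \to \HH{\theta}$ with $j \restriction \bar\kappa = \id_{\bar\kappa}$, $j(\bar\kappa) = \kappa$, and $y \in \ran{j}$. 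In the case $\bar\theta < \kappa$, setting $\bar\gamma = j^{-1}(\gamma)$ yields $\bar\gamma \in C^{(n)} \cap \kappa$ with $\bar\kappa < \bar\gamma$; after possibly enlarging $X$ to contain $\VV_{\bar\kappa}$ (feasible because elementarity transfers strong inaccessibility of $\kappa$ to $\bar\kappa$, so $\lvert \VV_{\bar\kappa} \rvert = \bar\kappa$, following the proof of \cite[Lemma~3.1]{SRminus}), the tuple $(\bar\gamma, \bar\kappa, X, j)$ is the reflection witness required by~\eqref{item:Magidor}.

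The main obstacle is ruling out $\bar\theta \geq \kappa$. Here the plan is to invoke (3): if for some $\gamma, z$ every witness $\bar\theta$ to the weak-shrewdness conclusion satisfied $\bar\theta \geq \kappa$, then the least such $\bar\theta$ would be an ordinal $\geq \kappa$ defined by a $\Sigma_{n+1}$-formula whose parameters can be placed inside $\HH{\kappa} = \VV_\kappa$. The formula asserts the existence of a suitable tuple $(\bar\kappa, X, j)$ with $j$ elementary from $X \prec \HH{\bar\theta}$ into $\HH{\theta}$ and $j(\bar\kappa) = \kappa$, giving the $\Sigma_1$-existential wrapper, subject to the $\Pi_n$-clause ``$\bar\theta \in C^{(n)}$'', which together is $\Sigma_{n+1}$; the bijection $b$ absorbs the external data $(\gamma, z)$ into a subset of $\kappa$, so that no forbidden parameters remain outside $\VV_\kappa$. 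This would contradict~(3). The technical heart of the proof is the careful bookkeeping required to ensure that $\kappa$ itself does not covertly reappear as a parameter in this $\Sigma_{n+1}$-definition of the least $\bar\theta$ — one must quantify existentially over $\bar\kappa$ and absorb all $\kappa$-indexed data into the $b$-coded subset of $\kappa$, leveraging the strong inaccessibility of $\kappa$ that was secured in the previous step.
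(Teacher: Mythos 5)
Your chain $\eqref{item:WCnShredNonSunf1}\Rightarrow\eqref{item:WCnShredNonSunf2}\Rightarrow\eqref{item:WCnShredNonSunf3}$ is correct and matches the paper, which treats both implications as immediate. The problem is in $\eqref{item:WCnShredNonSunf3}\Rightarrow\eqref{item:WCnShredNonSunf1}$, exactly at the step you flag as the \anf{technical heart}: ruling out $\bar{\theta}\geq\kappa$. You propose to define the least bad $\bar{\theta}$ by a $\Sigma_{n+1}$-formula with parameters in $\HH{\kappa}$, claiming that the bijection $b$ \anf{absorbs the external data $(\gamma,z)$ into a subset of $\kappa$}. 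This cannot work. In the Magidor-style characterization \eqref{item:Magidor} of Lemma \ref{lemma:SigmaNsunf}, the datum $z$ is an arbitrary element of $\VV_\gamma$ with $\gamma>\kappa$, and the property of $\bar{\theta}$ you need to express (\anf{there exist $\bar{\kappa}$, $X$ and elementary $\map{j}{X}{\HH{\theta}}$ with $j(\bar{\kappa})=\kappa$ and $y\in\ran{j}$}) carries $\theta$, $y$ and $\kappa$ itself as parameters. A bijection $\map{b}{\VV_\kappa}{\kappa}$ codes subsets of $\VV_\kappa$, not elements of $\VV_\gamma$; and if you instead quantify $\kappa$, $\theta$, $y$ away existentially, you sever the link to the particular instance that fails to reflect below $\kappa$, so the resulting \anf{least $\bar{\theta}$} need not be $\geq\kappa$ at all. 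The obstruction is not bookkeeping: the parameters genuinely live above $\HH{\kappa}$.

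The paper's proof avoids this by running the argument through the first-order characterization \eqref{item:shrewd} of Lemma \ref{lemma:SigmaNsunf} rather than \eqref{item:Magidor}. Failure of $C^{(n)}$-strong unfoldability yields a formula $\varphi$, a cardinal $\kappa<\theta\in C^{(n)}$ and a subset $A$ of $\VV_\kappa$ such that $\varphi(A,\kappa)$ holds in $\HH{\theta}$ while no $\bar{\theta}\in C^{(n)}\cap\kappa$ admits a reflected witness. Statement \eqref{item:WCnShredNonSunf3} forces $\betrag{\VV_\alpha}<\kappa$ for all $\alpha<\kappa$ (your strong-limit observation is the same point), so $A$ can be recoded as a subset $z$ of $\kappa$. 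Applying $C^{(n)}$-weak shrewdness to this \emph{first-order} statement produces $\bar{\kappa}<\kappa$ and some $\bar{\theta}\in C^{(n)}$ with the reflected statement true in $\HH{\bar{\theta}}$; crucially, the reflected statement mentions only $\bar{\kappa}$ and $z\cap\bar{\kappa}$, both in $\HH{\kappa}$. The least $\varepsilon\in C^{(n)}$ above $\bar{\kappa}$ at which it holds is then $\Sigma_{n+1}$-definable from these small parameters and is $\geq\kappa$ by the non-reflection below $\kappa$, contradicting \eqref{item:WCnShredNonSunf3}. You should restructure your argument along these lines; note also that your embedding route would in addition require a nontrivial translation between the $\HH{\theta}$-form of weak shrewdness and the $\VV_\gamma$-form of \eqref{item:Magidor}, and \anf{enlarging $X$} after $j$ has already been fixed is not an available move.
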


\begin{proof}
 First, assume that \eqref{item:WCnShredNonSunf1} fails and \eqref{item:WCnShredNonSunf3} holds. 
  Given $\alpha<\kappa$, the fact that the set $\{\betrag{\VV_\alpha}\}$ is definable by a $\Sigma_2$-formula with parameter $\alpha$ allows us to use our assumption to conclude that $\betrag{\VV_\alpha}<\kappa$ holds. 
  In particular, we know that the set $\VV_\kappa$ has cardinality $\kappa$ in this setting. 
  Since \eqref{item:WCnShredNonSunf1} fails, Lemma \ref{lemma:SigmaNsunf} shows that there is a set-theoretic formula $\varphi(v_0,v_1)$, a cardinal $\kappa<\theta\in C^{(n)}$ and a subset $A$ of $\VV_\kappa$  such that $\varphi(A,\kappa)$ holds in $\HH{\theta}$ and there is no cardinal $\bar{\theta}\in C^{(n)}\cap\kappa$ such that $\varphi(A\cap\VV_{\bar{\kappa}},\bar{\kappa})$ holds in $\HH{\bar{\theta}}$ for some cardinal $\bar{\kappa}<\bar{\theta}$. 
  Using the fact that $\kappa$ is weakly inaccessible and the set $\VV_\kappa$ has cardinality $\kappa$, we can now find a set-theoretic formula $\psi(v_0,v_1)$ and a subset $z$ of $\kappa$ with the property that $\psi(z,\kappa)$ holds in $\HH{\theta}$ and there is no cardinal $\bar{\theta}\in C^{(n)}\cap\kappa$ such that $\psi(z\cap\bar{\kappa},\bar{\kappa})$ holds in $\HH{\bar{\theta}}$ for some cardinal $\bar{\kappa}<\bar{\theta}$. 
  Since $\kappa$ is $C^{(n)}$-weakly shrewd, we can find a cardinal $\bar{\theta}\in C^{(n)}$ and a cardinal $\bar{\kappa}<\min(\kappa,\bar{\theta})$ such that $\varphi(z\cap\bar{\kappa},\bar{\kappa})$ holds in $\HH{\bar{\theta}}$. 
  Now, define $\varepsilon$ to be the least element of $C^{(n)}$ above $\bar{\kappa}$ with the property that $\varphi(z\cap\bar{\kappa},\bar{\kappa})$ holds in $\HH{\varepsilon}$. 
  We then know that $\varepsilon\geq\kappa$ and it is easy to see that the set $\{\varepsilon\}$ is definable by a $\Sigma_{n+1}$-formula with parameters $\bar{\kappa},z\cap\bar{\kappa}\in\HH{\kappa}$. 
 This contradicts \eqref{item:WCnShredNonSunf3} and  we can conclude that \eqref{item:WCnShredNonSunf3} implies \eqref{item:WCnShredNonSunf1}. 
  
 These computations complete the proof of the lemma, because \eqref{item:WCnShredNonSunf2} obviously implies \eqref{item:WCnShredNonSunf3} and Proposition \ref{proposition:BasicCnSunf}.\eqref{item:BasicCnSunf3} shows that  \eqref{item:WCnShredNonSunf1} implies \eqref{item:WCnShredNonSunf2}.  
\end{proof}

\begin{corollary}\label{corollary:SubtleNotUnfoldable}
  Given a natural number $n>0$, every subtle cardinal that is not a limit of subtle cardinals is a stationary limit of strongly inaccessible $C^{(n)}$-weakly shrewd cardinals that are not $C^{(n)}$-strongly unfoldable. 
\end{corollary}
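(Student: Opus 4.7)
My plan is to combine Lemmas \ref{lemma:ShrewdFromSubtle} and \ref{lemma:separatePropertiesCn} with a reflection argument that locates $\delta$ outside $C^{(n+1)}$. Once I show that $\delta\notin C^{(n+1)}$, the closedness of $C^{(n+1)}$ forces $\sup(C^{(n+1)}\cap\delta)<\delta$, so its complement in $\delta$ contains a club. Intersecting this club with the stationary set of strongly inaccessible $C^{(n)}$-weakly shrewd cardinals below $\delta$ furnished by Lemma \ref{lemma:ShrewdFromSubtle} will yield a stationary set of strongly inaccessible $C^{(n)}$-weakly shrewd cardinals that lie outside $C^{(n+1)}$. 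By Lemma \ref{lemma:separatePropertiesCn}, these are exactly the cardinals that fail to be $C^{(n)}$-strongly unfoldable, and the corollary will follow.

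The central step will be to show $\delta\notin C^{(n+1)}$ for every $n\geq 1$. Since $C^{(n+1)}\subseteq C^{(2)}$ for such $n$, it suffices to establish $\delta\notin C^{(2)}$. I plan to argue this by observing that the property \emph{$\alpha$ is subtle} can be written as a $\Sigma_2$-formula in the language of set theory: it is equivalent to $\exists M(M=V_{\alpha+2}\wedge \phi_0(M,\alpha))$, where $\phi_0$ is a $\Delta_0$-formula encoding subtleness over the transitive set $M$ and ``$M=V_{\alpha+2}$'' admits a $\Pi_1$-characterization as the unique transitive set of rank $\alpha+2$ that is closed under taking subsets of its elements. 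Using the hypothesis of the corollary, I will fix $\xi<\delta$ such that $(\xi,\delta)$ contains no subtle cardinals. The $\Sigma_2$-statement ``$\exists\alpha>\xi:\alpha$ is subtle'' with parameter $\xi$ then holds in $V$ with witness $\alpha=\delta$, and assuming $\delta\in C^{(2)}$ for contradiction it reflects to $V_\delta$, producing an $\alpha\in(\xi,\delta)$ which $V_\delta$ believes to be subtle. Since the witnessing $V_{\alpha+2}$ lies in $V_\delta$ and the subtleness condition over $V_{\alpha+2}$ is $\Delta_0$, absoluteness will upgrade this to genuine subtleness of $\alpha$ in $V$, contradicting the choice of $\xi$.

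The main obstacle I anticipate is precisely keeping this complexity analysis uniform in $n$, in particular making the argument go through for $n=1$. A naive complexity count using the $\Pi_2$-form of subtleness $\forall M(M=V_{\alpha+2}\to\phi_0(M,\alpha))$ yields only a $\Sigma_3$-statement ``$\exists\alpha>\xi:\alpha$ is subtle'', which reflects only through $C^{(3)}$ and hence handles only $n\geq 2$, leaving the critical case $n=1$ untreated. The manoeuvre that unifies the treatment is to express subtleness in the existential form $\exists M(M=V_{\alpha+2}\wedge\phi_0(M,\alpha))$ rather than the universal form, thereby pushing the existential closure down to $\Sigma_2$ and bringing the reflection argument within reach of $C^{(2)}$. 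The technical core I will need to verify carefully is that this $\Sigma_2$-reflection cooperates with the absoluteness of subtleness between $V_\delta$ and $V$ for ordinals $\alpha$ with $\alpha+2<\delta$, which is automatic from $\delta$ being an inaccessible limit.
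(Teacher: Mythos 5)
Your argument is correct and reaches the stated conclusion, but it is organized differently from the paper's proof. The paper also starts by fixing $\gamma<\delta$ with no subtle cardinals in $(\gamma,\delta)$, but then simply observes that $\{\delta\}$ is definable by a $\Sigma_2$-formula with parameter $\gamma$ (as the least subtle cardinal above $\gamma$) and invokes clause \eqref{item:WCnShredNonSunf3} of Lemma \ref{lemma:separatePropertiesCn}: for every $C^{(n)}$-weakly shrewd $\kappa\in(\gamma,\delta)$, the ordinal $\delta\geq\kappa$ is a $\Sigma_{n+1}$-singleton with parameter $\gamma\in\HH{\kappa}$, so $\kappa$ is not $C^{(n)}$-strongly unfoldable; Lemma \ref{lemma:ShrewdFromSubtle} then finishes in one line, with no need to discuss $C^{(n+1)}$ at all. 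You instead route through clause \eqref{item:WCnShredNonSunf2}, which forces you to prove $\delta\notin C^{(2)}$ by a reflection argument and then to argue that the complement of $C^{(n+1)}$ below $\delta$ contains a tail club. Both routes hinge on the same complexity fact (that subtleness, resp.\ ``least subtle cardinal above $\gamma$'', is $\Sigma_2$), and your club-intersection step is sound; the paper's version is shorter because clause \eqref{item:WCnShredNonSunf3} absorbs the reflection work that you redo by hand, and it avoids your worry about uniformity in $n$ entirely, since a $\Sigma_2$-definition is automatically $\Sigma_{n+1}$ for every $n\geq 1$.

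One local inaccuracy: your proposed $\Pi_1$-characterization of $M=\VV_{\alpha+2}$ as ``the unique transitive set of rank $\alpha+2$ that is closed under taking subsets of its elements'' is false. Such sets are neither unique nor necessarily equal to $\VV_{\alpha+2}$; for instance, $\{\emptyset,\{\emptyset\},\{\{\emptyset\}\}\}$ is transitive, closed under subsets of its elements and has rank $3$, yet omits $\VV_2$. The fact you actually need --- that $x=\VV_\beta$ is $\Pi_1$-expressible --- is nonetheless standard (via the $\Delta_1$-definability of the rank function: $x$ is transitive, every element of $x$ has rank below $\beta$, and every set of rank below $\beta$ belongs to $x$), so this does not damage the proof, but the justification as written should be replaced.
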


\begin{proof}
 Let $\delta$ be a subtle cardinal that is not a limit of subtle cardinals and let $\gamma<\delta$ be an ordinal with the property that the interval $(\gamma,\delta)$ does not contain subtle cardinals. Then the set $\{\delta\}$ can be defined by a $\Sigma_2$-formula with parameter $\gamma$. In this situation, Lemma \ref{lemma:separatePropertiesCn} shows that no $C^{(n)}$-weakly shrewd cardinal in the interval $(\gamma,\delta)$ is $C^{(n)}$-strongly unfoldable. An application of Lemma \ref{lemma:ShrewdFromSubtle} now yields the statement of the corollary. 
\end{proof}

The next lemma provides us with a tool to derive non-trivial consistency strength from the existence of   $C^{(n)}$-weakly shrewd cardinals that are not $C^{(n)}$-strongly unfoldable. 
 An interesting aspect of this result is the fact that, 
 in contrast to most results that provide lower bounds for the consistency strength of certain assumptions,  
 it establishes  the existence of large cardinals not only in certain  fine-structural inner models ({e.g.,} in  the constructible universe $\LL$) 
 but also  in the inner model $\HOD$. 
  Since the results of this paper show that the given assumption is compatible with the non-existence of strongly inaccessible cardinals, 
  the following lemma reveals that the existence of weakly shrewd cardinals that are not strongly unfoldable has interesting and non-trivial consequences for the behavior of ordinal definable sets.

\begin{lemma}\label{lemma:SubtleInHOD}
 Let $n>0$ be a natural number and let $\kappa$ be a $C^{(n)}$-weakly shrewd cardinal that is not $C^{(n)}$-strongly unfoldable. If $\delta$ is the least ordinal $\varepsilon\geq\kappa$ with the property that the set $\{\varepsilon\}$ is definable by a $\Sigma_{n+1}$-formula with parameters in $\HH{\kappa}$, then the following statements hold: 
  \begin{enumerate}
   \item $\delta$ is a  cardinal greater than $\kappa$ with $\cof{\delta}\in\kappa\cup\{\delta\}$. 
   
   \item $\delta$ is a subtle cardinal in $\HOD$. 
  \end{enumerate}
\end{lemma}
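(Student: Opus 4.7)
The plan is to prove both parts using the embedding characterization of $C^{(n)}$-weak shrewdness (Lemma~\ref{lemma:WcnShrewdChar}) combined with the minimality of $\delta$.

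For part~(1), I would first show $\delta>\kappa$ by contradiction. Suppose $\delta=\kappa$, with $\{\kappa\}$ defined by a $\Sigma_{n+1}$-formula $\varphi(v,p)$ for some $p\in\HH{\gamma}$ with $\gamma<\kappa$, and code $p$ as a subset $z\subseteq\gamma$. Applying $C^{(n)}$-weak shrewdness to a formula that decodes $p$ from $z$ and asserts $\varphi(\lambda,p)$ together with $\lambda>\gamma$ yields $\bar\theta\in C^{(n)}$ and $\bar\kappa\in(\gamma,\kappa)$ with $\varphi(\bar\kappa,p)$ true in $\HH{\bar\theta}$, and hence in $\VV$ by $\Sigma_{n+1}$-upward absoluteness (using $\bar\theta\in C^{(n)}$); this contradicts the uniqueness of $\kappa$. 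The cardinality and cofinality claims follow from minimality: $|\delta|$ and $\cof{\delta}$ are also $\Sigma_{n+1}$-definable from $p$ via low-complexity class functions, and each lies in $[\kappa,\delta]$, so neither can lie strictly in $[\kappa,\delta)$, forcing $|\delta|=\delta$ and $\cof{\delta}\in\kappa\cup\{\delta\}$.

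For part~(2), I argue by contradiction. Suppose $\delta$ is not subtle in $\HOD$ and let $(\vec{A},C)$ be the $<_{\HOD}$-least pair in $\HOD$ witnessing this. A modification argument (adjoining $\kappa$ to $C$ and canonically redefining $A_\kappa\in\HOD$ via diagonalization to preserve badness) lets me assume $\kappa\in C$. Choose $\theta\in C^{(n)}$ above $\delta$ and all relevant parameters, and apply Lemma~\ref{lemma:WcnShrewdChar} with $y=\langle\delta,p,\vec{A},C,\gamma\rangle$, enforcing $\bar\kappa>\gamma$ so that the $j$-preimage $\bar p$ equals $p$. Writing $\bar\delta,\vec{\bar{A}},\bar C$ for the other $j$-preimages, elementarity together with $\Sigma_{n+1}$-upward absoluteness and uniqueness of $\delta$ gives $\bar\delta=\delta$; and the canonical OD-definition of $(\vec{A},C)$ from $\delta$ (a $\Delta_2$-definition using the standard $\HOD$-well-ordering), combined with $\Sigma_n$-absoluteness, gives $\vec{\bar{A}}=\vec{A}$ and $\bar C=C$, so $j$ fixes the entire bad pair.

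The contradiction is now extracted as follows. Since $j$ fixes $\vec{A}$, we have $j(A_{\bar\kappa})=A_{j(\bar\kappa)}=A_\kappa$; but $A_{\bar\kappa}$ is a set of ordinals below $\bar\kappa$, each fixed pointwise by $j$, so $j(A_{\bar\kappa})=A_{\bar\kappa}$ and hence $A_\kappa=A_{\bar\kappa}\subseteq\bar\kappa$. Elementarity and $j$-fixity of $C$ give $\bar\kappa\in C\iff\kappa\in C$, so $\bar\kappa\in C$. Therefore $\bar\kappa<\kappa$ both lie in $C$ and satisfy $A_\kappa\cap\bar\kappa=A_\kappa=A_{\bar\kappa}$, contradicting the badness of $(\vec{A},C)$. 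The main technical obstacle is establishing the fixity $\vec{\bar{A}}=\vec{A}$ and $\bar C=C$: this requires the canonical OD-definition of the $<_{\HOD}$-least bad pair to be absolute between $\HH{\bar\theta}$ (for $\bar\theta\in C^{(n)}$) and $\VV$, which in turn demands careful verification that the defining formula has complexity $\Sigma_n$, together with confirming that the modification step guaranteeing $\kappa\in C$ always succeeds in $\HOD$.
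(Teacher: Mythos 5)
Your part (1) is essentially the paper's argument (minimality of $\delta$ plus the $\Sigma_2$-definability of $\betrag{\delta}$ and $\cof{\delta}$ from $\delta$), and the overall architecture of your part (2) — pulling the $<_{\HOD}$-least bad pair back along a weak-shrewdness embedding that fixes it — also matches. The genuine gap is the step where you ``assume $\kappa\in C$'' by modifying the pair. First, the diagonalization need not succeed: after adjoining $\kappa$ to $C$ you must choose $A'_\kappa\subseteq\kappa$ with $A'_\kappa\cap\beta\neq A_\beta$ for all $\beta\in C\cap\kappa$ \emph{and} $A'_\kappa\neq A_\gamma\cap\kappa$ for all $\gamma\in C$ above $\kappa$; since $\delta$ may far exceed $(2^\kappa)^{\HOD}$, the traces $\Set{A_\gamma\cap\kappa}{\gamma\in C\setminus(\kappa+1)}$ can exhaust $\POT{\kappa}^{\HOD}$, leaving no admissible $A'_\kappa$. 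Second, and more fundamentally, even when the modification succeeds it undercuts the rest of your argument: the modified pair is canonically definable only from $\delta$ \emph{and} $\kappa$, and $\kappa$ is not fixed by $j$ (its preimage is $\bar{\kappa}$), so $j$ pulls the modified pair back to the analogous pair built over $\bar{\kappa}$ rather than fixing it. Your final contradiction needs $\kappa$ and $\bar{\kappa}$ to lie in the \emph{same} club of a pair that $j$ fixes, which the modification cannot deliver.

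The paper closes exactly this gap by proving that $\kappa$ already belongs to the club $C$ of the unmodified, $\delta$-definable least bad pair, again via the minimality of $\delta$: since $\{C\}$ is $\Sigma_{n+1}$-definable from $y\in\HH{\kappa}$, if $C\cap\kappa=\emptyset$ then $\min(C)$ is an ordinal in $[\kappa,\delta)$ whose singleton is $\Sigma_{n+1}$-definable from parameters in $\HH{\kappa}$, contradicting minimality; and if $C\cap\kappa\neq\emptyset$ but $\kappa\notin C$, the same applies to $\min(C\setminus\kappa)=\min(C\setminus(\max(C\cap\kappa)+1))$, which is definable from $\max(C\cap\kappa)$ and $y$. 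You should replace your modification step with this argument. Two smaller points: take $\theta\in C^{(n+1)}$ rather than merely $C^{(n)}$, so that $\HH{\theta}$ correctly evaluates the $\Sigma_{n+1}$-definitions of $\delta$, $\vec{A}$ and $C$ (for $\theta\in C^{(n)}$ these are only upward absolute); and your inference that $j(A_{\bar{\kappa}})=A_{\bar{\kappa}}$ because each element of $A_{\bar{\kappa}}$ is fixed is not valid — pointwise fixing only yields $j(A_{\bar{\kappa}})\cap\bar{\kappa}=A_{\bar{\kappa}}$ — although the identity you actually need, $A_\kappa\cap\bar{\kappa}=A_{\bar{\kappa}}$, does follow correctly from $j(A_{\bar{\kappa}})=A_\kappa$ and $j\restriction\bar{\kappa}=\id_{\bar{\kappa}}$.
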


\begin{proof}
 Fix a $\Sigma_{n+1}$-formula $\varphi(v_0,v_1)$ and an element $y$ of $\HH{\kappa}$ with the property that $\delta$ is the unique set $x$ such that $\varphi(x,y)$ holds. Pick a cardinal $\kappa<\theta\in C^{(n+1)}$ and use  Lemma \ref{lemma:WcnShrewdChar} to find  a cardinal $\bar{\theta}\in C^{(n)}$, a cardinal $\bar{\kappa}<\min(\kappa,\bar{\theta})$, an elementary submodel $X$ of $\HH{\bar{\theta}}$ with $\bar{\kappa}+1\subseteq X$ and an elementary embedding $\map{j}{X}{\HH{\theta}}$ with $j\restriction\bar{\kappa}=\id_{\bar{\kappa}}$, $j(\bar{\kappa})=\kappa$ and $y,\delta\in\ran{j}$. Then $y\in\HH{\bar{\kappa}}$ with $j(y)=y$. Pick $\varepsilon\in X$  with $j(\varepsilon)=\delta$. Since $\varphi(\delta,y)$ holds in $\HH{\theta}$, we then know that $\varphi(\varepsilon,y)$ holds in $\HH{\bar{\theta}}$ and the fact that $\bar{\theta}$ is an element of $C^{(n)}$ implies that $\varphi(\varepsilon,y)$ also holds in $\VV$. This shows that $\delta=\varepsilon$ and hence $\delta>\kappa$. 
  Next, note that the set $\{\betrag{\delta}\}$ is $\Sigma_2$-definable from the parameter $\delta$ and, since $\betrag{\delta}\geq\kappa$, the minimality of $\delta$ implies that $\delta$ is a cardinal. Finally, since the set   $\{\cof{\delta}\}$ is $\Sigma_2$-definable from the parameter $\delta$, the minimality of $\delta$ also ensures that either $\delta$ is regular or $\cof{\delta}<\kappa$ holds.  
 
 Now, assume that $\delta$ is not a subtle cardinal in $\HOD$. Let $\langle\vec{A},C\rangle$ denote the least pair in the canonical well-ordering of $\HOD$ with the property that $\vec{A}=\seq{A_\gamma}{\gamma<\delta}$ is a sequence of length $\delta$ with $A_\gamma\subseteq\gamma$ for all $\gamma<\delta$ and $C$ is a closed unbounded subset of $\delta$ with the property that $A_\gamma\cap\beta\neq A_\beta$ holds for all $\beta<\gamma$ in $C$. Since the class of all proper initial segments of the canonical well-ordering of $\HOD$ is definable in $\VV$ by a $\Sigma_2$-formula without parameters  (see {\cite[Lemma 13.25]{MR1940513}}), we then know that, in $\VV$,  the sets $\{\vec{A}\}$ and $\{C\}$ are both definable by $\Sigma_2$-formulas with parameter $\delta$, and hence these sets are also definable by $\Sigma_{n+1}$-formulas with parameter $y$. We then know that $C\cap\kappa\neq\emptyset$, because otherwise $\kappa\leq\min(C)<\delta$ is an ordinal  with the property that the set $\{\min(C)\}$ is definable by a $\Sigma_{n+1}$-formula with parameters in $\HH{\kappa}$. 
 
 \begin{claim*}
  $\kappa\in C$.
 \end{claim*}
 
 \begin{proof}[Proof of the Claim]
  Assume, towards a contradiction, that $\kappa$ is not an element of $C$ and set $\alpha=\max(C\cap\kappa)<\kappa$ and $\beta=\min(C\setminus\kappa)=\min(C\setminus(\alpha+1))<\delta$. Then $\beta$ is an ordinal greater than $\kappa$ with the property that the set $\{\beta\}$  is definable by a $\Sigma_{n+1}$-formula with parameters $\alpha,y\in\HH{\kappa}$ and hence it follows that $\beta\geq\delta$, a contradiction. 
 \end{proof}
 
 Pick a cardinal $\delta<\theta\in C^{(n+1)}$ and use  Lemma \ref{lemma:WcnShrewdChar} to find 
  a cardinal $\bar{\theta}\in C^{(n)}$, a cardinal $\bar{\kappa}<\min(\kappa,\bar{\theta})$, an elementary submodel $X$ of $\HH{\bar{\theta}}$ with $\bar{\kappa}+1\subseteq X$ and an elementary embedding $\map{j}{X}{\HH{\theta}}$ with $j\restriction\bar{\kappa}=\id_{\bar{\kappa}}$, $j(\bar{\kappa})=\kappa$ and $y\in\ran{j}$. We then again know that $y\in\HH{\bar{\kappa}}$ with $j(y)=y$ and, by the correctness properties of $\HH{\theta}$ and $\HH{\bar{\theta}}$, the fact that the sets $\{\vec{A}\}$, $\{C\}$ and $\{\delta\}$ are all definable by $\Sigma_{n+1}$-formulas with parameter $y$ implies that $\vec{A},C,\delta\in X$ with $j(\vec{A})=\vec{A}$, $j(C)=C$ and $j(\delta)=\delta$. In this situation, elementarity allows us to conclude that $\bar{\kappa}\in C$ and  $A_{\bar{\kappa}}\in X$ with $j(A_{\bar{\kappa}})=A_\kappa$ and $A_\kappa\cap\bar{\kappa}=j(A_{\bar{\kappa}})\cap\bar{\kappa}=A_{\bar{\kappa}}$, contradicting the choice of $\vec{A}$  and $C$. 
\end{proof}

As a first application of the above lemma, we observe that, by combining it with Corollary \ref{corollary:SubtleNotUnfoldable}, we can  determine the exact consistency strength of the existence of a $C^{(n)}$-weakly shrewd cardinal that is not $C^{(n)}$-strongly unfoldable. The following corollary directly strengthens {\cite[Theorem 1.9.(i)]{SRminus}}. Note that, in combination with Theorem \ref{theorem:OrdSubtle1}, this result shows that, over $\ZFC$, the existence of  a $C^{(n)}$-weakly shrewd cardinal that is not $C^{(n)}$-strongly unfoldable has strictly greater consistency strength than the existence of a $C^{(n)}$-weakly shrewd cardinal.

\begin{corollary}
Given a natural number $n>0$, the following statements are equiconsistent over $\ZFC$: 
 \begin{enumerate}
  \item\label{item:consSubtle1} There exists a $C^{(n)}$-weakly shrewd cardinal that is not $C^{(n)}$-strongly unfoldable. 
  
  \item\label{item:consSubtle2} There exists a subtle cardinal. \qed 
 \end{enumerate}
\end{corollary}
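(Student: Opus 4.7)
The plan is to derive both directions of the equiconsistency as essentially immediate consequences of the two main results of this section, with no additional forcing or inner-model construction needed.

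For the direction Con\eqref{item:consSubtle2} $\Rightarrow$ Con\eqref{item:consSubtle1}, I would start in a model $V$ of $\ZFC$ containing a subtle cardinal and let $\delta$ denote the least such cardinal. By minimality, $\delta$ is not a limit of subtle cardinals, so Corollary \ref{corollary:SubtleNotUnfoldable} applies and yields, within $V$ itself, a stationary subset of $\delta$ consisting of strongly inaccessible $C^{(n)}$-weakly shrewd cardinals that are not $C^{(n)}$-strongly unfoldable. Any element of this set witnesses \eqref{item:consSubtle1} in $V$.

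For the reverse direction Con\eqref{item:consSubtle1} $\Rightarrow$ Con\eqref{item:consSubtle2}, I would begin in a model $V$ of $\ZFC$ containing a $C^{(n)}$-weakly shrewd cardinal $\kappa$ that is not $C^{(n)}$-strongly unfoldable. By the equivalence of items \eqref{item:WCnShredNonSunf1} and \eqref{item:WCnShredNonSunf3} in Lemma \ref{lemma:separatePropertiesCn}, there exists some ordinal $\varepsilon\geq\kappa$ whose singleton is definable by a $\Sigma_{n+1}$-formula with parameters in $\HH{\kappa}$, so I may let $\delta$ be the least such ordinal. Lemma \ref{lemma:SubtleInHOD} then states directly that $\delta$ is a subtle cardinal in $\HOD^V$. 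Since $\HOD^V$ is a transitive inner model of $\ZFC$, it follows that $\HOD^V\models\ZFC+\anf{\textit{there exists a subtle cardinal}}$, and translating this to the syntactic level gives the desired consistency implication.

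The corollary therefore requires essentially no independent work: each direction is a one-step invocation of a previously proved result. If one were proving the equivalence from scratch, the main obstacle would be entirely contained in Lemma \ref{lemma:SubtleInHOD} — namely, the fact that the subtle cardinal obtained from the failure of $C^{(n)}$-strong unfoldability lives not merely in $\LL$ but already in $\HOD$, which is what makes this lower bound sharp rather than merely a lower bound in a fine-structural inner model. Here I would simply cite that lemma and note that no finer analysis is needed for the present corollary.
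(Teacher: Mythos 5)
Your proof is correct and matches the paper's intended argument exactly: the paper leaves the corollary unproved (marked \qed) precisely because it is the stated combination of Corollary \ref{corollary:SubtleNotUnfoldable} applied to the least subtle cardinal in one direction, and Lemma \ref{lemma:SubtleInHOD} (with Lemma \ref{lemma:separatePropertiesCn} guaranteeing the relevant $\delta$ exists) in the other. No gaps.
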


Next, we use Lemma \ref{lemma:SubtleInHOD} to show that the principles \anf{\emph{$\Ord$ is essentially subtle}} and \anf{\emph{$\Ord$ is essentially faint}} are equivalent if only boundedly many  cardinals are subtle in $\HOD$.

\begin{proof}[Proof of Theorem \ref{theorem:SubtleInHOD}]
  Assume, towards a contradiction that $\Ord$ is essentially faint and not essentially subtle. 
  By Theorem \ref{theorem:OrdSubtle1}, the fact that $\Ord$ is not essentially subtle implies that for some natural number $n>0$, there are no $C^{(n)}$-strongly unfoldable cardinals. Since $\Ord$ is  essentially faint, Theorem \ref{theorem:CharOrdFaint} now shows that  there is  a proper class of $C^{(n)}$-weakly shrewd cardinals that are not $C^{(n)}$-strongly unfoldable. 
  In this situation, we can apply  Lemma \ref{lemma:SubtleInHOD} to conclude that a proper class of cardinals is subtle in $\HOD$.  
\end{proof}

 We now continue by using Theorem \ref{theorem:SubtleInHOD} to show that the principle \anf{\emph{$\Ord$ is essentially faint}}  is not finitely axiomatizable over $\ZFC$.

\begin{proof}[Proof of Corollary \ref{corollary:AxiomatizeEssFaint}]
   Assume,   towards a contradiction, that the theory $$\ZFC+\anf{\textit{$\Ord$ is essentially faint}}$$ is consistent and    there is a sentence $\phi$ in the language of set theory with the property that the statements   \eqref{item:AxiomatizeEssFaint1} and  \eqref{item:AxiomatizeEssFaint2} listed in the corollary hold. 
     Define $\psi$ to be the conjunction of $\phi$ with the statement that no ordinal is a subtle cardinal in $\HOD$.

     \begin{claim*}
      The theory $\ZFC+\psi$ is consistent. 
     \end{claim*}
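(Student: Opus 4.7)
The plan is to build a model of $\ZFC + \psi$. By statement (1), it suffices to construct a model of $\ZFC$ in which $\Ord$ is essentially faint and no ordinal is a subtle cardinal in $\HOD$. I would use the standing assumption of the corollary to fix a model $M \models \ZFC + \anf{\textit{$\Ord$ is essentially faint}}$, and pass to its constructible inner model $L := L^M$. A routine argument shows that essential faintness descends to $L$: any class sequence $\seq{E_\alpha}{\alpha \in \Ord}$ of $L$ with $\emptyset \neq E_\alpha \subseteq \POT{\alpha}^L$ is also such a sequence in $M$ (because $\POT{\alpha}^L \subseteq \POT{\alpha}^M$), and the witnesses $A \in E_\beta$ and $A \cap \alpha \in E_\alpha$ returned by the scheme in $M$ automatically lie in $L$ thanks to the subset constraint. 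Hence $L \models \ZFC + V = L + \anf{\textit{$\Ord$ is essentially faint}}$, and $(1)$ then yields $L \models \phi$.

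Since $V = L$ forces $V = \HOD$, the statement \anf{\textit{no ordinal is a subtle cardinal in $\HOD$}} inside $L$ coincides with \anf{\textit{no subtle cardinal}}, so if $L$ contains no subtle cardinal then $L \models \psi$ and we are finished. Otherwise I would let $\delta$ be the least subtle cardinal of $L$ and set $K := L_\delta = V_\delta^L$. Then $K \models \ZFC$ because $\delta$ is inaccessible in $L$, $K \models V = L = \HOD$, and no ordinal $\alpha < \delta$ is a subtle cardinal of $K$: the subtleness of $\alpha$ is decided by sequences of length $\alpha$ of subsets of initial segments and by clubs in $\alpha$, all of rank below $\delta$ and hence absolute between $K$ and $L$, so the minimality of $\delta$ forces the conclusion. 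Combined with $V = \HOD$ in $K$, this gives $K \models \anf{\textit{no ordinal is a subtle cardinal in $\HOD$}}$.

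The remaining and hardest step is to verify $K \models \anf{\textit{$\Ord$ is essentially faint}}$, which together with $(1)$ will then give $K \models \phi$ and thus $K \models \psi$. My approach is to apply Lemma \ref{lemma:ShrewdFromSubtle} inside $L$ to obtain that $\delta$ is a stationary limit of $C^{(n)}$-weakly shrewd cardinals of $L$ for every $n > 0$, and then argue that each such $\kappa < \delta$ remains $C^{(n)}$-weakly shrewd as computed inside $K$, so that Theorem \ref{theorem:CharOrdFaint} delivers essential faintness in $K$. The transfer exploits the strong absoluteness properties of the constructible hierarchy: condensation and the identification $L_\theta = H_\theta^L$ at $L$-cardinals $\theta$ let one verify that $C^{(n)}_K \cap \delta = C^{(n)}_L \cap \delta$ and that the witnessing cardinals $\bar\theta$ demanded by the $C^{(n)}$-weak shrewdness of $\kappa$ in $L$ can always be chosen below $\delta$. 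If the least subtle cardinal of $L$ does not afford enough correctness at all levels $n$ simultaneously, I would instead choose $\delta$ to be a subtle cardinal of $L$ lying in a sufficiently high $C^{(m)}_L$; the existence of such a $\delta$ follows from Theorem \ref{theorem:SubtleInHOD} applied inside $L$, which produces an unbounded class of subtle cardinals of $L$ unless $L$ is already essentially subtle, in which case a $C^{(n)}$-strongly unfoldable cardinal of $L$ for sufficiently large $n$ does the job via Proposition \ref{proposition:BasicCnSunf}.
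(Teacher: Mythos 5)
Your overall architecture is reasonable and close in spirit to the paper's: reduce to producing a model of $\ZFC+\anf{\textit{$\Ord$ is essentially faint}}+\anf{\textit{no ordinal is a subtle cardinal in $\HOD$}}$, pass to a canonical inner model where $\VV=\HOD$, and cut at the least subtle cardinal. The descent of essential faintness to $\LL$ via the subset constraint is fine, as is the treatment of the case where $\LL$ has no subtle cardinal. (The paper itself takes a slightly different but parallel route: it invokes Corollary \ref{corollary:EquiconsEsubtleEfaint} to get a model in which $\Ord$ is essentially \emph{subtle} and then observes that a subtle cardinal yields a set-sized model of that theory with no subtle cardinals in its $\HOD$.)

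The problem is your final step, which you correctly flag as the hardest one: the verification that $K=\LL_\delta$ satisfies $\anf{\textit{$\Ord$ is essentially faint}}$. The transfer of $C^{(n)}$-weak shrewdness from $\LL$ to $\LL_\delta$ does not go through. Your key identity $C^{(n)}_K\cap\delta=C^{(n)}_{\LL}\cap\delta$ essentially requires $\delta\in C^{(n)}_{\LL}$, and this \emph{fails} for the least subtle cardinal of $\LL$: since $\{\delta\}$ is then definable without parameters by a formula of fixed low complexity and subtlety of $\alpha<\delta$ is absolute between $\LL_\delta$ and $\LL$, membership of $\delta$ in $C^{(2)}_{\LL}$ would already reflect a subtle cardinal below $\delta$. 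Worse, the reflecting cardinals $\bar{\theta}$ produced by $C^{(n)}$-weak shrewdness in $\LL$ may lie above $\delta$ (that is the whole point of the \emph{weak} variant), so they are of no use inside $\LL_\delta$. Your fallback of choosing $\delta$ in a high $C^{(m)}_{\LL}$ cannot repair this, because essential faintness in $K$ is a scheme over all $n$, so you would need a single subtle $\delta$ lying in $C^{(n)}_{\LL}$ for every $n$ simultaneously, which no first-order choice provides. The detour through weakly shrewd cardinals is also unnecessary: if $\delta$ is subtle in $\LL$, then every definable-over-$\LL_\delta$ class sequence $\seq{A_\alpha}{\alpha\in\Ord}$ with $A_\alpha\subseteq\alpha$ and every definable club class of $\LL_\delta$ are literally a $\delta$-sequence and a club subset of $\delta$ in $\LL$, so the subtlety of $\delta$ \emph{directly} gives $\LL_\delta\models\anf{\textit{$\Ord$ is subtle}}$; since $\VV=\LL$ holds there, this is equivalent to essential subtlety and a fortiori yields essential faintness. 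That one-line observation is exactly the paper's phrase \anf{subtle cardinals yield set-sized models of this theory}, and it is what your argument is missing.
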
    
    
    \begin{proof}[Proof of the Claim]
      By our assumption and Corollary \ref{corollary:EquiconsEsubtleEfaint}, we know that the theory $$\ZFC+\anf{\textit{$\Ord$ is essentially subtle}}$$ is consistent and, since subtle cardinals yield set-sized models of this theory, it follows that the theory $$\ZFC ~ +  ~ \anf{\textit{$\Ord$ is essentially subtle}} ~ + ~ \anf{\textit{No ordinal is a subtle cardinal in $\HOD$}}$$ is also consistent. By \eqref{item:AxiomatizeEssFaint1}, this theory proves $\psi$.  
    \end{proof}

        By  \eqref{item:AxiomatizeEssFaint2}, we can  now apply  Theorem \ref{theorem:SubtleInHOD}  to conclude that $$\ZFC+\psi\vdash\anf{\textit{$\Ord$ is essentially subtle}}$$ holds. In this situation, Proposition \ref{proposition:OrdSubtleNotFinite} shows that $\ZFC+\psi$ is inconsistent, contradicting the above claim.   
\end{proof}

Another application of Theorem \ref{theorem:SubtleInHOD} yields the following strengthening of Theorem \ref{theorem:LargeInHODintro}:

\begin{corollary}\label{corollary:LargeInHOD}
 Given a natural number $n$, if $\Ord$ is essentially faint, then unboundedly many ordinals are strongly inaccessible $C^{(n)}$-weakly shrewd cardinals in $\HOD$. 
\end{corollary}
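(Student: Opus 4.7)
My plan is to reduce the problem to one entirely internal to $\HOD$, using the machinery of subtle and $C^{(n)}$-weakly shrewd cardinals already developed in Sections \ref{section:CnStronglyUnfoldable} and \ref{section:CnWeaklyShrewd}. The first step will be to establish that the hypothesis \anf{\emph{$\Ord$ is essentially faint}} is downward absolute from $\VV$ to $\HOD$: given an $\HOD$-definable class function $E$ with $\emptyset\neq E(\alpha)\subseteq\POT{\alpha}^{\HOD}$ for all ordinals $\alpha$ and any ordinal $\xi$, the same $E$ is also definable in $\VV$ (since $\HOD$ itself is ordinal definable), and applying essential faintness in $\VV$ produces $\xi<\alpha<\beta$ and $A\in E(\beta)$ with $A\cap\alpha\in E(\alpha)$; because $E(\beta)$ and $E(\alpha)$ are subsets of $\HOD$, these witnesses automatically live in $\HOD$.

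Having relocated the assumption inside $\HOD$, I will work entirely in this inner model, fix the given natural number $n$, and split according to whether $\Ord$ is essentially subtle in $\HOD$. In the first case, Theorem \ref{theorem:OrdSubtle1} applied within $\HOD$ yields a proper class of $C^{(n)}$-strongly unfoldable cardinals of $\HOD$; these are strongly inaccessible by definition, and a short check using the embedding characterization \eqref{item:Magidor} of Lemma \ref{lemma:SigmaNsunf} together with the equality $\VV_{\bar\gamma}=\HH{\bar\gamma}$ for $\bar\gamma\in C^{(1)}$ shows that each of them also satisfies condition \eqref{item:WcnShrewdChar2} of Lemma \ref{lemma:WcnShrewdChar} and is therefore $C^{(n)}$-weakly shrewd in $\HOD$ as well.

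In the complementary case, I will apply Theorem \ref{theorem:SubtleInHOD} \emph{inside} $\HOD$, using the identity $\HOD^{\HOD}=\HOD$ so that the internal reference to $\HOD$ inside that theorem picks out the same inner model; the contrapositive will then yield unboundedly many ordinals that are subtle cardinals of $\HOD$. Lemma \ref{lemma:ShrewdFromSubtle} applied in $\HOD$ shows that every such subtle cardinal is a stationary limit of strongly inaccessible $C^{(n)}$-weakly shrewd cardinals of $\HOD$, so above any prescribed ordinal $\xi$ such cardinals abound. The main delicate point I anticipate is the legitimacy of this internal relativization of Theorem \ref{theorem:SubtleInHOD}, where the identity $\HOD^{\HOD}=\HOD$ is essential; the remainder is a direct assembly of results already proven in earlier sections.
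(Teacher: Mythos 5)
Your overall architecture (relativize the hypothesis to $\HOD$, then split on whether $\Ord$ is essentially subtle there) is close to the paper's, and your Case A and your downward-absoluteness argument for essential faintness (and, implicitly, essential subtleness) are fine. But Case B contains a genuine gap: the identity $\HOD^{\HOD}=\HOD$, which you correctly flag as essential to your internal relativization of Theorem \ref{theorem:SubtleInHOD}, is \emph{not} a theorem of $\ZFC$. By a classical result of McAloon, it is consistent that $\HOD^{\HOD}$ is a proper subclass of $\HOD$. Consequently, relativizing Theorem \ref{theorem:SubtleInHOD} to $\HOD$ only yields, in its contrapositive, a proper class of ordinals that are subtle cardinals of $\HOD^{\HOD}$ (in the sense of $\HOD$), and since subtlety is not upward absolute from an inner model, this does not give subtle cardinals of $\HOD$ itself, which is what Lemma \ref{lemma:ShrewdFromSubtle} would need to be applied to.

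The repair is short, and it is what the paper does: apply Theorem \ref{theorem:SubtleInHOD} in $\VV$, not in $\HOD$. In your Case B, $\Ord$ is not essentially subtle in $\HOD$; by the same downward-absoluteness argument you gave for essential faintness (an $\HOD$-definable sequence $\seq{E_\alpha}{\alpha\in\Ord}$ with $\emptyset\neq E_\alpha\subseteq\POT{\alpha}^{\HOD}$ and an $\HOD$-definable club are $\VV$-definable, and the witnesses produced in $\VV$ lie in $\HOD$), essential subtleness is downward absolute to $\HOD$, so its failure in $\HOD$ implies its failure in $\VV$. Now Theorem \ref{theorem:SubtleInHOD}, applied in $\VV$ where $\Ord$ is essentially faint by hypothesis, yields a proper class of cardinals that are subtle in the genuine $\HOD$, and Lemma \ref{lemma:ShrewdFromSubtle} applied inside $\HOD$ finishes the argument. (The paper's own proof splits on essential subtleness in $\VV$ rather than in $\HOD$, which makes this transfer unnecessary, but with the fix above your decomposition also goes through.)
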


\begin{proof}
  If $\Ord$ is essentially subtle, then $\Ord$ is essentially subtle in $\HOD$ and an application of Theorem \ref{theorem:OrdSubtle1} in $\HOD$ yields the desired conclusion. Hence, we may assume that $\Ord$ is not essentially subtle. Then Theorem \ref{theorem:SubtleInHOD} shows that, in $\HOD$, there is a proper class of subtle cardinals. An application of  Lemma \ref{lemma:ShrewdFromSubtle} now allows us to conclude that unboundedly many ordinals are inaccessible $C^{(n)}$-weakly shrewd cardinals in $\HOD$.    
\end{proof}

 We now continue by using the developed techniques to show that the consistency strength of the inequality of the principles \anf{\emph{$\Ord$ is essentially subtle}} and   \anf{\emph{$\Ord$ is essentially faint}} is equal to the existence of a proper class of subtle cardinals.

\begin{proof}[Proof of Theorem \ref{MAIN:SeparatePrinciples}]
 First, assume that  $${\ZFC\mathbin{+}\anf{\textit{$\Ord$ is essentially faint}}}\mathbin{\not\vdash}\anf{\textit{$\Ord$ is essentially subtle}}$$ holds. Then  Theorem \ref{theorem:OrdSubtle1} yields a natural number $n>0$ with the property that the theory $$\ZFC+\anf{\textit{$\Ord$ is essentially faint}}+\anf{\textit{There are no $C^{(n)}$-strongly unfoldable cardinals}}$$ is consistent and we can work in a model of this theory. An application of  Theorem \ref{theorem:SubtleInHOD} now directly shows that, in $\HOD$, there is a proper class of subtle cardinals. These computations show that \eqref{item:Separate1} implies \eqref{item:Separate3} in the statement of the theorem.

 Next, let $\phi$ denote the set-theoretic sentence stating that there is a proper class of subtle cardinals. A combination of Lemma \ref{lemma:ShrewdFromSubtle} with Theorem \ref{theorem:CharOrdFaint} then shows that the theory $\ZFC+\phi$ proves that $\Ord$ is essentially faint. This shows that \eqref{item:Separate3} implies \eqref{item:Separate2} in the statement of the theorem.

 Finally, assume that there is a sentence $\phi$   such that the theory $\ZFC+\phi$ is consistent and $${\ZFC\mathbin{+}\phi}\mathbin{\vdash}\anf{\textit{$\Ord$ is essentially faint}}.$$ Then Proposition \ref{proposition:OrdSubtleNotFinite} ensures that $${\ZFC\mathbin{+}\phi}\mathbin{\not\vdash}\anf{\textit{$\Ord$ is essentially subtle}}$$ and hence we can conclude that $${\ZFC\mathbin{+}\anf{\textit{$\Ord$ is essentially faint}}}\mathbin{\not\vdash}\anf{\textit{$\Ord$ is essentially subtle}}.$$ This shows that   \eqref{item:Separate2} implies \eqref{item:Separate1} in the statement of the theorem.  
\end{proof}

 In order to expand our analysis of the given class principles, we now explore a phenomenon already unveiled by {\cite[Theorem 1.11]{SRminus}} showing that the existence of weakly shrewd cardinals that are not strongly unfoldable yields reflection properties for statements of arbitrary high complexities. 
 The following lemma strengthens the result from \cite{SRminus} by  both weakening the  assumption used and establishing the existence of large cardinals that induce the given reflection properties:

 \begin{lemma}\label{lemma:Overspil}
 Given  natural numbers $n>m>0$, if $\kappa$ is  a $C^{(m)}$-weakly shrewd cardinal that is not $C^{(m)}$-strongly unfoldable, $\delta>\kappa$ is a cardinal with the property that the set $\{\delta\}$ is definable by a $\Sigma_{m+1}$-formula with parameters in $\HH{\kappa}$ and $\alpha<\kappa$ is an ordinal, then the interval $(\alpha,\delta)$ contains a  $C^{(n)}$-weakly shrewd cardinal that is not $C^{(n)}$-strongly unfoldable. 
 \end{lemma}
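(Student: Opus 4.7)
The plan is a direct construction. First, I replace $\delta$ by the minimal ordinal above $\kappa$ whose singleton is $\Sigma_{m+1}$-definable with parameters in $\HH{\kappa}$; this is harmless since $(\alpha,\delta_{\min})\subseteq(\alpha,\delta)$ and the remaining hypotheses are unchanged. Under this minimality, Lemma \ref{lemma:SubtleInHOD} ensures $\delta>\kappa$ is a cardinal that is subtle in $\HOD$, and I fix the $\Sigma_{m+1}$-formula $\varphi(v_0,v_1)$ and parameter $y\in\HH{\kappa}$ that pin down $\delta$.

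The entire problem then reduces to producing a cardinal $\beta\in[\kappa,\delta)$ that is $C^{(n)}$-weakly shrewd in $\VV$. Indeed, for such a $\beta$ the parameter $y\in\HH{\kappa}$ lies in $\HH{\beta}$ and, since $m<n$, the set $\{\delta\}$ is also $\Sigma_{n+1}$-definable from parameters in $\HH{\beta}$; the equivalence of \eqref{item:WCnShredNonSunf1} and \eqref{item:WCnShredNonSunf3} in Lemma \ref{lemma:separatePropertiesCn} then forces $\beta$ to fail $C^{(n)}$-strong unfoldability, which places $\beta$ in $(\alpha,\delta)$ with the required properties.

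Producing this $\beta$ is the main obstacle. The plan is to emulate the sequence construction from the proof of Lemma \ref{lemma:ShrewdFromSubtle} inside $\HOD$, using subtlety of $\delta$ there to find cofinally many $\HOD$-$C^{(n)}$-weakly shrewd cardinals below $\delta$ and thereby to isolate such a cardinal above $\kappa$. The difficulty is that $\HOD$-weak shrewdness need not transfer to $\VV$, since a $\VV$-witness to the failure of $C^{(n)}$-weak shrewdness at a cardinal $\gamma$ is a subset $z_\gamma\subseteq\gamma$ that may lie outside $\HOD$. To bridge this gap, I plan to apply the embedding characterization of Lemma \ref{lemma:WcnShrewdChar} to $\kappa$ at level $m$, with $\theta\in C^{(m)}$ chosen far above $\delta$ and a parameter encoding $\alpha$, $y$ and $\varphi$: by the minimality of $\delta$, the resulting embedding $j\colon X\to\HH{\theta}$ satisfies $j(\delta)=\delta$, and the pulled-back structure makes the candidate $\VV$-witness at each cardinal $\gamma\in[\kappa,\delta)$ uniformly definable from $y$ and $\delta$ by a formula of bounded complexity, placing it in $\HOD$. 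Coding these witnesses into a $\HOD$-definable class sequence and applying the subtlety of $\delta$ in $\HOD$, as in the final step of Lemma \ref{lemma:ShrewdFromSubtle}'s proof, yields two cardinals $\beta<\gamma$ with coherent witnesses, so that the witness at $\beta$ provides a smaller reflection point contradicting the choice of the witness at $\gamma$. This contradiction produces the required $\VV$-$C^{(n)}$-weakly shrewd cardinal in $[\kappa,\delta)$.
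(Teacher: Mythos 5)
Your reduction step is fine: once you have a $C^{(n)}$-weakly shrewd cardinal $\beta$ in $(\alpha,\delta)$ with the defining parameter $y$ of $\{\delta\}$ in $\HH{\beta}$, Lemma \ref{lemma:separatePropertiesCn} does force $\beta$ to fail $C^{(n)}$-strong unfoldability. But the construction of $\beta$ — which you yourself flag as the main obstacle — has a genuine gap, and the proposed bridge does not close it. A witness to the failure of $C^{(n)}$-weak shrewdness at a cardinal $\gamma$ is an arbitrary subset $z_\gamma\subseteq\gamma$ of $\VV$, and nothing forces any such witness to be ordinal definable; the set of witnesses at $\gamma$ is definable from $\gamma$, but selecting a member requires a choice that may have no representative in $\HOD$ at all. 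The embedding you propose to extract from $\kappa$'s $C^{(m)}$-weak shrewdness via Lemma \ref{lemma:WcnShrewdChar} has critical point $\bar{\kappa}<\kappa$ and reflects information \emph{downward below} $\kappa$; it does not act on, constrain, or uniformize witnesses at cardinals $\gamma\in[\kappa,\delta)$, so the claim that it makes those witnesses ``uniformly definable from $y$ and $\delta$'' has no mechanism behind it. Even granting $\HOD$-witnesses, the subtlety argument run inside $\HOD$ would only produce cardinals that are $C^{(n)}$-weakly shrewd in the sense of $\HOD$ (with $C^{(n)}$ and the $\HH{\theta}$'s computed there), which, as you note, need not transfer to $\VV$.

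The paper's proof avoids $\HOD$ and subtlety entirely and, crucially, does not insist on finding the new cardinal above $\kappa$. Fixing $\rho<\kappa$ with $\alpha<\rho$ and $y\in\HH{\rho}$, one first observes that $(\rho,\delta)\cap C^{(n+1)}=\emptyset$ (so by Lemma \ref{lemma:separatePropertiesCn} any $C^{(n)}$-weakly shrewd cardinal in $(\rho,\delta)$ is automatically not $C^{(n)}$-strongly unfoldable), then assumes for contradiction that $(\rho,\delta)$ contains no $C^{(n)}$-weakly shrewd cardinal, and packages failure-witnesses for \emph{all} cardinals in $(\rho,\delta)$ into a single least $\HH{\eta}$ whose singleton is $\Sigma_{m+1}$-definable from $\rho$ and $y$. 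Applying the embedding characterization of $\kappa$'s $C^{(m)}$-weak shrewdness then fixes $\delta$ and $\eta$, and the key point is that the critical point $\bar{\kappa}$ of the resulting embedding is itself a cardinal in $(\rho,\delta)$: pushing a witness at $\bar{\kappa}$ forward with $j$ produces a witness at $\kappa$ that is reflected by $\bar{\kappa}$ itself, a contradiction. If you want to salvage your outline, the move to make is this contradiction argument over the whole interval $(\rho,\delta)$ rather than an attempted direct construction inside $\HOD$ above $\kappa$.
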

 
 \begin{proof}
  Pick $y\in\HH{\kappa}$ with the property that the set $\{\delta\}$ is definable by a $\Sigma_{m+1}$-formula with parameter $y$.   Since $\kappa$ is a limit cardinal, we can find a  cardinal $\alpha<\rho<\kappa$ with $y\in\HH{\rho}$.  Moreover, since $n>m$, it follows that  the interval $(\rho,\delta)$ does not contain an element of $C^{(n+1)}$. 
 Now, assume, towards a contradiction, the interval $(\rho,\delta)$ does not contain a  $C^{(n)}$-weakly shrewd cardinal that is not $C^{(n)}$-strongly unfoldable. The above observation then allow us to use Lemma \ref{lemma:separatePropertiesCn} to conclude that the interval $(\rho,\delta)$ does not contain  $C^{(n)}$-weakly shrewd cardinals.  
 This shows that  for every cardinal $\mu$ in the interval $(\rho,\delta)$, there exists a set-theoretic formula $\varphi(v_0,v_1)$, a cardinal $\mu<\theta\in C^{(n)}$ and a subset $z$ of $\mu$ such that $\varphi(z,\mu)$ holds in $\HH{\theta}$ and $\varphi(z\cap\bar{\mu},\bar{\mu})$ does not hold in $\HH{\bar{\theta}}$ for all $\bar{\theta}\in C^{(n)}$   and  all  cardinals $\bar{\mu}<\min(\mu,\bar{\theta})$. 
 We can then find a cardinal $\delta<\zeta\in C^{(1)}$ with the property that, in $\HH{\zeta}$,   for every cardinal $\mu$ in the interval $(\rho,\delta)$, there exists a set-theoretic formula $\varphi(v_0,v_1)$, a cardinal $\mu<\theta\in C^{(n)}$ and a subset $z$ of $\mu$ such that $\varphi(z,\mu)$ holds in $\HH{\theta}$ and $\varphi(z\cap\bar{\mu},\bar{\mu})$ does not hold in $\HH{\bar{\theta}}$ for all $\bar{\theta}\in C^{(n)}$   and  all  cardinals $\bar{\mu}<\min(\mu,\bar{\theta})$. 
 Let $\eta$ be the least cardinal with this property. Then the set $\{\eta\}$ is definable by a $\Sigma_2$-formula with parameters $\delta$, $\rho$ and $y$. We then directly know that the set $\{\eta\}$ is also definable by a $\Sigma_{m+1}$-formula with parameters $\rho$ and $y$.

 Now,  pick $\kappa<\vartheta\in C^{(m+1)}$ and use  Lemma \ref{lemma:WcnShrewdChar} to find  $\bar{\vartheta}\in C^{(m)}$, a cardinal $\bar{\kappa}<\min(\kappa,\bar{\vartheta})$, an elementary submodel $X$ of $\HH{\bar{\vartheta}}$ with $\bar{\kappa}+1\subseteq X$ and an elementary embedding $\map{j}{X}{\HH{\vartheta}}$ with $j\restriction\bar{\kappa}=\id_{\bar{\kappa}}$, $j(\bar{\kappa})=\kappa$ and $\rho,y\in\ran{j}$. Then $y\in X$ with $j(y)=y$. Moreover, we know that $\rho<\bar{\kappa}$ and therefore $j(\rho)=\rho$. In combination with the fact that all $\Sigma_{m+1}$-statements are upwards absolute from $X$ to $\VV$, this shows that $\delta,\eta\in X$ with $j(\delta)=\delta$ and $j(\eta)=\eta$. 
 Since $\bar{\kappa}$ is a cardinal in the interval $(\rho,\delta)$ in $\HH{\eta}$, we can now find a set-theoretic formula $\varphi(v_0,v_1)$, an ordinal $\bar{\kappa}<\theta\in X\cap\eta$ and $z\in\POT{\bar{\kappa}}\cap X$ with the property that, in $\HH{\eta}$, the ordinal $\theta$ is a cardinal in $C^{(n)}$ such that  $\varphi(z,\bar{\kappa})$ holds in $\HH{\theta}$ and $\varphi(z\cap\mu,\mu)$ does not hold in $\HH{\bar{\theta}}$ for all $\bar{\theta}\in C^{(n)}$   and  all  cardinals $\mu<\min(\bar{\kappa},\bar{\theta})$. 
 The fact that $\HH{\eta}\in X$ with $j(\HH{\eta})=\HH{\eta}$ then allows us to use the elementarity of $j$ to conclude that, in $\HH{\eta}$,  the ordinal $j(\theta)$ is a cardinal in $C^{(n)}$ with the property that  $\varphi(j(z),\kappa)$ holds in $\HH{j(\theta)}$ and $\varphi(j(z)\cap\mu,\mu)$ does not hold in $\HH{\bar{\theta}}$ for all $\bar{\theta}\in C^{(n)}$   and  all  cardinals $\mu<\min(\kappa,\bar{\theta})$. 
 But, this yields a contradiction, because  $\theta$ is an element of $C^{(n)}$ in $\HH{\eta}$, $\bar{\kappa}<\min(\kappa,\theta)$ and $j(z)\cap\bar{\kappa}=z$.  
\end{proof}

As a first application of Lemma \ref{lemma:Overspil}, we show that the principle \anf{\emph{$\Ord$ is essentially subtle}} can be axiomatized over the theory $\ZFC+\anf{\textit{$\Ord$ is essentially faint}}$ in a strong way.

\begin{proof}[Proof of Proposition \ref{proposition:Difference}]
 For each natural number $n>0$, let $\phi_n$ denote the sentence in the language of set theory that states that the existence of a proper class of $C^{(n)}$-weakly shrewd cardinals  implies the existence of a proper class of $C^{(n)}$-strongly unfoldable cardinals. Define $\mathrm{T}$ to be the theory consisting of these sentences. Then Theorem \ref{theorem:OrdSubtle1}  shows that $\mathrm{T}$ holds in every model of $\ZFC$ in which $\Ord$ is essentially subtle. Moreover, a combination of Theorem \ref{theorem:OrdSubtle1}   and Theorem \ref{theorem:CharOrdFaint} shows that $\Ord$ is essentially subtle in every model of $\ZFC$ in which $\Ord$ is essentially faint and $\mathrm{T}$ holds. Finally, assume that  we work in a model of $\ZFC$ in which the sentence $\phi_m$  fails for some natural number $m>0$. Then there exists a proper class of $C^{(m)}$-weakly shrewd cardinals and there are only boundedly many $C^{(m)}$-strongly unfoldable cardinals. In particular, there exists a proper class of $C^{(m)}$-weakly shrewd cardinals  that are not $C^{(m)}$-strongly unfoldable. In this situation, a combination of Lemmas \ref{lemma:separatePropertiesCn} and \ref{lemma:Overspil} shows that for every natural number $n>0$, there exists a proper class of $C^{(n)}$-weakly shrewd cardinals. Hence, we can apply Theorem \ref{theorem:CharOrdFaint} to conclude that $\Ord$ is essentially faint in this model.  
\end{proof}

By combing Theorem \ref{theorem:SubtleInHOD} with Proposition \ref{proposition:Difference}, we can now examine the conditions that allow the proposition's statement to be strengthened to result in finite axiomatizations.

\begin{proof}[Proof of Theorem \ref{theorem:DiffSentence}]
  First, assume that the theory $$\ZFC+\anf{\textit{There is a proper class of subtle cardinals}}+\anf{\textit{$\Ord$ is essentially subtle}}$$ is inconsistent and let $\phi$ denote the set-theoretic sentence  stating that only boundedly many cardinals are subtle in $\HOD$. 
 Now, if we work in a model $\VV$ of $\ZFC$ in which $\Ord$ is essentially subtle, then $\Ord$ is essentially subtle in $\HOD$ and, since our assumption ensures that $\HOD$ has only boundedly many subtle cardinals, we can conclude that $\phi$ holds in $\VV$.   Next, if we work in a model  of $\ZFC$ in which $\Ord$ is essentially faint and $\phi$ holds, then Theorem \ref{theorem:SubtleInHOD} directly shows that $\Ord$ is essentially subtle. In combination, this shows that \eqref{item:Diff2} implies \eqref{item:Diff3} in the statement  of the theorem. 
 
 Next, assume that there is a set-theoretic sentence $\phi$ satisfying the statements \eqref{item:Diff1.1} and \eqref{item:Diff1.2} listed in the theorem. Let  $\psi$ be the set-theoretic sentence that is given by the conjunction of $\phi$ and the sentence stating that there is a proper class of subtle cardinals. Then $${\ZFC\mathbin{~ + ~ }{\psi}}\mathbin{ ~ \vdash ~ }\anf{\textit{$\Ord$ is essentially subtle}}$$ and therefore Proposition \ref{proposition:OrdSubtleNotFinite} shows that $\ZFC\vdash\neg\psi$. In this situation, our assumptions imply that $${\ZFC\mathbin{~ + ~ }{\anf{\textit{$\Ord$ is essentially subtle}}}}\mathbin{ ~ \vdash ~ }\anf{\textit{There are only boundedly many subtle cardinals}}$$ and we can conclude that \eqref{item:Diff3} implies \eqref{item:Diff2} in the statement  of the theorem. 
 
 Finally, assume that there is a  set-theoretic sentence $\phi$  satisfying the statements \eqref{item:Diff1.1} and \eqref{item:Diff1.2} listed in the theorem. Let $\mathrm{T}$ be the theory given by Proposition \ref{proposition:Difference}.  We then know that $${\ZFC\mathbin{+}{\anf{\textit{$\Ord$ is essentially faint}}}\mathbin{+}\mathrm{T}}\mathbin{ ~ \vdash ~ }\phi$$ and hence there is a finite subtheory $\mathrm{F}$ of $\mathrm{T}$ with $${\ZFC\mathbin{+}{\anf{\textit{$\Ord$ is essentially faint}}}\mathbin{+}\mathrm{F}}\mathbin{ ~ \vdash ~ }\phi.$$ Let $\psi$ be the set-theoretic sentence that is given by the disjunction of $\phi$ with the conjunction of all sentences in $\mathrm{F}$. Our assumptions then ensure that  $\psi$ satisfies the statement \eqref{item:Diff1.1} of the theorem. Moreover, the definition of $\psi$ implies that $${{\ZFC\mathbin{+}{\anf{\textit{$\Ord$ is essentially faint}}}}\mathbin{+}{\psi}}\mathbin{ ~ \vdash ~ }   \phi$$ and hence our assumption ensure that $\psi$ satisfies  statement \eqref{item:Diff1.2} of the theorem. Finally, assume that we work in a model of $\ZFC$ in which $\psi$ does not hold. Then there exists a sentence in $\mathrm{F}$ that does not hold. In this situation, the properties of the theory $\mathrm{T}$ allow us to conclude that $\Ord$ is essentially faint in this model. This shows that $\psi$ also satisfies  statement \eqref{item:Diff1.3} of the theorem. In combination, these arguments show that  \eqref{item:Diff1} implies \eqref{item:Diff2} in the statement  of the theorem and therefore these assumptions are equivalent.  
\end{proof}

We end this section by using Lemma \ref{lemma:Overspil} to show that the principle \anf{\emph{$\Ord$ is essentially faint}}  does not imply the existence of strongly inaccessible cardinals.

\begin{proof}[Proof of Theorem \ref{MAIN:Equicons}]
 Work in a model of $\ZFC + {\VV=\LL}$ in which a proper class of subtle cardinals exists and no inaccessible cardinal is a limit of subtle cardinals. 
 Let $S$ denote the class of all subtle cardinals. Define $R$ to be the unique class function with domain $S$ that satisfies $R(\min(S))=\omega$ and $R(\delta)=\sup(S\cap\delta)^+$ for all $\min(S)<\delta\in S$. Our assumptions then ensure that $R$ is a well-defined regressive function.  
  Define $\PPP$ to be the class partial order that is given by the Easton support product of all partial orders of the form $\Add{R(\delta)}{\delta}$ with $\delta\in S$ and  let $G$ be $\PPP$-generic over $\VV$. 
  Then standard  arguments (as in {\cite[pp. 235--236]{MR1940513}}) show that $\VV[G]$ is a model of $\ZFC$ with the same regular cardinals as $\VV$.

 \begin{claim*}
   There are no inaccessible cardinals in $\VV[G]$. 
 \end{claim*}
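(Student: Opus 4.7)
The plan is to show that every cardinal that would be inaccessible in $\VV[G]$ is stripped of the strong-limit property by a single factor of $\PPP$. Assume toward a contradiction that $\kappa$ is inaccessible in $\VV[G]$. Since $\VV[G]$ and $\VV$ have the same regular cardinals and $(2^{\lambda})^{\VV}\leq (2^\lambda)^{\VV[G]}$ for all $\lambda<\kappa$, the cardinal $\kappa$ is already inaccessible in $\VV$. By the choice of $\VV$, $\kappa$ is not a limit of subtle cardinals, so $\eta := \sup(S \cap \kappa) < \kappa$, and since $\kappa$ is a limit cardinal, $\eta^+ < \kappa$ as well.

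Let $\delta$ be the least element of $S$ with $\delta \geq \kappa$, which exists because $S$ is a proper class. By the minimality of $\delta$ we have $S \cap \delta = S \cap \kappa$, so either $R(\delta) = \eta^+$ or, in the corner case $\delta = \min(S)$, $R(\delta) = \omega$. In either case $R(\delta) < \kappa$. Factor $\PPP$ as $\PPP_0 \times \Add{R(\delta)}{\delta} \times \PPP_1$, where $\PPP_0$ and $\PPP_1$ are the Easton support products of $\Add{R(\beta)}{\beta}$ over $\beta \in S \cap \delta$ and over $\beta \in S \setminus (\delta + 1)$ respectively, and correspondingly write $G = G_0 \times H \times G_1$. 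The generic $H$ yields $\delta$ pairwise distinct Cohen subsets of $R(\delta)$, all lying in $\VV[G]$, so $(2^{R(\delta)})^{\VV[G]} \geq \delta \geq \kappa$, contradicting the fact that $\kappa$ is a strong limit in $\VV[G]$.

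The main obstacle is verifying that these $\delta$ Cohen subsets of $R(\delta)$ really do survive as pairwise distinct subsets in the full extension $\VV[G]$, rather than being identified or absorbed by the side factors. This is a routine Easton-product calculation that uses $\GCH$ in $\VV = \LL$: each individual factor $\Add{R(\beta)}{\beta}$ is $R(\beta)^+$-cc of cardinality $\beta$, so $\PPP_0$ is a small forcing whose entire extension lies inside a fixed $\VV_\xi$ for some $\xi < \delta$ and cannot affect the distinctness of the new subsets of $R(\delta)$; meanwhile every factor of $\PPP_1$ is $<R(\beta)$-closed with $R(\beta) > \delta > R(\delta)$, so $\PPP_1$ adds no new subsets of $R(\delta)$ whatsoever and in particular cannot collapse any two of the subsets contributed by $H$.
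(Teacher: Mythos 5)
Your argument is correct and follows essentially the same route as the paper: pull the inaccessibility of $\kappa$ back to $\VV$, use the ground-model hypothesis to get $R(\delta)<\kappa$ for $\delta=\min(S\setminus\kappa)$, and observe that the factor $\Add{R(\delta)}{\delta}$ forces $(2^{R(\delta)})^{\VV[G]}\geq\delta\geq\kappa$. The only difference is that you spell out the factoring and closure/chain-condition details (and worry about the new subsets being "identified", which is a non-issue since distinctness of sets is upward absolute), whereas the paper delegates all of this to the standard Easton-product analysis it has already cited.
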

  
  \begin{proof}[Proof of the Claim]
   Assume, towards a contradiction, that there is an inaccessible $\kappa$ in $\VV[G]$. 
   Set $\delta=\min(S\setminus\kappa)\geq\kappa$. Since $\kappa$ is not a limit of subtle cardinals in $\VV$, we know that $R(\delta)<\kappa$. 
   Moreover, our setup ensures that $$\big(2^{R(\delta)}\big)^{\VV[G]} ~ \geq ~ \delta ~ \geq ~ \kappa$$ holds, contradicting the inaccessibility of $\kappa$ in $\VV[G]$.  
  \end{proof}

  \begin{claim*}
   In $\VV[G]$, there is a proper class of weakly shrewd cardinals. 
  \end{claim*}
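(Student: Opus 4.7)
The plan is to show that every subtle cardinal $\delta$ of $\VV$ (that is, every $\delta\in S$) remains weakly shrewd in $\VV[G]$. Since $S$ is a proper class in $\VV$ and $\PPP$ preserves all cardinals, this will suffice. The key feature of the construction is that the regressive function $R$ was designed so that the tail of the forcing above each $\delta\in S$ is very closed: if $\delta'\in S$ and $\delta'>\delta$, then $R(\delta')=\sup(S\cap\delta')^+\geq\delta^+$, so $\PPP_{>\delta}$, the Easton product of the factors indexed by $S\cap(\delta,\Ord)$, is $\delta^+$-closed in $\VV$. In particular $\PPP_{>\delta}$ adds no subsets of $\delta$ and no new elements of $\HH{\delta^+}^{\VV}$. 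Under the GCH in $\VV=\LL$, the complementary factor $\PPP_{\leq\delta}$ has cardinality $\delta$. Moreover, $\delta$ is weakly shrewd in $\VV$ since it is subtle, hence totally indescribable, hence shrewd, and Proposition~\ref{proposition:BasicWeaklyShrewd} applies.

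I would verify weak shrewdness of $\delta$ in $\VV[G]$ using the embedding characterization of Lemma~\ref{lemma:WcnShrewdChar} with $n=1$. Given a cardinal $\theta>\delta$ of $\VV[G]$ lying in $C^{(1)}$ as computed in $\VV[G]$, and an element $y\in\HH{\theta}^{\VV[G]}$, cardinal preservation ensures that $\theta$ is a cardinal of $\VV$, while the closure of $\PPP_{>\theta}$ yields $\HH{\theta}^{\VV[G]}=\HH{\theta}^{\VV[G_{\leq\theta}]}$. Hence $y$ has a $\PPP_{\leq\theta}$-name $\dot y\in\HH{\theta^+}^{\VV}$, and some condition $q\in G_{\leq\theta}$ decides enough of $\dot y$ to pin down the relevant feature. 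Fixing a cardinal $\theta^*$ in $C^{(1)}$ of $\VV$ slightly above $\theta$, I apply Lemma~\ref{lemma:WcnShrewdChar} in $\VV$ to the parameter $w=\langle q,\dot y,\PPP_{\leq\theta},\theta\rangle\in\HH{\theta^*}^{\VV}$. This produces $\bar\delta<\delta$, a cardinal $\bar\theta^*$ in $C^{(1)}$ of $\VV$, an elementary submodel $X\prec\HH{\bar\theta^*}^{\VV}$ with $\bar\delta+1\subseteq X$, and an elementary $j:X\to\HH{\theta^*}^{\VV}$ satisfying $j\restriction\bar\delta=\id_{\bar\delta}$, $j(\bar\delta)=\delta$ and $w\in\ran{j}$. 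By definability of $S$, the preimage $j^{-1}(\PPP_{\leq\theta})$ equals $\PPP_{\leq\bar\theta}$, where $\bar\theta=j^{-1}(\theta)$.

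The principal obstacle is transporting the reflection back into $\VV[G]$: the reflected condition $\bar q=j^{-1}(q)\in\PPP_{\leq\bar\theta}$ need not lie in the induced generic $G\restriction\PPP_{\leq\bar\theta}$. I would resolve this by a density argument in the spirit of Lemma~\ref{lemma:ShrewdFromSubtle}: the weak shrewdness of $\delta$ in $\VV$ supplies stationarily many candidate $\bar\delta<\delta$, each carrying a reflected condition, so the collection of such conditions is dense in $\PPP_{\leq\delta}$ below $q$ and genericity forces at least one of them into $G$. For such a $\bar\delta$, the embedding $j$ lifts across the forcing to $\bar j:X[G_{\leq\bar\theta}]\to\HH{\theta^*}^{\VV[G_{\leq\theta}]}=\HH{\theta^*}^{\VV[G]}$ with $y\in\ran{\bar j}$, delivering the data required by Lemma~\ref{lemma:WcnShrewdChar} to witness the weak shrewdness of $\delta$ in $\VV[G]$. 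The technical heart of the argument is verifying both the density of the reflected conditions below $q$ and the elementarity of the lifted embedding, both of which hinge on the careful coherence between the Easton factorization and the preimage $j^{-1}(\PPP_{\leq\theta})=\PPP_{\leq\bar\theta}$.
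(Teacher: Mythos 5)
There is a genuine gap, and it occurs at the very first step. Your plan rests on the chain \anf{subtle $\Rightarrow$ totally indescribable $\Rightarrow$ shrewd $\Rightarrow$ weakly shrewd}, but the first implication is false: subtle cardinals are inaccessible \emph{limits} of totally indescribable cardinals, not totally indescribable themselves. In fact, no $\delta\in S$ is weakly shrewd in the ground model of this construction. Since no inaccessible cardinal is a limit of subtle cardinals there, every $\delta\in S$ is the least subtle cardinal above $\gamma=\sup(S\cap\delta)<\delta$; applying weak shrewdness of $\delta$ to the statement \anf{$\delta$ is subtle, $z$ is an ordinal successor $\gamma+1$, and no cardinal in the interval $(\gamma,\delta)$ is subtle} (with $z=\gamma+1\subseteq\delta$, noting that subtlety is absolute to $\HH{\bar{\theta}}$ for any cardinal $\bar{\theta}>\bar{\delta}$) would produce a subtle cardinal in $(\gamma,\delta)$, a contradiction. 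So the class of cardinals you propose to show weakly shrewd in $\VV[G]$ is not even weakly shrewd in $\VV$, and the strategy cannot be repaired by working harder on the lifting.

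The paper's proof targets different cardinals: it factors $\PPP$ as $\QQQ\times\RRR\times(\text{tail})$, passes to the intermediate model $M=\VV[\text{tail generic}]$ in which $\delta$ is still subtle, and uses Lemma~\ref{lemma:ShrewdFromSubtle} \emph{in $M$} to find an inaccessible weakly shrewd $\kappa$ strictly inside the interval $(R(\delta),\delta)$; it is these $\kappa$, not the members of $S$, that are shown to remain weakly shrewd in $\VV[G]$ (they lose inaccessibility there, which is exactly the point of the theorem). Your closing density sketch also does not engage with the real obstruction in the lifting: the factor $\RRR=\Add{R(\delta)}{\delta}$ has $\delta$-many coordinates lying far above the critical point $\bar{\kappa}$ of the reflecting embedding $j$, so $j$ moves the support of conditions and one cannot expect $j[G_1\cap X]\subseteq G_1$. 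The paper resolves this not by density but by choosing a permutation $\sigma$ of $\delta$ extending $j\restriction(X\cap\delta)$, using the induced automorphism $\tau$ of $\RRR$ to replace $G_1$ by $G_1'=\tau^{-1}[G_1]$, and lifting $j$ along $G_1''=G_1'\cap X$; some such device is unavoidable here.
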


  \begin{proof}[Proof of the Claim]
      Fix an ordinal $\alpha>\omega$ and pick  $\delta\in S$ with $R(\delta)>\alpha$.   Set $S_0=S\cap\delta$ and let $\QQQ$ denote the Easton support product of all partial orders of the form $\Add{R(\gamma)}{\gamma}$ with $\gamma\in S_0$, as constructed in $\VV$. In addition, let $\RRR$ denote the partial order $\Add{R(\delta)}{\delta}$ as constructed in $\VV$. 
 A standard factor analysis of the class partial order $\PPP$ (again, as in {\cite[pp. 235--236]{MR1940513}}) now allows us to find an inner model $M$ of $\VV[G]$ such that  $\VV\subseteq M$, $({}^\delta\VV)^M\subseteq\VV$ and there    are $G_0,G_1\in\VV[G]$ such that $G_0\times G_1$ is $(\QQQ\times\RRR)$-generic over $M$ with $\VV[G]=M[G_0,G_1]$.   
   Then $\delta$ is a subtle cardinal in $M$ and   the definition of $R$ in $\VV$ ensures that $\delta$ is the minimal subtle cardinal above $R(\delta)$ in $M$.     In particular, it follows that the set $\{\delta\}$ is definable by a $\Sigma_2$-formula with parameter $R(\delta)$ in $M$.

   An application of   Lemma \ref{lemma:ShrewdFromSubtle} now shows that, in $M$, the interval $(R(\delta),\delta)$ contains an inaccessible  weakly shrewd cardinal $\kappa$. 
      In the following, fix a cardinal $\theta>\delta$ that is an element of $C^{(1)}$ in $\VV[G]$ and an element $z$ of $\HH{\theta}^{\VV[G]}$. 
   Pick a $(\QQQ\times\RRR)$-name $\dot{z}$ in $M$ with $z=\dot{z}^{G_0\times G_1}$ and an ordinal $\vartheta>\theta$ that is an element of $C^{(2)}$ in $M$ and satisfies $\dot{z}\in\HH{\vartheta}^M$. 
   Proposition \ref{proposition:BasicWeaklyShrewd} and Lemma \ref{lemma:WcnShrewdChar} now show that, in $M$, we can find 
   a cardinal $\bar{\vartheta}\in C^{(1)}$, 
   a cardinal $\bar{\kappa}<\min(\kappa,\bar{\vartheta})$, 
    an elementary submodel $X$ of $\HH{\bar{\vartheta}}$ of cardinality $\bar{\kappa}$ with $\bar{\kappa}+1\subseteq X$ and 
    an elementary embedding $\map{j}{X}{\HH{\vartheta}}$ with $j\restriction\bar{\kappa}=\id_{\bar{\kappa}}$, $j(\bar{\kappa})=\kappa$  and $R(\delta),\theta,\dot{z}\in\ran{j}$. 
    We then know that $R(\delta)<\bar{\kappa}$ and this directly implies that $\delta\in X$ with $j(\delta)=\delta$. In particular, it follows that $\bar{\kappa}<\kappa<\delta<\bar{\vartheta}$. Since the partial order $\QQQ\times\RRR$ has cardinality $\delta$ in $M$, we know that the cardinals $\vartheta$ and $\bar{\vartheta}$ are elements of $C^{(1)}$ in both $M[G_0]$ and $\VV[G]$. 
    Moreover, since  $X$ has cardinality $\bar{\kappa}<\delta$ in $M$, there is a permutation $\sigma$ of $\delta$ in $M$  that extends  the injection  $\map{j\restriction(X\cap\delta)}{X\cap\delta}{\delta}$.  
   Since $\RRR=\Add{R(\delta)}{\delta}^M$, this  shows that $M$ also contains an automorphism $\tau$ of  $\RRR$ that is induced by  the action of $\sigma$ on the supports of the conditions in $\Add{R(\delta)}{\delta}$,\footnote{Here, we view conditions in partial orders of the form $\Add{\mu}{\nu}$ as  functions $p$ of cardinality less than $\mu$ with $\dom{p}\subseteq\nu$ and $p(\gamma)\in\Add{\mu}{1}$ for every $\gamma\in\dom{p}$.}  
     in the sense that $\dom{\tau(p)}=\sigma[\dom{p}]$ and $\tau(p)(\sigma(\gamma))=p(\gamma)$ holds for every condition $p$ in $\Add{R(\delta)}{\delta}$ and all $\gamma\in\dom{p}$. 
   Since      our setup ensures that  $\bar{\kappa}$ is inaccessible in $M$ and $\HH{\bar{\kappa}}^M\subseteq X$ with $j\restriction\HH{\bar{\kappa}}^M=\id_{\HH{\bar{\kappa}}^M}$, it follows that  $\Add{R(\delta)}{1}^M\cup\{\Add{R(\delta)}{1}^M\}\subseteq X$ and $j\restriction\Add{R(\delta)}{1}^M=\id_{\Add{R(\delta)}{1}^M}$. 
   Moreover, we also know that $\RRR\in X$ with $j(\RRR)=\RRR$. 
   Finally, our definitions ensure that  $\tau(p)=j(p)$ holds for every condition $p$ in $\RRR$ that is an element of $X$.

  Next, since $\POT{\QQQ}\in\HH{\bar{\kappa}}^M\subseteq X$, we know that $G_0$ is $\QQQ$-generic over $X$, 
  $\HH{\vartheta}^M[G_0]=\HH{\vartheta}^{M[G_0]}$ and 
  the model $X[G_0]$ is an elementary submodel of $\HH{\bar{\vartheta}}^{M[G_0]}$. 
  Moreover, the fact that  $j\restriction\QQQ=\id_{\QQQ}$ implies that   there is a canonical lifting $$\map{j_0}{X[G_0]}{\HH{\vartheta}^{M[G_0]}}$$ of $j$ to $X[G_0]$. 
   Now, set $G_1^\prime=\tau^{{-}1}[G_1]\subseteq\RRR$. Since  $\tau$ is an element of $\HH{\vartheta}^M$, it follows that $G_1^\prime$ is $\RRR$-generic over $M[G_0]$ with $$\HH{\vartheta}^{M[G_0]}[G_1^\prime] ~ = ~ \HH{\vartheta}^{M[G_0]}[G_1] ~ = ~ \HH{\vartheta}^{\VV[G]}.$$ 
   Set $G_1^{\prime\prime}=G^\prime_1\cap X$.  
     Since the partial order $\QQQ$ has cardinality less than $\bar{\kappa}$ in $M$ and the partial order $\RRR=\Add{R(\delta)}{\delta}$ satisfies the $\bar{\kappa}$-chain condition in $M$, if follows that   $\RRR$ satisfies the $\bar{\kappa}$-chain condition in $M[G_0]$. 
 Together with the fact that  $\RRR\cap X=\RRR\cap X[G_0]$, this  ensures that $G_1^{\prime\prime}$ is $\RRR$-generic over $X[G_0]$ and $X[G_0,G_1^{\prime\prime}]$ is an elementary submodel of $\HH{\bar{\vartheta}}^{\VV[G]}$.  
   Moreover, since we have ensured that $j_0[G_1^{\prime\prime}]=j[(\tau^{{-}1}[G_1])\cap X]\subseteq G_1$, there is a canonical lifting $$\map{j_1}{X[G_0,G_1^{\prime\prime}]}{\HH{\vartheta}^{\VV[G]}}$$ of $j_0$ to $X[G_0,G_1^{\prime\prime}]$. 
  Pick $\bar{\theta}\in X$ with $j(\bar{\theta})=\theta$. Since $\bar{\vartheta}$ is an element of $C^{(1)}$ in $\VV[G]$, elementarity implies that $\bar{\theta}$ is also an element of this class. Set $Y=\HH{\bar{\theta}}^{\VV[G]}\cap X[G_0,G_1^{\prime\prime}]$ and define $i=j_1\restriction Y$. 
  Since $\HH{\bar{\theta}}^{\VV[G]}\in X[G_0,G_1^{\prime\prime}]$,  elementarity ensures that the set $Y$ is an elementary submodel of $\HH{\bar{\theta}}^{\VV[G]}$. Moreover, our setup ensures that $\map{i}{Y}{\HH{\theta}^{\VV[G]}}$ is an elementary embedding with $i\restriction\bar{\kappa}=\id_{\bar{\kappa}}$, $i(\bar{\kappa})=\kappa$, $i(\delta)=\delta$ and $z\in\ran{i}$. 
   These computations show that, in $\VV[G]$, the cardinal $\kappa>\alpha$ is  weakly shrewd. 
  \end{proof}

 In combination, the above claims show that $\VV[G]$ is a model of $\ZFC$ that contains a proper class of weakly shrewd cardinals and in which no strongly inaccessible cardinals exist.

 Now, let $\phi$ denote the sentence in the language of set-theory stating that there are no strongly inaccessible cardinals and there is a proper class of weakly shrewd cardinals. Then $\phi$ implies that there is a proper class of weakly shrewd cardinals that are not strongly unfoldable and, by Lemma \ref{lemma:Overspil}, this conclusion ensures  that for every natural number $n>0$, there is a proper class of $C^{(n)}$-weakly shrewd cardinals. Therefore,  Theorem \ref{theorem:CharOrdFaint}  shows that   $${\ZFC\mathbin{+}\phi}\mathbin{\vdash}{\anf{\textit{$\Ord$ is essentially faint}}\mathbin{+}\anf{\textit{There are no strongly inaccessible cardinals}}}.$$ In addition, the above computations show that the consistency of the theory $$\ZFC\mathbin{+}\anf{\textit{There is a proper class of subtle cardinals}}$$ implies the consistency of $\ZFC+\phi$. Hence, we know that   \eqref{item:EquiconsNonInacc1} implies \eqref{item:EquiconsNonInacc3} in the statement  of the theorem.

 Next, if we work in a model of $\ZFC$ in which $\Ord$ is essentially faint and there are no strongly inaccessible cardinals, then Theorem \ref{theorem:OrdSubtle1} shows that $\Ord$ is not essentially subtle  and Theorem \ref{theorem:SubtleInHOD} allows us to conclude that a proper class of cardinals in subtle in $\HOD$. In particular, we know that  \eqref{item:EquiconsNonInacc2} implies \eqref{item:EquiconsNonInacc1} in the statement  of the theorem. 

  This completes the proof of the theorem, because the remaining implication from   \eqref{item:EquiconsNonInacc3} to \eqref{item:EquiconsNonInacc2} in its statement is trivial.  
\end{proof}


\section{Abstract logics}\label{section:AbstractLogics}

In the remainder of this paper, we will connect the theory developed above with the study of structural properties of abstract logics.  
We start by recalling the relevant definitions. Our setup is based on the definitions given in {\cite[Section 2.5]{MR1059055}} and our presentation closely follows those provided in \cite{MR4093885}, \cite{bdgm} and \cite{outwardcompactness}.

\begin{definition}\label{definition:StructuresAndRenamings}
  \begin{enumerate}
    \item A \emph{language}  is a tuple $$\tau ~ = ~ \langle\CCCC_\tau,\FFFF_\tau,\RRRR_\tau,\aaaa_\tau\rangle,$$ where $\CCCC_\tau$,  $\FFFF_\tau$ and $\RRRR_\tau$ are pairwise disjoint sets and $\map{\aaaa_\tau}{\FFFF_\tau\cup\RRRR_\tau}{\omega\setminus\{0\}}$ is a function.  We then call $\CCCC_\tau$ the \emph{set of constant symbols} of $\tau$, $\FFFF_\tau$ the \emph{set of function symbols} of $\tau$, $\RRRR_\tau$ the \emph{set of relation symbols} of $\tau$, $\aaaa_\tau$  the \emph{arity function} of $\tau$ and $\SSSS_\tau=\CCCC_\tau\cup\FFFF_\tau\cup\RRRR_\tau$ the set of \emph{logical symbols} of $\tau$.

    \item Given a language $\tau$, a \emph{$\tau$-structure} is a tuple $$M ~ = ~ \langle\dom{M},\seq{c^M}{c\in\CCCC_\tau}, \seq{f^M}{f\in\FFFF_\tau},\seq{r^M}{r\in\RRRR_\tau}\rangle,$$  where $\dom{M}$ is a non-empty set, each $c^M$ is an element of $\dom{M}$,     each $f^M$ is an $\aaaa_\tau(f)$-ary function on $\dom{M}$ and each $r^M$ is an $\aaaa_\tau(r)$-ary relation on $\dom{M}$. 
    
      \item Given a language $\tau$,  we let $\Str_\tau$ denote the class of all $\tau$-structures.  
      
      \item Given a language $\tau$ and $M,N\in\Str_\tau$, an \emph{isomorphism} between $M$ and $N$ is a bijection $\map{\pi}{\dom{M}}{\dom{N}}$ satisfying: 
       \begin{enumerate}
         \item If   $c\in\CCCC_\tau$, then $\pi(c^M)=c^N$. 
         
         \item If $f\in\FFFF_\tau$ and $x_0,\ldots,x_{\aaaa_\tau(f)-1}\in\dom{M}$, then $$\pi(f^M(x_0,\ldots,x_{\aaaa_\tau(f)-1})) ~ = ~ f^N(\pi(x_0),\ldots,\pi(x_{\aaaa_\tau(f)-1})).$$ 
         
         \item If $r\in\RRRR_\tau$ and $x_0,\ldots,x_{\aaaa_\tau(r)-1}\in\dom{M}$, then $$r^M(x_0,\ldots,x_{\aaaa_\tau(r)-1}) ~ \Longleftrightarrow ~ r^N(\pi(x_0),\ldots,\pi(x_{\aaaa_\tau(r)-1})).$$  
       \end{enumerate}
      
       \item Given a language $\tau$ and $M,N\in\Str_\tau$, the structure $M$ is a \emph{substructure} of the structure $N$ if $\dom{M}\subseteq\dom{N}$, $c^M=c^N$ for all $c\in\CCCC_\tau$, $f^M=f^N\restriction\dom{M}^{\aaaa_\tau(f)}$ for all $f\in\FFFF$ and $r^M=r^N\cap\dom{M}^{\aaaa_\tau(r)}$ for all $r\in\RRRR$. 
      
      \item A language $\sigma$ is a \emph{sublanguage} of a language $\tau$ if $\mathfrak{C}_\sigma\subseteq\mathfrak{C}_\tau$, $\mathfrak{F}_\sigma\subseteq\mathfrak{F}_\tau$, $\mathfrak{R}_\sigma\subseteq\mathfrak{R}_\tau$ and $\aaaa_\sigma=\aaaa_\tau\restriction(\mathfrak{F}_\sigma\cup\mathfrak{R}_\sigma)$.  
      
      \item Given a sublanguage $\sigma$ of a language $\tau$ and $M\in\Str_\tau$, the \emph{$\sigma$-reduct} $M\mathbin{\restriction}\sigma$ of $M$ is the unique element $N$ of $\Str_\sigma$ with $\dom{N}=\dom{M}$,  $c^N=c^M$ for all $c\in\CCCC_\sigma$, $f^N=f^M$ for all $f\in\FFFF_\sigma$ and $r^N=r^M$ for all $r\in\RRRR_\sigma$.

    \item A \emph{renaming} of a language $\sigma$ into a language $\tau$ is a bijection $$\map{u}{\SSSS_\sigma}{\SSSS_\tau}$$ satisfying $u[\mathfrak{C}_\sigma]=\mathfrak{C}_\tau$, $u[\mathfrak{F}_\sigma]=\mathfrak{F}_\tau$, $u[\mathfrak{R}_\sigma]=\mathfrak{R}_\tau$ and $\aaaa_\sigma(s)=\aaaa_\tau(u(s))$ for all $s\in\mathfrak{F}_\sigma\cup\mathfrak{R}_\sigma$. 
    
    \item Given a renaming $u$ of a language $\sigma$ into a language $\tau$,   we let $\map{u^*}{\Str_\sigma}{\Str_\tau}$ denote the   bijective class function with the property that for all $M\in \Str_\sigma$, we have $\dom{u^*(M)}=\dom{M}$ and  $u(s)^{u^*(M)}=s^M$ for all $s\in\SSSS_\sigma$.   
  \end{enumerate}
 \end{definition}

  Note that, if $u$ is a renaming of a language $\sigma$  into a language $\tau$, then the inverse $u^{{-}1}$ of $u$ is a renaming of $\tau$ into $\sigma$ with the property that $(u^{{-1}})^*=(u^*)^{{-}1}$ holds.

  We are now ready to introduce the notion of an abstract logic:

 \begin{definition}\label{definition:AbstractLogic}
  An \emph{abstract logic}  consists of a class function $\calL$ and a binary class relation $\models_\calL$ satisfying  the following statements: 
     \begin{enumerate}
      \item The domain of $\calL$ is the class of all languages. 
      
      \item If $M\models_\calL\phi$ holds, then there exists a language $\tau$ such that $M\in\Str_\tau$ and $\phi\in\calL(\tau)$.  
      
      \item  (Monotonicity) If $\sigma$ is a sublanguage of $\tau$, then $\calL(\sigma)\subseteq\calL(\tau)$. 
      
      \item\label{item:AbstractLogicExpansion} (Expansion) If $\sigma$ is a sublanguage of $\tau$, $M\in\Str_\tau$ and $\phi\in\calL(\sigma)$, then $$M\models_\calL\phi ~ \Longleftrightarrow ~ {M\mathbin{\restriction}\sigma}\models_\calL\phi.$$
      
       \item\label{item:AbstractLogic-Iso} (Isomorphism) Given a language $\tau$ and isomorphic $\tau$-structures $M$ and $N$, we have $$M\models_\calL\phi ~ \Longleftrightarrow ~ N\models_\calL\phi$$ for all $\phi\in\calL(\tau)$.  
       
      \item\label{item:AbstractLogic-renaming}  (Renaming) Every renaming $u$ of a language $\sigma$ into a language $\tau$ induces a unique bijection $\map{u_*}{\calL(\sigma)}{\calL(\tau)}$ with the property that $$M\models_\calL\phi ~ \Longleftrightarrow ~ u^*(M)\models_\calL u_*(\phi)$$  holds for every $M\in\Str_\sigma$ and every $\phi\in\calL(\sigma)$.  
            
     \item\label{item:occurrenceNumber} (Occurrence number) There exists an infinite cardinal $\ooo$   with the property that for every language $\tau$ and every $\phi\in\calL(\tau)$, there exists a sublanguage $\sigma$ of $\tau$ with $\betrag{\SSSS_\sigma}<\ooo$ and $\phi\in\calL(\sigma)$. The least such cardinal is called the \emph{occurrence number} of the logic. 
     \end{enumerate}
  \end{definition}

  In the following, we just write $\calL$ to denote an abstract logic. We then call the corresponding relation $\models_\calL$ the \emph{satisfaction relation} of $\LL$ and, given a language $\tau$,  we say that $\calL(\tau)$ is the set of \emph{$\tau$-sentences}. In addition, we write $\ooo_\calL$ to denote the occurrence number of an abstract logic $\calL$. 
  Note that the uniqueness of $u_*$ in Item \eqref{item:AbstractLogic-renaming} of Definition \ref{definition:AbstractLogic} implies that $(u^{{-}1})_*=(u_*)^{{-}1}$ holds for every renaming $u$.

  The above definitions now allow us to generalize fundamental concepts from first-order logic to all abstract logics.

  \begin{definition}\label{definition:AbstractLogicNotions}
   Let $\calL$ be an abstract logic. 
   \begin{enumerate}
    \item An \emph{$\calL$-theory} is a set  $T$ with the property that $T\subseteq\calL(\tau)$ for some  language $\tau$. 
    
    \item An $\calL$-theory $T$ is \emph{satisfiable} if there is a language $\tau$ with $T\subseteq\calL(\tau)$ and $M\in\Str_\tau$ with $M\models_\calL\phi$ for all $\phi\in T$. 
    
    \item  Given an  infinite cardinal $\kappa$, an  $\calL$-theory $T$ is \emph{${<}\kappa$-satisfiable}  if every subset of $T$ of cardinality less than $\kappa$ is satisfiable. 
    
    \item An infinite cardinal $\kappa$ is a \emph{strong compactness cardinal for $\calL$} if every ${<}\kappa$-satisfiable $\calL$-theory is satisfiable. 
    
    \item An infinite cardinal $\kappa$ is a \emph{weak compactness cardinal for $\calL$} if every ${<}\kappa$-satisfiable $\calL$-theory of cardinality $\kappa$ is satisfiable.  
    
    \item Given a language $\tau$ and $N\in\Str_\tau$, an \emph{$\calL$-elementary substructure} of $N$ is a substructure $M$ of $N$ with the property that $$M\models\phi ~ \Longleftrightarrow ~ N\models\phi$$ holds for all $\phi\in\calL(\tau)$. 
    
    \item An uncountable cardinal $\kappa$ is a \emph{L\"owenheim--Skolem--Tarski number} for $\calL$ if for every language $\tau$ with  $\betrag{\SSSS_\tau}<\kappa$ and every $\tau$-structure $N$, there exists an $\calL$-elementary substructure $M$ of $N$ with $\betrag{\dom{M}}<\kappa$.  
    
    \item\label{item:WeakLST} An uncountable cardinal $\kappa$ is a \emph{weak L\"owenheim--Skolem--Tarski number} for $\calL$ if for every language $\tau$ with the property that $\betrag{\SSSS_\tau}<\kappa$ and every $\tau$-structure $N$ with $\betrag{\dom{N}}=\kappa$, there exists an $\calL$-elementary substructure $M$ of $N$ with $\betrag{\dom{M}}<\kappa$.  
    
       \item\label{item:StrictkLST} An uncountable cardinal $\kappa$ is a \emph{strict L\"owenheim--Skolem--Tarski number} for $\calL$ if for every language $\tau$ with the property that $\betrag{\SSSS_\tau}<\kappa$, every $\tau$-structure $N$ with $\betrag{\dom{N}}=\kappa$ and every $\phi\in\calL(\tau)$ with $N\models_\calL\phi$, there exists a  substructure $M$ of $N$ with $\betrag{\dom{M}}<\kappa$ and $M\models_\calL\phi$.  
   \end{enumerate}
  \end{definition}

  In {\cite[Section 4]{bdgm}}, it is verified that  Makowsky's Theorem \ref{theorem:Makowsky} holds true with respect to the above definitions. 
  In another direction, an unpublished result of Stavi shows that the validity of Vop\v{e}nka's Principle is also equivalent to the fact that  analogs of the Downward L\"owenheim--Skolem  Theorem hold for all abstract logics (see {\cite[Theorem 6]{MR2833152}}). For sake of completeness and to motivate the notions introduced in Items \eqref{item:WeakLST} and \eqref{item:StrictkLST} of Definition \ref{definition:AbstractLogicNotions}, we present a proof of Stavi's result.

 \begin{theorem}[Stavi]\label{theorem:Stavi}
  The following schemes are equivalent over $\ZFC$: 
   \begin{enumerate}
   \item\label{item:StaviVP} Vop\v{e}nka's Principle. 
   
   \item\label{item:StaviLSTNumber} Every abstract logic  has a L\"owenheim--Skolem--Tarski number. 
   
   \item\label{item:StaviSTRICLST} For every abstract logic $\calL$, there exists an uncountable cardinal $\kappa$ with the property that for every language $\tau$ with $\betrag{\SSSS_\tau}<\kappa$, every $\tau$-structure $N$  and every $\phi\in\calL(\tau)$ with $N\models_\calL\phi$, there exists a  substructure $M$ of $N$ with $\betrag{\dom{M}}<\kappa$ and $M\models_\calL\phi$.  
   \end{enumerate}
 \end{theorem}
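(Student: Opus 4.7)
My plan is to establish the three-way equivalence cyclically: $(2) \Rightarrow (3) \Rightarrow (1) \Rightarrow (2)$, with the first step essentially immediate. For $(2) \Rightarrow (3)$, if $\kappa$ is an LST number for $\calL$ and $N \models_\calL \phi$ with $|\SSSS_\tau| < \kappa$, then the $\calL$-elementary substructure $M$ of $N$ with $|\dom{M}| < \kappa$ provided by the LST property satisfies every $\calL(\tau)$-sentence that $N$ does; in particular $M \models_\calL \phi$, so $\kappa$ is a strict LST number.

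For $(3) \Rightarrow (1)$, I would argue by contraposition. Assume $\VV$ does not satisfy Vop\v{e}nka's Principle. Using the classical reformulation, I fix a proper class sequence $\seq{A_\alpha}{\alpha\in\Ord}$ of structures in a fixed countable relational language $\tau_0$ with no elementary embeddings between distinct members, indexed (after padding with disjoint sets if necessary) so that $|\dom{A_\alpha}| = \alpha$ for every uncountable cardinal $\alpha$. I then build an abstract logic $\calL$ extending first-order logic as follows: for every language $\tau \supseteq \tau_0 \cup \{P\}$ with $P$ a distinguished unary predicate, adjoin sentences $\theta_\alpha$ (one for each $\alpha \in \Ord$) whose semantics is that $M \models_\calL \theta_\alpha$ iff the $\tau_0$-reduct of $M$ restricted to $P^M$ is isomorphic to $A_\alpha$. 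A routine verification shows this defines an abstract logic with occurrence number $\omega_1$ (each $\theta_\alpha$ depending only on the countable sublanguage $\tau_0 \cup \{P\}$). Given any uncountable $\kappa$, form the $\tau_0 \cup \{P\}$-structure $M_\kappa$ with domain $\dom{A_\kappa}$, with $P^{M_\kappa} = \dom{A_\kappa}$ and $\tau_0$-reduct $A_\kappa$; then $M_\kappa \models_\calL \theta_\kappa$, but any substructure $N$ of $M_\kappa$ satisfying $\theta_\kappa$ must have $|P^N| = |\dom{A_\kappa}| = \kappa$, which forces $|\dom{N}| \geq \kappa$. Hence $\kappa$ is not a strict LST number for $\calL$.

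For $(1) \Rightarrow (2)$, I invoke Bagaria's theorem from {\cite[Section 4]{zbMATH06029795}} that Vop\v{e}nka's Principle is equivalent to the scheme asserting that for every natural number $n$ there is a $C^{(n)}$-extendible cardinal. Given an abstract logic $\calL$, choose a natural number $n$ such that the satisfaction relation $\models_\calL$ is $\Sigma_n$-definable and pick a $C^{(n+1)}$-extendible cardinal $\kappa > \ooo_\calL$. I claim $\kappa$ is an LST number for $\calL$. Given $\tau$ with $|\SSSS_\tau| < \kappa$ (so $\tau \in \VV_\kappa$) and a $\tau$-structure $N$, pick $\lambda \in C^{(n+1)}$ with $\lambda > \kappa$ and $N, \calL(\tau) \in \VV_\lambda$, and apply $C^{(n+1)}$-extendibility to obtain an elementary embedding $\map{j}{\VV_\lambda}{\VV_\mu}$ with $\crit{j} = \kappa$, $j(\kappa) > \lambda$, and $\mu \in C^{(n+1)}$. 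Since $\tau \in \VV_\kappa$, we have $j(\tau) = \tau$, so the pointwise image $j[N]$ is a substructure of $j(N)$, and $j$ itself realizes an isomorphism of $N$ onto $j[N]$ within $\VV_\mu$. Using the isomorphism axiom, the elementarity of $j$ applied to each $\phi \in \calL(\tau) \subseteq \VV_\lambda$, and the $\Sigma_n$-correctness of both $\VV_\lambda$ and $\VV_\mu$, one checks that $j[N]$ is an $\calL$-elementary substructure of $j(N)$ in $\VV_\mu$ of cardinality $|\dom{N}|$. In particular, $\VV_\mu$ satisfies the statement that $j(N)$ admits an $\calL$-elementary substructure of cardinality less than $j(\kappa)$; pulling this back through $j$ and using the correctness of $\VV_\lambda$, we find the required $\calL$-elementary substructure of $N$ of cardinality less than $\kappa$ in $\VV$.

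The main technical obstacle I anticipate is bookkeeping the complexity of the statement \anf{\emph{$M$ is an $\calL$-elementary substructure of $N$ of cardinality less than $\kappa$}}, so that the chosen levels of $C^{(n)}$-correctness for $\lambda$ and $\mu$ genuinely allow the reflection across $j$ to succeed; this requires tracking the $\Sigma_n$-definability of $\models_\calL$ together with the existential quantifier over the substructure.
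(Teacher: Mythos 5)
Your three-step decomposition ((2) $\Rightarrow$ (3) trivially, (3) $\Rightarrow$ (1) via a purpose-built logic, (1) $\Rightarrow$ (2) via $C^{(n)}$-extendible cardinals) is the same as the paper's, but your proof of (3) $\Rightarrow$ (1) contains a genuine flaw. The logic you construct assigns to each language $\tau\supseteq\tau_0\cup\{P\}$ a \emph{proper class} of distinguished sentences $\{\theta_\alpha\mid\alpha\in\Ord\}$, so $\calL(\tau)$ is not a set and $\calL$ is not a class function; this violates Definition \ref{definition:AbstractLogic} (the renaming and occurrence-number clauses, and indeed the rest of the paper, e.g.\ the assertion $\calL(\tau)\in\VV_\kappa$ in the proof of (1) $\Rightarrow$ (2), all treat $\calL(\tau)$ as a set). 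The correct device --- and the one the paper uses --- is to encode the proper class by a \emph{single} sentence whose satisfaction condition is \anf{the designated reduct is isomorphic to \emph{some} member of the class}. Once you make that repair, your key step collapses: a small substructure satisfying the single sentence $\phi_*$ need not have cardinality $\kappa$, it is merely isomorphic to some \emph{other} member $A_\beta$. To contradict rigidity you then need the small substructure to be first-order elementary in the big one, which is why the paper first expands by Skolem functions and only then applies the strict LST property; the resulting elementary embedding (in particular, homomorphism) $A_\beta\to A_\kappa$ with $\beta\neq\kappa$ (distinct because the cardinalities differ) is what contradicts the assumed absence of homomorphisms between distinct members. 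So the repair is not cosmetic: it changes the combinatorial heart of the step and lands exactly on the paper's argument.

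Your (1) $\Rightarrow$ (2) is essentially the paper's proof, with two elisions worth flagging. First, $\betrag{\SSSS_\tau}<\kappa$ does \emph{not} imply $\tau\in\VV_\kappa$, since the symbols may have arbitrarily high rank; one must first rename $\tau$ into a language lying in $\VV_\kappa$ and transfer the conclusion back through the renaming axiom, as the paper does. Second, $\kappa$ must also be chosen above the parameter appearing in the $\Sigma_n$-definitions of $\calL$ and $\models_\calL$, since the absoluteness computations between $\VV_\kappa$, $\VV_\lambda$, $\VV_{j(\kappa)}$ and the target model require that parameter to be fixed by $j$. The complexity bookkeeping you identify as the main obstacle is real but handled in the paper by taking the intermediate level in $C^{(n+2)}$ and using $j(\kappa)\in C^{(n)}$; your choice of indices would need similar care but this part is repairable.
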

   
 \begin{proof} 
  First, assume that \eqref{item:StaviVP} holds and fix an abstract logic $\calL$. Then there is a natural number $n>0$, a $\Sigma_n$-formula $\varphi(v_0,v_1,v_2)$, a $\Sigma_n$-formula $\psi(v_0,v_1,v_2)$ and a set $z$ with the property that the  function $\calL$ is defined by the formula $\varphi(v_0,v_1,v_2)$ and the parameter $z$ and the relation $\models_\calL$ is defined by the formula $\psi(v_0,v_1,v_2)$ and the parameter $z$. 
  By {\cite[Corollary 4.15]{zbMATH06029795}}, we can now find a \emph{$C^{(n)}$-extendible cardinal} $\kappa$ ({i.e.,} a cardinal $\kappa$ with the property that for every $\eta>\kappa$, there exists an ordinal $\zeta$ and an elementary embedding $\map{j}{\VV_\eta}{\VV_\zeta}$ with $\crit{j}=\kappa$, $j(\kappa)>\eta$ and $j(\kappa)\in C^{(n)}$) with   $z\in\HH{\kappa}$. 
   Now, fix a language $\sigma$ with $\betrag{\SSSS_\sigma}<\kappa$ and a $\sigma$-structure $K$. Then there exists a renaming $u$ of $\sigma$ into a language $\tau\in\VV_\kappa$. Since {\cite[Proposition 3.4]{zbMATH06029795}} shows that $\kappa\in C^{(n+2)}$, we know that $\calL(\tau)$ is also an element of $\VV_\kappa$. 
    Set  $M=u^*(K)$. Pick $\kappa<\eta\in C^{(n+2)}$ with $M\in\VV_\eta$. Then, there is an ordinal $\zeta$ and an elementary embedding $\map{j}{\VV_\eta}{\VV_\zeta}$ with $\crit{j}=\kappa$, $j(\kappa)>\eta$ and $j(\kappa)\in C^{(n)}$. 
   Since $\tau\in\VV_\kappa$ and $j(\tau)=\tau$, we now know that $j(M)$ is a $\tau$-structure with $s^{j(M)}=j(s^M)$ for every $s\in\SSSS_\tau$. This shows that $j(M)$ has a substructure $N$ with $\dom{N}=j[\dom{M}]$. We then know that the map $j\restriction\dom{M}$ is an isomorphism between $M$ and $N$ and this isomorphism is an element of $\VV_\zeta$. 
   In this situation, 
 the correctness properties of $\VV_\eta$,  the elementarity of $j$  and    Item \eqref{item:AbstractLogic-Iso} in Definition \ref{definition:AbstractLogic}     ensure that  
   \begin{equation}\label{equation:IsoInVZeta}
     \VV_\zeta ~ \models ~ \psi(M,\phi,z)\longleftrightarrow\psi(N,\phi,z) 
    \end{equation} 
     holds for all $\phi\in\calL(\tau)$. 
   Moreover, since $\kappa$ and $\eta$ are both elements of $C^{(n)}$, we know that the formula $\psi(v_0,v_1,v_2)$ is absolute between $\VV_\kappa$ and $\VV_\eta$. Elementarity then ensures that this formula is also absolute between $\VV_{j(\kappa)}$ and $\VV_\zeta$. Moreover, since $j(\kappa)$ is also an element of $C^{(n)}$, it follows that $\psi(v_0,v_1,v_2)$ is absolute between $\VV_\eta$ and $\VV_\zeta$.   
    For all $\phi\in\calL(\tau)$, we then have 
   $$\VV_\zeta\models\psi(j(M),\phi,z) ~  \Longleftrightarrow ~ \VV_\eta\models\psi(M,\phi,z) ~ \Longleftrightarrow ~ \VV_\zeta\models\psi(M,\phi,z) ~ \Longleftrightarrow ~ \VV_\zeta\models\psi(N,\phi,z),$$ where the first equivalence is given by elementarity of $j$, the second equivalence follows from the above absoluteness considerations and the third equivalence is given by \eqref{equation:IsoInVZeta}. 
  The elementarity of $j$ now yields a substructure $S$ of $M$ with $\betrag{\dom{S}}<\kappa$ and the property that $$\VV_\eta ~ \models  ~ \psi(M,\phi,z)\longleftrightarrow\psi(S,\phi,z)$$ holds for all $\phi$ in $\calL(\tau)$. Since $\eta$ is an element of $C^{(n)}$, this shows that $S$ is an $\calL$-elementary submodel of $M$. 
   Set $R=(u^*)^{{-}1}(M)\in\Str_\sigma$. Then $R$ is a submodel of $K$ with $\betrag{\dom{R}}<\kappa$. Moreover,  we can apply Item \eqref{item:AbstractLogic-renaming} in Definition \ref{definition:AbstractLogic} to conclude that $R$ is an $\calL$-elementary submodel of $K$. 
We can conclude that $\kappa$ is a L\"owenheim--Skolem--Tarski number for $\calL$. These computations show that \eqref{item:StaviVP} implies \eqref{item:StaviLSTNumber}.

  Now, assume that \eqref{item:StaviSTRICLST} holds and    fix a proper class $\Gamma$ of graphs. 
  Then there is an abstract logic $\calL_\Gamma$ such that the following statements hold: 
  \begin{itemize}
    \item  For every language $\tau$, the set $\calL_\Gamma(\tau)$ consists of all first-order sentences of $\tau$ together with a distinguished sentence $\phi_r$ for every binary relation symbol $r$ in $\tau$. 
    
    \item Given  is a language $\tau$, $M\in\Str_\tau$ and  a first-order sentence $\phi\in\calL_\Gamma(\tau)$,     then $M\models_{\calL_\Gamma}\phi$ holds if and only if $\phi$ holds in $M$ (in the sense of first-order structures). 
    
    \item Given  is a language $\tau$, $M\in\Str_\tau$ and  a binary relation symbol $r$ in $\tau$,     then $M\models_{\calL_\Gamma}\phi_r$ holds if and only if $\langle\dom{M},r^M\rangle$ is a graph that is isomorphic to a graph in $\Gamma$. 
  \end{itemize}

  Our assumption now yields an  uncountable cardinal $\kappa$ with the property that for every language $\tau$ with $\betrag{\SSSS_\tau}<\kappa$, every $\tau$-structure $N$  and every $\phi\in\calL_\Gamma(\tau)$ with $N\models_{\calL_\Gamma}\phi$, there exists a  substructure $M$ of $N$ with $\betrag{\dom{M}}<\kappa$ and $M\models_{\calL_\Gamma}\phi$.    
  Since $\Gamma$ is a proper class, we can fix a graph $G$ in $\Gamma$ that has cardinality greater than $\kappa$. 
  Let $\sigma$ denote a language with a single binary relation symbol $e$ and let $\tau$ be a language extending $\sigma$ that adds functions symbols for first-order Skolem functions to $\sigma$. View $G$ as a $\sigma$-structure and  let $G^\prime$ be a $\tau$-structure whose $\sigma$-reduct  is $G$ and that interprets the new function symbols as the corresponding first-order Skolem functions. 
  Then Item \eqref{item:AbstractLogicExpansion} in Definition \ref{definition:AbstractLogic} ensures that $G^\prime\models_{\calL_\Gamma}\phi_e$ holds and, since $\betrag{\SSSS_\tau}<\kappa$,  we can use our assumption to find a substructure $H^\prime$ of $G^\prime$ with $\betrag{H^\prime}<\kappa$ and $H^\prime\models_{\calL_\Gamma}\phi_e$.  
   Let $H$ denote the $\sigma$-reduct of $H^\prime$. Our setup now ensures that  $H$ is a first-order elementary submodel of $G$ and $H\models_{\calL_\Gamma}\phi_e$ holds. This shows that  $\langle\dom{H},e^H\rangle$  is a graph that is isomorphic to an element $K$ of $\Gamma$. Since $\dom{H}$ has cardinality less than $\kappa$, we know that $K\neq G$ and $K$ embeds into $G$. These computations show that \eqref{item:StaviVP} holds. 
  
 This completes the proof of the theorem, because  \eqref{item:StaviLSTNumber} obviously implies \eqref{item:StaviSTRICLST}. 
 \end{proof}

Motivated by Theorem \ref{theorem:Stavi}, we will also use the developed theory to  prove the following analogs of Theorems \ref{MAIN:EssentiallySubtle} and \ref{MAIN:EssentiallyFaint} for strict L\"owenheim--Skolem--Tarski numbers in the next section:

\begin{theorem}\label{MAIN:StrictLST}
 \begin{enumerate}
  \item\label{item:StrictLST1} The following schemes of sentences are equivalent over $\ZFC$: 
 \begin{enumerate}
  \item $\Ord$ is essentially faint. 
  
  \item Every abstract logic has a strict L\"owenheim--Skolem--Tarski number.  
 \end{enumerate}

  \item  The following schemes of sentences are equivalent over $\ZFC$: 
    \begin{enumerate}
      \item $\Ord$ is essentially subtle. 
  
     \item Every abstract logic has a stationary class of strict L\"owenheim--Skolem--Tarski numbers. 
     
      \item Every abstract logic has a stationary class of weak L\"owenheim--Skolem--Tarski numbers. 
    \end{enumerate}
  \end{enumerate}
\end{theorem}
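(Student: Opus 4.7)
The plan is to leverage the characterizations of $\anf{\Ord~\text{is essentially faint}}$ and $\anf{\Ord~\text{is essentially subtle}}$ established in Theorems \ref{theorem:CharOrdFaint} and \ref{theorem:OrdSubtle1}, adapting the Stavi-style embedding argument of Theorem \ref{theorem:Stavi} to the weaker hypotheses of $C^{(n)}$-weak shrewdness and $C^{(n)}$-strong unfoldability. For both parts, given an abstract logic $\calL$, I fix a natural number $n>0$ and a parameter $z_0$ such that $\calL$ and $\models_\calL$ are $\Sigma_n$-definable from $z_0$, and construct the required LST numbers from cardinals of the corresponding type.

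For Part \eqref{item:StrictLST1} in the direction \anf{essentially faint $\Rightarrow$ strict LST numbers exist}, pick via Theorem \ref{theorem:CharOrdFaint} a $C^{(n)}$-weakly shrewd cardinal $\kappa$ with $z_0\in\HH{\kappa}$. Given data $\tau$, $N$, $\phi$ witnessing the strict LST condition, renaming and isomorphism reduce to the case $\dom{N}=\kappa$ and $\SSSS_\tau\subseteq\kappa$. Pick a cardinal $\mu<\kappa$ strictly above $\betrag{\tc{\{z_0,\tau,\phi\}}}$ together with a bijection $f\colon\mu\to\tc{\{z_0,\tau,\phi\}}$, and apply Lemma \ref{lemma:WcnShrewdChar} with $y=\langle z_0,\tau,\phi,N,\mu,f\rangle$ to obtain $\bar\theta\in C^{(n)}$, $\bar\kappa<\min(\kappa,\bar\theta)$, an elementary submodel $X\preceq\HH{\bar\theta}$ with $\bar\kappa+1\subseteq X$, and an elementary embedding $\map{j}{X}{\HH{\theta}}$ with $j\restriction\bar\kappa=\id$, $j(\bar\kappa)=\kappa$ and $y\in\ran{j}$. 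Including $\mu$ as an ordinal component of $y$ forces $\bar\kappa>\mu$, since $\mu=j(j^{-1}(\mu))<j(\bar\kappa)$ gives $j^{-1}(\mu)<\bar\kappa$ and hence $j^{-1}(\mu)=\mu$. A Mostowski collapse argument applied to $\bar f:=j^{-1}(f)$ then shows $\tc{\{z_0,\tau,\phi\}}\subseteq X$ and that $j$ acts as the identity on this transitive set, so that $j$ fixes each of $z_0$, $\tau$, and $\phi$. Consequently $\bar N:=j^{-1}(N)$ is a $\tau$-structure with $\dom{\bar N}=\bar\kappa$, the substructure $M$ of $N$ with universe $\bar\kappa$ coincides with $\bar N$, and the $\Sigma_n$-statement $N\models_\calL\phi$ reflects through $j$ and the $\Sigma_n$-correctness of $\bar\theta$ to yield $M\models_\calL\phi$. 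For the converse direction, argue by contraposition: if $\Ord$ is not essentially faint, Theorem \ref{theorem:CharOrdFaint} supplies some $n>0$ with only boundedly many $C^{(n)}$-weakly shrewd cardinals, and adapting the construction used in the proof of that theorem produces an abstract logic $\calL_n$ whose sentences encode failure of $C^{(n)}$-weak shrewdness at the universe size, so that strict LST numbers of $\calL_n$ must themselves be $C^{(n)}$-weakly shrewd, contradicting the boundedness.

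Part (2) follows the same pattern with $C^{(n)}$-strongly unfoldable cardinals via Lemma \ref{lemma:SigmaNsunf}.\eqref{item:Magidor}. The decisive new feature is that this Magidor-style characterization provides $\bar\gamma<\kappa$ with $\VV_{\bar\kappa}\cup\{\bar\kappa\}\subseteq X$, and hence $j\restriction\VV_{\bar\kappa}=\id_{\VV_{\bar\kappa}}$, so the entire set $\calL(\tau)$ becomes fixed by $j$. The Stavi-style argument of Theorem \ref{theorem:Stavi} then upgrades the reflection from the single sentence $\phi$ to full $\calL$-elementarity of $M$ in $N$, giving a weak LST number, which is automatically a strict LST number. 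The stationarity of the resulting class follows from the closure of $C^{(n+1)}$ (Proposition \ref{proposition:BasicCnSunf}.\eqref{item:BasicCnSunf3}) combined with the reflection properties of essentially subtle applied to closed unbounded classes. For the converses in Part (2), any stationary class of strict or weak LST numbers meets $C^{(n+1)}$, so Lemma \ref{lemma:separatePropertiesCn} lifts the $C^{(n)}$-weakly shrewd cardinals produced as in Part \eqref{item:StrictLST1}'s converse to $C^{(n)}$-strongly unfoldable ones, yielding essentially subtle via Theorem \ref{theorem:OrdSubtle1}. The main obstacle throughout is reflecting the logic's defining parameter $z_0$ through the embedding without the strong inaccessibility that Stavi's extendibility hypothesis provides; the Mostowski-coding device using the auxiliary bijection $f$ is the key innovation that overcomes this obstruction in the setting of weak shrewdness.
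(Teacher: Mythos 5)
Your high-level architecture matches the paper's: derive strict/weak L\"owenheim--Skolem--Tarski numbers from $C^{(n)}$-weakly shrewd (resp.\ $C^{(n)}$-strongly unfoldable) cardinals via the embedding characterizations, and for the converses build an abstract logic whose LST numbers force the existence of such cardinals, then close the loop with Theorems \ref{theorem:OrdSubtle1} and \ref{theorem:CharOrdFaint}. However, the central technical step is handled incorrectly. Your ``Mostowski-coding device'' requires $\betrag{\tc{\{z_0,\tau,\phi\}}}<\kappa$, but a sentence $\phi$ of an abstract logic is an arbitrary set: nothing in Definition \ref{definition:AbstractLogic} bounds the rank or hereditary cardinality of elements of $\calL(\tau)$, even when $\betrag{\SSSS_\tau}<\ooo_\calL$. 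So you cannot in general choose $\mu<\kappa$ above $\betrag{\tc{\{\phi\}}}$, and the same problem sinks the Part (2) claim that $\VV_{\bar{\kappa}}\cup\{\bar{\kappa}\}\subseteq X$ makes ``the entire set $\calL(\tau)$ fixed by $j$'' --- that would need $\calL(\tau)\subseteq\VV_{\bar{\kappa}}$, which is false in general. The paper's Lemma \ref{lemma:CnWeaklyShrewd} exists precisely to overcome this: one first fixes a cardinal $\mu\in C^{(1)}$ whose $\HH{\mu}$ contains, for every language $\sigma\in\HH{\ooo_\calL}$ and every non-trivial permutation $\pi$ of $\calL(\sigma)$, a structure $M_{\sigma,\pi}$ witnessing that $\pi$ is not satisfaction-preserving; then the uniqueness of the renaming-induced bijections $u_*$ (item \eqref{item:AbstractLogic-renaming} of Definition \ref{definition:AbstractLogic}) is used to prove $j\restriction(\calL(\bar{\tau})\cap X)=\id_{\calL(\bar{\tau})\cap X}$ without any size assumption on sentences. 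Relatedly, for \emph{weak} LST numbers one additionally needs $\calL(\tau)\subseteq X$, which the paper only obtains under the cardinal-arithmetic hypothesis $\nu^{{<}\ooo_\calL}\leq\kappa$ for $\nu<\kappa$ (automatic for the inaccessible $C^{(n)}$-strongly unfoldable cardinals of Part (2), but the reason Question \ref{Q2} is open for Part (1)); your proposal does not engage with this distinction.

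The converse directions are also essentially asserted rather than proved. The proof of Theorem \ref{theorem:CharOrdFaint} constructs a class sequence, not a logic; the paper's actual argument (Lemma \ref{lemma:CardinalFromLogic}) builds the class $\Ce^{m,n}_\rho$ of structures carrying functions $f^M,g^M$ that encode counterexamples to $C^{(n)}$-weak shrewdness, and needs the Kunen inconsistency twice to place the critical point of the resulting embedding above $\rho$ and inside $C^{(m)}$. Moreover, the correct conclusion is not that strict LST numbers of this logic ``must themselves be $C^{(n)}$-weakly shrewd,'' but only that a $C^{(n)}$-weakly shrewd cardinal exists in the interval $(\rho,\kappa]$; the stronger claim you state is not justified. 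These gaps would need to be filled for the proof to go through.
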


In the last part of the next section, we will  consider  the question of whether the first equivalence can also be extended to characterize the existence of  weak L\"owenheim--Skolem--Tarski numbers for all abstract logics.


\section{Structural properties of abstract logics}\label{section:ProofsCompactness}

 This section contains the proofs of Theorems \ref{MAIN:EssentiallySubtle}, \ref{MAIN:EssentiallyFaint} and \ref{MAIN:StrictLST}.   
  The following lemma shows how strong structural properties of abstract logics can be derived from the existence of $C^{(n)}$-weakly shrewd cardinals.

\begin{lemma}\label{lemma:CnWeaklyShrewd}
 Let $n>0$ be a natural number, let $\varphi(v_0,v_1,v_2)$ and $\psi(v_0,v_1,v_2)$ be $\Sigma_n$-formulas and let $z$ be a set with the property that there is an abstract logic $\calL$ such that the  function $\calL$ is defined by the formula $\varphi(v_0,v_1,v_2)$ and the parameter $z$ and the relation $\models_\calL$ is defined by the formula $\psi(v_0,v_1,v_2)$ and the parameter $z$. Then the following statements hold: 
 \begin{enumerate}
  \item\label{item:WeakCompactFromWShrewd1} There exists a cardinal $\mu\in C^{(1)}$ such that the following statements hold: 
   \begin{enumerate}
    \item $\mu>\ooo_\calL$ and $z\in\HH{\mu}$. 
    
    \item If $\sigma$ is a language that is an element of $\HH{\ooo_\calL}$, then $\calL(\sigma)\in\HH{\mu}$. 
    
    \item  If  $\sigma$ is a language that is an element of $\HH{\ooo_\calL}$ and $\pi$ is a non-trivial permutation of $\calL(\sigma)$, then  there exists a $\sigma$-structure $M_{\sigma,\pi}\in\HH{\mu}$ and $\phi_{\sigma,\pi}\in\calL(\sigma)$ with  
      \begin{equation}\label{equation:WitnessNonIdentity}
         M_{\sigma,\pi}  \models_\calL ~ \phi_{\sigma,\pi} ~ \Longleftrightarrow ~ \neg(M_{\sigma,\pi}  {\models}_\calL ~ \pi(\phi_{\sigma,\pi})).
      \end{equation}
   \end{enumerate}   
  
   \item\label{item:WeakCompactFromWShrewd1.5} If $\mu$ is a cardinal with the properties listed in \eqref{item:WeakCompactFromWShrewd1}, $\sigma$ is a language in $\HH{\ooo_\calL}$ and $u$ is a renaming of $\sigma$ into a language $\tau$, then $u_*$ is the unique bijection $\varpi$ between $\calL(\sigma)$ and $\calL(\tau)$ with the property that 
    \begin{equation}\label{equation:UniquePermutation}
     M\models_\calL\phi ~ \Longleftrightarrow ~ u^*(M)\models_\calL\varpi(\phi)
    \end{equation}
    holds for every $\sigma$-structure $M$ in $\HH{\mu}$ and every $\phi\in\calL(\sigma)$. 
    
  \item\label{item:WeakCompactFromWShrewd2} If $\mu$ is a cardinal with the properties listed in \eqref{item:WeakCompactFromWShrewd1},    $\kappa>\mu$ is a $C^{(n)}$-weakly shrewd cardinal and  $\tau$ is a language with $\betrag{\SSSS_\tau}\leq\kappa$ and $\SSSS_\tau\subseteq\HH{\kappa}$, then for every cardinal $\kappa<\theta\in C^{(n+2)}$ and every $y\in\HH{\theta}$, there exists 
  \begin{itemize} 
   \item  a cardinal $\bar{\theta}\in C^{(n)}$, 
  
   \item  a cardinal $\mu<\bar{\kappa}<\min(\kappa,\bar{\theta})$,  
   
   \item     a sublanguage $\bar{\tau}$ of $\tau$ with $\betrag{\SSSS_{\bar{\tau}}}\leq\bar{\kappa}$ and $\SSSS_{\bar{\tau}}=\SSSS_\tau\cap\HH{\bar{\kappa}}$, 
    
   \item  an elementary submodel $X$ of $\HH{\bar{\theta}}$ with  $\bar{\kappa}\cup\{\bar{\kappa},\bar{\tau},\calL(\bar{\tau}),z\}\subseteq X$ and 
     
    \item   an elementary embedding $\map{j}{X}{\HH{\theta}}$       with $j\restriction\bar{\kappa}=\id_{\bar{\kappa}}$,  $j(\bar{\kappa})=\kappa$, $j(z)=z$, $j(\bar{\tau})=\tau$, $j(\calL(\bar{\tau}))=\calL(\tau)$, $j\restriction(\calL(\bar{\tau})\cap X)=\id_{\calL(\bar{\tau})\cap X}$ and $y\in\ran{j}$. 
   \end{itemize}
   
   \item\label{item:WeakCompactFromWShrewd3} If $\mu$ is a cardinal with the properties listed in \eqref{item:WeakCompactFromWShrewd1}, then every $C^{(n)}$-weakly shrewd cardinal greater than $\mu$ is a   weak compactness cardinal for $\calL$.  
   
   \item\label{item:WeakCompactFromWShrewd4} If $\mu$ is a cardinal with the properties listed in \eqref{item:WeakCompactFromWShrewd1}, then every $C^{(n)}$-weakly shrewd cardinal greater than $\mu$ is a   strict L\"owenheim--Skolem--Tarski number for $\calL$.  
   
      \item\label{item:WeakCompactFromWShrewd5} If $\mu$ is a cardinal with the properties listed in \eqref{item:WeakCompactFromWShrewd1}, then every $C^{(n)}$-weakly shrewd cardinal $\kappa>\mu$ with the property that $\nu^{{<}\ooo_\calL}\leq\kappa$ holds for all $\nu<\kappa$     is a   weak L\"owenheim--Skolem--Tarski number for $\calL$.  
 \end{enumerate}
 \end{lemma}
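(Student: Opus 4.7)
My plan is to prove (1) by a routine reflection argument using the occurrence-number and renaming axioms, derive (2) from the uniqueness clause of (1c), establish the master embedding of (3) by applying Lemma \ref{lemma:WcnShrewdChar} with a carefully prepared parameter and invoking (1c) for the final rigidity, and then deduce (4)--(6) as applications of (3). For (1), the class of pairs $(\sigma,\pi)$ with $\sigma$ a language in $\HH{\ooo_\calL}$ and $\pi$ a non-identity permutation of $\calL(\sigma)$ forms a set. Applying Item \eqref{item:AbstractLogic-renaming} of Definition \ref{definition:AbstractLogic} to the identity renaming $\id_\sigma$ shows that $\id_{\calL(\sigma)}$ is the unique bijection satisfying the renaming compatibility, so any non-trivial $\pi$ must be witnessed by some $(M_{\sigma,\pi},\phi_{\sigma,\pi})$ satisfying \eqref{equation:WitnessNonIdentity}. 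Using the Axiom of Choice to fix such witnesses and taking $\mu \in C^{(1)}$ above $\ooo_\calL$ to contain $z$, all sets $\calL(\sigma)$ for $\sigma \in \HH{\ooo_\calL}$, and all witnesses then yields (1). Item (2) follows because any other bijection $\varpi$ satisfying \eqref{equation:UniquePermutation} produces, via $u_*^{-1}\circ\varpi$, a permutation of $\calL(\sigma)$ whose non-triviality would contradict the witness from (1c) residing in $\HH{\mu}$.

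Part (3) is the technical heart. Given $\theta \in C^{(n+2)}$ above $\kappa$ and $y \in \HH{\theta}$, apply Lemma \ref{lemma:WcnShrewdChar} at $\kappa$ with distinguished element encoding the tuple $\langle y, z, \mu, \tau, \calL(\tau)\rangle$ to obtain $\bar\theta \in C^{(n)}$, $\bar\kappa < \min(\kappa,\bar\theta)$, an elementary submodel $X \prec \HH{\bar\theta}$ with $\bar\kappa+1 \subseteq X$, and an elementary embedding $\map{j}{X}{\HH{\theta}}$ with $j\restriction\bar\kappa = \id$, $j(\bar\kappa) = \kappa$, and these elements in $\ran{j}$. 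Since $\mu \in \ran{j}$ is an ordinal, elementarity forces $\bar\kappa > \mu$; because $\mu \in C^{(1)}$ is a strong limit, $\HH{\mu} \subseteq X$, and an $\in$-rank induction shows that $j$ fixes $\HH{\mu}$ pointwise, so in particular $j(z) = z$. Take $\bar\tau$ to be the sublanguage of $\tau$ with $\SSSS_{\bar\tau} = \SSSS_\tau\cap\HH{\bar\kappa}$; elementarity applied to the definable operation $\tau \mapsto \tau\restriction\HH{\kappa}$ identifies $\bar\tau$ with $j^{-1}(\tau)$, giving $\bar\tau \in X$, $j(\bar\tau) = \tau$, and $j(\calL(\bar\tau)) = \calL(\tau)$. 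The crucial rigidity property $j\restriction(\calL(\bar\tau)\cap X) = \id$ follows from (1c): the restriction $j\restriction\SSSS_{\bar\tau}$ extends to a renaming-like bijection of $\calL(\bar\tau)$ whose compatibility with $\calL$-satisfaction on the witnessing structures from $\HH{\mu}$ (all fixed by $j$) forces it, by the uniqueness in (2) applied at $\bar\tau$, to be the identity.

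For (4), given a $<\kappa$-satisfiable theory $T$ of cardinality $\kappa$, apply (3) with $y = T$ to form $\bar T = j^{-1}(T)$. Internal enumerability in $X$ places $\bar T \subseteq X$, and pointwise fixedness of $\calL(\bar\tau)\cap X$ under $j$ yields $\bar T \subseteq T$, so $\bar T$ is a sub-theory of cardinality $\bar\kappa < \kappa$, hence satisfiable; by absorbing a witnessing model of $\bar T$ into the initial parameter and pushing it via $j$, we obtain a model of $T$. For (5), apply (3) with $y = \langle N,\phi\rangle$; the pullback $\bar N$ is a $\bar\tau$-structure of size $\bar\kappa < \kappa$ with $\bar N \models_\calL \bar\phi = \phi$ which, via $j$ and the Expansion axiom together with $\bar\tau \subseteq \tau$, is realized as a substructure of $N$ witnessing the strict L\"owenheim--Skolem--Tarski property. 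Part (6) is analogous, but we additionally need $\calL$-elementarity in $N$; the cardinal-arithmetic hypothesis $\nu^{<\ooo_\calL} \leq \kappa$ for $\nu < \kappa$ ensures that $\calL$-theories of bounded tuples of elements of $N$ have codes in $\HH{\theta}$, enabling the same pullback argument to deliver $\calL$-elementarity. The main obstacle throughout is the rigidity property in (3), which does not follow from Lemma \ref{lemma:WcnShrewdChar} alone and requires the uniqueness statement from (1c) applied through the fact that all the relevant witnessing structures lie in $\HH{\mu}$, and are therefore fixed by $j$.
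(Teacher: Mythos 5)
Your overall strategy matches the paper's proof almost step for step: (1) via the renaming axiom applied to the trivial renaming plus choice of witnesses, (2) by conjugating a hypothetical second bijection into a non-trivial permutation contradicting the witnesses, (3) as the master reflection via Lemma \ref{lemma:WcnShrewdChar}, and (4)--(6) as applications. However, there is a genuine gap at the step you yourself identify as the main obstacle, namely the rigidity $j\restriction(\calL(\bar{\tau})\cap X)=\id$. You propose to obtain it from ``the uniqueness in (2) applied at $\bar{\tau}$,'' but statement (2) is only formulated -- and is only provable from the witnesses of (1c) -- for languages $\sigma\in\HH{\ooo_\calL}$, whereas $\bar{\tau}$ has up to $\bar{\kappa}$-many symbols and in general lies far outside $\HH{\ooo_\calL}$; there are no witnessing structures in $\HH{\mu}$ attached to permutations of $\calL(\bar{\tau})$. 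The actual content of this step is a reduction that your sketch omits: given a hypothetical $\phi_*\in\calL(\bar{\tau})\cap X$ with $j(\phi_*)\neq\phi_*$, one must first invoke the occurrence-number axiom to place $\phi_*$ in a sublanguage $\tau_0$ of $\bar{\tau}$ with $\betrag{\SSSS_{\tau_0}}<\ooo_\calL$ (so that $j$ fixes $\tau_0$ and $\calL(\tau_0)$ setwise and $j(\phi_*)\in\calL(\tau_0)$), then rename $\tau_0$ into a language $\tau_1\in\HH{\ooo_\calL}$ via some $u\in\HH{\bar{\kappa}}\cap X$, prove that $u_*$ is an element of $X$ (this itself needs the correctness of $\HH{\theta}$ for $\theta\in C^{(n+2)}$ together with the uniqueness clause (2), and is the reason $\theta$ is taken two levels up), and only then conjugate $j\restriction\calL(\tau_0)$ by $u_*$ to manufacture a non-trivial permutation of $\calL(\tau_1)$ that contradicts the witness $M_{\tau_1,\pi}\in\HH{\mu}$. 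Without this detour through a language in $\HH{\ooo_\calL}$, the appeal to (1c)/(2) has nothing to bite on.

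A smaller but still real problem occurs in your argument for (4): ``absorbing a witnessing model of $\bar{T}$ into the initial parameter and pushing it via $j$'' is circular, since $\bar{T}$ only comes into existence after the embedding has been fixed, so no model of it can be fed into the choice of $y$. The correct transfer is in the other direction and uses no extra parameter: $\bar{T}\subseteq T$ has cardinality $\bar{\kappa}<\kappa$, so it is satisfiable in $\VV$; the correctness of $\HH{\bar{\theta}}$ (as $\bar{\theta}\in C^{(n)}$ and $\models_\calL$ is $\Sigma_n$-definable) and the elementarity of $X\prec\HH{\bar{\theta}}$ put the statement ``there is a $\bar{\tau}$-structure satisfying $\bar{T}$'' into $X$, whence $j$ and the correctness of $\HH{\theta}$ yield a model of $T=j(\bar{T})$ in $\VV$. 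You should also record the preliminary renaming of the given theory into a language with $\SSSS_\tau\subseteq\HH{\kappa}$, since (3) requires this and an arbitrary $\calL$-theory of cardinality $\kappa$ need not satisfy it. The remaining parts (1), (2), (5) and (6) are in order modulo these points.
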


\begin{proof}
 \eqref{item:WeakCompactFromWShrewd1} Note that, given a language $\sigma$, an application of \eqref{item:AbstractLogic-renaming} in Definition \ref{definition:AbstractLogic} to the trivial renaming of $\sigma$ into itself ensures that the identity function on $\calL(\sigma)$ is the unique permutation $\iota$ of $\calL(\sigma)$ with the property that $$M\models_\calL \phi ~ \Longleftrightarrow ~ M\models_\calL \iota(\phi)$$ holds for every $\sigma$-structure $M$ and every $\phi\in\calL(\sigma)$. Hence, for every non-trival permuation $\pi$ of $\calL(\sigma)$, there exists a $\sigma$-structure $M_{\sigma,\pi}$ and $\phi_{\sigma,\pi}\in\calL(\sigma)$ with \eqref{equation:WitnessNonIdentity}. Using this observation, it is now easy to find a cardinal $\mu$ with the desired properties.

 \eqref{item:WeakCompactFromWShrewd1.5} Let $\mu$ be a cardinal as in \eqref{item:WeakCompactFromWShrewd1}, let $\sigma$ be a language in $\HH{\ooo_\calL}$, let $u$ be a renaming of $\sigma$ into a language $\tau$ and let $\varpi$ be a bijection between $\calL(\sigma)$ and $\calL(\tau)$ such that \eqref{equation:UniquePermutation} holds. Assume, towards a contradiction that $\varpi\neq u_*$. Then $\pi=(u_*)^{{-}1}\circ\varpi$ is a non-trivial permutation of $\calL(\sigma)$ and, since $M_{\sigma,\pi}$ is an element of $\HH{\mu}$, our assumptions on $\varpi$  ensure that $$M_{\sigma,\pi}\models_\calL\phi_{\sigma,\pi} ~ \Longleftrightarrow ~ u^*(M_{\sigma,\pi})\models_\calL\varpi(\phi_{\sigma,\pi}) ~ \Longleftrightarrow ~ M_{\sigma,\pi}\models_\calL\pi(\phi_{\sigma,\pi}),$$ contradicting the definitions of $M_{\sigma,\pi}$ and $\phi_{\sigma,\pi}$. This shows that $\varpi$ is equal to $u_*$.

 \eqref{item:WeakCompactFromWShrewd2} Fix a cardinal $\mu$ as in \eqref{item:WeakCompactFromWShrewd1}, a $C^{(n)}$-weakly shrewd cardinal $\kappa>\mu$, a cardinal $\kappa<\theta\in C^{(n+2)}$, an element $y$ of $\HH{\theta}$ and  a language  $\tau$ with $\betrag{\SSSS_\tau}\leq\kappa$ and $\SSSS_\tau\subseteq\HH{\kappa}$.  Using Lemma \ref{lemma:WcnShrewdChar}, we can find  
  a cardinal $\bar{\theta}\in C^{(n)}$, 
   a cardinal $\bar{\kappa}<\min(\kappa,\bar{\theta})$, 
   an elementary submodel $X$ of $\HH{\bar{\theta}}$ with $\bar{\kappa}+1\subseteq X$ and 
   an elementary embedding $\map{j}{X}{\HH{\theta}}$ with $j\restriction\bar{\kappa}=\id_{\bar{\kappa}}$, $j(\bar{\kappa})=\kappa$ and  $\mu,\tau,y\in\ran{j}$.  
  Then $\mu<\bar{\kappa}$ and the fact that  $\mu\in C^{(1)}$ implies that $\HH{\mu}\cup\{\HH{\mu}\}\subseteq X$ with $j\restriction\HH{\mu}=\id_{\HH{\mu}}$.  In particular, we know that $z\in X$ with $j(z)=z$. 
 Moreover, our setup ensures that there is a language $\bar{\tau}$ in $X$ with $j(\bar{\tau})=\tau$, $\SSSS_{\bar{\tau}}\subseteq\HH{\bar{\kappa}}$ and $\betrag{\SSSS_{\bar{\tau}}}\leq\bar{\kappa}$. 
  It then follows that $j\restriction\SSSS_{\bar{\tau}}=\id_{\SSSS_{\bar{\tau}}}$ and $\bar{\tau}$ is a sublanguage of $\tau$ with $\SSSS_{\bar{\tau}}=\SSSS_\tau\cap\HH{\bar{\kappa}}$. 
  Moreover, since  $\calL(\tau)$ is the unique set $\ell$ such that  $\varphi(\tau,\ell,z)$ holds and  $\calL(\bar{\tau})$ is the unique set $\bar{\ell}$ such that  $\varphi(\bar{\tau},\bar{\ell},z)$ holds, the correctness properties of $\HH{\theta}$ and $\HH{\bar{\theta}}$ imply that $\calL(\bar{\tau})\in X$ with $j(\calL(\bar{\tau}))=\calL(\tau)$.

 Now, assume, towards a contradiction, that there is $\phi_*\in\calL(\bar{\tau})\cap X$ with $j(\phi_*)\neq\phi_*$.  The correctness properties of $\HH{\bar{\theta}}$ and \eqref{item:occurrenceNumber} in Definition \ref{definition:AbstractLogic}  then allow us to find a sublanguage $\tau_0$ of $\bar{\tau}$ in $X$ with $\phi_*\in\calL(\tau_0)$ and $\betrag{\SSSS_{\tau_0}}<\ooo_\calL$. We then know that $\tau_0$ is an element of $\HH{\bar{\kappa}}\cap X$ with $j(\tau_0)=\tau_0$ and $\calL(\tau_0)$ is an element of $X$ with $j(\calL(\tau_0))=\calL(\tau_0)$.  This implies that $j(\phi_*)$ is an element of $\calL(\tau_0)$. 
 In this situation,  we can now pick a language $\tau_1$ that is an element of $\HH{\ooo_\calL}$ and a renaming $u$ of $\tau_1$ into $\tau_0$ that is an element of $\HH{\bar{\kappa}}\cap X$.  It follows that $j(u)=u$. 
 Let  $u^*$ denote the canonical bijection between $\Str_{\tau_1}$ and $\Str_{\tau_0}$ induced by $u$. 
  Since $\tau_0$ and $u$ are both elements of $\HH{\bar{\kappa}}\cap X$, it follows that  $u^*(M)\in\HH{\bar{\kappa}}\cap X$ holds for every $\tau_1$-structure $M$ in $\HH{\mu}$. 
 In particular,  we know that   $j(u^*(M))=u^*(M)$ holds for every  $\tau_1$-structure  $M$ in $\HH{\mu}$.  
 Let $\map{u_*}{\calL(\tau_1)}{\calL(\tau_0)}$ denote the unique bijection given by \eqref{item:AbstractLogic-renaming} in Definition \ref{definition:AbstractLogic}.

 \begin{claim*}
  The map $u_*$ is an element of $X$.  
 \end{claim*}
 
 \begin{proof}[Proof of the Claim]
   First,   since $\theta$ is an element of $C^{(n+2)}$, if follows  that $u_*$ is an element of $\HH{\theta}$. The elementarity of $j$ and the correctness properties of $\HH{\bar{\theta}}$ then imply that there is a bijection $\map{\varpi}{\calL(\tau_1)}{\calL(\tau_0)}$ in $X$ with the property that $$M\models_\calL\phi ~ \Longleftrightarrow ~ u^*(M)\models_\calL\varpi(\phi)$$ holds for all $M\in\Str_{\tau_1}\cap X$ and all $\phi\in\calL(\tau_1)$. 
   Since $\HH{\mu}\subseteq X$, we can now apply \eqref{item:WeakCompactFromWShrewd1.5} to conclude that $\varpi=u_*$.  
 \end{proof}

  Since our setup ensures that $\calL(\tau_1)\subseteq X$, we now know that $\calL(\tau_0)=u_*[\calL(\tau_1)]\subseteq X$ and the fact that  $j(\calL(\tau_0))=\calL(\tau_0)$ implies that $$\map{j\restriction\calL(\tau_0)}{\calL(\tau_0)}{\calL(\tau_0)}$$ is a bijection. 
     If we now define $$\map{\pi ~ = ~ (u_*)^{{-}1}\circ(j\restriction\calL(\tau_0))\circ u_*}{\calL(\tau_1)}{\calL(\tau_1)},$$ then we have  $$\pi((u_*)^{{-}1}(\phi_*)) ~ = ~ (u_*)^{{-}1}(j(\phi_*)) ~ \neq ~ (u_*)^{{-}1}(\phi_*)$$ and this shows that $\pi$ is a non-trivial permutation of $\calL(\tau_1)$. 
  Since $M_{\tau_1,\pi}\in\HH{\mu}\subseteq\HH{\bar{\kappa}}\cap X$, we know that $j(M_{\tau_1,\pi})=M_{\tau_1,\pi}$. 
  Moreover, earlier computations show that $u^*(M_{\tau_1,\pi})$ is an element of $X$ with $j(u^*(M_{\tau_1,\pi}))=u^*(M_{\tau_1,\pi})$. 
    Since $u_*(\phi_{\tau_1,\pi})\in\calL(\tau_0)\subseteq X$, we can now conclude that 
  \begin{equation*}
   \begin{split}
     M_{\tau_1,\pi}\models_\calL ~ \phi_{\tau_1,\pi} ~ & \Longleftrightarrow ~ u^*(M_{\tau_1,\pi})\models_\calL  u_*(\phi_{\tau_1,\pi}) \\
      &  \Longleftrightarrow ~ X\models\psi(u^*(M_{\tau_1,\pi}),u_*(\phi_{\tau_1,\pi}),z)   \\ 
    &  \Longleftrightarrow ~ \HH{\theta}\models\psi(u^*(M_{\tau_1,\pi}),(j\circ u_*)(\phi_{\tau_1,\pi}),z) \\
     & \Longleftrightarrow ~ u^*(M_{\tau_1,\pi})\models_\calL (u_*\circ\pi)(\phi_{\tau_1,\pi}) \\ 
     & \Longleftrightarrow ~ M_{\tau_1,\pi}\models_\calL ~ \pi(\phi_{\tau_1,\pi}),  
   \end{split} 
  \end{equation*}
  where the first and fifth equivalence is given by \eqref{item:AbstractLogic-renaming} in  Definition \ref{definition:AbstractLogic}, the second equivalence is given by the correctness properties of $\HH{\bar{\theta}}$, the third equivalence is given by the elementarity of $j$ and the fourth equivalence is given by the correctness properties of $\HH{\theta}$. 
   The above equivalences now contradict the defining properties of $M_{\tau_1,\pi}$ and $\phi_{\tau_1,\pi}$. 
  This shows that $j\restriction(\calL(\bar{\tau})\cap X)=\id_{\calL(\bar{\tau})\cap X}$.

  \eqref{item:WeakCompactFromWShrewd3} Let $\mu$ be a cardinal as  in \eqref{item:WeakCompactFromWShrewd1}, let $\kappa>\mu$ be a $C^{(n)}$-weakly shrewd cardinal and let  $U$ be a ${<}\kappa$-satisfiable $\calL$-theory  of cardinality $\kappa$. 
   Since $\ooo_\calL<\kappa$, we can find a language $\sigma$  with $\betrag{\SSSS_\sigma}\leq\kappa$ and $U\subseteq\calL(\sigma)$. 
   Then there is a renaming $u$ of $\sigma$ into a language $\tau$ with $\SSSS_\tau\subseteq\HH{\kappa}$.  Set $T=u_*[U]$. Then \eqref{item:AbstractLogic-renaming} in Definition \ref{definition:AbstractLogic} ensures that $T$ is also ${<}\kappa$-satisfiable. 
  Now, pick a cardinal $\kappa<\theta\in C^{(n+2)}$ with $T\in\HH{\theta}$ and   use \eqref{item:WeakCompactFromWShrewd2} to find 
  a cardinal $\bar{\theta}\in C^{(n)}$,  a cardinal $\mu<\bar{\kappa}<\min(\kappa,\bar{\theta})$,    a sublanguage $\bar{\tau}$ of $\tau$ with $\betrag{\SSSS_{\bar{\tau}}}\leq\bar{\kappa}$ and $\SSSS_{\bar{\tau}}=\SSSS_\tau\cap\HH{\bar{\kappa}}$,      an elementary submodel $X$ of $\HH{\bar{\theta}}$ with  $\bar{\kappa}\cup\{\bar{\kappa},\bar{\tau},\calL(\bar{\tau}),z\}\subseteq X$ and   an elementary embedding $\map{j}{X}{\HH{\theta}}$       with $j\restriction\bar{\kappa}=\id_{\bar{\kappa}}$,  $j(\bar{\kappa})=\kappa$, $j(z)=z$, $j(\bar{\tau})=\tau$, $j(\calL(\bar{\tau}))=\calL(\tau)$, $j\restriction(\calL(\bar{\tau})\cap X)=\id_{\calL(\bar{\tau})\cap X}$ and $T\in\ran{j}$. 
  Pick $\bar{T}\in X$ with $j(\bar{T})=T$. Then $\bar{T}$ is a subset of $\calL(\bar{\tau})$ of cardinality $\bar{\kappa}$ and the fact that $\bar{\kappa}\subseteq X$ ensures that $\bar{T}\subseteq X$. 
  Moreover, since $j\restriction(\calL(\bar{\tau})\cap X)=\id_{\calL(\bar{\tau})\cap X}$, it follows that $\bar{T}=j[\bar{T}]\subseteq T$ and hence $\bar{T}$ is satisfiable. This shows that there is a $\bar{\tau}$-structure $M$ with the property that $\psi(M,\phi,z)$ holds for all $\phi$ in $\bar{T}$. The correctness properties of $\HH{\bar{\theta}}$ now ensure that this statement holds in $X$ and hence we know that, in $\HH{\theta}$, there is a $\tau$-structure $N$ with the property that $\psi(N,\phi,z)$ holds for all $\phi$ in $T$. Using the correctness properties of $\HH{\theta}$, we can now conclude that $T$ is satisfiable and, by   \eqref{item:AbstractLogic-renaming} in  Definition \ref{definition:AbstractLogic}, we also know that $U$ is  satisfiable.

 \eqref{item:WeakCompactFromWShrewd4}  Let $\mu$ be a cardinal as  in \eqref{item:WeakCompactFromWShrewd1}, let  $\kappa>\mu$ be a $C^{(n)}$-weakly shrewd cardinal,  let  $\sigma$ be a language with the property that $\betrag{\SSSS_\sigma}<\kappa$, let $I$ be a $\sigma$-structure  with $\betrag{\dom{I}}=\kappa$ and let  $\phi$ be an element of $\calL(\sigma)$ with $I\models_\calL\phi$. Let $u$ be a renaming of $\sigma$ into a language $\tau\in\HH{\kappa}$. Set $K=u^*(I)$ and $\phi_1=u_*(\phi)$. Then $K\models_\calL\phi_1$ and there is an isomorphism $\pi$ between $K$ and a $\tau$-structure $N$ with $\dom{N}=\kappa$ and $N\models_\calL\phi_1$. Fix a cardinal $\kappa<\theta\in C^{(n+2)}$. Then $K$ is an element of $\HH{\theta}$ and we can use \eqref{item:WeakCompactFromWShrewd2} to find a cardinal $\bar{\theta}\in C^{(n)}$, a cardinal $\mu<\bar{\kappa}<\min(\kappa,\bar{\theta})$,   a sublanguage $\bar{\tau}$ of $\tau$ with $\betrag{\SSSS_{\bar{\tau}}}\leq\bar{\kappa}$ and $\SSSS_{\bar{\tau}}=\SSSS_\tau\cap\HH{\bar{\kappa}}$,      an elementary submodel $X$ of $\HH{\bar{\theta}}$ with  $\bar{\kappa}\cup\{\bar{\kappa},\bar{\tau},\calL(\bar{\tau}),z\}\subseteq X$ and   an elementary embedding $\map{j}{X}{\HH{\theta}}$       with $j\restriction\bar{\kappa}=\id_{\bar{\kappa}}$,  $j(\bar{\kappa})=\kappa$, $j(z)=z$, $j(\bar{\tau})=\tau$, $j(\calL(\bar{\tau}))=\calL(\tau)$, $j\restriction(\calL(\bar{\tau})\cap X)=\id_{\calL(\bar{\tau})\cap X}$ and $N,\phi_1\in\ran{j}$. 
 The fact that $\tau$ is an element of $\HH{\kappa}$ then implies that $\tau=\bar{\tau}\in\HH{\bar{\kappa}}$, $\SSSS_\tau\subseteq X$ and $j\restriction\SSSS_\tau=\id_{\SSSS_\tau}$. Pick $M\in X$ with $j(M)=N$. 
 Then $M$ is a $\tau$-structure with $\dom{M}=\bar{\kappa}$ and it follows that $M$ is a   substructure of $N$. 
 Next, pick $\phi_0$ in $X$ with $j(\phi_0)=\phi_1$. Then $\phi_0\in\calL(\tau)$ and the fact that $j\restriction(\calL(\tau)\cap X)=\id_{\calL(\tau)\cap X}$ implies that $\phi_0=\phi_1\in X$. Moreover, using the elementarity of $j$ and the correctness properties of $\HH{\theta}$ and $\HH{\bar{\theta}}$, we know that $M\models_\calL\phi_1$ holds. Then $\pi^{{-}1}\restriction\bar{\kappa}$ is an isomorphism between $M$ and a substructure $J$ of $K$, and we can apply \eqref{item:AbstractLogic-Iso} in  Definition \ref{definition:AbstractLogic} to show that $J\models_\calL\phi_1$. Finally, we can use \eqref{item:AbstractLogic-Iso} in  Definition \ref{definition:AbstractLogic} to conclude  that $H=(u^*)^{{-}1}(J)$ is a substructure of $I$ with $\betrag{\dom{H}}<\kappa$ and $H\models_\calL\phi$.

 \eqref{item:WeakCompactFromWShrewd5} Given a cardinal $\mu$ as  in \eqref{item:WeakCompactFromWShrewd1}, a $C^{(n)}$-weakly shrewd cardinal $\kappa>\mu$ satisfying $\nu^{{<}\ooo_\calL}\leq\kappa$  for all $\nu<\kappa$, a language  $\sigma$  with  $\betrag{\SSSS_\sigma}<\kappa$ and a $\sigma$-structure $I$  with $\betrag{\dom{I}}=\kappa$, we  proceed as in \eqref{item:WeakCompactFromWShrewd4} to first find a renaming $u$ of $\sigma$ into a language $\tau\in\HH{\kappa}$ and an isomorphism $\pi$ between $u^*(I)$ and a $\tau$-structure $N$ with $\dom{N}=\kappa$. As above, fix a cardinal $\kappa<\theta\in C^{(n+2)}$ and use  \eqref{item:WeakCompactFromWShrewd2} to find a cardinal $\bar{\theta}\in C^{(n)}$, a cardinal $\mu<\bar{\kappa}<\min(\kappa,\bar{\theta})$ with $\tau\in\HH{\bar{\kappa}}$,      an elementary submodel $X$ of $\HH{\bar{\theta}}$ with  $\bar{\kappa}\cup\{\bar{\kappa},\tau,\calL(\tau),z\}\subseteq X$  and   an elementary embedding $\map{j}{X}{\HH{\theta}}$       with $j\restriction\bar{\kappa}=\id_{\bar{\kappa}}$,  $j(\bar{\kappa})=\kappa$,  $j(z)=z$, $j(\tau)=\tau$, $j(\calL(\tau))=\calL(\tau)$, $j\restriction(\calL(\tau)\cap X)=\id_{\calL(\tau)\cap X}$ and $N\in\ran{j}$. This setup then ensures that $\nu^{{<}\ooo_\calL}\leq\bar{\kappa}$ holds for all $\nu<\bar{\kappa}$.

 \begin{claim*}
  $\calL(\tau)\subseteq X$. 
 \end{claim*}
 
 \begin{proof}[Proof of the Claim]
   Fix $\phi\in\calL(\tau)$. By   \eqref{item:occurrenceNumber} in  Definition \ref{definition:AbstractLogic}, there is a sublanguage $\tau_0$ of $\tau$ with $\betrag{\SSSS_{\tau_0}}<\ooo_\calL$ and $\phi\in\calL(\tau_0)$. Since $\betrag{\SSSS_\tau}^{{<}\ooo_\calL}\leq\bar{\kappa}\subseteq X$, it follows that $\tau_0$ is an element of $\HH{\bar{\kappa}}\cap X$ and this implies that $j(\tau_0)=\tau_0$ and $j(\calL(\tau_0))=\calL(\tau_0)$. 
   Now, we can find a language $\tau_1\in\HH{\ooo_\calL}$ and a renaming $v$ of $\tau_1$ into $\tau_0$ that is an element of $X$.   Our setup then ensures that $\calL(\tau_1)\in\HH{\mu}\subseteq X$ and $j(\calL(\tau_1))=\calL(\tau_1)$. 
   Since \eqref{item:WeakCompactFromWShrewd1.5} shows that $v_*$ is the unique bijection between $\calL(\tau_1)$ and $\calL(\tau_0)$ with the property that \eqref{equation:UniquePermutation} holds   for all $\tau_1$-structures $M$ in $\HH{\mu}$ and every $\phi\in\calL(\tau_1)$, the fact that all parameters of this existence statement are elements of $X$ that are fixed by $j$ ensures that $v_*$ is an element of $X$. 
   In particular, since $\calL(\tau_1)\subseteq\HH{\mu}\subseteq X$, we know that   $\calL(\tau_0)\subseteq X$ and hence $\phi$ is an element of $X$.  
  \end{proof}

  Now, pick $M\in X$ with $j(M)=N$. Then $M$ is a $\tau$-structure with $\dom{M}=\bar{\kappa}$. Moreover, since $j\restriction\SSSS_\tau=\id_{\SSSS_\tau}$, we know that $M$ is a substructure of $N$. Fix $\phi\in\calL(\tau)$. Then $\phi\in X$ with $j(\phi)=\phi$ and we can conclude that $$M\models_\calL\phi ~  \Longleftrightarrow ~ X\models\psi(M,\phi,z) ~  \Longleftrightarrow ~ \HH{\theta}\models\psi(N,\phi,z) ~ \Longleftrightarrow ~ N\models_\calL\phi.$$ This shows that $M$ is an $\calL$-elementary submodel of $N$ and, by arguing as in the proof of \eqref{item:WeakCompactFromWShrewd4}, this shows that there is an $\calL$-elementary submodel $H$ of $I$ with $\betrag{\dom{H}}<\kappa$.  
\end{proof}

By combining the above lemma with Theorems \ref{theorem:OrdSubtle1} and \ref{theorem:CharOrdFaint}, we obtain the forward implications in Theorems \ref{MAIN:EssentiallySubtle}, \ref{MAIN:EssentiallyFaint} and \ref{MAIN:StrictLST}:

\begin{corollary}\label{corollary:CompactnessCardinalsFromSubtle}
 \begin{enumerate}
   \item  If $\Ord$ is essentially faint and $\calL$ is an abstract logic, then there is a proper class of cardinals that are both weak  compactness cardinals for $\calL$  and  strict L\"owenheim--Skolem--Tarski numbers for $\calL$. 
   
   
   \item If $\Ord$ is essentially subtle and $\calL$ is an abstract logic, then the class of cardinals that are both weak compactness cardinals for $\calL$  and  weak L\"owenheim--Skolem--Tarski numbers for $\calL$ is stationary.   
 \end{enumerate}
\end{corollary}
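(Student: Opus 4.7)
The plan is to combine Lemma \ref{lemma:CnWeaklyShrewd} with the large cardinal characterizations from Theorems \ref{theorem:OrdSubtle1} and \ref{theorem:CharOrdFaint}. Fix an abstract logic $\calL$. Pick a natural number $n>0$, $\Sigma_n$-formulas $\varphi(v_0,v_1,v_2)$ and $\psi(v_0,v_1,v_2)$ and a parameter $z$ that define $\calL$ and $\models_\calL$; then apply Item \eqref{item:WeakCompactFromWShrewd1} of Lemma \ref{lemma:CnWeaklyShrewd} to produce a cardinal $\mu\in C^{(1)}$ with the listed properties. After this setup, both parts of the corollary become purely a question about the abundance of $C^{(n)}$-weakly shrewd cardinals above $\mu$.

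For Item (1), if $\Ord$ is essentially faint, then Theorem \ref{theorem:CharOrdFaint} directly yields a proper class of $C^{(n)}$-weakly shrewd cardinals, and hence a proper class of such cardinals exceeding $\mu$. Items \eqref{item:WeakCompactFromWShrewd3} and \eqref{item:WeakCompactFromWShrewd4} of Lemma \ref{lemma:CnWeaklyShrewd} then guarantee that each such cardinal is simultaneously a weak compactness cardinal and a strict L\"owenheim--Skolem--Tarski number for $\calL$.

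For Item (2), the task is to upgrade this from properness to stationarity. The plan is to first observe that an inaccessible $C^{(n)}$-weakly shrewd cardinal $\kappa>\max(\mu,\ooo_\calL)$ automatically satisfies the smallness condition $\nu^{<\ooo_\calL}\leq\kappa$ for all $\nu<\kappa$, so that Items \eqref{item:WeakCompactFromWShrewd3} and \eqref{item:WeakCompactFromWShrewd5} of Lemma \ref{lemma:CnWeaklyShrewd} apply. Thus, given any closed unbounded class $C$ of ordinals, it suffices to produce a $C^{(n)}$-weakly shrewd cardinal inside the (still closed unbounded) class $C\cap C^{(1)}$, since membership in $C^{(1)}$ upgrades weak shrewdness to strong inaccessibility, as noted in the comments following Proposition \ref{proposition:BasicWeaklyShrewd}. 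To find such a cardinal, I would mimic the proof of Lemma \ref{lemma:ShrewdFromSubtle} in the class setting: build a class sequence $\seq{E_\alpha}{\alpha\in\Ord}$ of nonempty subsets of $\POT{\alpha}$ that, at any $\alpha$ which is a regular cardinal failing to be $C^{(n)}$-weakly shrewd, encodes a G\"odel number of a witnessing formula together with the witnessing set (via the pairing scheme used in the forward direction of the proof of Theorem \ref{theorem:CharOrdFaint}), and which assigns canonical defaults chosen to be incompatible with this encoding at all other ordinals. Applying the essential subtleness of $\Ord$ to this sequence and the club $C\cap C^{(1)}$ produces $\alpha<\beta$ in the club and $A\in E_\beta$ with $A\cap\alpha\in E_\alpha$; the design of the defaults forces both $\alpha$ and $\beta$ to be regular cardinals failing to be $C^{(n)}$-weakly shrewd, at which point the familiar argument from Lemma \ref{lemma:ShrewdFromSubtle} shows that the reflected witness at $\beta$ already testifies $C^{(n)}$-weak shrewdness at $\alpha$, a contradiction.

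The main obstacle I foresee is the bookkeeping in the design of the default values of the sequence $\seq{E_\alpha}{\alpha\in\Ord}$: one must arrange the encoding so that a coherent pair $A\in E_\beta,\, A\cap\alpha\in E_\alpha$ can only arise when both ordinals are regular cardinals on which the failure of $C^{(n)}$-weak shrewdness has been recorded, so that the reflection forced by essential subtleness is genuinely informative rather than a coincidence among the default values. Once this is calibrated correctly, the contradiction argument proceeds exactly as in Lemma \ref{lemma:ShrewdFromSubtle}, and the stationarity follows.
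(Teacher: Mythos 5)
Your Item (1) is correct and is exactly the paper's argument: fix $n$ with $\calL$ and $\models_\calL$ being $\Sigma_n$-definable, take $\mu$ from Lemma \ref{lemma:CnWeaklyShrewd}.\eqref{item:WeakCompactFromWShrewd1}, and combine Items \eqref{item:WeakCompactFromWShrewd3} and \eqref{item:WeakCompactFromWShrewd4} of that lemma with the proper class of $C^{(n)}$-weakly shrewd cardinals supplied by Theorem \ref{theorem:CharOrdFaint}.

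For Item (2) you depart from the paper, and your construction as described does not work. The paper avoids any new combinatorial argument: it enlarges $n$ so that the given club $C$ is itself $\Sigma_n$-definable, so that all sufficiently large members of $C^{(n)}$ lie in $C$; Theorem \ref{theorem:OrdSubtle1} then provides a proper class of $C^{(n)}$-strongly unfoldable cardinals, which are inaccessible, lie in $C^{(n+1)}\subseteq C^{(n)}$ by Proposition \ref{proposition:BasicCnSunf}.\eqref{item:BasicCnSunf3} (hence eventually in $C$), and are handled by Lemma \ref{lemma:CnWeaklyShrewd} as in Item (1). Your alternative --- locating a $C^{(n)}$-weakly shrewd cardinal directly inside $C\cap C^{(1)}$ via essential subtleness --- is salvageable but logically garbled as stated: you define $\seq{E_\alpha}{\alpha\in\Ord}$ unconditionally, recording failures of $C^{(n)}$-weak shrewdness at the failing regular cardinals and assigning ``incompatible'' defaults everywhere else, and you then claim the coherent pair must land on two failing regular cardinals, yielding a contradiction. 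Since nothing has been assumed, that argument would refute essential subtleness outright. The obstruction is precisely in the defaults: at the members of the club that \emph{are} $C^{(n)}$-weakly shrewd you would need pairwise non-cohering values, and essential subtleness (applied to your sequence and your club, given that coherence is provably impossible at every other kind of pair) shows no such assignment exists. You must either run the construction as a reductio --- assume no regular cardinal in $C\cap C^{(1)}$ above $\mu$ is $C^{(n)}$-weakly shrewd, so that every regular member of the club carries a failure encoding and the genuinely incoherent defaults (the cofinal-function coding from the proof of Theorem \ref{theorem:CharOrdFaint}) are needed only at singular members --- or keep the construction unconditional but make the defaults at weakly shrewd members mutually coherent, in which case the coherent pair landing there \emph{is} the desired conclusion rather than a step toward contradiction. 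With either repair your route goes through and in fact shows that the inaccessible $C^{(n)}$-weakly shrewd cardinals are stationary, but the paper's definability trick is substantially shorter.
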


\begin{proof}
 Given a natural number $n>0$, assume that $\calL$ is an abstract logic with the property that the function $\calL$ and the relation $\models_\calL$ are definable by $\Sigma_n$-formulas with parameters. Then \eqref{item:WeakCompactFromWShrewd3} and \eqref{item:WeakCompactFromWShrewd4} of Lemma \ref{lemma:CnWeaklyShrewd} show that all sufficiently large $C^{(n)}$-weakly shrewd cardinals are both weak compactness cardinals for $\calL$  and  strict L\"owenheim--Skolem--Tarski numbers for $\calL$. 
 Theorem \ref{theorem:CharOrdFaint} then shows that the assumption that $\Ord$ is essentially faint implies that there is a proper class of cardinals that are both weak compactness cardinals for $\calL$  and  strict L\"owenheim--Skolem--Tarski numbers for $\calL$.

 Now, let $\calL$ be an abstract logic and let $C$ be a closed unbounded class of ordinals. Then there is a natural number $n>0$ such that both $\calL$ and $C$ are definable by $\Sigma_n$-formulas with parameters. Since every sufficiently large cardinal in $C^{(n)}$ is an element of $C$,  we can combine the above computations with Lemma \ref{lemma:separatePropertiesCn} to conclude that every sufficiently large $C^{(n)}$-strongly unfoldable cardinal is an element of $C$ that is both a weak compactness cardinal for $\calL$  and a weak L\"owenheim--Skolem--Tarski number  for $\calL$. In particular,   Theorem \ref{theorem:OrdSubtle1} shows that the assumption that $\Ord$ is essentially subtle implies that every closed unbounded class of ordinals contains a cardinal that is both a weak compactness cardinal for $\calL$  and a weak L\"owenheim--Skolem--Tarski number for $\calL$. 
\end{proof}

 By combining the above corollary with arguments contained in \cite{bdgm}, we can also use the obtained results to separate the principle  \anf{\emph{$\Ord$ is  subtle}} from the principle  \anf{\emph{$\Ord$ is essentially subtle}}:

 \begin{corollary}\label{corollary:SublteNotEssentially}
  If the theory $$\ZFC ~ + ~ \anf{\textit{$\Ord$ is subtle}}$$ is consistent, then this theory does not prove that $\Ord$ is essentially subtle. 
 \end{corollary}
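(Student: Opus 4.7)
The plan is to exploit Corollary \ref{corollary:CompactnessCardinalsFromSubtle}(2) in its contrapositive form: if there is an abstract logic $\calL$ for which the class of weak compactness cardinals is not stationary, then $\Ord$ is not essentially subtle. Hence, in order to separate the two principles, it is enough to exhibit a single model of $\ZFC + \anf{\textit{$\Ord$ is subtle}}$ in which some abstract logic fails to admit a stationary class of weak compactness cardinals. This is precisely the scenario that the arguments of \cite{bdgm} produce on the ``strong'' side of Theorem \ref{theorem:BDGM}: that theorem derives a stationary class of weak compactness cardinals from \anf{\textit{$\Ord$ is subtle}} only under the additional hypothesis that $\VV$ admits a definable well-ordering, and a companion construction there shows that without this extra assumption the implication provably fails.

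Concretely, starting from the consistency of $\ZFC + \anf{\textit{$\Ord$ is subtle}}$, the downwards absoluteness of the subtleness of $\Ord$ to $\LL$ yields a model $\VV$ of $\ZFC + {\VV{=}\LL} + \anf{\textit{$\Ord$ is subtle}}$. I would then apply the class forcing construction from \cite{bdgm} over this ground model. The key properties required of the extension $\VV[G]$ are, first, that subtleness of every subtle cardinal in $\VV$ is preserved, so that $\VV[G]$ continues to satisfy \anf{\textit{$\Ord$ is subtle}}; and second, that the generic $G$ can be used to define an abstract logic $\calL$ (for instance a logic equipped with a predicate computing a parameter coding $G$) whose class of weak compactness cardinals avoids some closed unbounded class of ordinals, and is therefore non-stationary. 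Applying Corollary \ref{corollary:CompactnessCardinalsFromSubtle}(2) inside $\VV[G]$ then yields that $\Ord$ is not essentially subtle in $\VV[G]$, completing the separation.

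The main obstacle is the simultaneous balancing of the two preservation tasks in the forcing step: preserving subtleness of a proper class of cardinals while arranging enough definable asymmetry in $\VV[G]$ to keep some canonical abstract logic from having a stationary class of weak compactness cardinals. Subtleness is a fine combinatorial reflection property, and naive class forcing easily introduces threads along closed unbounded classes that violate it, so one must use the carefully tailored construction of \cite{bdgm}. Since the corollary is stated as a combination of Corollary \ref{corollary:CompactnessCardinalsFromSubtle} with work already present in \cite{bdgm}, I would invoke that construction as a black box rather than redeveloping it here.
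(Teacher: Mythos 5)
Your proposal is correct and follows essentially the same route as the paper: the paper likewise combines Corollary \ref{corollary:CompactnessCardinalsFromSubtle} with a consistency result extracted from the proof of {\cite[Theorem 5.6]{bdgm}}, namely that $\ZFC+\anf{\textit{$\Ord$ is subtle}}$ is consistent with the non-existence of a weak compactness cardinal for second-order logic, and then invokes that construction as a black box exactly as you do. The only cosmetic difference is that the cited model kills \emph{all} weak compactness cardinals for the fixed, canonical logic $\calL^2$ rather than merely making them non-stationary for some $G$-coded logic (which also sidesteps the worry that a logic defined from a proper-class generic parameter would not be a definable class in the sense this paper requires of abstract logics).
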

 
 \begin{proof}
  The proof of {\cite[Theorem 5.6]{bdgm}} shows that, if the above theory is consistent, then it is consistent with the assumption that there is no weak compactness cardinal for second-order logic. 
  An application of Corollary \ref{corollary:CompactnessCardinalsFromSubtle} then shows that this theory does not prove that $\Ord$ is essentially subtle. 
 \end{proof}

 We now work towards proving the backward implications in Theorems \ref{MAIN:EssentiallySubtle}, \ref{MAIN:EssentiallyFaint} and \ref{MAIN:StrictLST}. 
  Given a natural number $n>0$ and a cardinal $\kappa$ that is not $C^{(n)}$-weakly shrewd, 
   we let $\theta^n_\kappa$ denote the minimal element $\theta$ of $C^{(n)}$ greater than $\kappa$ with the property that there exists $z\in \HH{\theta}$ such that for all $\bar{\theta}\in C^{(n)}$, all cardinals $\bar{\kappa}<\min(\kappa,\bar{\theta})$ and all  elementary submodels $X$ of $\HH{\bar{\theta}}$ with $\bar{\kappa}+1\subseteq X$, there is no elementary embedding $\map{j}{X}{\HH{\theta}}$ with $j\restriction\bar{\kappa}=\id_{\bar{\kappa}}$, $j(\bar{\kappa})=\kappa$ and $z\in\ran{j}$. 
 In addition, given a natural number $n>0$ and a cardinal $\kappa$ that is not $C^{(n)}$-weakly shrewd,  we let $Z^n_\kappa$ denote the non-empty set of all elements $z$ of $\HH{\theta^n_\kappa}$ with the property that for all $\bar{\theta}\in C^{(n)}$, all cardinals $\bar{\kappa}<\min(\kappa,\bar{\theta})$ and all  elementary submodels $X$ of $\HH{\bar{\theta}}$ with $\bar{\kappa}+1\subseteq X$, there is no elementary embedding $\map{j}{X}{\HH{\theta^n_\kappa}}$ with $j\restriction\bar{\kappa}=\id_{\bar{\kappa}}$, $j(\bar{\kappa})=\kappa$ and $z\in\ran{j}$.  
 In the following,  let $\sigma$ denote the language consisting of a binary relation symbol $E$ and four constant symbols $d$, $e$,  $f$ and $g$. 
 Given a natural number $m$, a natural number $n>0$ and a cardinal $\rho\in C^{(1)}$ of uncountable cofinality, we  define $\Ce^{m,n}_\rho$ to be the class of all $\sigma$-structures $M$ 
 with the property that there exists a (necessarily unique) cardinal $\rho<\kappa_M\in\Lim(C^{(m)})$ such that the following statements hold:  
  \begin{itemize}
   
   \item There exists a cardinal $\vartheta>\kappa_M$ with the property that $\vartheta>\theta^n_\mu$ holds for every cardinal $\mu$ in $C^{(m)}\cap(\rho,\kappa_M]$ that is not $C^{(n)}$-weakly shrewd and $\dom{M}$ is an elementary submodel of $\HH{\vartheta}$ of cardinality $\kappa_M$ with $\kappa_M+1\subseteq \dom{M}$. 
   
      \item $E^M={\in}\restriction(\dom{M}\times\dom{M})$, $d^M=\kappa_M$ and $e^M=\rho$.  
      
   \item $f^M$ and $g^M$ are functions whose domains are the set of all cardinals in  $C^{(m)}\cap(\rho,\kappa_M]$ that are not $C^{(n)}$-weakly shrewd.  
   
   \item If $\mu\in\dom{f^M}$, then $f^M(\mu)$ is an elementary submodel of $\HH{\theta^n_\mu}$ of cardinality $\mu$ with $\mu+1\subseteq f^M(\mu)$ and $g^M(\mu)\in Z^n_\mu\cap f^M(\mu)$. 
  \end{itemize}

 The following lemma shows how the above class of structures can be used to derive the existence of $C^{(n)}$-weakly shrewd cardinals from strong structural properties of abstract logics:

\begin{lemma}\label{lemma:CardinalFromLogic}
  Given a natural number $m$, a natural number $n>0$ and  a   cardinal $\rho\in C^{(1)}$ of uncountable cofinality, assume that $\calL$ is an abstract logic with the property that  the  following statements hold whenever $\tau$ is a language extending $\sigma$:  
   \begin{enumerate}
    \item\label{item:CardinalFromLogic1} The set $\calL(\tau)$ contains all first-order sentences of $\tau$, a distinguished sentence $\phi_*$ and  distinguished sentences $\phi_{{<}\xi}$ and $\phi_{{\geq}\xi}$ for every ordinal $\xi\leq\rho$.  
    
    \item Given $M\in\Str_\tau$ and  a first-order sentence $\phi\in\calL(\tau)$,      $M\models_{\calL}\phi$ holds if and only if $\phi$ holds in $M$ (in the sense of first-order structures). 
    
    \item Given $M\in\Str_\tau$,      $M\models_{\calL}\phi_*$ holds if and only if $M\restriction\sigma$ is isomorphic to a structure in $\Ce^{m,n}_\rho$. 
    
    \item Given $\xi\leq\rho$ and $M\in\Str_\tau$,      $M\models_{\calL}\phi_{{<}\xi}$ holds if and only if $\langle\dom{M},E^M\rangle$ is a well-order of order-type less than $\xi$. 
    
     \item\label{item:CardinalFromLogic5} Given $\xi\leq\rho$ and $M\in\Str_\tau$,      $M\models_{\calL}\phi_{{\geq}\xi}$ holds if and only if $\langle\dom{M},E^M\rangle$ is a well-order of order-type at least $\xi$. 
  \end{enumerate}
  
  If $\kappa\in\Lim(C^{(m)})$ is either a weak compactness cardinal for $\calL$ or a strict L\"owenheim--Skolem--Tarski number for $\calL$, then $\kappa>\rho$ and the set $C^{(m)}\cap(\rho,\kappa]$ contains a   $C^{(n)}$-weakly shrewd cardinal $\mu$ with the property that $\delta^\eta<\mu$ holds for all $\delta<\mu$ and $\eta\leq\rho$ with $\delta^\eta<\kappa$. 
\end{lemma}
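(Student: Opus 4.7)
The proof splits into two stages: first establishing $\kappa > \rho$, then locating the cardinal $\mu$.

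For $\kappa > \rho$, suppose for contradiction that $\kappa \leq \rho$, so that $\phi_{<\kappa}$ and $\phi_{\geq\kappa}$ belong to $\calL(\tau)$ for every $\tau \supseteq \sigma$. In the weak compactness case, extend $\sigma$ to $\tau$ by fresh constants $c_\alpha$ for $\alpha < \kappa$ and consider the theory $T = \{\phi_{<\kappa}\} \cup \{c_\alpha E c_\beta : \alpha < \beta < \kappa\}$ of cardinality $\kappa$: every subtheory of cardinality below $\kappa$ involves only $<\kappa$ of the $c_\alpha$ and is realised by interpreting the involved constants as a strictly increasing sequence in a small well-order, whereas a model of $T$ itself would require a well-order of order-type $<\kappa$ containing an $E$-increasing sequence of length $\kappa$. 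In the strict L\"owenheim--Skolem--Tarski case, take the $\sigma$-structure $N$ with $\dom{N} = \kappa$ and $E^N = {\in}\restriction\kappa$; then $N \models_\calL \phi_{\geq\kappa}$, but no substructure of cardinality below $\kappa$ can satisfy $\phi_{\geq\kappa}$. Either way this contradicts the assumption on $\kappa$.

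Now assume $\kappa > \rho$, and suppose for contradiction that no $\mu \in C^{(m)} \cap (\rho, \kappa]$ is both $C^{(n)}$-weakly shrewd and satisfies the asserted $\rho$-closure condition. Using the Axiom of Choice, construct $N \in \Ce^{m,n}_\rho$ with $\kappa_N = \kappa$: for each non--$C^{(n)}$-weakly shrewd $\mu \in C^{(m)} \cap (\rho, \kappa]$ select $z_\mu \in Z^n_\mu$ and an elementary submodel $Y_\mu \prec \HH{\theta^n_\mu}$ of cardinality $\mu$ containing $\mu + 1 \cup \{z_\mu\}$; pick $\vartheta \in C^{(n)}$ exceeding every $\theta^n_\mu$ and every $\delta^\eta$ with $\delta < \kappa, \eta \leq \rho, \delta^\eta < \kappa$; take $\dom{N}$ to be an elementary submodel of $\HH{\vartheta}$ of cardinality $\kappa$ containing $\kappa + 1$ together with the assembled functions $f : \mu \mapsto Y_\mu$ and $g : \mu \mapsto z_\mu$. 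Now expand $\sigma$ to a language $\tau$ of cardinality below $\kappa$ by adjoining a constant for every ordinal $\leq \rho$ together with Skolem function symbols for every first-order formula in the language of set theory, interpreted via canonical Skolem functions of $\HH{\vartheta}$ restricted to $\dom{N}$. This guarantees that every $\tau$-substructure of the $\tau$-expansion of $N$ satisfies $\rho + 1 \subseteq \dom{M}$ and $\dom{M} \prec \HH{\vartheta}$ in the language of set theory.

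In the strict LST case, apply the strict LST property to this expansion of $N$ and the sentence $\phi_*$: this yields a $\tau$-substructure $M$ of cardinality below $\kappa$ with $M \models_\calL \phi_*$. Fix an isomorphism $\pi : M\restriction\sigma \to N'$ with $N' \in \Ce^{m,n}_\rho$ and write $\mu^* = \kappa_{N'}$; the bijection of $E$-predecessors of $d$ through $\pi$ forces $\rho < \mu^* \leq |\dom{M}| < \kappa$ and $\mu^* \in \Lim(C^{(m)})$. Arranging the ambient $\vartheta_{N'}$ of $N'$ to lie in $C^{(n)}$ as well, and combining $\dom{N'} \prec \HH{\vartheta_{N'}}$ with $\dom{M} \prec \HH{\vartheta}$, the $\in$-preserving bijection $\pi^{-1}$ factors as $\sigma_M^{-1} \circ \sigma_{N'}$ through the common Mostowski collapse and thereby constitutes an elementary embedding $\dom{N'} \to \HH{\vartheta}$ with $\pi^{-1}(\mu^*) = \kappa$. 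For any prescribed $y \in \HH{\vartheta}$, pre-adding a constant for $y$ to $\tau$ before invoking strict LST forces $y \in \dom{M}$ and hence $y \in \ran{\pi^{-1}}$; this produces exactly the reflecting data required by Lemma \ref{lemma:WcnShrewdChar}\eqref{item:WcnShrewdChar2} for the cardinal $\kappa$ with $\bar\kappa = \mu^*$, showing $\kappa$ to be $C^{(n)}$-weakly shrewd, while the $\rho$-closure of $\kappa$ follows because $\vartheta$ exceeds every relevant $\delta^\eta$. Since $\kappa \in C^{(m)} \cap (\rho, \kappa]$, this contradicts the hypothesis. The weak compactness case runs in parallel via the theory $T_* = \{\phi_*\} \cup \{c_\alpha E d : \alpha<\kappa\} \cup \{c_\alpha E c_\beta : \alpha<\beta<\kappa\}$ in the same Skolem-extended language: $<\kappa$-satisfiability is witnessed by structures in $\Ce^{m,n}_\rho$ of height some $\mu \in C^{(m)} \cap (\rho, \kappa)$ bounding the finitely-indexed constants used, and a model of $T_*$ supplies the same reflection analysis.

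The main obstacle in this argument is verifying the identity condition $j \restriction \bar\kappa = \id_{\bar\kappa}$ of Lemma \ref{lemma:WcnShrewdChar}\eqref{item:WcnShrewdChar2} for $j = \pi^{-1}$ and $\bar\kappa = \mu^*$: the embedding is automatically the identity on the initial segment $\rho + 1$ supplied by the added constants, but forcing it to be the identity all the way up to $\mu^*$ requires that $\dom{M}$ contain a sufficiently long initial segment of ordinals below $\kappa$. Securing this is the key technical step and is achieved by the careful choice of Skolem function symbols in $\tau$, together with the placement of $\vartheta$ in a high enough level of the $C^{(k)}$-hierarchy to absorb the definitional complexity of $\phi_*$, of the assembled data $f, g$, and of the witnesses $z_\mu$.
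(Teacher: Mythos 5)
Your first stage (showing $\kappa>\rho$) is fine and essentially matches the paper. The second stage, however, has a genuine gap at exactly the point you flag as ``the key technical step'': you want to read off $C^{(n)}$-weak shrewdness of $\kappa$ itself from the embedding $\pi^{-1}\colon\dom{N'}\to\HH{\vartheta}$, and for that you need $\pi^{-1}\restriction\mu^*=\id_{\mu^*}$ where $\mu^*=\kappa_{N'}$. No choice of Skolem function symbols, constants for the ordinals $\leq\rho$, or placement of $\vartheta$ in the $C^{(k)}$-hierarchy can secure this: a substructure $M$ of size ${<}\kappa$ produced by the strict LST property will in general have $\dom{M}\cap\kappa$ non-transitive, so the inverse of any isomorphism onto a member of $\Ce^{m,n}_\rho$ must move ordinals strictly below $\mu^*$. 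The added constants only guarantee the identity up to $\rho$, which is far from enough. This is precisely why the paper does not try to prove that $\kappa$ is weakly shrewd. Instead it produces two structures $H,K\in\Ce^{m,n}_\rho$ with an elementary embedding $j\colon\dom{H}\to\dom{K}$ that is not the identity on $\kappa_H+1$, and takes as candidate the critical point $\mu$ of $j$ --- the least ordinal with $j(\mu)>\mu$ --- for which $j\restriction\mu=\id_\mu$ holds \emph{by definition}. The counterexample data $f,g$ that the structures in $\Ce^{m,n}_\rho$ carry for every non-weakly-shrewd cardinal in $C^{(m)}\cap(\rho,\kappa_M]$ --- which you construct when building $N$ but then never use --- is the engine of the contradiction: if $\mu$ were not $C^{(n)}$-weakly shrewd, then $j\restriction f^H(\mu)$ would be an elementary embedding of $f^H(\mu)\prec\HH{\theta^n_\mu}$ into $\HH{\theta^n_{j(\mu)}}$ with critical point $\mu$, sending $\mu$ to $j(\mu)$ and with $g^K(j(\mu))\in Z^n_{j(\mu)}$ in its range, contradicting the definition of $Z^n_{j(\mu)}$.

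Two further problems. In the weak compactness case your theory $T_*$ contains $\phi_*$ and order constraints on fresh constants but omits the first-order elementary diagram of a fixed $M\in\Ce^{m,n}_\rho$ with $\kappa_M=\kappa$; without the diagram a model of $T_*$ gives no elementary embedding from $\dom{M}$ anywhere, so there is nothing to take a critical point of, and ``the same reflection analysis'' does not get started. Finally, even granting an embedding, your write-up omits the verifications that the resulting cardinal lies in $C^{(m)}$ and above $\rho$ (both of which the paper obtains from the Kunen inconsistency applied to restrictions of $j$ to ranks fixed by $j$) and the argument for the closure property $\delta^\eta<\mu$; choosing $\vartheta$ large does not by itself yield these, since the relevant cardinal is the critical point, not $\kappa$.
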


\begin{proof}
 First, assume towards a contradiction, that $\kappa\leq\rho$. Define $$T ~ = ~ \{\phi_{{<}\kappa}\} ~ \cup ~ \Set{\phi_{{\geq}\xi}}{\xi<\kappa} ~ \subseteq ~ \calL(\sigma).$$ Then $T$ is an unsatisfiable $\calL$-theory that is ${<}\kappa$-satisfiable. This shows that $\kappa$ is not a weak compactness cardinal for $\calL$. 
 If we now pick a $\sigma$-structure $N$ with $\dom{N}=\kappa$ and $E^N={\in}\restriction(\kappa\times\kappa)$, then $N\models_\calL\phi_{{\geq}\kappa}$ holds and $M\models_\calL\phi_{{\geq}\kappa}$ fails for every substructure $M$ of $N$ with $\betrag{\dom{M}}<\kappa$. This shows that $\kappa$ is also not a strict L\"owenheim--Skolem--Tarski number for $\calL$, contradicting our initial assumptions. Therefore, we know that  $\kappa>\rho$ and we can pick  $M\in\Ce^{m,n}_\rho$ with $\kappa_M=\kappa$.

  \begin{claim*}
   There exist $\sigma$-structures $H$ and $K$ in $\Ce^{m,n}_\rho$ with $\kappa\in\{\kappa_H,\kappa_K\}$  
    and the property that there is an elementary embedding $\map{j}{\dom{H}}{\dom{K}}$ satisfying $j(\kappa_H)=\kappa_K$, $j(\rho)=\rho$, $j\restriction(\kappa_H+1)\neq\id_{\kappa_H+1}$, $j(f^H)=f^K$ and $j(g^H)=g^K$.   
  \end{claim*}
  
  \begin{proof}[Proof of the Claim]
    First, assume that $\kappa$ is a weak compactness cardinal for $\calL$.  
      Let $\tau$ denote the language that extends $\sigma$ by a constant symbol $c$ and a constant symbol  $c_x$ for every $x\in\dom{M}$.
    Next, let $T$ denote the $\calL(\tau)$-theory consisting of the union of the first-order elementary diagram of $M$ using the constant symbols $c_x$ and the set $$\{\phi_*\} ~ \cup ~ \{c\mathbin{E}d\} ~ \cup ~ \Set{c\neq c_\alpha}{\alpha<\kappa}.$$
  This theory has cardinality $\kappa$ and $\tau$-expansions of the structure $M$ witness that it is ${<}\kappa$-satisfiable. 
  Hence, our assumption implies that  $T$ is satisfiable. Our setup and \eqref{item:AbstractLogic-Iso} in Definition \ref{definition:AbstractLogic} now ensure that $T$ has a model whose $\sigma$-reduct $N$ is an element of $\Ce^{m,n}_\rho$. 
  We can now find an elementary embedding $\map{j}{\dom{M}}{\dom{N}}$ with $j(\kappa)=\kappa_N\geq\kappa$, $j(\rho)=\rho$, $c^N\in\kappa_N\setminus j[\kappa]$, $j(f^M)=f^N$ and $j(g^M)=g^N$.   In particular, we know that $j\restriction(\kappa+1)\neq\id_{\kappa+1}$.

    Now, assume that $\kappa$ is a strict L\"owenheim--Skolem--Tarski number for $\calL$. Let  $\tau$ be a language extending $\sigma$ that adds functions symbols for first-order Skolem functions to $\sigma$ and let $N$ be a $\tau$-structure such that $N\restriction\sigma=M$ and $N$ interprets all new function symbols as Skolem functions. 
  Then $N\models_\calL\phi_*$ and $\betrag{\dom{N}}=\kappa$. Therefore, our assumptions yield a substructure $K$ of $N$ with $K\models_\calL\phi_*$ and $\betrag{\dom{K}}<\kappa$. We can now find a $\sigma$-structure $H$ in $\Ce^{m,n}_\rho$ that is isomorphic to $K\restriction\sigma$. 
  Our setup now ensures that there is an elementary embedding $\map{j}{\dom{H}}{\dom{M}}$ with $j(\kappa_H)=\kappa$, $j(\rho)=\rho$, $j(f^H)=f^M$ and $j(g^H)=g^M$. 
 Moreover, since $\kappa_H=\betrag{\dom{H}}<\kappa$, we know that $j\restriction(\kappa_H+1)\neq\id_{\kappa_H+1}$.  
  \end{proof}

  Fix $\sigma$-structures $H$ and $K$ in $\Ce^{m,n}_\rho$ and  an elementary embedding $\map{j}{\dom{H}}{\dom{K}}$ as in the above claim. 
  Since $j\restriction(\kappa_H+1)\neq\id_{\kappa_H+1}$, we can find an  ordinal $\mu\leq\kappa_H$ with $j\restriction\mu=\id_\mu$ and $j(\mu)>\mu$. The fact that $\dom{H}$ is an elementary submodel of  $\HH{\vartheta}$ for some cardinal $\vartheta>\kappa$ then implies that $\mu$ is a regular cardinal.

  \begin{claim*}
   $\mu>\rho$. 
  \end{claim*}
  
  \begin{proof}[Proof of the Claim]
   Since $\rho$ is an element of $C^{(1)}\cap\kappa_H$, we know that $\VV_\rho\cup\{\VV_\rho\}\subseteq\dom{H}$. Moreover, since $j(\rho)=\rho$, it follows that   $j(\VV_\rho)=\VV_\rho$ and $\map{j\restriction\VV_\rho}{\VV_\rho}{\VV_\rho}$. In this situation, the fact that  $\rho$ has uncountable cofinality now allows us to use the  \emph{Kunen Inconsistency} to conclude that $j\restriction\VV_\rho=\id_{\VV_\rho}$.  
  \end{proof}

  \begin{claim*}
   $\mu\in C^{(m)}$. 
  \end{claim*}
  
  \begin{proof}[Proof of the Claim]
   Assume, towards a contradiction, that $\mu\notin C^{(m)}$. Set $\xi=\min(C^{(m)}\setminus\mu)<\kappa_H$. Then there is an ordinal $\eta<\mu$ with the property that  the ordinal $\xi$ is definable in $\dom{H}$ by a formula using the parameters $\kappa_H$ and $\eta$ and the same formula with the parameters $\kappa_K$ and $\eta$ defines $\xi$ in $\dom{K}$. It  follows that $j(\xi)=\xi>\mu$. 
   Since $C^{(0)}$ is the class of all ordinals, we now know $m>0$ and this implies that  $\kappa_H\in C^{(1)}$ and $\VV_{\kappa_H}\subseteq\dom{H}$. The fact that $j(\VV_{\xi+2})=\VV_{\xi+2}$ then implies that $\map{j\restriction\VV_{\xi+2}}{\VV_{\xi+2}}{\VV_{\xi+2}}$ is a non-trivial elementary embedding, contradicting the  \emph{Kunen Inconsistency}.   
  \end{proof}

  \begin{claim*}
   If $\delta<\mu$ and $\eta\leq\rho$ with $\delta^\eta<\kappa$, then $\delta^\eta<\mu$. 
  \end{claim*}
  
  \begin{proof}[Proof of the Claim]
   Assume, towards a contradiction, that there are cardinals $\delta<\mu$ and $\eta\leq\rho$ with $\mu\leq\delta^\eta<\kappa$. Then $\dom{H}$ contains an injection $\map{\iota}{\mu}{{}^\eta\delta}$. Moreover, since $\delta,\eta<\mu$, $j(\kappa_H)=\kappa_K$ and $\kappa\in\{\kappa_H,\kappa_K\}$, the elementarity of $j$ ensures that $\delta^\eta<\kappa_H$ and hence we know that ${}^\eta\delta\subseteq\dom{H}$.  Set $s=j(\iota)(\mu)\in{}^\eta\delta$. Since $j(s)=s$, we can now find  an $\alpha<\mu$ with $\iota(\alpha)=s$. But then $j(\iota)(\alpha)=j(\iota(\alpha))=j(s)=j(\iota)(\mu)$, contradicting the injectivity of  $\iota$.  
  \end{proof}

  Since $\kappa_H\leq j(\kappa_H)=\kappa_K$ and $\kappa\in\{\kappa_H,\kappa_K\}$, we know that $\mu\leq\kappa_H\leq\kappa$ and the   above computations show that the cardinal $\mu$ is an element of the set $C^{(m)}\cap(\rho,\kappa]$. Assume, towards a contradiction, that $\mu$ is not a $C^{(n)}$-weakly shrewd cardinal. Then $\mu\in\dom{f^H}$.    Set $\nu=j(\mu)>\mu$. Then $\nu$ is a regular cardinal in the interval $(\rho,\kappa_K]$ and elementarity implies that $\nu\in\dom{f^K}$. 
   Set $X=f^H(\mu)\subseteq\dom{H}$ and $z=g^N(\nu)=j(g^M(\mu))\in Z^n_\nu$.  
Then $\theta^n_\mu$ is an element of $C^{(n)}$, $\mu<\min(\nu,\theta^n_\mu)$, $X$ is an elementary submodel  of $\HH{\theta^n_\mu}$ with $\mu+1\subseteq X$ and  $\map{i=j\restriction X}{X}{\HH{\theta^n_\nu}}$ is an elementary embedding with  $i\restriction\mu=\id_\mu$, $i(\mu)=\nu$ and $z\in\ran{i}$. 
  But, this contradicts the fact that $z$ is an element of $Z^n_\nu$.  
  This yields the conclusion of the lemma.   
\end{proof}

 We can now also combine the above lemma with Theorems \ref{theorem:OrdSubtle1} and \ref{theorem:CharOrdFaint} to derive the backward implications of Theorems \ref{MAIN:EssentiallySubtle}, \ref{MAIN:EssentiallyFaint} and \ref{MAIN:StrictLST}. This completes the proofs of these results.

\begin{corollary}
 \begin{enumerate}
  \item\label{item:SubtleOrdFromLogic1}  If every abstract logic has either a weak compactness cardinal or a strict L\"owenheim--Skolem--Tarski number, then $\Ord$ is essentially faint. 
  
  \item\label{item:SubtleOrdFromLogic2}  If every abstract logic has,  for every natural number $n$, either a weak compactness cardinal that is an element of $C^{(n)}$ or a strict L\"owenheim--Skolem--Tarski number that is an element of $C^{(n)}$, then $\Ord$ is essentially subtle. 

 \end{enumerate}
\end{corollary}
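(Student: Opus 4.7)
The plan is to construct, for each pair of natural numbers $m$ and $n > 0$ and each cardinal $\rho \in C^{(1)}$ of uncountable cofinality, an abstract logic $\calL^{m,n}_\rho$ that realizes the syntactic--semantic specification \eqref{item:CardinalFromLogic1}--\eqref{item:CardinalFromLogic5} of Lemma \ref{lemma:CardinalFromLogic}, and then to feed the resulting cardinals into Theorems \ref{theorem:CharOrdFaint} and \ref{theorem:OrdSubtle1}. Concretely, the set $\calL^{m,n}_\rho(\tau)$ will consist of all first-order $\tau$-sentences when $\tau$ does not extend $\sigma$ and, when $\tau$ does extend $\sigma$, will additionally contain a distinguished sentence $\phi_*$ (holding in $M$ iff $M\restriction\sigma$ is isomorphic to a member of $\Ce^{m,n}_\rho$) together with sentences $\phi_{<\xi}$ and $\phi_{\geq\xi}$ for each $\xi\leq\rho$ (asserting that $E^M$ well-orders $\dom{M}$ with order-type below, respectively at least, $\xi$). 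The occurrence number of this logic can be taken to be $(\rho+\aleph_0)^+$, and monotonicity, expansion, and the isomorphism axiom follow immediately from the definitions; the bookkeeping for the renaming axiom amounts to sending $\phi_*$, $\phi_{<\xi}$, $\phi_{\geq\xi}$ to the analogous sentences built from the renamed images of the symbols $E, d, e, f, g$.

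For part \eqref{item:SubtleOrdFromLogic1}, given $n > 0$ and an ordinal $\xi_0$, I would fix a cardinal $\rho \in C^{(1)}$ of uncountable cofinality with $\rho > \xi_0$ and apply the hypothesis to $\calL^{0,n}_\rho$. The resulting cardinal $\kappa$ is uncountable and hence a limit ordinal, so $\kappa \in \Lim(C^{(0)})$, and Lemma \ref{lemma:CardinalFromLogic} produces a $C^{(n)}$-weakly shrewd cardinal $\mu \in (\rho,\kappa]$, in particular with $\mu > \xi_0$. Since $\xi_0$ and $n$ were arbitrary, a proper class of $C^{(n)}$-weakly shrewd cardinals exists for every $n$, and Theorem \ref{theorem:CharOrdFaint} then yields that $\Ord$ is essentially faint.

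For part \eqref{item:SubtleOrdFromLogic2}, given $n > 0$, I would set $m = n+1$, pick any cardinal $\rho \in C^{(1)}$ of uncountable cofinality, and apply the strengthened hypothesis with the natural number $n+2$ to $\calL^{m,n}_\rho$ to obtain a weak compactness cardinal or strict L\"owenheim--Skolem--Tarski number $\kappa$ for this logic that lies in $C^{(n+2)} = C^{(m+1)}$. Since $C^{(m)}$ is $\Pi_m$-definable and unbounded in $\Ord$, the $\Sigma_{m+1}$-correctness of $\kappa$ transports this unboundedness down to $\kappa$, so $\kappa \in \Lim(C^{(m)})$. Lemma \ref{lemma:CardinalFromLogic} then provides a $C^{(n)}$-weakly shrewd cardinal $\mu \in C^{(m)} \cap (\rho,\kappa] \subseteq C^{(n+1)}$, and Lemma \ref{lemma:separatePropertiesCn} shows that such a $\mu$ is already $C^{(n)}$-strongly unfoldable. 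Combined with Proposition \ref{proposition:BasicCnSunf} to cover the case $n = 0$, Theorem \ref{theorem:OrdSubtle1} then forces $\Ord$ to be essentially subtle.

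The principal obstacle throughout will be the explicit verification that $\calL^{m,n}_\rho$ satisfies all clauses of Definition \ref{definition:AbstractLogic}: in particular, specifying the bijection $u_*$ of the renaming axiom on the distinguished sentences requires a careful record of how the image symbols $u(E), u(d), u(e), u(f), u(g)$ are permuted, and one must check that this assignment commutes with the satisfaction relation uniformly across all $\sigma$-extending languages. Once this construction is in place, the two parts of the corollary follow cleanly from the chains of invocations outlined above.
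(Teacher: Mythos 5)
Your deductive skeleton is exactly the paper's: for part \eqref{item:SubtleOrdFromLogic1} apply Lemma \ref{lemma:CardinalFromLogic} with $m=0$ (so that $\Lim(C^{(0)})$ is just the class of limit ordinals and the hypothesis cardinal automatically qualifies) and feed the resulting proper class of $C^{(n)}$-weakly shrewd cardinals into Theorem \ref{theorem:CharOrdFaint}; for part \eqref{item:SubtleOrdFromLogic2} take $m=n+1$, use $C^{(n+2)}\subseteq\Lim(C^{(n+1)})$ to enter the lemma, and upgrade the resulting $C^{(n)}$-weakly shrewd cardinal in $C^{(n+1)}$ to a $C^{(n)}$-strongly unfoldable one via Lemma \ref{lemma:separatePropertiesCn} before invoking Theorem \ref{theorem:OrdSubtle1}. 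All of that is correct and is what the paper does; the paper simply asserts the existence of a logic satisfying \eqref{item:CardinalFromLogic1}--\eqref{item:CardinalFromLogic5} without constructing it.

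The one place where your write-up, taken literally, goes wrong is the explicit definition of $\calL^{m,n}_\rho$. Declaring that $\calL^{m,n}_\rho(\tau)$ consists of \emph{only} the first-order sentences whenever $\tau$ does not extend $\sigma$ violates the renaming axiom (Item \eqref{item:AbstractLogic-renaming} of Definition \ref{definition:AbstractLogic}): any language $\tau_1$ with one binary relation symbol and four constant symbols admits a renaming $u$ into $\sigma$, and then $u_*$ would have to be a bijection between a countable set of first-order sentences and the set $\calL^{m,n}_\rho(\sigma)$, which has cardinality $\rho>\aleph_0$ because of the sentences $\phi_{{<}\xi},\phi_{{\geq}\xi}$ for $\xi\leq\rho$ --- so no such bijection exists, let alone one preserving satisfaction. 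The repair is routine and is the convention the paper itself uses for $\calL_\Gamma$ in the proof of Theorem \ref{theorem:Stavi}: attach a copy of the distinguished sentences to \emph{every} language, parametrized by each choice of a binary relation symbol and a quadruple of constant symbols interpreting $E,d,e,f,g$, with satisfaction defined via the induced reduct; renamings then act on these parameters in the obvious way. With that adjustment your construction goes through and the rest of your argument stands.
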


\begin{proof}
  \eqref{item:SubtleOrdFromLogic1} Fix a natural number  $n>0$ and   a cardinal $\rho$ in $C^{(1)}$ of uncountable cofinality. 
  Set $m=0$. 
  Then there exists an abstract logic $\calL$ that satisfies the statements \eqref{item:CardinalFromLogic1}--\eqref{item:CardinalFromLogic5} listed in Lemma \ref{lemma:CardinalFromLogic} for $m$, $n$ and $\rho$. 
  The fact that $C^{(m)}$ is the class of all ordinals now allows us to apply the lemma to find a $C^{(n)}$-weakly shrewd cardinal greater than $\rho$. 
  
  These computations show that for every natural number $n>0$, there is a proper class of $C^{(n)}$-weakly shrewd cardinals. In this situation,  Theorem \ref{theorem:CharOrdFaint} allows us to conclude that $\Ord$ is essentially faint.

  \eqref{item:SubtleOrdFromLogic2} Given a natural number  $n>0$   and a cardinal $\rho\in C^{(1)}$ of uncountable cofinality,  there exists an abstract logic $\calL$ that satisfies the statements \eqref{item:CardinalFromLogic1}--\eqref{item:CardinalFromLogic5} listed in Lemma \ref{lemma:CardinalFromLogic} for $m=n+1$, $n$ and $\rho$. Since $C^{(n+2)}\subseteq\Lim(C^{(n+1)})$, our assumptions allows us to use the lemma to find a $C^{(n)}$-weakly shrewd cardinal greater than $\rho$ that is an element of $C^{(n+1)}$. We can then combine Theorem \ref{theorem:OrdSubtle1} with Lemma \ref{lemma:separatePropertiesCn} to conclude that $\Ord$ is essentially subtle. 
\end{proof}

\begin{remark}
 In \cite{FuSa22}, Fuchino and Sakai consider the possibility of characterizing the existence of weak compactness cardinals for second-order logic $\calL^2$ through the existence of certain large cardinals, analogous to  Magidor's characterization of  the existence of strong compactness cardinals for  $\calL^2$  through the existence of extendible cardinals  in \cite{MR295904}.  
  For this purpose, they introduce the concept of \emph{weak extendibility} as a natural candidate for a large cardinal property  providing such a characterization. Here, a cardinal $\kappa$ is defined to be \emph{weakly extendible} if $2^{{<}\kappa}=\kappa$ holds and for every ordinal $\theta>\kappa$ and every elementary submodel $X$ of $\VV_\theta$ of cardinality $\kappa$ with $\kappa+1\subseteq X$, there exists an ordinal $\vartheta>\kappa$ and an elementary embedding $\map{j}{X}{\VV_\vartheta}$ with $j\restriction\kappa=\id_\kappa$ and $j(\kappa)>\kappa$. 
  The results of  \cite{FuSa22}  show that all weakly extendible cardinals are weakly compact and therefore strongly inaccessible. 
   
Fuchino and Sakai prove that a cardinal $\kappa$ is a weak compactness cardinal for $\calL^2$ if and only if 
for every ordinal $\theta>\kappa$ and every elementary submodel $X$ of $\VV_\theta$ of cardinality $\kappa$ with $\kappa+1\subseteq X$,  there exists an ordinal $\vartheta>\kappa$ and an elementary embedding $\map{j}{X}{\VV_\vartheta}$ with $j(\kappa)>\sup(j[\kappa])$. 
 This equivalence directly implies that every  weakly extendible cardinal is a weak compactness cardinal for $\calL^2$. 
  Moreover, a standard application of Jensen's \emph{Covering Lemma} (as in {\cite[Theorem 18.27]{MR1940513}}) allows us to conclude  that the assumption $\VV=\LL$ causes all  weak compactness cardinals for $\calL^2$ to be weakly extendible. 
  In addition, a standard argument shows that, if $\kappa$ is the least  weak compactness cardinal for $\calL^2$ and $2^{{<}\kappa}=\kappa$ holds, then $\kappa$ is weakly extendible. 
  These implications directly raise the question of whether the existence of a  weak compactness cardinal for $\calL^2$ provably implies the existence of a weakly extendible cardinal. 
  A combination of Theorems \ref{MAIN:EssentiallyFaint} and \ref{MAIN:Equicons} now provides a negative answer to this question by showing that  weak compactness cardinals for $\calL^2$ can exist in models of $\ZFC$ without strongly inaccessible cardinals.   
\end{remark}

In the remainder of this section, we consider the question whether it is possible to extend the conclusion of Theorem \ref{MAIN:StrictLST}.\eqref{item:StrictLST1} to obtain a  characterization of the existence of  weak L\"owenheim--Skolem--Tarski numbers for all abstract logics through the assumption that $\Ord$ is essentially faint.

  \begin{theorem}\label{theorem:CharWeakLST}
  The following schemes are equivalent over $\ZFC$: 
   \begin{enumerate}
   \item\label{item:CharWeakLST1} Every abstract logic has a weak L\"owenheim--Skolem--Tarski number. 
   
   \item\label{item:CharWeakLST2} For every natural number $n$ and every cardinal $\eta$, there is a $C^{(n)}$-weakly shrewd cardinal $\kappa$ with $\delta^\eta<\kappa$ for all $\delta<\kappa$.  
   \end{enumerate}
 \end{theorem}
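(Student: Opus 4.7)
For the backward direction $(2)\Rightarrow(1)$, we apply Lemma \ref{lemma:CnWeaklyShrewd} directly. Given an abstract logic $\calL$ whose defining data is $\Sigma_n$-definable from parameters, Lemma \ref{lemma:CnWeaklyShrewd}.\eqref{item:WeakCompactFromWShrewd1} furnishes a cardinal $\mu\in C^{(1)}$ with the listed properties. Setting $\eta^{*}=\max(\mu,\ooo_\calL)$ and applying hypothesis $(2)$ to the pair $(n,\eta^{*})$, we obtain a $C^{(n)}$-weakly shrewd cardinal $\kappa$ with $\delta^{\eta^{*}}<\kappa$ for every $\delta<\kappa$. Elementary cardinal arithmetic then yields both $\kappa>\eta^{*}\geq\mu$ and $\nu^{<\ooo_\calL}\leq\nu^{\ooo_\calL}\leq\nu^{\eta^{*}}<\kappa$ for every $\nu<\kappa$, so that Lemma \ref{lemma:CnWeaklyShrewd}.\eqref{item:WeakCompactFromWShrewd5} implies that $\kappa$ is a weak L\"owenheim--Skolem--Tarski number for $\calL$.

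For the forward direction $(1)\Rightarrow(2)$, fix $n$ and $\eta$ and choose a cardinal $\rho\in C^{(1)}$ of uncountable cofinality with $\rho\geq\eta$. The plan is to construct an abstract logic $\calL=\calL_{n,\eta,\rho}$ that satisfies the hypotheses of Lemma \ref{lemma:CardinalFromLogic} for $m=0$, $n$, and $\rho$, but which moreover includes sentences forcing every weak L\"owenheim--Skolem--Tarski number $\kappa$ for $\calL$ to satisfy $\delta^\eta<\kappa$ for every $\delta<\kappa$. Concretely, for each language $\tau$ extending $\sigma$ by a distinguished unary predicate $A$ and a distinguished $(\eta{+}1)$-ary function symbol $F$, we place into $\calL(\tau)$ a distinguished sentence $\chi_\eta$ asserting that $F$ witnesses a bijection between ${}^\eta A$ and the domain. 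Hypothesis $(1)$ then provides a weak L\"owenheim--Skolem--Tarski number $\kappa$ for $\calL$.

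To verify closure, fix $\delta<\kappa$. If $\delta^\eta=\kappa$, enrich $\tau$ by constants $c_\alpha$ for $\alpha<\delta$ and build a $\tau$-structure $N$ with $|\dom{N}|=\kappa$, $A^N=\{c_\alpha^N:\alpha<\delta\}$, and $F^N$ a bijection ${}^\eta A^N\to\dom{N}$; any $\calL$-elementary substructure $M\prec_\calL N$ with $|\dom{M}|<\kappa$ then satisfies $\chi_\eta$, and the constants force $A^M=A^N$, so that $\chi_\eta$ forces $|\dom{M}|=\delta^\eta=\kappa$, a contradiction. The case $\delta^\eta>\kappa$ is disposed of by further augmenting $\calL$ with a second-order fragment and a sentence asserting strong-inaccessibility-like closure of $|\dom{}|$, whose preservation under $\calL$-elementary submodels of smaller cardinality excludes such $\kappa$. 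Having established $\delta^\eta<\kappa$ for all $\delta<\kappa$, we use that every weak L\"owenheim--Skolem--Tarski number is a strict L\"owenheim--Skolem--Tarski number, apply Lemma \ref{lemma:CardinalFromLogic} to obtain a $C^{(n)}$-weakly shrewd cardinal $\mu\in(\rho,\kappa]$ with $\delta^{\eta'}<\mu$ for all $\delta<\mu$ and $\eta'\leq\rho$ with $\delta^{\eta'}<\kappa$, and conclude $\delta^\eta<\mu$ for every $\delta<\mu$ by taking $\eta'=\eta$ and using the closure of $\kappa$.

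The main obstacle is the handling of the case $\delta^\eta>\kappa$ in the forward direction: whereas $\delta^\eta=\kappa$ is defeated by the direct bijection argument above, the strict inequality $\delta^\eta>\kappa$ cannot be refuted by any sentence that asserts the existence of a bijection between a $\kappa$-sized domain and ${}^\eta A$, since no such bijection can exist. One is therefore compelled to work with a logical fragment strong enough to detect strong-limit properties of the cardinality of the ambient domain (for example via a second-order quantifier), while remaining within the framework of abstract logics of Section \ref{section:AbstractLogics}, and verifying that the resulting enriched $\calL$ still satisfies the hypotheses required to invoke Lemma \ref{lemma:CardinalFromLogic}.
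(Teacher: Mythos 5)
Your backward direction $(2)\Rightarrow(1)$ is correct and is essentially the paper's argument: choose $\eta^*$ large enough that hypothesis $(2)$ produces a $C^{(n)}$-weakly shrewd $\kappa$ above $\mu$ with $\nu^{<\ooo_\calL}\leq\kappa$ for all $\nu<\kappa$, and invoke Lemma \ref{lemma:CnWeaklyShrewd}.\eqref{item:WeakCompactFromWShrewd5}. Your reduction of the forward direction to Lemma \ref{lemma:CardinalFromLogic} via the observation that every weak L\"owenheim--Skolem--Tarski number is a strict one is also the paper's route. The problem is the closure claim $\delta^\eta<\kappa$ for all $\delta<\kappa$, which you need before Lemma \ref{lemma:CardinalFromLogic} can deliver the final conclusion. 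You handle $\delta^\eta=\kappa$ with a bijection sentence, explicitly flag $\delta^\eta>\kappa$ as the main obstacle, and then dispose of it only by gesturing at ``a second-order fragment and a sentence asserting strong-inaccessibility-like closure of $|\dom{}|$.'' This is a genuine gap: a sentence asserting that the cardinality of the domain is closed under $\delta\mapsto\delta^\eta$ is exactly the property of $\kappa$ you are trying to establish, so you cannot arrange for the ambient $\kappa$-sized structure to satisfy it without circularity; and no sentence asserting a bijection between the domain and ${}^\eta A$ can be satisfied by a $\kappa$-sized structure when $\delta^\eta>\kappa$, as you yourself note. (A smaller technical defect: your $(\eta{+}1)$-ary function symbol $F$ is not permitted, since Definition \ref{definition:StructuresAndRenamings} only allows finite arities.)

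The paper closes this gap by a different, uniform device: the logic is required to contain the infinitary fragment $\calL_{\rho,\omega}$, which for each individual function $s\in{}^\eta\delta$ supplies a sentence $\phi_s$ (a conjunction of $\eta<\rho$ many atomic statements over constants $c_\gamma$, $\gamma\leq\delta$) asserting the existence of a function sending $c_\gamma$ to $c_{s(\gamma)}$ for all $\gamma<\eta$. Taking $N$ whose domain is a $\kappa$-sized elementary submodel of some $\HH{\theta}$ with $\kappa+1\subseteq\dom{N}$, the assumption $\delta^\eta\geq\kappa$ (whether $=\kappa$ or $>\kappa$) forces $\dom{N}\cap{}^\eta\delta$ to have cardinality $\kappa$, so any $\calL$-elementary substructure $M$ of size less than $\kappa$ omits some such $s$; then $N\models_\calL\phi_s$ while $M\not\models_\calL\phi_s$ (any witness in $\dom{M}$ would equal $s$ by elementarity and extensionality), a contradiction. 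This single argument replaces both of your cases, requires no second-order machinery, and stays within the occurrence-number constraints of Definition \ref{definition:AbstractLogic}. You should replace your case split by this pinning-down argument, or else supply a concrete sentence and a complete verification for the case $\delta^\eta>\kappa$; as written, that case is not proved.
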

 
 \begin{proof}
   First, assume that \eqref{item:CharWeakLST1} holds, $n>0$ is a natural number and  $\mu$ is a cardinal. Pick a cardinal $\rho$ in $C^{(1)}$ above $\mu$.  Then there is an abstract logic $\calL$ that satisfies statements \eqref{item:CardinalFromLogic1}--\eqref{item:CardinalFromLogic5} listed in Lemma \ref{lemma:CardinalFromLogic} for $m=0$, $n$ and $\rho$ together with the following statements: 
   \begin{enumerate}
    \setcounter{enumi}{5}
     \item If $\tau$ is a language, then $\calL(\tau)$ contains the set $\calL_{\rho,\omega}(\tau)$ of all sentences of the infinitary logic $\calL_{\rho,\omega}$ that extends first-order logic by allowing conjunctions and disjunctions of less than $\rho$-many formulas. 
     
     \item Given  a language $\tau$, $M\in\Str_\tau$  and $\phi\in\calL_{\rho,\omega}(\tau)$, then $M\models_\calL\phi$ if and only if $M\models_{\calL_{\rho,\omega}}\phi$. 
   \end{enumerate}
 Our assumption now ensures that there is   a weak L\"owenheim--Skolem--Tarski number $\kappa$ for $\calL$.

 \begin{claim*}
  If $\delta<\kappa$, then $\delta^\eta<\kappa$.  
 \end{claim*}
 
 \begin{proof}[Proof of the Claim]
   Assume, towards a contradiction, that there exists a cardinal $\eta\leq\delta<\kappa$ with $\delta^\eta\geq\kappa$. Let $\tau$ denote a language that is obtained from $\sigma$ by first adding a constant symbol $c_\gamma$ for each $\gamma\leq\delta$ and then closing the language under Skolem functions for first order formulas.  
   Pick a sufficiently large cardinal $\theta>\kappa$ and a $\tau$-structure $N$ such that $\dom{N}$ is an elementary submodel of $\HH{\theta}$ of cardinality $\kappa$ with $\kappa+1\subseteq\dom{N}$, $E^N={\in}\restriction(\dom{N}\times\dom{N})$, $c_\gamma^N=\gamma$ for all $\gamma\leq\delta$ and all new function symbols are interpreted as the corresponding Skolem functions. 
   By elementarity, we know that the set $\dom{N}\cap{}^\eta\delta$ has cardinality $\kappa$. Since $\kappa$ is a  weak L\"owenheim--Skolem--Tarski number for $\calL$, we can find an $\calL$-elementary submodel $M$ of $N$ with $\betrag{\dom{M}}<\kappa$. 
   Then $\dom{M}$ is an elementary submodel of $\HH{\theta}$ with $\delta+1\subseteq\dom{M}$. Moreover, we know that there is a function $s\in N\cap{}^\eta\delta$ that is not an element of $M$. But, the set $\calL_{\rho,\omega}(\tau)$ contains a sentence $\phi_s$ that states that there is a function from $c_\eta$ to $c_\delta$ that sends $c_\gamma$ to $c_{s(\gamma)}$ for all $\gamma<\eta$. It then follows that $N\models_{\calL}\phi_s$ holds and $M\models_\calL\phi_s$ fails, contradicting the fact that $M$ is an $\calL$-elementary submodel  of $N$.  
 \end{proof}

 An application of Lemma \ref{lemma:CardinalFromLogic} now shows that the interval $(\rho,\kappa]$ contains a $C^{(n)}$-weakly shrewd cardinal $\mu$ with the property that $\delta^\eta<\mu$ holds for all $\delta<\mu$ with $\delta^\eta<\kappa$. By the above claim, we can conclude that $\delta^\eta<\mu$ holds for all $\delta<\mu$. This shows that \eqref{item:CharWeakLST2} holds in this case.

 Now, assume that \eqref{item:CharWeakLST2} holds. Pick an abstract logic $\calL$ and a natural number $n>0$ such that $\calL$ is definable by a $\Sigma_n$-formula with parameters. We can now apply \eqref{item:WeakCompactFromWShrewd5} of Lemma \ref{lemma:CnWeaklyShrewd} to see that every sufficiently large $C^{(n)}$-weakly shrewd cardinal $\kappa$ with the property that $\nu^{{<}\ooo_\calL}\leq\kappa$ holds for all $\nu<\kappa$     is a   weak L\"owenheim--Skolem--Tarski number for $\calL$.  Since our assumption ensures that there is a proper class of such cardinals, we can conclude that \eqref{item:CharWeakLST1} holds in this case.  
 \end{proof}

 The above result leaves open the question of whether the existence of weak L\"owenheim--Skolem--Tarski numbers for all abstract logics is provably equivalent to the assumption that $\Ord$ is essentially faint (see Question \ref{Q2} in the next section). 
 In the following, we will use Theorem \ref{theorem:CharWeakLST} to severely restrict the class of models of set theory in which the given principles are not equivalent. 
 For this purpose, recall that  the \emph{Singular Cardinal Hypothesis} $\SCH$ states that $\kappa^{\cof{\kappa}}=\kappa^+$ holds for every singular cardinal $\kappa$ with $2^{\cof{\kappa}}<\kappa$. In the following, we say that the \emph{$\SCH$  eventually holds} if this implication holds for eventually all singular cardinals.

 \begin{corollary}\label{corollary:SCHweakLST}
   The following schemes are equivalent over $\ZFC+\anf{\textit{$\SCH$ eventually holds}}$: 
   \begin{enumerate}
   \item\label{item:SCH1} Every abstract logic has a weak L\"owenheim--Skolem--Tarski number. 
   
   \item\label{item:SCH2} $\Ord$ is essentially faint. 
   \end{enumerate}
 \end{corollary}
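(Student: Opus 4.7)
The plan is to deduce both implications by combining the characterizations already established. By Theorem \ref{theorem:CharOrdFaint}, statement \eqref{item:SCH2} is equivalent to the assertion that for every natural number $n$ there is a proper class of $C^{(n)}$-weakly shrewd cardinals, while Theorem \ref{theorem:CharWeakLST} equates \eqref{item:SCH1} with the stronger assertion that for every such $n$ and every cardinal $\eta$, such a cardinal $\kappa$ can additionally be chosen so that $\delta^\eta<\kappa$ holds for all $\delta<\kappa$. The corollary therefore reduces to showing that, in the presence of $\SCH$ eventually, the extra closure condition is automatic at sufficiently large $C^{(n)}$-weakly shrewd cardinals.

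The direction \eqref{item:SCH1}~$\Rightarrow$~\eqref{item:SCH2} uses no cardinal arithmetic. Theorem \ref{theorem:CharWeakLST} supplies, for each $n$ and each cardinal $\eta$, a $C^{(n)}$-weakly shrewd cardinal $\kappa$ with $\kappa>2^\eta\geq\eta$ (taking $\delta$ any cardinal with $\eta\leq\delta<\kappa$ in the additional inequality forces this). Since $\eta$ is arbitrary, these cardinals form a proper class for every $n$, and an appeal to Theorem \ref{theorem:CharOrdFaint} yields \eqref{item:SCH2}.

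For the converse \eqref{item:SCH2}~$\Rightarrow$~\eqref{item:SCH1}, I fix a cardinal $\mu_0$ above which $\SCH$ holds, as well as $n>0$ and $\eta$. The standard consequence of $\SCH$ that $\delta^\eta\leq\max(\delta^+,2^\eta)$ for all cardinals $\delta>\mu_0$ implies that every limit cardinal $\mu$ above the set-sized bound $\lambda_\eta\mathbin{=}\max(2^\eta,\mu_0^\eta)$ satisfies $\delta^\eta<\mu$ for all $\delta<\mu$: the small cardinals $\delta\leq\mu_0$ are handled by $\delta^\eta\leq\mu_0^\eta<\mu$, while for $\mu_0<\delta<\mu$ both $\delta^+$ and $2^\eta$ lie below the limit cardinal $\mu$. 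By Theorem \ref{theorem:CharOrdFaint}, statement \eqref{item:SCH2} supplies a proper class of $C^{(n)}$-weakly shrewd cardinals; each such cardinal is weakly Mahlo and in particular a limit cardinal, by the remark following Proposition \ref{proposition:BasicWeaklyShrewd}, so picking one above $\lambda_\eta$ gives the witness required by Theorem \ref{theorem:CharWeakLST}. The only non-trivial ingredient is the routine $\SCH$ computation, so no serious obstacle arises; this corollary is essentially a packaging of the preceding characterizations with elementary cardinal arithmetic.
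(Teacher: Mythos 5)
Your proof is correct and follows essentially the same route as the paper: the implication \eqref{item:SCH1}~$\Rightarrow$~\eqref{item:SCH2} by combining Theorems \ref{theorem:CharOrdFaint} and \ref{theorem:CharWeakLST}, and the converse by the standard $\SCH$ computation showing that all sufficiently large limit cardinals (hence all sufficiently large $C^{(n)}$-weakly shrewd cardinals, which are weakly Mahlo) are closed under $\eta$-th powers. The only cosmetic difference is that you route the converse through condition \eqref{item:CharWeakLST2} of Theorem \ref{theorem:CharWeakLST} and spell out the arithmetic explicitly, whereas the paper applies Lemma \ref{lemma:CnWeaklyShrewd}.\eqref{item:WeakCompactFromWShrewd5} directly and cites the proof of Jech's Theorem 5.22 for the same computation.
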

 
 \begin{proof}
  In one direction, a combination of Theorem \ref{theorem:CharOrdFaint} with Theorem \ref{theorem:CharWeakLST} directly shows that \eqref{item:SCH1} implies \eqref{item:SCH2}. 
  In the other direction, note that, if the $\SCH$ eventually holds, then the proof of {\cite[Theorem 5.22]{MR1940513}} shows that for every cardinal $\mu$, all sufficiently large weakly inaccessible cardinals $\kappa$  have the property that $\nu^{{<}\mu}<\kappa$ holds for all $\nu<\kappa$. In particular, it follows that all sufficiently large $C^{(n)}$-weakly shrewd cardinals satisfy the assumptions of  \eqref{item:WeakCompactFromWShrewd5} in Lemma \ref{lemma:CnWeaklyShrewd} and therefore these cardinals are weak  L\"owenheim--Skolem--Tarski numbers for abstract logics definable by $\Sigma_n$-formulas. This shows that \eqref{item:SCH2} implies \eqref{item:SCH1}. 
 \end{proof}

 Remember that a classical result of Solovay in \cite{MR0379200} shows that the existence of a strongly compact cardinal implies that the $\SCH$ eventually holds. 
  In particular, the existence of such a cardinal entails that $\Ord$ is essentially faint   if and only if every abstract logic has a weak L\"owenheim--Skolem--Tarski number.  
 In another direction, Corollary \ref{corollary:SCHweakLST} also shows that the inequivalence of the two principles 
  implies the consistency of large cardinal assumptions in the region of measurable cardinals of high Mitchell order.
 This follows from the fact that this inequivalence requires a failure of the $\SCH$  at a proper class of singular cardinals 
 and such a  failure is known to imply the existence of such large cardinals  in canonical inner models (see, for example, \cite{MR1073778}).


\section{Open questions}

We end this paper by discussing two questions raised by the above results.

First, while the existence of strict L\"owenheim--Skolem--Tarski numbers for all abstract logics was shown to be equivalent to the assumption that $\Ord$ is essentially faint in Theorem \ref{MAIN:StrictLST}, it is unclear whether these principles are also equivalent to the existence of weak L\"owenheim--Skolem--Tarski numbers for all abstract logics. More specifically, Theorem \ref{theorem:CharWeakLST} leaves open the possibility that the equivalence stated in Theorem \ref{theorem:CharOrdFaint} can be strengthened so that the existence of $C^{(n)}$-weakly shrewd cardinals satisfying the cardinal arithmetic assumptions listed in Theorem \ref{theorem:CharWeakLST} can be derived from the   principle  \anf{\emph{$\Ord$ is essentially faint}}.

\begin{question}\label{Q2}
  Assume that $\Ord$ is essentially faint. Given a natural number $n>0$ and a cardinal $\eta$, is there is a $C^{(n)}$-weakly shrewd cardinal $\kappa$ with $\delta^\eta<\kappa$ for all $\delta<\kappa$?  
\end{question}

As discussed at the end of Section \ref{section:ProofsCompactness}, 
 a negative answer to Question \ref{Q2} would require substantial large cardinal assumptions. 
 However, it is not clear if the given fragment of the $\SCH$ outright follows from the assumption that $\Ord$ is essentially faint.

 Second, we ask whether the analysis carried out to separate the  principle \anf{\emph{$\Ord$ is essentially subtle}} from the principle \anf{\emph{$\Ord$ is essentially faint}} can be extended 
 to obtain characterizations of the consistency of stronger subtlety properties of the class of ordinals 
 through the existence and non-existence of  set-theoretic sentences with certain provable consequences.  
 More specifically, Theorem \ref{MAIN:SeparatePrinciples} shows that the consistency of the class-version of the large cardinal property of being a regular limit of subtle cardinals is equivalent to the existence of a consistent sentence implying that $\Ord$ is  essentially faint. 
 In addition, Theorem \ref{theorem:DiffSentence} provides a characterization of the consistency of the class-version of subtle limits of subtle cardinals through the non-existence of a sentence axiomatizing the principle  \anf{\emph{$\Ord$ is essentially subtle}} over the principle \anf{\emph{$\Ord$ is essentially faint}}.  
These results raise the question whether the consistency of the class-version of the next stronger large cardinal property, {i.e.,} the property of being a stationary limit of subtle cardinals, can be characterized in a similar manner. 

\begin{question}
 Are there analogs of Theorems  \ref{MAIN:SeparatePrinciples} and \ref{theorem:DiffSentence} that  characterize the consistency of the theory $$\ZFC ~ + ~ \anf{\textit{The class of subtle cardinals is stationary in $\Ord$}}?$$  
\end{question}


\bibliographystyle{amsplain} 
\bibliography{biblio}

\end{document}